\pgfplotsset{compat=newest} 
\pgfplotsset{plot coordinates/math parser=false}
\title{Cusp Universality for Random Matrices II: The Real Symmetric Case} 
\author{Giorgio Cipolloni$^{\dagger\ddagger}$}
\author{L\'aszl\'o Erd\H{o}s$^{\dagger}$}
\address[G.~Cipolloni, L.~Erd\H{o}s and D.~Schr\"oder]{IST Austria, Am Campus 1, A-3400 Klosterneuburg, Austria}
\email{dschroed@ist.ac.at}
\email{giorgio.cipolloni@ist.ac.at}
\email{lerdos@ist.ac.at}
\thanks{$^\dagger$Partially supported by ERC Advanced Grant No.~338804}\thanks{$^\ddagger$This project has received funding from the European Union's Horizon 2020 research and innovation programme under the Marie Sk\l odowska-Curie Grant Agreement No. 665385.} 
\author{Torben Kr\"uger$^{\ast}$}
\address[T. Kr\"uger]{University of Bonn, Endenicher Allee 60, 53115 Bonn, Germany}
\email{torben-krueger@uni-bonn.de}
\thanks{$^\ast$Partially supported by the Hausdorff Center for Mathematics} 
\author{Dominik Schr\"oder$^{\dagger}$}
\subjclass[2010]{60B20, 15B52} 
\keywords{Cusp universality, Dyson Brownian motion, Local law}
\date{\today}
\newcommand{\om}{\omega}
\newcommand{\ov}{\overline}
\begin{document}

\begin{abstract}
We prove that the local eigenvalue statistics  of real symmetric Wigner-type matrices near the cusp points of the eigenvalue density
are universal.
Together with the companion paper~\cite{1809.03971}, which proves the same result for the complex Hermitian symmetry class,
this completes the last remaining case of the Wigner-Dyson-Mehta universality conjecture  
after bulk and edge universalities have been established in the last years.
We extend the recent Dyson Brownian motion analysis at the edge~\cite{1712.03881}
to the cusp regime using the optimal local law from~\cite{1809.03971} 
and the  accurate local shape analysis of the density from~\cite{1506.05095, 1804.07752}. 
We also present a novel PDE-based  method to improve the estimate on eigenvalue
rigidity via  the maximum principle of the heat flow related to the Dyson Brownian motion. 
\end{abstract} 
\maketitle

\tableofcontents

\section{Introduction}

We consider \emph{Wigner-type} matrices, i.e.~$N\times N$ Hermitian random matrices $H$ with  independent, not necessarily identically
distributed entries above the diagonal; a natural generalization of the standard Wigner ensembles that have i.i.d.~entries. 
The Wigner-Dyson-Mehta (WDM) conjecture asserts that the local eigenvalue statistics are universal, i.e.~they are independent of the details
of the ensemble and depend only on the \emph{symmetry type}, i.e.~on whether $H$ is real symmetric or complex Hermitian.
Moreover, different statistics emerge in the bulk of the spectrum and at the spectral edges with a square root vanishing behavior of the eigenvalue density.
 The WDM conjecture for both symmetry classes has
 been proven for Wigner matrices, see~\cite{MR3699468} for  complete historical references. Recently it has been
 extended to more general ensembles including Wigner-type matrices in the bulk and edge regimes; we refer to the companion paper~\cite{1809.03971} for up to date
 references.

The key tool for the recent proofs of the WDM conjecture is the Dyson Brownian motion (DBM), a system of coupled stochastic differential equations.
The DBM method has evolved during the last years. The original version, presented in the monograph~\cite{MR3699468},  was in the spirit of 
a high dimensional analysis of a strongly correlated Gibbs measure and its dynamics. Starting in~\cite{MR3372074}  with  the analysis of the underlying 
parabolic equation and its short range approximation,
 the  PDE component of the theory became  prominent. With the coupling idea, introduced in~\cite{MR3541852, MR3606475}, the essential 
part of the proofs became fully deterministic, greatly simplifying the technical aspects.  In the current paper we extend  this trend and use
PDE methods even for the proof of the rigidity bound, a key technical input,  that earlier was obtained with  direct random matrix methods.

The historical focus on the  bulk and edge universalities has been  motivated by the Wigner ensemble since, apart from the natural bulk regime,
 its semicircle density vanishes as  a square root  near the edges, giving rise to the Tracy-Widom statistics.
  Beyond the Wigner ensemble,  however, the density profile shows a  much richer structure. Already Wigner matrices with nonzero expectation
  on the diagonal, also called \emph{deformed Wigner ensemble},  may have a density supported on several intervals and
  a cubic root cusp singularity in the density arises whenever two such intervals touch each other as some deformation parameter varies.  
  Since local spectral universality is ultimately determined by the local behavior of the density near its vanishing points,
  the appearance of the cusp gives rise to a new type of universality.
  This was first observed in~\cite{MR1618958} and the local eigenvalue statistics at the cusp
   can be explicitly described by the Pearcey process in the complex Hermitian case~\cite{MR2207649}. The corresponding explicit formulas
  for the real symmetric case have not  yet been established. 
 
 The key classification theorem~\cite{MR3684307} for the density of Wigner-type matrices showed that the density may vanish only 
as a square root (at regular edges) or as a  cubic root (at cusps); no other singularity may occur.  This result has recently been extended to a  large
class of matrices with correlated entries~\cite{1804.07752}. In other words, the cusp universality is the third and last 
universal spectral statistics for random matrix ensembles arising from natural generalizations of the Wigner matrices.
We note that invariant $\beta$-ensembles may exhibit further universality classes, see~\cite{MR3833603}.
  
In the companion paper~\cite{1809.03971} we established  cusp universality for Wigner-type matrices in the complex Hermitian symmetry class. In
the present work we extend this result to the real symmetric class and even to certain space-time correlation functions. In fact, we 
show the appearance of a natural  one-parameter family of universal statistics associated to a family of singularities of the eigenvalue density that we call \emph{{physical} cusps}.
  In both works we follow the \emph{three step strategy}, 
a general method developed for proving local spectral universality for random matrices, see~\cite{MR3699468} for a pedagogical introduction.
The first step is the  \emph{local law} or \emph{rigidity},  establishing the location of the eigenvalues with a precision slightly above
the typical local eigenvalue spacing.
The second step is to establish universality for ensembles with a tiny Gaussian component. The third step is a perturbative argument
to remove this tiny Gaussian component relying on the optimal local law.  The first and third steps are insensitive to the symmetry type,
in fact the optimal local law  in the cusp regime has been established for both symmetry classes in~\cite{1809.03971} and it completes also the third step
in both cases.  

There are two different strategies for  the second step. In the complex Hermitian symmetry class, the  Br\'ezin-Hikami formula~\cite{MR1662382}
turns the problem into a saddle point analysis for a contour integral.  This direct path was followed in~\cite{1809.03971}
relying on the optimal local law. 
In the real symmetric case, lacking the Br\'ezin-Hikami formula, only the second strategy via
the analysis of  Dyson Brownian motion (DBM) is feasible.   This approach exploits the very fast decay to local equilibrium of  DBM. It
 is the most robust and powerful method  up to now
to establish local spectral universality. In this paper we present a version of this method adjusted to the cusp situation. 
We will work in the real symmetric case for definiteness. The proof can easily be modified 
for the complex Hermitian case as well. The DBM method does not explicitly yield the local correlation kernel.
 Instead it establishes that the local statistics are universal and therefore can be identified from a reference ensemble
  that we will choose as the simplest Gaussian ensemble exhibiting a cusp singularity.

 In this paper we partly  follow the recent DBM analysis at the 
regular edges~\cite{1712.03881} and we extend it to the cusp regime, using the optimal local law from the companion paper~\cite{1809.03971} and
the precise control of the density near the cusps~\cite{1506.05095, 1804.07752}. 
 The main conceptual difference between~\cite{1712.03881} and the current work is
that we obtain  the necessary local law  along the time evolution of DBM  via novel DBM methods in Section~\ref{sec:rigid}. 
Some other steps, such as the Sobolev inequality,  heat kernel estimates
from~\cite{MR3253704} and the finite speed of propagation~\cite{MR3372074, MR3606475, 1712.03881},  require only moderate adjustments for
 the cusp regime, but for completeness we include them in the
Appendix.  The comparison of the short range approximation of the DBM with the full evolution, Lemma~\ref{se}  and  Lemma~\ref{seb111},
will be presented in detail in Section~\ref{DBMS} and in Appendix~\ref{SLL}  since it is more involved in the cusp setup, 
 after the necessary estimates on the semicircular flow near the cusp are
proven in Section~\ref{sec  scflow}. 

We now outline the novelties and main difficulties at the cusp compared with the edge analysis in~\cite{1712.03881}.
The  basic idea  is  to interpolate between   the time evolution of two DBM's, with initial conditions given by the original ensemble
and the reference ensemble, respectively, after their local densities have been matched by shift and scaling.
Beyond this  common idea there are several differences.

The first  difficulty lies in the rigidity  analysis of the DBM starting from the interpolated  initial conditions.
The optimal  rigidity  from~\cite{1809.03971}, that
 holds for very general Wigner-type matrices,   applies for the flows of both  the  original and the reference matrices,
 but it does not directly apply to the interpolating process. The latter starts from a regular initial data but it
  runs for a very short time, violating the \emph{flatness}  (i.e.~effective mean-field) assumption of~\cite{1809.03971}.
 While it is possible to extend the analysis of~\cite{1809.03971} to this case, here we chose a  technically
 lighter and conceptually  more interesting route. We use the maximum principle of the DBM to transfer
  rigidity information on the reference process to the interpolating one after an appropriate localization. Similar ideas for proving rigidity of the $\beta$-DBM flow has been used in the bulk~\cite{1612.06306} and at the edge~\cite{1810.08308}.

The second difficulty in the cusp regime  is that the  shape of the density  is highly unstable under
the semicircular flow that describes the evolution of the density under the DBM. 
The regular edge analysed in~\cite{1712.03881} remains of square root type along its dynamics and it
 can be simply described by its location and its multiplicative \emph{slope parameter} --- both vary regularly with time.
In contrast, the evolution of the cusp is a relatively complicated process: it  starts with 
a small gap that shrinks to zero as the cusp forms and then continues  developing  a small local minimum. Heavily relying on the main results of~\cite{1804.07752}, the density is described by  quite involved shape functions, see~\eqref{Psi edge},~\eqref{Psi min}, that
have a two-scale structure, given in terms of a total of three parameters, each varying on different time scales.
For example, the location of the gap moves linearly with time, the length of gap shrinks as the $3/2$-th power
of the time, while the local minimum after the cusp increases as the $1/2$-th power
of the time. The scaling behavior of the corresponding quantiles, that approximate  the eigenvalues by rigidity, 
follows the same complicated pattern of the density. All these require a very precise description of the semicircular
flow near the cusp as well as the optimal rigidity.

The third difficulty is that we need to run the DBM for a relatively long time in order to exploit the local decay; in
fact this time scale, $N^{-1/2+\epsilon}$ is considerably longer than the characteristic time scale $N^{-3/4}$ on which the {physical} cusp varies under the semicircular flow. 
We need to tune the initial condition very precisely so that after a relatively long time it develops 
a cusp exactly at the right location with the right slope.

The fourth difficulty is that, unlike for the regular edge regime, the eigenvalues or quantiles on both sides of the ({physical}) cusp  contribute to
the short range approximation of the dynamics,  their effect cannot be treated as mean-field. Moreover, 
there are two scaling regimes for quantiles  corresponding to the two-scale structure of the density.

Finally, we note that  the analysis of the semicircular flow around the cusp, partly completed already 
 in the companion paper~\cite{1809.03971},
 is relatively short and transparent despite its considerably
more complex pattern compared to the corresponding analysis around  the regular edge. 
  This is mostly   due to strong  results imported from the general shape analysis~\cite{1506.05095}. Not only 
the exact formulas for the density shapes are taken over, but we also heavily rely on the  $1/3$-H\"older continuity
in space and time of the density and its Stieltjes transform, established in the strongest form in~\cite{1804.07752}.

\bigskip

\noindent\textbf{Notations and conventions.} 
We now introduce some custom notations we use throughout the paper. 
For integers $n$ we define $[n]\defeq\{1,\dots,n\}$.
For positive quantities $f,g$, we write $f\lesssim g$ and $f\sim g$ if $f\le Cg$ or, respectively, $cg\le f\le Cg$ for some constants $c,C$ that depend only on the \emph{model parameters}, 
i.e.~on the constants appearing in the basic Assumptions~\ref{bdd moments}--\ref{bdd m} listed in Section~\ref{sec: main results} below. Similarly, we write $f\ll g$ if $f\le cg$ 
for some tiny constant $c>0$ depending on the model parameters.  We denote vectors by bold-faced lower case Roman letters $\vx,\vy\in\C^N$, and matrices by upper case Roman letters $A,B\in\C^{N\times N}$. We write $\braket{A}\defeq N^{-1}\Tr A$ and $\braket{\vx}\defeq N^{-1}\sum_{a\in[N]}x_a$ for the averaged trace and the average of a vector. We often identify diagonal matrices with the vector of its diagonal elements. Accordingly, for any matrix $R$, 
we denote by $\diag(R)$ the vector of its diagonal elements, and for any vector $\vr$ we denote by $\diag(\vr)$ the 
corresponding diagonal matrix.

We will frequently use the concept of ``with very high probability'' meaning that for any fixed $D>0$  the probability of the event is bigger than $1-N^{-D}$ if $N\ge N_0(D)$.

\bigskip
\noindent\textbf{Acknowledgement.} The authors are very grateful to Johannes Alt for his invaluable contribution in helping improve several  results of~\cite{1804.07752} tailored to the needs of this paper.

\section{Main results}\label{sec: main results}
For definiteness we consider the real symmetric case $H\in\R^{N\times N}$. With small modifications the proof presented in this paper works for complex Hermitian case as well, but 
this case was already considered in~\cite{1809.03971} with a contour integral analysis. Let $W=W^* \in \R^{N \times N}$ be a symmetric random matrix and $A=\diag(\bm{a})$ be a deterministic diagonal matrix with entries $\bm{a}=(a_i)_{i=1}^N \in \R^N$. We say that $W$ is of \emph{Wigner-type}~\cite{MR3719056} if its entries $w_{ij}$ for $i \le j$ are centred, $\E  w_{ij} =0$, independent random variables. We define the \emph{variance matrix} or \emph{self-energy matrix} $S=(s_{ij})_{i,j=1}^N$, $s_{ij}\defeq \E w_{ij}^2$. In~\cite{MR3719056} it was shown that as $N$ tends to infinity, the resolvent $G(z)\defeq (H-z)^{-1}$ of the \emph{deformed Wigner-type matrix} $H=A+W$ entrywise approaches a diagonal matrix $M(z)\defeq \diag(\vm(z))$ for $z\in\HC\defeq\Set{z\in\C|\Im z>0}$. The entries $\vm=(m_1 \dots , m_N)\colon \HC \to \HC^N$  of $M$ have positive imaginary parts and solve the \emph{Dyson equation}
\begin{equation} \label{Dyson equation} -\frac{1}{m_i(z)}= z-a_i +\sum_{j=1}^Ns_{ij}m_j(z),\qquad z \in \HC\defeq\Set{z\in\C|\Im z>0}, \quad i\in[N]. \end{equation}
We call $M$ or $\vm$ the \emph{self-consistent Green's function}. 
The normalised trace $\braket{M}$ of $M$ is the Stieltjes transform $\braket{M(z)}=\int_\R (\tau-z)^{-1}\rho(\diff\tau)$ of a unique probability measure $\rho$ on $\R$ that approximates the empirical eigenvalue distribution of $A+W$ increasingly well as $N \to \infty$. We call $\rho$ the \emph{self-consistent density of states} (scDOS). Accordingly, its support $\supp\rho$ is called the \emph{self-consistent spectrum}. It was proven in~\cite{1506.05095} that under very general conditions, $\rho(\diff\tau)$ is an absolutely continuous measure with a $1/3$-H\"older continuous density, $\rho(\tau)$. 
Furthermore, the self-consistent spectrum consists of finitely many intervals with square root growth of $\rho$ at the \emph{edges}, i.e.~at the points in $\partial \supp \rho$.

We call a point $\cu\in\R$ a cusp of $\rho$ if $\cu\in\interior\supp\rho$ and $\rho(\cu)=0$. Cusps naturally emerge when we consider a one-parameter family of ensembles and two support intervals of $\rho$ merge as the parameter value changes. The cusp universality phenomenon is not restricted to the exact cusp; it also occurs for situations shortly before and after the merging of two such support intervals, giving rise to a one parameter family of universal statistics. More precisely, universality emerges if $\rho$ has a \emph{{physical} cusp}. 
The terminology indicates that all these singularities become indistinguishable from the exact cusp if the density is resolved with a local precision above the typical eigenvalue spacing.
 We say that $\rho$ exhibits a physical cusp if it has a small gap  $(\ed_-, \ed_+) \subset \R\setminus \supp \rho$  with $\ed_+,\ed_- \in \supp \rho$ in its support of size  $\ed_+-\ed_-\lesssim N^{-3/4}$ or a local minimum $\mi\in\interior\supp\rho$ of size $\rho(\mi)\lesssim N^{-1/4}$, cf.~Figure~\ref{fig phys cusp}.  Correspondingly, we call the points 
 $\bu\defeq \frac{1}{2}
 (\ed_++\ed_-)$ and $\bu\defeq\mi$ \emph{{physical} cusp points}, respectively. One of the simplest models exhibiting a physical cusp point is the deformed Wigner matrix
\begin{equation}\label{eq deformed wigner} H=\diag(1,\dots,1,-1,\dots,-1)+\sqrt{1+t}W \end{equation}
 with equal numbers of \(\pm1\), and where \(W\) is a Wigner matrix of variance \(\  E\abs{w_{ij}}^2=N^{-1}\). The ensemble \(H\) from~\eqref{eq deformed wigner} exhibits an exact cusp if \(t=0\) and a physical cusp if \(\abs{t}\lesssim N^{-1/2}\), with \(t>0\) corresponding to a small non-zero local minimum and \(t<0\) corresponding to a small gap in the support of the self-consistent density. For the proof of universality in the real symmetric symmetry class we will use~\eqref{eq deformed wigner} with \(W\sim\mathrm{GOE}\) as a Gaussian reference ensemble.
\begin{figure}[htbp]
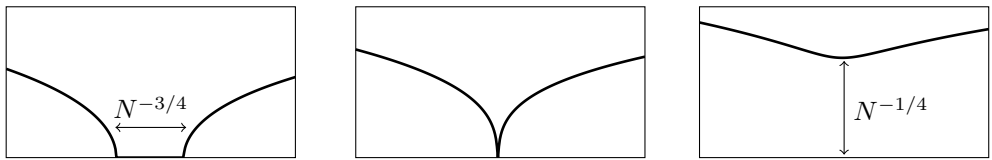

  \centering
    \setlength{\figurewidth}{4cm}\setlength{\figureheight}{2cm}\input{almost0.tikz} \qquad \input{almost1.tikz}\qquad \input{almost2.tikz}
  \caption{The cusp universality class can be observed in a 1-parameter family of \emph{physical cusps}.}
  \label{fig phys cusp}
\end{figure}

Our main result is cusp universality under the real symmetric analogues of the assumptions of~\cite{1809.03971}. Throughout this paper we make the following three assumptions:  

\begin{assumption}[Bounded moments]\label{bdd moments}
The entries of the  matrix $\sqrt{N}W$ have bounded moments and the expectation $A$ is bounded, i.e.\ there are positive $C_k$ such that 
\[
\abs{a_i}\le C_0, \qquad 
\E\abs{w_{ij}}^k \le C_kN^{-k/2} , \qquad k \in \N.
\]
\end{assumption}
\begin{assumption}[Flatness]\label{fullness}
We assume that the matrix $S$ is flat in the sense $s_{ij}=\E w_{ij}^2 \ge c/N$ for some constant $c>0$. 
\end{assumption}
\begin{assumption}[Bounded self-consistent Green's function] \label{bdd m}
The scDOS $\rho$ has a {physical} cusp point $\bu$, and in a neighbourhood of the {physical} cusp point $\bu \in \R$ the self-consistent Green's function is bounded, i.e.\ for positive $C,\kappa$ we have 
\[
\abs{m_{i}(z)}\le C, \qquad z \in [\bu-\kappa,\bu+\kappa]+ \ii \R^+.
\]
\end{assumption}

We call the constants appearing in Assumptions~\ref{bdd moments}--\ref{bdd m} \emph{model parameters}. All generic constants in this paper may implicitly depend on these model parameters. Dependence on further parameters, however, will be indicated.

\begin{remark}
The boundedness of $\vm$ in Assumption~\ref{bdd m} can be, for example, ensured by assuming some regularity of the variance matrix $S$. For more details we refer to~\cite[Chapter~6]{1506.05095}.
\end{remark}

According to the extensive analysis in~\cite{1506.05095,1804.07752} it follows\footnote{The claimed expansions~\eqref{gamma def} and~\eqref{gamma def min} follow directly from~\cite[Theorem 7.2(c),(d)]{1804.07752}. The error term in~\eqref{gamma def edge} follows from~\cite[Theorem 7.1(a)]{1804.07752}, where we define $\gamma$ according to $h$ therein.} that there exists some small $\delta_\ast\sim 1$ such that the self-consistent density $\rho$ around the points where it is small exhibits one of the following three types of behaviours.
\begin{subequations}\label{gamma def eqs}
\begin{enumerate}[(i)]
\item \emph{Exact cusp}. There is a cusp point $\cu\in\R$ in the sense that $\rho(\cu)=0$ and $\rho(\cu\pm\delta)>0$ for $0\ne\delta\ll1$. In this case the self-consistent density is locally around $\cu$ given by 
\begin{equation}\label{gamma def}
\rho(\cu +\omega) = \frac{\sqrt 3\gamma^{4/3}\abs{\omega}^{1/3}}{2\pi}\Big[1+\landauO{\abs{\omega}^{1/3}}\Big]
\end{equation}
for $\omega\in [-\delta_\ast,\delta_\ast]$ and some $\gamma>0$.
\item \emph{Small gap.} There is a maximal interval $[\ed_-,\ed_+]$ of size $0<\Delta \defeq  \ed_+-\ed_-\ll1$ such that $\rho\rvert_{[\ed_-,\ed_+]}\equiv 0$. In this case the density around $\ed_\pm$ is, for some $\gamma>0$, locally given by
\begin{equation}\label{gamma def edge}
\rho(\ed_\pm\pm \omega)=\frac{\sqrt{3}(2\gamma)^{4/3}\Delta^{1/3}}{2\pi}\Psi_{\mathrm{edge}}(\omega/\Delta)\left[1+\landauO{\min\Bigl\{\omega^{1/3},\frac{\omega^{1/2}}{\Delta^{1/6}}\Bigr\}}\right]
\end{equation}
for $\omega\in[0,\delta_\ast]$, where 
\begin{equation}\label{Psi edge}
\Psi_{\mathrm{edge}}(\lambda)\defeq \frac{\sqrt{\lambda(1+\lambda)}}{(1+2\lambda+2\sqrt{\lambda(1+\lambda)})^{2/3}+(1+2\lambda-2\sqrt{\lambda(1+\lambda)})^{2/3}+1},\quad \lambda\ge0.
\end{equation}
\item \emph{Non-zero local minimum.} There is a local minimum at $\mi\in\R$ of $\rho$ such that $0<\rho(\mi)\ll1$. In this case there exists some $\gamma>0$ such that
\begin{equation}\label{gamma def min}
\rho(\mi + \omega) = \rho(\mi) + \rho(\mi) \Psi_{\mathrm{min}}\left(\frac{3\sqrt 3 \gamma^4 \omega}{2(\pi\rho(\mi))^3 }\right) \left[1+\landauO{\min\Bigl\{\rho(\mi)^{1/2},\frac{\rho(\mi)^4}{\abs{\omega}}\Bigr\}+\min\Bigl\{\frac{\omega^2}{\rho(\mi)^5},\abs{\omega}^{1/3}\Bigr\}}\right]
\end{equation}
for $\omega\in[-\delta_\ast,\delta_\ast]$, where 
\begin{equation}\label{Psi min}
\Psi_{\mathrm{min}}(\lambda) \defeq \frac{\sqrt{1+\lambda^2}}{(\sqrt{1+\lambda^2}+\lambda)^{2/3}+(\sqrt{1+\lambda^2}-\lambda)^{2/3}-1}-1,\qquad \lambda\in\R.
\end{equation}
\end{enumerate}
\end{subequations}
We note that the choices for the \emph{slope} parameter $\gamma$ in~\eqref{gamma def edge}--\eqref{gamma def min} are consistent with~\eqref{gamma def} in the sense that in the regimes $\Delta\ll \omega\ll 1$ and $\rho(\mi)^3\ll \abs{\omega}\ll 1$ the respective formulae asymptotically agree. The precise form of the pre-factors in~\eqref{gamma def eqs} is also chosen such that in the universality statement $\gamma$ is a linear rescaling parameter.

It is natural to express universality in terms of a rescaled $k$-point function $p_k^{(N)}$ which we define implicitly by 
\begin{equation} \E\binom{N}{k}^{-1} \sum_{\{i_1,\dots,i_k\}\subset[N]} f(\lambda_{i_1},\dots,\lambda_{i_k}) = \int_{\R^k} f(\bm x)p_k^{(N)}(\bm x)\diff \bm x \label{eq k pt func}\end{equation}
for test functions $f$, where the summation is over all subsets of $k$ distinct integers from $[N]$.

\begin{theorem}\label{thr:cusp universality}
Let $H$ be a real symmetric or complex Hermitian deformed Wigner-type matrix whose scDOS $\rho$ has a {physical} cusp point $\bu$ such that Assumptions~\ref{bdd moments}--\ref{bdd m} are satisfied. Let $\gamma>0$ be the slope parameter at $\bu$, i.e.~such that $\rho$ is locally around $\bu$ given by~\eqref{gamma def eqs}. Then the local $k$-point correlation function at $\bu$ is universal, i.e.~for any $k\in\N$ there exists a $k$-point correlation function $p_{k,\alpha}^{\mathrm{GOE}/\mathrm{GUE}}$ such that for any test function $F\in C_c^1(\overline{\Omega})$, with $\Omega\subset\R^k$ some bounded open set, it holds that
\[
\int_{\R^k} F(\bx)\left[ \frac{N^{k/4}}{\gamma^k} p_{k}^{(N)}\left( \bu + \frac{\bx}{\gamma N^{3/4}}\right)- p_{k,\alpha}^{\mathrm{GOE}/\mathrm{GUE}}\left(\bx\right)\right] \diff \bx = \mathcal{O}_{k,\Omega}\big( N^{-c(k)}\lVert F\rVert_{C^1} \big),
\]
where the parameter $\alpha$ and the physical cusp \(\bu\) are given by 
\begin{equation}
\alpha \defeq \begin{cases}
0 & \text{in case (i)}\\
3 \left(\gamma\Delta/4\right)^{2/3} N^{1/2} & \text{in case (ii)}\\
-\left(\pi\rho(\mi)/\gamma\right)^2 N^{1/2} & \text{in case (iii)},
\end{cases}, \qquad 
\bu\defeq \begin{cases}
  \cu & \text{in case (i)}\\
  (\ed_-+\ed_+)/2 & \text{in case (ii)}\\
  \mi & \text{in case (iii)},
  \end{cases} \label{eq pearcey param choice}
\end{equation}
and $c(k)>0$ is a small constant only depending on $k$. The implicit constant in the error term depends on \(k\) and the diameter of the set \(\Omega\).
\end{theorem}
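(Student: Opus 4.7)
The plan is to apply the three-step strategy. Step one (optimal local law and rigidity at the cusp for the static ensemble) and step three (Green function comparison to remove a small Gaussian component) are already supplied by the companion paper~\cite{1809.03971} and are insensitive to the symmetry class, so the entire burden is step two: proving universality for an ensemble with a tiny Gaussian component. Since Br\'ezin--Hikami is unavailable in the real symmetric class, I will carry out a Dyson Brownian motion analysis, extending the edge analysis of~\cite{1712.03881} to the {physical} cusp regime. The reference ensemble will be~\eqref{eq deformed wigner} with \(W\sim\mathrm{GOE}\) and parameter \(t\) tuned so that after running the Ornstein--Uhlenbeck/DBM flow of variance preserving type for a time \(T\sim N^{-1/2+\epsilon}\) one produces a {physical} cusp whose location \(\bu_T\) and slope \(\gamma_T\) match, after a shift and linear rescaling, those of the original ensemble $H+\sqrt{s}\,\widetilde W$ with $s\sim T$; here $\widetilde W$ is an independent GOE. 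The matching is dictated by the formulas~\eqref{gamma def}--\eqref{gamma def min} together with the precise evolution of the shape functions \(\Psi_{\mathrm{edge}}, \Psi_{\mathrm{min}}\) under the semicircular flow, which must be established quantitatively in Section~\ref{sec scflow} using the H\"older regularity of the Stieltjes transform from~\cite{1506.05095,1804.07752}.

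Once the two DBM flows are matched, I would interpolate between them linearly at time zero and follow the interpolating DBM. Its eigenvalue trajectories satisfy the usual SDE with the same Brownian motions chosen by the coupling of~\cite{MR3541852, MR3606475}; the difference process is then driven by a short-range parabolic operator whose kernel I would analyse via the Sobolev/Nash-type inputs of~\cite{MR3253704} and the finite speed of propagation argument of~\cite{MR3372074, MR3606475, 1712.03881}. The key estimate needed is that the Hessian--type energy decays on the time scale \(T\), which in turn requires a rigidity estimate for all three processes (original, reference, and the interpolated one) with a precision slightly above the local eigenvalue spacing, accompanied by quantile locations that follow the two-scale pattern (gap of length \(\sim t^{3/2}\), local minimum of height \(\sim t^{1/2}\)) induced by the semicircular evolution of the cusp.

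Rigidity along the interpolated flow is the principal obstacle. The static local law of~\cite{1809.03971} cannot be invoked directly because the interpolating initial condition violates the flatness assumption on the variance matrix during the short DBM time under consideration. My plan, carried out in Section~\ref{sec:rigid}, is to avoid extending~\cite{1809.03971} and instead transfer rigidity from the reference process, where the optimal local law does hold, to the interpolating one. The mechanism is the maximum principle for the DBM generator applied to a carefully localised functional of the eigenvalue positions, in the spirit of~\cite{1612.06306,1810.08308} at the bulk and edge. The localisation must respect the two scales of the cusp: a mesoscopic window covering the gap/minimum on scale $N^{-3/4+\epsilon}$, and a secondary scale governing the tails on both sides of the {physical} cusp.

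With rigidity in hand, the short-range comparison in Section~\ref{DBMS} (supplemented by Appendix~\ref{SLL}, Lemma~\ref{se} and Lemma~\ref{seb111}) proves that the local $k$-point functions of the interpolated process and the reference GOE process differ only by $N^{-c(k)}$. Since the reference ensemble~\eqref{eq deformed wigner} is explicit and produces a physical cusp with parameter $\alpha$ determined by $t=\alpha N^{-1/2}$, its rescaled $k$-point function at $\bu_T$ converges to a canonical $p^{\mathrm{GOE}}_{k,\alpha}$ (analogously $p^{\mathrm{GUE}}_{k,\alpha}$ in the Hermitian case, recovering the Pearcey kernel at $\alpha=0$). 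The identification~\eqref{eq pearcey param choice} of the parameter \(\alpha\) then follows by matching the gap length or local minimum height through~\eqref{gamma def edge} or~\eqref{gamma def min} and tracing how these scale under the explicit semicircular evolution of~\eqref{eq deformed wigner}. Combining this with the Green function comparison of~\cite{1809.03971} to remove the $\sqrt{s}\,\widetilde W$ perturbation yields the theorem.
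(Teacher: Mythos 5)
Your proposal follows essentially the same route as the paper: three-step strategy with the local law and Green function comparison from~\cite{1809.03971}, a coupled/interpolated DBM with short-range approximation, heat-kernel and finite-speed estimates adapted to the cusp scaling, and rigidity for the interpolating process obtained by maximum-principle arguments rather than by extending the static local law. The matching of the reference ensemble~\eqref{eq deformed wigner} via the semicircular-flow shape analysis and the identification of \(\alpha\) through the gap length and minimum height are also as in the paper.

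One step is asserted rather than proven: you write that the rescaled \(k\)-point function of the Gaussian reference ensemble \emph{converges} to a canonical \(p_{k,\alpha}^{\mathrm{GOE}}\). In the complex Hermitian case this limit is the explicit Pearcey determinant, but in the real symmetric case no explicit formula (not even a Pfaffian structure) is available, so the existence of the limit is itself part of what must be shown — indeed the theorem's statement begins with "there exists a \(k\)-point correlation function \(p_{k,\alpha}^{\mathrm{GOE}}\)". The paper closes this by a bootstrap: the comparison result~\eqref{univ comp Ualpha} is applied to two copies of the GOE reference ensemble with comparable but unequal dimensions \(N_n=(4/3)^n\) and \(N_{n+1}\), yielding that the functionals \(\mathcal{J}_n(F)=\int F(\bx)\,N_n^{k/4}p_{k,\alpha,\mathrm{GOE}}^{(N_n)}(\bx N_n^{-3/4})\diff\bx\) form a Cauchy sequence in \(C_c^1(\overline{\Omega})^*\), whose limit defines \(p_{k,\alpha}^{\mathrm{GOE}}\) as a distribution. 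This in turn forces the whole DBM comparison to be set up for ensembles of merely comparable (rather than equal) dimension, with a corresponding rescaling of time and an adjusted number of padding particles (cf.~Remark~\ref{notequal}). You should either supply this compactness argument or an alternative construction of the limit; without it the conclusion of the theorem as stated is not reached.
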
 

\begin{remark}\label{remark kpt func}
\begin{enumerate}[(i)]
\item In the complex Hermitian symmetry class the $k$-point function is given by
\[ p_{k,\alpha}^{\mathrm{GUE}}(\bm x)= \det \Bigl(K_{\alpha,\alpha}(x_i,x_j)\Bigr)_{i,j=1}^k.\]
Here the \emph{extended Pearcey kernel} $K_{\alpha,\beta}$ is given by 
\begin{equation}\label{Pearcey kernel}
\begin{split}
K_{\alpha,\beta}(x,y) &= \frac{1}{(2\pi\ii)^2} \int_\Xi \diff z \int_\Phi \diff w \frac{\exp(-w^4/4 + \beta w^2/2-yw + z^4/4-\alpha z^2/2 + xz)}{w-z}\\
&\quad - \frac{\1_{\beta>\alpha}}{\sqrt{2\pi(\beta-\alpha)}} \exp\Bigl(-\frac{(y-x)^2}{2(\beta-\alpha)}\Bigr),
\end{split}
\end{equation}
where $\Xi$ is a contour consisting of rays from $\pm\infty e^{\ii\pi/4}$ to $0$ and rays from $0$ to $\pm \infty e^{-\ii\pi/4}$, and $\Phi$ is the ray from $-\ii\infty$ to $\ii\infty$. For more details we refer to~\cite{MR2207649,MR2642890,MR1618958} and the references in~\cite{1809.03971}.

\item The real symmetric $k$-point function (possibly only a distribution) \smash{$p_{k,\alpha}^{\mathrm{GOE}}$} is not known explicitly. In fact, it is not even known whether \smash{$p_{k,\alpha}^{\mathrm{GOE}}$} is Pfaffian.
 We will nevertheless establish the existence of $p_{k, \alpha}^\mathrm{GOE}$ as a distribution in the dual of the
 $C^1$ functions in Section~\ref{sec OU flow} as the limit of
the correlation functions of a one parameter family of Gaussian comparison models.
\end{enumerate}
\end{remark}

Theorem~\ref{thr:cusp universality} is a universality result about the spatial correlations of eigenvalues. Our method also allows us to prove the corresponding statement on space-time universality when we consider the time evolution of eigenvalues $(\lambda_i^t)_{i\in[N]}$ according to the Dyson Brownian motion \smash{$\diff H^{(t)}=\diff \mathfrak{B}_t$} with initial condition \smash{$H^{(0)}=H$}, where, depending on the symmetry class, $\mathfrak{B}_t$ is a complex Hermitian or real symmetric matrix valued Brownian motion. For any ordered $k$-tuple $\bm\tau=(\tau_1,\dots,\tau_k)$ with $0\le\tau_1\le\dots\le\tau_k\lesssim N^{-1/2}$ we then define the \emph{time-dependent $k$-point function} as follows. Denote the unique values in the tuple $\bm\tau$ by $\sigma_1<\dots<\sigma_l$ such that $\{\tau_1,\dots,\tau_k\}=\{\sigma_1,\dots,\sigma_l\}$ and denote the multiplicity of $\sigma_j$ in $\bm\tau$ by $k_j$ and note that $\sum k_j=k$. We then define \smash{$p_{k,\bm\tau}^{(N)}$} implicitly via
\begin{equation}\label{eq time k pt func}
\E \prod_{j=1}^l \left[ \binom{N}{k_j}^{-1}\sum_{\{i_1^j,\dots,i_{k_j}^j\}\subset[N]} \right] f(\lambda_{i_1^1}^{\sigma_1},\dots,\lambda_{i_{k_1}^1}^{\sigma_1},\dots,\lambda_{i_1^l}^{\sigma_l},\dots,\lambda_{i_{k_l}^l}^{\sigma_l})=\int_{\R^k} f(\bm x) p_{k,\bm\tau}^{(N)}(\bm x)\diff \bm x
\end{equation}
for test functions $f$ and note that~\eqref{eq time k pt func} reduces to~\eqref{eq k pt func} in the case $\tau_1=\dots=\tau_k=0$. We note that in~\eqref{eq time k pt func} coinciding indices are allowed only for eigenvalues at different times. If the scDOS $\rho$ of $H$ has a physical cusp in $\bu$, then for $\tau\lesssim N^{-1/2}$ the scDOS $\rho_\tau$ of \smash{$H^{(\tau)}$} also has a physical cusp $\bu_\tau$ close to $\bu$ and we can prove space-time universality in the sense of the following theorem, whose proof we defer to Appendix~\ref{sec proof space time}.

\begin{theorem}\label{thr:time cusp universality}
Let $H$ be a real symmetric or complex Hermitian deformed Wigner-type matrix whose scDOS $\rho$ has a {physical} cusp point $\bu$ such that Assumptions~\ref{bdd moments}--\ref{bdd m} are satisfied. Let $\gamma>0$ be the slope parameter at $\bu$, i.e.~such that $\rho$ is locally around $\bu$ given by~\eqref{gamma def eqs}. Then there exists a $k$-point correlation function $p_{k,\bm\alpha}^{\mathrm{GOE}/\mathrm{GUE}}$ such that for any $0\le\tau_1\le\dots\le\tau_k\lesssim N^{-1/2}$ and for any test function $F\in C_c^1(\overline{\Omega})$, with $\Omega\subset\R^k$ some bounded open set, it holds that
\[ \int_{\R^k} F(\bx)\left[ \frac{N^{k/4}}{\gamma^k} p_{k,\bm\tau/\gamma^2}^{(N)}\left( \bu_{\bm\tau/\gamma^2} + \frac{\bx}{\gamma N^{3/4}}\right)- p_{k,\bm\alpha}^{\mathrm{GOE}/\mathrm{GUE}}\left(\bx\right)\right] \diff \bx = \mathcal{O}_{k,\Omega}\left(N^{-c(k)}\lVert F\rVert_{C^1}\right),\]
where $\bm\tau=(\tau_1,\dots,\tau_k)$, $\bu_{\bm\tau}=(\bu_{\tau_1},\dots,\bu_{\tau_k})$ and $\bm\alpha=\alpha-\bm\tau N^{1/2}$ with $\alpha$ from~\eqref{eq pearcey param choice} and $c(k)>0$ is a small constant only depending on $k$. In the case of the complex Hermitian symmetry class the $k$-point correlation function is known to be determinantal of the form
\[ p_{\alpha_1,\dots,\alpha_k}^{\mathrm{GUE}}(\bm x)=\det\Bigl( K_{\alpha_i,\alpha_j}(x_i,x_j)\Bigr)_{i,j=1}^k\]
with $K_{\alpha,\beta}$ as in~\eqref{Pearcey kernel}.
\end{theorem}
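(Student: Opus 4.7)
The plan is to prove Theorem~\ref{thr:time cusp universality} by adapting the three-step strategy underlying Theorem~\ref{thr:cusp universality} to track joint statistics at multiple times in the short window $[0, CN^{-1/2}]$. The same optimal local law and DBM coupling used for the single-time statement apply, provided one identifies how the cusp parameter evolves under the semicircular flow and provided all estimates are made uniform in $\tau$.

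First, I would apply the semicircular flow analysis of Section~\ref{sec scflow} to $H^{(\tau)} = H + \mathfrak{B}_\tau$ for $\tau \lesssim N^{-1/2}$. Adding a matrix-valued Brownian increment of variance $\tau$ amounts to free convolution of $\rho$ with a semicircle, so the physical cusp $\bu$ is perturbed to some nearby $\bu_\tau$ and, via the identifications in~\eqref{eq pearcey param choice}, the associated cusp parameter $\alpha(\tau)$ shifts linearly in $\tau N^{1/2}$. This produces precisely the vector $\bm\alpha = \alpha - \bm\tau N^{1/2}$ appearing in the theorem, and reduces the claim to a joint comparison of local statistics of the DBM for $H$ with those of a reference Gaussian ensemble at the times $\tau_1, \dots, \tau_k$.

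Second, in the complex Hermitian class the Br\'ezin--Hikami contour integral representation used in~\cite{1809.03971} extends to multiple times, yielding a joint determinantal structure on eigenvalues at distinct times $\sigma_1 < \dots < \sigma_l$ starting from a deterministic initial matrix. A saddle point expansion at the cusp scale $N^{-3/4}$ recovers the extended Pearcey kernel $K_{\alpha_i, \alpha_j}$ of~\eqref{Pearcey kernel}, with the Gaussian subtraction therein being the free propagator between two times that encodes the temporal correlations. The third step (Green's function comparison) from~\cite{1809.03971}, insensitive to the time parameter, transfers the result to general Hermitian $H$ by being applied simultaneously at each of the $k$ time marginals. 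In the real symmetric case the Br\'ezin--Hikami formula is not available, so I would instead run two coupled DBM trajectories driven by the same matrix-valued Brownian motion $\mathfrak{B}_\cdot$: one starting from $H$, the other from the deformed GOE reference ensemble~\eqref{eq deformed wigner} with parameter $t$ matched so that the initial cusps agree. The rigidity proved along the interpolating process in Section~\ref{sec:rigid}, the DBM comparison of Section~\ref{DBMS}, and the short range approximation from Appendix~\ref{SLL} then apply uniformly in $\tau$, and because both processes share the same Brownian increments the comparison of local statistics is \emph{joint} in $\tau_1, \dots, \tau_k$. The limit $p_{k, \bm\alpha}^{\mathrm{GOE}}$ is defined, as anticipated in Remark~\ref{remark kpt func}(ii) and Section~\ref{sec OU flow}, via this Gaussian reference family promoted to the multi-time setting.

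The main obstacle is maintaining the precision of all these ingredients uniformly over the entire time window. Under the semicircular flow the cusp structure evolves on three different time scales (the location $\bu_\tau$ moves linearly in $\tau$, the gap $\Delta_\tau$ closes as $\tau^{3/2}$, and the local minimum opens as $\tau^{1/2}$), so the shape functions $\Psi_{\mathrm{edge}}$ and $\Psi_{\mathrm{min}}$, together with the induced quantile structure, must be tracked simultaneously. Any loss in rigidity, or in the comparison of quantiles through this multi-scale evolution, would destroy the accuracy needed for a joint statement on all $k$ time marginals at the cusp scale $N^{-3/4}$, which is precisely what the maximum-principle rigidity argument of Section~\ref{sec:rigid}, combined with the sharp shape analysis imported from~\cite{1804.07752}, is designed to provide.
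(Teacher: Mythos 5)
Your overall mechanism is close to the paper's: the parameter shift $\bm\alpha=\alpha-\bm\tau N^{1/2}$ coming from the semicircular flow, and the observation that the comparison with the Gaussian reference is a pathwise, very-high-probability statement and therefore upgrades to joint multi-time statistics essentially for free, are exactly the ingredients used in Appendix~\ref{sec proof space time}. However, there is a genuine gap in how you feed the times $\tau_1,\dots,\tau_k$ into the DBM argument in the real symmetric case. You propose to run the coupled DBM directly from $H$ (and from the reference ensemble) and to compare at the times $\tau_j\lesssim N^{-1/2}$, asserting that the comparison of Section~\ref{DBMS} applies ``uniformly in $\tau$''. It does not: Propositions~\ref{cut} and~\ref{cutmin} compare the two flows only after they have run for the time $t_1=N^{-1/2+\omega_1}$, which exceeds the entire observation window $[0,CN^{-1/2}]$; for $\tau\lesssim N^{-1/2}$ the heat-kernel contraction factor $\bigl(N^{1/2}\tau\bigr)^{-4/15}$ in~\eqref{fefe} is $O(1)$, so no local relaxation has occurred and the comparison at time $\tau$ simply fails. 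The missing step is the multi-time Green function comparison that the paper performs first: write $(H^{(\tau_1)},\dots,H^{(\tau_k)})$ in distribution as $(H+\sqrt{\tau_1}U_1,\,H+\sqrt{\tau_1}U_1+\sqrt{\tau_2-\tau_1}U_2,\dots)$ with independent GOE increments, condition on $U_1,\dots,U_k$ and absorb the accumulated increments into the expectation matrix, so that the resolvent expansion of~\cite[Eq.~(116)]{1809.03971} (which also handles products of traces of resolvents with \emph{differing} expectations) allows the replacement of $H$ by $\widetilde H_t=H_t+\sqrt{ct}\,U$ with $t\le N^{-1/4-\epsilon}$. Only after this replacement do the observation times become $ct+\tau_j\ge t_1$, i.e.\ the joint law is that of the DBM started from $H_t$ evaluated past the cusp relaxation time, where the pathwise bounds of Propositions~\ref{cut} and~\ref{cutmin} apply and yield the multi-time conclusion. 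Your only mention of Green function comparison (``applied simultaneously at each of the $k$ time marginals'', and only in the Hermitian discussion) addresses marginals rather than this joint comparison of correlated matrices at different times, which is precisely the point that needs an argument.

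Two smaller remarks: the prefactor identification $\bm\alpha=\alpha-\bm\tau N^{1/2}$ should be justified quantitatively via~\eqref{eq Delta size}--\eqref{eq min size} (equivalently~\cite[Lemma 5.1]{1809.03971}), not just by the heuristic that $\alpha$ ``shifts linearly''; and for the GUE case the paper does not redo a multi-time Br\'ezin--Hikami saddle point analysis but quotes the known determinantal formula with the extended Pearcey kernel~\eqref{Pearcey kernel}, so your proposed contour-integral derivation is unnecessary extra work rather than an error.
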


The analogous version of Theorem~\ref{thr:time cusp universality} for fixed energy bulk multitime universality has been proven in~\cite[Sec.~2.3.1.]{MR3914908}. 

\begin{remark}
The extended Pearcey kernel $K_{\alpha,\beta}$ in Theorem~\ref{thr:time cusp universality} has already been observed for the double-scaling limit of non-intersecting Brownian bridges~\cite{MR2642890,MR2207649}. However, in the random matrix setting our methods also allow us to prove that the space-time universality of Theorem~\ref{thr:time cusp universality} extends beyond the Gaussian DBM flow. If the times $0\le \tau_1\le\dots\le\tau_k\lesssim N^{-1/2}$ are ordered, then the $k$-point correlation function of the DBM flow asymptotically agrees with the $k$-point correlation function of eigenvalues of the matrices 
\[H+\sqrt{\tau_1}W_1,H+\sqrt{\tau_1} W_1 + \sqrt{\tau_2-\tau_1} W_2, \dots, H+\sqrt{\tau_1}W_1+\dots+\sqrt{\tau_k-\tau_{k-1}}W_k \]
for independent standard Wigner matrices $W_1,\dots,W_k$. 
\end{remark}

\section{Ornstein-Uhlenbeck flow}\label{sec OU flow}
Starting from  this section we consider  a more general framework that allows for random matrix ensembles with certain correlation among the entries.
In this way we  stress that our proofs  regarding the semicircular flow and the Dyson Brownian motion are largely model independent,
assuming the optimal local law holds. The  independence assumption on the entries  of $W$  is made
only  because we rely on the local law from~\cite{1809.03971} that was proven for deformed Wigner-type matrices. 
We therefore present the flow directly in the more general framework of the \emph{matrix Dyson equation} (MDE)
\begin{equation} \label{MDE matrix form} 1 + (z-A+\SS[M(z)])M(z) =0,\qquad A\defeq \E H,\qquad \SS[R]\defeq \E WRW, \end{equation}
with spectral parameter in the complex upper half plane, $\Im z>0$, and positive definite imaginary part, $\frac{1}{2\ii}(M(z)-M(z)^\ast)>0$, of the solution $M$. The MDE generalizes~\eqref{Dyson equation}. Note that  in the deformed
 Wigner-type case the \emph{self-energy operator}  $\SS\colon\C^{N\times N}\to \C^{N\times N}$ is related to the variance matrix $S$
by $\SS[\diag\vr] = \diag(S\vr)$.

As in~\cite{1809.03971} we consider the Ornstein-Uhlenbeck flow 
\begin{equation}
\label{ORNULflow} \diff \wt H_s=-\frac{1}{2}(\wt H_s-A)\diff s+ \Sigma^{1/2}[\diff \mathfrak{B}_s] , \qquad \Sigma[R]\defeq\frac{\beta}{2}\E W\Tr WR, \qquad \wt H_0\defeq H,
\end{equation}
which preserves expectation and self-energy operator $\SS$. Since we consider real symmetric $H$, the parameter $\beta$ indicating the symmetry class is $\beta=1$.  In~\eqref{ORNULflow} with $\mathfrak{B}_s\in\mathbb{R}^{N\times N}$ we denote a real symmetric 
matrix valued standard  (GOE) Brownian motion, i.e.~$(\mathfrak{B}_s)_{ij}$  for $i<j$ and $(\mathfrak{B}_s)_{ii}/\sqrt{2}$ are independent standard 
Brownian motions and $(\mathfrak{B}_s)_{ji}=(\mathfrak{B}_s)_{ij}$. In case $H$ were complex Hermitian, we would have $\beta =2$ and  $\diff \mathfrak{B}_s$ would be an infinitesimal GUE matrix. This was the setting in~\cite{1809.03971}.
 The OU flow effectively adds a small Gaussian component of size $\sqrt s$ to $\wt H_s$. More precisely, we can construct a Wigner-type matrix $H_s$, satisfying Assumptions~\ref{bdd moments}--\ref{bdd m}, such that, for any fixed $s$, 
\begin{equation} \label{OU decomp}\wt H_s= H_s + \sqrt{cs} U,\qquad \SS_s=\SS-cs\SS^{\mathrm{GOE}}, \qquad \E H_s=A, \qquad U\sim \mathrm{GOE},\end{equation}
where $U$ is independent of $H_s$. Here $c>0$ is a small universal constant which depends on the constant in Assumption~\ref{fullness}, $\SS_s$ is the self-energy operator corresponding to $H_s$ and 
$\SS^\mathrm{GOE}[R]\defeq \braket{R}+R^t/N$, where $\langle \cdot \rangle\defeq N^{-1}\text{Tr}(\cdot)$ and  $R^t$ denotes the transpose of $R$. 
Since $\SS$ is flat in the sense $\SS[R]\gtrsim \braket{R}$ and $s$ is small it follows that also $\SS_s$ is flat.

As a consequence of the well established Green function comparison technique  the $k$-point function of $H=\wt H_0$ is comparable with the one of $\wt H_s$ as long as $s\le N^{-1/4-\epsilon}$ for some $\epsilon>0$. Indeed, from~\cite[Eq.~(116)]{1809.03971} for any $F\in C_c^1(\overline{\Omega})$, compactly supported $C^1$ test function 
on a bounded open set $\Omega\subset \mathbb{R}^k$, we find
\begin{equation}\label{gft}
\int_{\R^k} F(\bx) N^{k/4} \biggl[p_k^{(N)}\Bigl(\bu+\frac{\bx}{\gamma N^{3/4}}\Bigr)-\wt p_{k,s}^{(N)}\Bigl(\bu+\frac{\bx}{\gamma N^{3/4}}\Bigr)\biggr]\diff\bx = \mathcal{O}_{k,\Omega}\big( N^{-c} \lVert F\rVert_{C^1} \big),
\end{equation} 
where $\wt p_{k,s}^{(N)}$ is the $k$-point correlation function of $\wt H_s$,  $c=c(k)>0$ is some constant.\nc

It follows from the flatness assumption that the matrix $H_s$ satisfies the assumptions of the local law from~\cite[Theorem 2.5]{1809.03971} uniformly in $s\ll1$. Therefore~\cite[Corollary 2.6]{1809.03971} implies that the eigenvalues of $H_s$ are rigid down to the optimal scale. It remains to prove that for long enough times $s$ the local eigenvalue statistics of $H_s+\sqrt{cs}U$ on a scale of $1/\gamma N^{3/4}$ around $\bu$ agree with the local eigenvalue statistics of the Gaussian reference ensemble around $0$ at a scale of $1/N^{3/4}$. By a simple rescaling Theorem~\ref{thr:cusp universality} then follows from~\eqref{gft} together with the following proposition.

\begin{proposition}\label{DBM prop}
Let $t_1\defeq N^{-1/2+\omega_1}$ with some small $\omega_1>0$ and let $t_\ast$ be such that $\abs{t_\ast-t_1}\lesssim N^{-1/2}$. Assume that $H^{(\lambda)}$ and $H^{(\mu)}$ \footnote{We use the notation $H^{(\lambda)}$ and $H^{(\mu)}$ since we denote the eigenvalues of  $H^{(\lambda)}$ and $H^{(\mu)}$ by $\lambda_i$ and $\mu_i$ respectively, with $1\le i\le N$ respectively.} are Wigner-type matrices satisfying Assumptions~\ref{bdd moments}--\ref{bdd m} such that the scDOSs $\rho_{\lambda,t_\ast},\rho_{\mu,t_\ast}$ of $H^{(\lambda)}+\sqrt{t_\ast}U^{(\lambda)}$ and $H^{(\mu)}+\sqrt{t_\ast}U^{(\mu)}$ with independent $U^{(\lambda)},U^{(\mu)}\sim\mathrm{GOE}$ have cusps in some points $\cu_\lambda$, $\cu_\mu$ such that locally around $\cu_r$, $r=\lambda,\mu$, the densities $\rho_{r,t_\ast}$ are given by~\eqref{gamma def} with $\gamma=1$. Then the local $k$-point correlation functions $p_{k,t_1}^{(N,r)}$ of $H^{(r)}+\sqrt{t_1}U^{(r)}$ around the respective {physical} cusps $\bu_{r,t_1}$ of $\rho_{r,t_1}$, $j=1,2$, asymptotically agree in the sense 
\[ \int_{\R^k} F(\bx)\left[ N^{k/4} p_{k,t_1}^{(N,\lambda)}\left( \bu_{\lambda,t_1} + \frac{\bx}{ N^{3/4}}\right)-N^{k/4} p_{k,t_1}^{(N,\mu)}\left( \bu_{\mu,t_1} + \frac{\bx}{ N^{3/4}}\right)\right] \diff \bx =  \mathcal{O}_{k,\Omega}
\big( N^{-c(k)} \lVert F\rVert_{C^1} \big)  \]
 for any $F\in C_c^1(\overline{\Omega})$, with $\Omega\subset \mathbb{R}^k$ a bounded open set. 
\end{proposition}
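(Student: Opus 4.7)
The plan is to compare the two eigenvalue processes $\bm\lambda^{(t)}$ and $\bm\mu^{(t)}$ of $H^{(r)}+\sqrt{t}U^{(r)}$, $r=\lambda,\mu$, which evolve according to the Dyson Brownian motion (DBM). Following the strategy of~\cite{1712.03881} adapted to the cusp regime, I would couple the two driving matrix-valued Brownian motions so that $U^{(\lambda)}=U^{(\mu)}$; then the differences $u_i^t\defeq \lambda_{i+s}^t-\mu_i^t$ (for an appropriate integer shift $s$ aligning the physical cusps of the two densities) satisfy a \emph{deterministic} discrete parabolic equation whose generator is built from the instantaneous eigenvalue gaps. The proposition reduces to an $L^\infty$-bound $\abs{u_i^{t_1}}\lesssim N^{-3/4-c(k)}$ for indices $i$ in an $N^{-3/4+\eta}$-neighbourhood of the physical cusp. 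The argument then decomposes into three stages: (a) a precise matching of the semicircular flows $\rho_{\lambda,t}$ and $\rho_{\mu,t}$ on $[0,t_\ast]$; (b) optimal rigidity along the entire flow, uniform in $t\in[0,t_\ast]$; (c) the short-range DBM decay estimate.

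For stage (a), the hypothesis that $\rho_{r,t_\ast}$ has an exact cusp with slope parameter $\gamma=1$ pins down the initial data of the two-scale shape functions $\Psi_\mathrm{edge}$ and $\Psi_\mathrm{min}$ appearing in~\eqref{gamma def eqs}. The analysis of Section~\ref{sec  scflow}, built on~\cite{1506.05095,1804.07752}, then provides an explicit description of $\rho_{r,t}$ for $t$ slightly below $t_\ast$ as a physical cusp parametrised by the single variable $t_\ast-t$. In particular at time $t_1$ the rescaled densities of the two ensembles agree at precision $o(N^{-3/4})$ around their respective physical cusp points $\bu_{r,t_1}$, so that the corresponding quantiles match after the translation $\bu_{\lambda,t_1}\mapsto\bu_{\mu,t_1}$. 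This deterministic matching is what makes the coupling in (c) meaningful.

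For stage (b), optimal rigidity at the endpoints $t=0$ and $t=t_\ast$ is immediate from the cusp local law of the companion paper~\cite[Thm.~2.5 \& Cor.~2.6]{1809.03971} applied to $H^{(r)}$ and to $H^{(r)}+\sqrt{t_\ast}U^{(r)}$. Intermediate times are more delicate because the interpolating ensemble violates the flatness assumption of~\cite{1809.03971} for $t$ of order $N^{-1/2}$. Rather than extending the local law to this regime, I would exploit the maximum principle of the DBM advertised in the introduction: coupling the interpolating process to a Gaussian reference process whose rigidity is known, the pointwise differences of the eigenvalue tuples obey a parabolic inequality with the right sign, so the rigidity bound propagates from the endpoints into the interior after a short-range localisation, in the spirit of~\cite{1612.06306,1810.08308}.

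The heart of the argument, and the main obstacle, is stage (c). Following the strategy of~\cite{MR3372074,1712.03881} I would truncate the DBM generator acting on $u_i^t$ to a short-range operator keeping only pairs within distance $N^{-3/4+\eta}$; the finite-speed-of-propagation lemmas~\cite{MR3372074,MR3606475,1712.03881} (here Lemma~\ref{se} and Lemma~\ref{seb111}) show that this truncation is admissible up to time $t_1$, while the long-range contribution reduces to a mean-field term that cancels after the semicircular matching of (a). The short-range equation is then treated by a Sobolev-type inequality together with the heat-kernel bounds of~\cite{MR3253704} to extract the decay factor $N^{-c(k)}$. The difficulty specific to the cusp, absent in the regular-edge analysis of~\cite{1712.03881}, is twofold: both sides of the physical cusp couple into the short-range generator so the analysis is genuinely two-sided and cannot be reduced to a half-line problem; and on the two sides the quantile spacings obey the very different scalings $t^{3/2}$ (length of the gap) and $t^{1/2}$ (height of the local minimum) dictated by $\Psi_\mathrm{edge}$ and $\Psi_\mathrm{min}$. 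Consequently the Sobolev inequality and heat-kernel estimates have to be set up with a non-uniform weight encoding this two-scale structure, which is where the bulk of the technical work is expected to lie. Once $\abs{u_i^{t_1}}\le N^{-3/4-c(k)}$ is established for indices near the cusp, the comparison of $k$-point functions tested against any $F\in C_c^1(\overline{\Omega})$ follows by a standard averaging argument and yields the claimed bound.
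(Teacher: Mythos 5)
Your plan reproduces the architecture of the paper's proof: couple the two DBMs through identical Brownian motions after an index shift realised by padding particles, match the densities along the semicircular flow (Section~\ref{sec scflow}), obtain rigidity for the interpolating process by DBM/maximum-principle methods rather than a new local law (Section~\ref{sec:rigid}), and close the argument with a short-range approximation, finite speed of propagation, a cusp-weighted Sobolev inequality and Nash-type heat-kernel decay (Section~\ref{DBMS} and the appendices). So in outline this is the same proof.

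Two points where your sketch deviates from, or glosses over, what actually makes the argument close. First, in stage (c) you propose to run the parabolic analysis on the \emph{direct} difference $u_i^t=\lambda_{i+s}^t-\mu_i^t$, whose generator has coefficients $\sim N^{-1}(\lambda_i-\lambda_j)^{-1}(\mu_i-\mu_j)^{-1}$; quantitative finite-speed and heat-kernel bounds for this kernel require lower bounds on individual gaps, i.e.~level repulsion. The paper instead differentiates a one-parameter interpolation $z(t,\alpha)$ in $\alpha$ (following~\cite{MR3606475}), so the kernel becomes the perfect square $N^{-1}(z_i-z_j)^{-2}$ and only rigidity for the interpolating process is needed --- which is precisely what Section~\ref{sec:rigid} supplies. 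As written, your stage (c) would stall at the level-repulsion input unless you switch to the interpolation derivative there as well (you already invoke the interpolation in stage (b)). Second, the conclusion is a quantitative race that your sketch does not confront: the $\ell^5\to\ell^\infty$ heat-kernel contraction over time $t_1\sim N^{-1/2+\omega_1}$ gains only $N^{-\frac{4}{15}\omega_1}$ (see~\eqref{fefe}), so the rigidity error for the interpolating short-range process must be pushed down to $N^{\frac{1}{6}\omega_1}$ \emph{with the optimal $\abs{i}^{-1/4}$ dependence} (Propositions~\ref{G3} and~\ref{G33}, feeding into~\eqref{v0norm}); a bound of size $N^{C\omega_1}$ from the first rigidity pass is not enough. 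Relatedly, because interpolation and the semicircular flow do not commute, the interpolated process must be recentred by a time-dependent but index-independent shift $H(t,\alpha)$ encoding the group velocity of the cusp (Section~\ref{sec:shiftchoice}); without it the quantiles $\ov\gamma_i(t)$ do not trail the interpolating particles to the required precision $N^{-3/4+\frac{1}{6}\omega_1}$ over the much longer scale $t_1\gg N^{-3/4}$.
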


\begin{proof}[Proof of Theorem~\ref{thr:cusp universality}]Set $s\defeq t_1/c\theta^2$ and $H^{(\lambda)}\defeq\theta H_s$ where $c$ is the constant from~\eqref{OU decomp} and $\theta\sim 1$ is yet to be chosen. Note that $H^{(\lambda)}+\sqrt{t} U=\theta(H_s+ \sqrt{t/\theta^2} U)$, and in particular $H^{(\lambda)}+\sqrt{t_1}U=\wt H_s$. Moreover, it follows from the semicircular flow analysis in Section~\ref{sec scflow} that for some $t_\ast$ with $\abs{t_\ast-t_1}\lesssim N^{-1/2}$, the scDOS $\theta\rho_{\lambda,t_\ast}(\lambda\cdot)$ of $H_s+\sqrt{t_\ast/\theta^2}U$ and thereby also $\rho_{\lambda,t_\ast}$, the one of $H^{(\lambda)}+\sqrt{t_\ast}U$, have exact cusps in $\cu_\lambda/\theta$ and $\cu_\lambda$, respectively. It follows from the $1/3$-H\"older continuity of the slope parameter, cf.~\cite[Lemma 10.5, Eq.~(7.5a)]{1804.07752}, that locally around $\cu_\lambda/\theta$ the scDOS of $H_s+\sqrt{t_\ast/\theta^2}U$ is given by
\[ \theta\rho_{\lambda,t_\ast}\bigl(\cu_\lambda+ \theta \omega\bigr)= \theta\rho_{\lambda,t_\ast}\Bigl(\theta\Bigl(\frac{\cu_\lambda}{\theta}+ \omega\Bigr)\Bigr)=\frac{\sqrt 3\gamma^{4/3}\abs{\omega}^{1/3}}{2\pi}\Bigl[1+\landauO{\abs{\omega}^{1/3}+\abs{t_\ast-t_1}^{1/3}}\Bigr].\]
Whence we can choose $\theta=\gamma\bigl[1+\landauO[1]{\abs{t_1-t_\ast}^{1/3}}\bigr]$ appropriately such that 
\[\rho_{\lambda,t_\ast}(\cu_\lambda+\omega) = \frac{\sqrt{3}\abs{\omega}^{1/3}}{2\pi}\Bigl[1+\landauO{\abs{\omega}^{1/3}}\Bigr]\]
and it follows that $H^{(\lambda)}$ satisfies the assumptions of Proposition~\ref{DBM prop}, in particular the slope parameter of $H^{(\lambda)}+\sqrt{t_\ast}U$ is normalized to $1$. Furthermore, the almost cusp $\bu_{\lambda,t_1}$ of $H^{(\lambda)}+\sqrt{t_1}U$ is given by $\bu_{\lambda,t_1}=\theta \bu$ with $\bu$ as in Theorem~\ref{thr:cusp universality}. 

We now choose our Gaussian comparison model. For $\alpha\in\R$ we consider the \emph{reference ensemble} 
\begin{equation} U_\alpha =U_\alpha^{(N)}
\defeq \diag(1,\dots,1,-1,\dots,-1)+\sqrt{1-\alpha N^{-1/2}}U\in\R^{N\times N}, \qquad U\sim\mathrm{GOE}\label{Ualpha def}\end{equation}
with $\floor{N/2}$ and $\lceil N/2\rceil$ times $\pm1$ in the deterministic diagonal. An elementary computation shows that for even $N$ and $\alpha=0$, the self-consistent density of $U_\alpha$ has an exact cusp of slope $\gamma=1$ in $\cu=0$, i.e.~it is given by~\eqref{gamma def}. For odd $N$ the exact cusp is at distance  $\lesssim N^{-1}$ away from $0$ 
 which is well below the natural scale of order $N^{-3/4}$  of the  eigenvalue fluctuation  and therefore has no influence on the $k$-point correlation function. The reference ensemble $U_\alpha$ has for $0\ne\abs{\alpha}\sim 1$ a small gap of size $N^{-3/4}$ or small local minimum of size $N^{-1/4}$ at the {physical} cusp point $\abs{\bu}\lesssim \frac{1}{N}$, depending on the sign of $\alpha$. Using the definition in~\eqref{Ualpha def}, let $H^{(\mu)}\defeq U_{N^{1/2}t_\ast}$ from which it follows that $H^{(\mu)}+\sqrt{t_\ast}U\sim U_0$ has an exact cusp in $0$ whose slope is $1$ by an easy explicit computation in the case of even $N$. For odd $N$ the cusp emerges at a distance of $\lesssim N^{-1}$ away from $0$, which is well below the investigated scale. Thus also $H^{(2)}$ satisfies the assumptions of Proposition~\ref{DBM prop}. The almost cusp $\bu_{\mu,t_1}$ is given by $\bu_{\mu,t_1}=0$ by symmetry of the density $\rho_{\mu,t_1}$ in the case of even $N$ and at a distance of $\abs[0]{\bu_{\mu,t_1}}\lesssim N^{-1}$ in the case of odd $N$. This fact follows, for example, from explicitly solving the $2$d-quadratic equation. The perturbation of size $1/N$ is not visible on the scale of the $k$-point correlation functions.

Now Proposition~\ref{DBM prop} together with~\eqref{gft} and $s\sim N^{-1/2+\omega_1}$ implies that 
\begin{equation}\label{univ comp Ualpha} \int_{\R^k} F(\bx)\left[ \frac{N^{k/4}}{\theta^k} p_{k}^{(N)}\left(\bu + \frac{\bx}{\theta N^{3/4}}\right)-N^{k/4} p_{k,\alpha,\mathrm{GOE}}^{(N)}\left(\frac{\bx}{ N^{3/4}}\right)\right] \diff \bx = \mathcal{O}_{k,\Omega} \big( N^{-c(k)} \lVert F\rVert_{C^1(\Omega)} \big)
\end{equation}
with $\alpha=N^{1/2}(t_\ast-t_1)$, where $p_{k,\alpha,\mathrm{GOE}}^{(N)}$ denotes the $k$-point function of the comparison model $U_\alpha$. This completes the proof of Theorem~\ref{thr:cusp universality} modulo the comparison of \smash{$p_{k,\alpha,\mathrm{GOE}}^{(N)}$} with its limit by relating $t_\ast-t_1$ to the size of the gap and the local minimum of $\rho$ via~\cite[Lemma 5.1]{1809.03971} (or~\eqref{eq Delta size}--\eqref{eq min size} later) and recalling that \smash{$\theta=\gamma\bigl[1+\landauO[1]{\abs{t_1-t_\ast}^{1/3}}\bigr]$}.

To complete the proof we claim that for any fixed $k$ and $\alpha$ there exists a distribution
 $p_{k,\alpha}^{\mathrm{GOE}}$  on $\R^k$, locally in the dual of $C^1_c(\overline{\Omega})$ for every open bounded $\Omega\subset\R^k$,
such that
\begin{equation}\label{infty}
\int_{\R^k} F(\bx)  \left[ N^{k/4} 
p_{k,\alpha,\mathrm{GOE}}^{(N)}\left(\frac{\bx}{N^{3/4}}\right) - 
p_{k,\alpha}^{\mathrm{GOE}}\left(\bx\right)
\right] \diff \bx = \mathcal{O}_{k,\Omega}\big( N^{-c(k)} \lVert F\rVert_{C^1}\big)
\end{equation}
holds for any $F\in C^1_c(\Omega)$. We now show that
~\eqref{infty} is a straightforward consequence of~\eqref{univ comp Ualpha}. 

 First notice that,  for notational simplicity, we gave the proof of~\eqref{univ comp Ualpha}  only
 for the case when $H$ and $U_\alpha$ are of the same dimension, but 
  it works without any modification when their dimensions are only comparable, see Remark~\ref{notequal}. 
   Hence, applying this result to a sequence  of GOE ensembles $U_\alpha^{(N_n)}$ with $N_n\defeq(4/3)^n$, for any compactly supported $F\in C^1_c(\overline{\Omega}) $ we have 
\begin{equation}
\label{cauchyseq}
\int_{\R^k} F(\bx)  \left[ N_n^{k/4} 
p_{k,\alpha,\mathrm{GOE}}^{(N_n)}\left(\frac{\bx}{N_n^{3/4}}\right) -  N_{n+1}^{k/4}
p_{k,\alpha,\mathrm{GOE}}^{(N_{n+1})}\left(\frac{\bx}{N_{n+1}^{3/4}}\right)
\right] \diff \bx = \mathcal{O}_{k,\Omega}\Big( \left( \frac{3}{4}\right)^{n c(k)}\lVert F\rVert_{C^1} \Big).
\end{equation}
Fix a bounded open set $\Omega\subset \R^k$ and 
 define the sequence of functionals $\{\mathcal{J}_n\}_{n\in\N}$ in the dual space $C^1_c(\overline{\Omega})^*$ as follows
\[
\mathcal{J}_n(F)\defeq \int_{\R^k} F(\bx)  N_n^{k/4} p_{k,\alpha,\mathrm{GOE}}^{(N_n)}\left(\frac{\bx}{N_n^{3/4}}\right) \diff \bx,
\]
for any $F\in C^1_c(\overline{\Omega})$. 
Then, by~\eqref{cauchyseq} it easily follows that $\{\mathcal{J}_n\}_{n\in\N}$ is a Cauchy sequence on $C^1_c(\overline{\Omega})^*$.  
Indeed, for any $M>L$ we have by a telescopic sum
\begin{equation}
\label{limitcau}
\begin{split}
\left| (\mathcal{J}_M-\mathcal{J}_L)(F)\right| &=\left|\sum_{n=L}^{M-1} \int_{\R^k} F(\bx)  \left[ N_{n+1}^{k/4} 
p_{k,\alpha,\mathrm{GOE}}^{(N_{n+1})}\left(\frac{\bx}{N_{n+1}^{3/4}}\right) -  N_n^{k/4}
p_{k,\alpha,\mathrm{GOE}}^{(N_n)}\left(\frac{\bx}{N_n^{3/4}}\right)
\right] \diff \bx \right| \\
&\le C_{k,\Omega}\left(\frac{3}{4} \right)^{Lc(k)}\lVert F\rVert_{C^1}.
\end{split}
\end{equation}
Thus, we conclude that there exists a unique $\mathcal{J}_\infty  \in C_c^1(\overline{\Omega})^*$ such that $\mathcal{J}_n\to \mathcal{J}_\infty$ as $n\to \infty$ in norm. Then,~\eqref{limitcau} clearly concludes the proof of~\eqref{infty}, 
identifying $\mathcal{J}_\infty=\mathcal{J}_\infty^{(\Omega)}$ with $p_{k,\alpha}^{\mathrm{GOE}}$ restricted to $\overline{\Omega}$.
Since this holds for any open bounded set $\Omega\subset \R^k$, the distribution 
 $p_{k,\alpha}^{\mathrm{GOE}}$ can be identified with the inductive limit of 
the consistent family of functionals $\{ \mathcal{J}_\infty^{(\Omega_m)}\}_{m\ge 1}$, where, say, $\Omega_m$ is the ball of radius $m$. This completes the proof of Theorem~\ref{thr:cusp universality}.

\end{proof}

\section{Semicircular flow analysis}\label{sec scflow}
In this section we analyse various properties of the semicircular flow in order to prepare the Dyson Brownian motion argument in Section~\ref{sec:rigid} and Section~\ref{DBMS}. If $\rho$ is a probability density on $\R$ with Stieltjes transform $m$, then the free semicircular evolution $\rho_t^\mathrm{fc}=\rho\boxplus\sqrt{t}\rho_\mathrm{sc}$ of $\rho$ is defined as the unique probability measure whose Stieltjes transform $m_t^\mathrm{fc}$ solves the implicit equation
\begin{equation}\label{eq free sc conv def}
m_t^\mathrm{fc}(\zeta)=m(\zeta+ t m_t^\mathrm{fc}(\zeta)), \qquad \zeta\in\HC,\quad t\ge 0. 
\end{equation}
Here $\sqrt{t}\rho_\mathrm{sc}$ is the semicircular distribution of variance $t$. 

We now prepare the Dyson Brownian motion argument in Section~\ref{DBMS} by providing a detailed analysis of the scDOS along the semicircular flow. As in Proposition~\ref{DBM prop} we consider the setting of two densities $\rho_\lambda,\rho_\mu$ whose semicircular evolutions reach a cusp of the same slope at the same time. Within the whole section we shall assume the following setup: Let $\rho_\lambda,\rho_\mu$ be densities associated with solutions $M_\lambda,M_\mu$ to some Dyson equations satisfying Assumptions~\ref{bdd moments}--\ref{bdd m} (or their matrix counterparts). We consider the free convolutions $\rho_{\lambda,t}\defeq \rho_\lambda\boxplus \sqrt{t}\rho_\mathrm{sc}$, $\rho_{\mu,t}\defeq \rho_\mu\boxplus\sqrt{t}\rho_\mathrm{sc}$ of $\rho_\lambda,\rho_\mu$ with semicircular distributions of variance $t$ and assume that after a time $t_\ast\sim N^{-1/2+\omega_1}$ both densities $\rho_{\lambda,t_\ast}$, $\rho_{\mu,t_\ast}$ have cusps in points $\cu_\lambda,\cu_\mu$ around which they can be approximated by~\eqref{gamma def} with the same $\gamma=\gamma_\lambda(t_\ast)=\gamma_\mu(t_\ast)$. It follows from the semicircular flow analysis in~\cite[Lemma 5.1]{1809.03971} that for $0\le t\le t_\ast$ both 
densities have small gaps $[\ed_{r,t}^-,\ed_{r,t}^+]$, $r=\lambda,\mu$
 in their supports, while for $t_\ast\le t\le 2t_\ast$ they have non-zero local minima in some points $\mi_{r,t}$, $r=\lambda,\mu$. Instead of comparing the eigenvalue flows corresponding to \(\rho_\lambda,\rho_\mu\) directly, we rather consider a continuous interpolation \(\rho_\alpha\) for \(\alpha\in[0,1]\) of \(\rho_\lambda\) and \(\rho_\mu\). For technical reasons we define this interpolated density \(\rho_{\alpha,t}\) as an interpolation of \(\rho_{\lambda,t}\) and \(\rho_{\mu,t}\) separately for each time \(t\), rather than considering the evolution \(\rho_{\alpha,0}\boxplus\sqrt{t}\rho_\mathrm{sc}\) of the initial interpolation \(\rho_{\alpha,0}\). We warn the reader that semicircular evolution and interpolation do not commute, i.e.~\(\rho_{\alpha,t}\ne\rho_{\alpha,0}\boxplus\sqrt{t}\rho_\mathrm{sc}\). We now define the concept of \emph{interpolating densities} following~\cite[Section 3.1.1]{1712.03881}.

\begin{definition}\label{def interpolating density}
For $\alpha\in [0,1]$ define the \emph{$\alpha$-interpolating density} $\rho_{\alpha,t}$ as follows. For any $0\le E\le \delta_*$ and $r=\lambda,\mu$ let 
\[
      n_{r,t}(E) \defeq  \int_{\ed_{r,t}^+}^{\ed_{r,t}^++E} \rho_{r,t}(\om)\diff \om,\quad 0\le t\le t_\ast, \qquad  n_{r,t}(E) \defeq  \int_{\mi_{r,t}}^{\mi_{r,t}+E} \rho_{r,t}(\om)\diff \om, \quad t_\ast\le t\le 2t_\ast
\]
be the counting functions and $\varphi_{\lambda,t}$, $\varphi_{\mu,t}$ their  inverses, i.e.~$n_{r,t}(\varphi_{r,t}(s))=s$. Define now
\begin{equation}\label{interpolating phi}
  \varphi_{\alpha,t}(s)\defeq \alpha\varphi_{\lambda,t}(s)+ (1-\alpha)\varphi_{\mu,t}(s)
\end{equation}
for $s\in [0, \delta_{**}]$ where $\delta_{**}\sim 1$ depends on $\delta_*$ and is chosen in such a way that $\varphi_{\alpha,t}$ is invertible\footnote{Invertibility in a small neighbourhood follows from the form of the explicit shape functions in~\eqref{gamma def edge} and~\eqref{gamma def min}}. We thus define $n_{\alpha,t}(E)$ to be the inverse of $\varphi_{\alpha,t}(s)$ near zero. Furthermore,
for $0 \le t\le t_*$ set 
\begin{align}\label{rhoz}
\ed^\pm_{\alpha,t}&\defeq \alpha \ed_{\lambda,t}^\pm + (1-\alpha)\ed_{\mu,t}^\pm, \\
\label{dernal}
 \rho_{\alpha,t}(\ed^+_{\alpha,t}+E) & \defeq \frac{\diff}{\diff E} n_{\alpha,t}(E),\quad E\in[0,\delta_\ast]
\end{align}
and for $t\ge t_*$ set
\begin{align}
  \mi_{\alpha,t}&\defeq\alpha\mi_{\lambda,t}+(1-\alpha)\mi_{\mu,t}, \\
   \rho_{\alpha,t}(\mi_{\alpha,t}+E)&\defeq \alpha\rho_{\lambda,t}(\mi_{\lambda,t})+(1-\alpha)\rho_{\mu,t}(\mi_{\mu,t}) + \frac{\diff}{\diff E} n_{\alpha,t}(E),\quad E\in[-\delta_\ast,\delta_\ast].\nonumber
\end{align}
We define $ \rho_{\alpha,t}(E)$ for $0\le t\le t_\ast$ and $E\in [\ed_{\alpha,t}^--\delta_*, \ed_{\alpha,t}^-]$ analogously.  
\end{definition}
The motivation for the interpolation mode 
 in Definition~\ref{def interpolating density} is that~\eqref{interpolating phi} ensures that the quantiles of $\rho_{\alpha,t}$ 
are the convex combination of the quantiles of $\rho_{\lambda,t}$ and $\rho_{\mu,t}$,  see~\eqref{quantiles convex comb} later. 
The following two lemmas collect various properties of the interpolating density. Recall that 
$\rho_{\lambda,t}$ and $\rho_{\mu,t}$ are asymptotically close near the cusp regime,  up 
to a trivial shift, since they develop a cusp with the same slope at the same time.
 In  Lemma~\ref{lm:xyflow} we show that  $\rho_{\alpha,t}$ shares this property. 
   Lemma~\ref{lemma holderhsh} shows that $\rho_{\alpha,t}$ inherits the regularity properties of $\rho_{\lambda,t}$ and $\rho_{\mu,t}$ from~\cite{1804.07752}.

\begin{lemma}[Size of gaps and minima along the flow]\label{lm:xyflow}
For $t\le t_\ast$ and $r=\alpha,\lambda,\mu$ the supports of $\rho_{r,t}$ have small gaps $[\ed_{r,t}^-,\ed_{r,t}^+]$ near $\cu_\ast$ of size
\begin{subequations}
\begin{equation}\label{eq Delta size}
\Delta_{r,t}\defeq\ed_{r,t}^+-\ed_{r,t}^- = (2\gamma)^2 \Bigl(\frac{t_\ast-t}{3}\Bigr)^{3/2}\bigl[1+\landauO[1]{(t_\ast-t)^{1/3}}\bigr], \qquad \Delta_{r,t} = \Delta_{\mu,t} \bigl[1+\landauO[1]{(t_\ast-t)^{1/3}}\bigr]
\end{equation}
and the densities are close in the sense 
\begin{equation}\label{rho rho gap}
\rho_{r,t}(\ed_{r,t}^\pm\pm \omega) = \rho_{\mu,t}(\ed_{\mu,t}^\pm\pm \omega)\biggl[1+\landauO[2]{(t_\ast-t)^{1/3}+\min\Bigl\{\omega^{1/3},\frac{\omega^{1/2}}{(t_\ast-t)^{1/4}}\Bigr\} }\biggr]
\end{equation}
for $0\le\omega\le\delta_\ast$. For $t_\ast < t \le 2t_\ast$ the densities $\rho_{r,t}$ have small local minima $\mi_{r,t}$ of size
\begin{equation}\label{eq min size}
\rho_{r,t}(\mi_{r,t})=\frac{\gamma^2\sqrt{t-t_\ast}}{\pi}\bigl[1+\landauO[1]{(t-t_\ast)^{1/2}}\bigr], \qquad 
\rho_{r,t}(\mi_{r,t}) = \rho_{\mu,t}(\mi_{\mu,t}) \bigl[1+\landauO[1]{(t-t_\ast)^{1/2}}\bigr]
\end{equation}
and the densities are close in the sense 
\begin{equation}\label{rho rho min}
\frac{\rho_{r,t}(\mi_{r,t}+\omega)}{\rho_{\mu,t}(\mi_{\mu,t}+\omega)} = 1+\landauO[3]{(t-t_\ast)^{1/2}+\min\Bigl\{ (t-t_\ast)^{1/4},\frac{(t-t_\ast)^2}{\abs{\omega}}\Bigr\}+\min\Bigl\{\frac{\omega^2}{(t-t_\ast)^{5/2}},\abs{\omega}^{1/3}\Bigr\} }
\end{equation}
for $\omega\in[-\delta_\ast,\delta_\ast]$. Here $\delta_\ast,\delta_{\ast\ast}\sim1$ are small constants depending on the model parameters in Assumptions~\ref{bdd moments}--\ref{bdd m}.
\end{subequations}
\end{lemma}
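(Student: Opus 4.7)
The plan is to reduce everything to the single-density semicircular flow analysis from \cite[Lemma 5.1]{1809.03971} and the shape analysis of \cite{1804.07752}, and then to lift the comparison estimates for $\rho_{\lambda,t}$ and $\rho_{\mu,t}$ to the interpolation $\rho_{\alpha,t}$ by unraveling its definition through the quantile functions $\varphi_{r,t}$.

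\textbf{Step 1: the cases $r=\lambda,\mu$.} First I would treat $\rho_{\lambda,t}$ and $\rho_{\mu,t}$ separately. Both densities develop an exact cusp at the \emph{same} time $t_\ast$ with the \emph{same} slope $\gamma$, so the free convolution identity \eqref{eq free sc conv def} together with \cite[Lemma 5.1]{1809.03971} supplies the leading-order gap size $(2\gamma)^2((t_\ast-t)/3)^{3/2}$ for $t<t_\ast$ and the leading-order minimum height $\gamma^2\sqrt{t-t_\ast}/\pi$ for $t>t_\ast$, with relative errors $(t_\ast-t)^{1/3}$ and $(t-t_\ast)^{1/2}$ respectively inherited from the $1/3$-Hölder regularity of the slope parameter established in \cite[Lemma 10.5]{1804.07752}. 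This yields the first estimates in \eqref{eq Delta size} and \eqref{eq min size} for $r=\lambda,\mu$; the mutual comparisons are then algebraic consequences. The density comparisons \eqref{rho rho gap} and \eqref{rho rho min} follow by substituting the explicit shape expansions \eqref{gamma def edge}, \eqref{gamma def min} on both sides: since $\gamma$ and $t_\ast$ agree and $\Delta_{\lambda,t}/\Delta_{\mu,t}=1+\mathcal{O}((t_\ast-t)^{1/3})$, a Taylor expansion of $\Psi_{\mathrm{edge}}$ (resp.\ $\Psi_{\mathrm{min}}$) about $\omega/\Delta_{\mu,t}$ introduces only an extra relative error of the same order, which is absorbed into the right-hand side.

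\textbf{Step 2: transferring to $r=\alpha$.} Since $\ed_{\alpha,t}^\pm$ and $\mi_{\alpha,t}$ are by \eqref{rhoz} exactly convex combinations of the corresponding quantities for $\lambda,\mu$, the scaling statements in \eqref{eq Delta size}, \eqref{eq min size} for $r=\alpha$ are immediate. For the density comparison I would differentiate $n_{\alpha,t}(\varphi_{\alpha,t}(s))=s$ and combine it with $\varphi'_{\alpha,t}=\alpha\varphi'_{\lambda,t}+(1-\alpha)\varphi'_{\mu,t}$ to obtain, for $s=n_{\alpha,t}(E)$,
\[
\rho_{\alpha,t}(\ed_{\alpha,t}^{+}+E)=\biggl(\frac{\alpha}{\rho_{\lambda,t}(\ed_{\lambda,t}^{+}+\varphi_{\lambda,t}(s))}+\frac{1-\alpha}{\rho_{\mu,t}(\ed_{\mu,t}^{+}+\varphi_{\mu,t}(s))}\biggr)^{-1}.
\]
By Step~1 the two denominators agree up to the stated relative error, so the harmonic combination is close to $\rho_{\mu,t}(\ed_{\mu,t}^{+}+E)$. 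The argument shifts $\varphi_{r,t}(s)\approx E$ for $r=\lambda,\mu$ are controlled by integrating \eqref{rho rho gap} to see $n_{\lambda,t}\approx n_{\mu,t}\approx n_{\alpha,t}$ and hence $\varphi_{\lambda,t}(s)\approx\varphi_{\mu,t}(s)\approx E$. The analogous computation at the shallow minimum additionally uses the extra additive term in Definition~\ref{def interpolating density}, which is designed precisely so that the leading constant $\alpha\rho_{\lambda,t}(\mi_{\lambda,t})+(1-\alpha)\rho_{\mu,t}(\mi_{\mu,t})$ survives on the right-hand side of \eqref{rho rho min}.

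\textbf{Main obstacle.} The hard part is bookkeeping the two-scale error structure. Near a small gap there are two natural regimes: an inner region $\omega\lesssim\Delta\sim(t_\ast-t)^{3/2}$, where the edge behaves as a regular square root with effective slope $\gamma\Delta^{1/6}$, and an outer region $\omega\gtrsim\Delta$, where the cubic-root cusp profile has set in. The two terms inside $\min\{\omega^{1/3},\omega^{1/2}/(t_\ast-t)^{1/4}\}$ reflect these two regimes, and they must be tracked carefully both within the basic expansion \eqref{gamma def edge} itself and through the propagation step that converts closeness of $\rho_{\lambda,t}$ and $\rho_{\mu,t}$ into closeness of the quantile functions $\varphi_{r,t}$ defining $\rho_{\alpha,t}$. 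An analogous two-scale phenomenon occurs near the shallow minimum, where the crossover at $\omega\sim\rho(\mi)^3\sim(t-t_\ast)^{3/2}$ separates the quadratic and cubic-root regimes of $\Psi_{\mathrm{min}}$ and accounts for the three-term $\min$ structure in \eqref{rho rho min}.
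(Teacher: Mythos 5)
Your proposal is correct and follows essentially the same route as the paper: the cases $r=\lambda,\mu$ are handled by \cite[Lemma 5.1]{1809.03971} together with the expansion $\Psi_{\mathrm{edge}}((1+\epsilon)\lambda)=\Psi_{\mathrm{edge}}(\lambda)[1+\landauO{\epsilon}]$ (and its $\Psi_{\min}$ analogue), and the case $r=\alpha$ is obtained from the same harmonic-mean identity $\rho_{\alpha,t}(\ed_{\alpha,t}^++\varphi_{\alpha,t}(s))=\bigl(\alpha/\rho_{\lambda,t}(\ed_{\lambda,t}^++\varphi_{\lambda,t}(s))+(1-\alpha)/\rho_{\mu,t}(\ed_{\mu,t}^++\varphi_{\mu,t}(s))\bigr)^{-1}$ combined with the counting-function/inverse estimates that control the argument shifts $\varphi_{\lambda,t}(s)\approx\varphi_{\mu,t}(s)\approx E$, exactly as in the paper's proof.
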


\begin{lemma}\label{lemma holderhsh}
The density $\rho_{\alpha,t}$ from Definition~\ref{def interpolating density} is well defined and is a $1/3$-H\"older continuous density. More precisely, in the pre-cusp regime, i.e.~for $t\le t_\ast$, we have
\begin{subequations}
\begin{equation}\label{rho' difference hash} \abs{\rho_{\alpha,t}'(\ed_{\alpha,t}^\pm\pm x)} \lesssim \frac{1}{\rho_{\alpha,t}(\ed_{\alpha,t}^\pm\pm x)\Big(\rho_{\alpha,t}(\ed_{\alpha,t}^\pm\pm x)+\Delta_{\alpha,t}^{1/3}\Big)}\end{equation}
for $0\le x\le \delta_\ast$. Moreover, the Stieltjes transform $m_{\alpha,t}$ satisfies the bounds
\begin{equation}\label{weakm}
\begin{split}
\abs{m_{\alpha,t}(\ed_{\alpha,t}^\pm\pm x)}\lesssim 1,\quad \abs{m_{\alpha,t}(\ed_{\alpha,t}^\pm\pm (x+y))-m_{\alpha,t}(\ed_{\alpha,t}^\pm\pm x)}\lesssim \frac{\abs{y} \abs{\log \abs{y} }}{\rho_{\alpha,t}(\ed_{\alpha,t}^\pm\pm x)(\rho_{\alpha,t}(\ed_{\alpha,t}^\pm\pm x)+\Delta^{1/3}_{\alpha,t})}
\end{split}
\end{equation}
\end{subequations}
for $\abs{x}\le\delta_\ast/2$, $\abs{y}\ll x$. In the small minimum case, i.e.~for $t\ge t_\ast$, we similarly have
\begin{subequations}
\begin{equation}\label{rho' difference min}
\abs{\rho_{\alpha,t}'(\mi_{\alpha,t}+ x)} \lesssim \frac{1}{\rho_{\alpha,t}^2(\mi_{\alpha,t}+ x)}
\end{equation}
for $\abs{x}\le \delta_\ast$ and 
\begin{equation}\label{weakm min}
\qquad \abs{m_{\alpha,t}(\mi_{\alpha,t}+ x)}\lesssim 1,\quad\abs{m_{\alpha,t}(\mi_{\alpha,t}+ (x+y))-m_{\alpha,t}(\mi_{\alpha,t}+ x)}\lesssim \frac{\abs{y} \abs{\log\abs{y}}}{\rho_{\alpha,t}^2(\mi_{\alpha,t}+ x)}
\end{equation}
for $\abs{x}\le \delta_\ast$ and $\abs{y}\ll \abs{x}$.
\end{subequations}
\end{lemma}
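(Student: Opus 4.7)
The plan is to reduce everything to the regularity of the two ``reference'' densities $\rho_{\lambda,t},\rho_{\mu,t}$, for which the analogues of \eqref{rho' difference hash}--\eqref{weakm min} are already available from~\cite{1804.07752}, and then to transfer these bounds to $\rho_{\alpha,t}$ through the interpolation formula~\eqref{interpolating phi}. Well-definedness of $\rho_{\alpha,t}$ near the gap/minimum amounts to local invertibility of $\varphi_{\alpha,t}$, which follows because each $\varphi_{r,t}$ is strictly increasing on $[0,\delta_{\ast\ast}]$ (the densities $\rho_{r,t}$ are strictly positive immediately outside $[\ed_{r,t}^-,\ed_{r,t}^+]$ by the explicit shape functions~\eqref{Psi edge},~\eqref{Psi min}), hence the convex combination $\varphi_{\alpha,t}$ is strictly increasing on the same interval and its derivative is bounded below by a positive quantity comparable to $\min(\varphi_{\lambda,t}',\varphi_{\mu,t}')$.

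In the pre-cusp regime $t\le t_\ast$ I would first rewrite the density using the inverse relation $\varphi_{\alpha,t}(n_{\alpha,t}(E))=E$, which gives
\begin{equation*}
\rho_{\alpha,t}(\ed_{\alpha,t}^++E)=\frac{1}{\varphi_{\alpha,t}'(n_{\alpha,t}(E))},\qquad \varphi_{\alpha,t}'(s)=\frac{\alpha}{\rho_{\lambda,t}(\ed_{\lambda,t}^++\varphi_{\lambda,t}(s))}+\frac{1-\alpha}{\rho_{\mu,t}(\ed_{\mu,t}^++\varphi_{\mu,t}(s))}.
\end{equation*}
Since~\eqref{rho rho gap} implies $\rho_{\lambda,t}(\ed_{\lambda,t}^++\varphi_{\lambda,t}(s))\sim \rho_{\mu,t}(\ed_{\mu,t}^++\varphi_{\mu,t}(s))$, we get $\rho_{\alpha,t}(\ed_{\alpha,t}^++E)\sim \rho_{\mu,t}(\ed_{\mu,t}^++\varphi_{\mu,t}(n_{\alpha,t}(E)))$. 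Differentiating again yields
\begin{equation*}
\rho_{\alpha,t}'(\ed_{\alpha,t}^++E)=-\frac{\varphi_{\alpha,t}''(n_{\alpha,t}(E))}{\varphi_{\alpha,t}'(n_{\alpha,t}(E))^3},\qquad \varphi_{\alpha,t}''(s)=-\alpha\frac{\rho_{\lambda,t}'}{\rho_{\lambda,t}^3}-(1-\alpha)\frac{\rho_{\mu,t}'}{\rho_{\mu,t}^3}
\end{equation*}
(each $\rho_{r,t},\rho_{r,t}'$ being evaluated at $\ed_{r,t}^++\varphi_{r,t}(s)$). Substituting the $r=\lambda,\mu$ analogues of~\eqref{rho' difference hash} from~\cite{1804.07752} and using once more the comparability of $\rho_{\lambda,t}$ and $\rho_{\mu,t}$ together with $\Delta_{\lambda,t}\sim\Delta_{\mu,t}\sim\Delta_{\alpha,t}$ from~\eqref{eq Delta size}, the desired bound~\eqref{rho' difference hash} for $\rho_{\alpha,t}'$ drops out.

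The Stieltjes transform bound~\eqref{weakm} is then obtained by writing $m_{\alpha,t}(\ed_{\alpha,t}^++x)=\int\rho_{\alpha,t}(\omega)(\omega-\ed_{\alpha,t}^+-x)^{-1}\diff\omega$, splitting the integration into a short-range piece of length comparable to $|y|$ around $\ed_{\alpha,t}^++x$ and a complementary long-range piece. On the short range the previously established derivative bound together with the explicit shape function gives the stated size; on the long range one uses a standard Taylor expansion of the integrand and integration by parts, the $|y||\log|y||$ factor arising in the usual fashion from the logarithmic divergence of $\int d\omega/(\omega-\ed_{\alpha,t}^+-x)$ cut off at scale $|y|$. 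Boundedness of $m_{\alpha,t}$ follows at once from the integrability of $\rho_{\alpha,t}$ against a principal value and the cubic-root vanishing.

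The small-minimum case $t\ge t_\ast$ is structurally identical: starting from $\rho_{\alpha,t}(\mi_{\alpha,t}+E)=\alpha\rho_{\lambda,t}(\mi_{\lambda,t})+(1-\alpha)\rho_{\mu,t}(\mi_{\mu,t})+1/\varphi_{\alpha,t}'(n_{\alpha,t}(E))$, the comparability statement~\eqref{rho rho min} together with the $r=\lambda,\mu$ version of~\eqref{rho' difference min} from~\cite{1804.07752} gives the claimed bound on $\rho_{\alpha,t}'$, and the integration argument above produces~\eqref{weakm min}. The main technical obstacle is the bookkeeping in the pre-cusp regime near the gap edge, where the density behaves on two different scales: the $\rho_{r,t}^{1/2}$ scale of the square-root edge very close to $\ed_{r,t}^\pm$ and the $\Delta^{1/3}$-rescaled cubic-root scale further away. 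The minimum appearing in~\eqref{rho rho gap} and~\eqref{rho' difference hash} has to be propagated through both the inverse-function computation and the splitting of the Stieltjes integral, and it is crucial that the comparability of $\rho_{\lambda,t}$ and $\rho_{\mu,t}$ holds uniformly across this transition, which is exactly what Lemma~\ref{lm:xyflow} provides.
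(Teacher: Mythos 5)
Your proposal matches the paper's proof in all essentials: the derivative bound is obtained exactly as in the paper by differentiating $\rho_{\alpha,t}(\ed_{\alpha,t}^++\varphi_{\alpha,t}(s))=1/\varphi_{\alpha,t}'(s)$, expressing $\varphi_{\alpha,t}''$ through $\rho_{r,t}'/\rho_{r,t}^3$ for $r=\lambda,\mu$, and invoking the reference bounds from~\cite{1804.07752} together with the comparability statements of Lemma~\ref{lm:xyflow}; the Stieltjes transform estimates likewise follow the paper's route of splitting the singular integral near the two singularities at $0$ and $x$, using $1/3$-H\"older continuity and the derivative bound, with the $\abs{y}\abs{\log\abs{y}}$ factor produced by cutting off at scale $\abs{y}$. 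The only cosmetic difference is that the paper controls the long-range piece by direct estimates on $\rho(\cdot+y)-\rho(\cdot)$ via the mean value theorem rather than integration by parts, but this does not change the argument.
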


\begin{proof}[Proof of Lemma~\ref{lm:xyflow}]
We first consider the two densities $r=\lambda,\mu$ only. The first claims in~\eqref{eq Delta size} and~\eqref{eq min size} follow directly from~\cite[Lemma 5.1]{1809.03971}, while the second claims follow immediately from the first ones. For the proof of~\eqref{rho rho gap} and~\eqref{rho rho min} we first note that by elementary calculus
\[\Psi_\edge((1+\epsilon)\lambda)=\Psi_\edge(\lambda)\bigl[1+\landauO{\epsilon}\bigr],\qquad\Psi_{\min}((1+\epsilon)\lambda) = \Psi_{\min}(\lambda)\bigl[1+\landauO{\epsilon}\bigr]\]
so that 
\[\Delta_{\lambda,t}^{1/3}\Psi_\edge(\omega/\Delta_{\lambda,t}) = \Delta_{\mu,t}^{1/3}\Psi_\edge(\omega/\Delta_{\mu,t})\biggl[1+\landauO[2]{(t_\ast-t)^{1/3}}\biggr] \]
and the claimed approximations follow together with~\eqref{gamma def edge} and~\eqref{gamma def min}. Here the exact cusp case $t=t_\ast$ is also covered by interpreting $0^{1/3}\Psi_\edge(\omega/0)=\omega^{1/3}/2^{4/3}$. 

 In order to prove the corresponding statements for the interpolating densities $\rho_{\alpha,t}$, 
 we first have to establish a quantitative understanding of the counting function $n_{r,t}$ and its inverse. 
 We claim that for $r=\alpha,\lambda,\mu$ they satisfy for $0\le E\le \delta_\ast$, $0\le s\le \delta_{\ast\ast}$ that
\begin{subequations}
\label{conting functions inverses}
\begin{equation}\label{rho counting fcts gap}
\begin{split}
n_{r,t}(E) &\sim\min\bigg\{ \frac{E^{3/2}}{\Delta_{r,t}^{1/6}},E^{4/3}\bigg\},\qquad  \varphi_{r,t}(s)\sim \max\bigg\{ s^{3/4},s^{2/3}\Delta^{1/9}_{r,t} \bigg\}, \\
 \frac{\varphi_{r,t}(s)}{\varphi_{\lambda,t}(s)} &\sim \min\bigg\{ \varphi_{\lambda,t}^{1/3}(s),\frac{\varphi_{\lambda,t}^{1/2}(s)}{\Delta_{\lambda,t}^{1/6}} \bigg\}
\end{split}
\end{equation}
for $t\le t_\ast$ and 
\begin{equation}\label{rho conting fcts min}
\begin{split}
n_{r,t}(E)&\sim\max\{ E^{4/3}, E \rho_{r,t}(\mi_{r,t}) \}, \qquad \varphi_{r,t}(s)\sim\min\bigg\{s^{3/4}, \frac{s}{\rho_{r,t}(\mi_{r,t})} \bigg\}\\ 
\frac{\varphi_{r,t}(s)}{\varphi_{\lambda,t}(s)} &\sim \min\bigg\{ \varphi_{\lambda,t}^{1/3}(s), \frac{\varphi_{\lambda,t}(s)}{\rho_{r,t}^2(\mi_{r,t})},\frac{\varphi^2_{\lambda,t}(s)}{\rho_{r,t}^{11/2}(\mi_{r,t})}\bigg\}
\end{split}
\end{equation}
for $t\ge t_\ast$. 
\end{subequations}
\begin{proof}[Proof of~\eqref{conting functions inverses}]
We begin with the proof of~\eqref{rho counting fcts gap} for $r=\lambda,\mu$. Recall that the shape function $\Psi_\mathrm{edge}$ satisfies the scaling $\Delta^{1/3}\Psi_\mathrm{edge}(\omega/\Delta)\sim\min\{\omega^{1/3},\omega^{1/2}/\Delta^{1/6}\}$. We first find by elementary integration that
\[ \int_0^q \min\Bigl\{\omega^{1/3},\frac{\omega^{1/2}}{\Delta^{1/6}}\Bigr\}\diff \omega = \frac{9q^{4/3}\min\{q,\Delta\}^{1/6}-\min\{q,\Delta\}^{3/2}}{12\Delta^{1/6}}\sim \min\Bigl\{\frac{q^{3/2}}{\Delta^{1/6}},q^{4/3}\Bigr\} \] 
from which we conclude the first relation in~\eqref{rho counting fcts gap}, and by inversion also the second relation. Together with the estimate for the error integral for $\rho_{\lambda,t}(\ed_{\lambda,t}^++\omega)-\rho_{\mu,t}(\ed_{\mu,t}^++\omega)\lesssim \min\{\omega^{2/3},\omega/\Delta_{\lambda,t}^{1/3}\}$,
\[  
\int_0^q \min\Bigl\{ \omega^{2/3}, \frac{\omega}{\Delta^{1/3}}\Bigr\}\diff \omega = \frac{6 q^{5/3} \min\{q,\Delta\}^{1/3}-\min\{q,\Delta\}^2}{10\Delta^{1/3}} \sim \min\Bigl\{\frac{q^2}{\Delta^{1/3}},q^{5/3}\Bigr\}
\]
we can thus conclude also the third relation in~\eqref{rho counting fcts gap}.

We now turn to the case $t>t_\ast$ where both densities $\rho_{\lambda,t},\rho_{\mu,t}$ exhibit a small local minimum. We first record the elementary integral
\[ \int_0^q \Bigl(\rho  + \min\Bigl\{\omega^{1/3},\frac{\omega^2}{\rho^5}\Bigr\}\Bigr)\diff \omega = \frac{q^{4/3}\min\{\rho^3,q\}^{5/3}+12q\rho^6-5\min\{q,\rho^3\}^3}{12\rho^5} \sim \max\{q^{4/3},q\rho \} \] 
for $q,\rho\ge 0$ and easily conclude the first two relation in~\eqref{rho conting fcts min}. For the error integral we obtain
\[ \int_0^q \min\Bigl\{\omega^{1/3},\frac{\omega^2}{\rho^5}\Bigr\}\Bigl[ \min\Bigl\{\rho^{1/2},\frac{\rho^4}{\omega}\Bigr\}+\min\Bigl\{\omega^{1/3},\frac{\omega^2}{\rho^5}\Bigr\} \Bigr] \diff \omega \sim\min\Bigl\{ q^{5/3}, \frac{q^2}{\rho}, \frac{q^3}{\rho^{9/2}}\Bigr\}\]
from which the third relation in~\eqref{rho conting fcts min} follows. Finally, the claims~\eqref{rho counting fcts gap} and~\eqref{rho conting fcts min} for $r=\alpha$ follow immediately from Definition~\ref{def interpolating density} and the corresponding statements for $r=\lambda,\mu$. This completes the proof of~\eqref{conting functions inverses}.
\end{proof}

We now turn to the density $\rho_{\alpha,t}$ for which the claims~\eqref{eq Delta size},~\eqref{eq min size} follow immediately from Definition~\ref{def interpolating density} and the corresponding statements for $\rho_{\lambda,t}$ and $\rho_{\mu,t}$. For $t\le t_\ast$ we now continue by differentiating $E=\varphi_{r,t}(n_{r,t}(E))$ to obtain
\begin{align}
\rho_{\alpha,t}(\ed_{\alpha,t}^++\varphi_{\alpha,t}(s)) &= \frac{1}{\varphi_{\alpha,t}'(s)} = \frac{1}{\alpha\varphi_{\lambda,t}'(s)+(1-\alpha)\varphi_{\mu,t}'(s)}=\bigg(\frac{\alpha}{\rho_{\lambda,t}(\ed_{\lambda,t}^++\varphi_{\lambda,t}(s))}+\frac{1-\alpha}{\rho_{\mu,t}(\ed_{\mu,t}^++\varphi_{\mu,t}(s))}\bigg)^{-1} \nonumber\\
&= \rho_{\lambda,t}(\ed_{\lambda,t}^++\varphi_{\lambda,t}(s)) \bigg(\alpha+(1-\alpha)\frac{\rho_{\lambda,t}(\ed_{\lambda,t}^++\varphi_{\lambda,t}(s))}{\rho_{\mu,t}(\ed_{\mu,t}^++\varphi_{\mu,t}(s))}\bigg)^{-1},
\label{ealphder}
\end{align}
from which we can easily conclude~\eqref{rho rho gap} for $r=\alpha$ together with~\eqref{rho rho gap} for $r=\lambda$ and~\eqref{rho counting fcts gap}. The proof of~\eqref{rho rho min} for $r=\alpha$ follows by the same argument and replacing $\ed_{r,t}^+$ by $\mi_{r,t}$. This finishes the proof of Lemma~\ref{lm:xyflow}
\end{proof}
\begin{proof}[Proof of Lemma~\ref{lemma holderhsh}]
By differentiating we find 
\begin{align*}
 &\frac{\rho_{\alpha,t}'(\ed_{\alpha,t}^++ \varphi_{\alpha,t}(s)) }{\rho_{\alpha,t}(\ed_{\alpha,t}^++ \varphi_{\alpha,t}(s)) }  =  - \frac{\alpha\varphi''_{\lambda,t}(s)+(1-\alpha)\varphi''_{\mu,t}(s)}{\Big(\alpha\varphi_{\lambda,t}'(s)+(1-\alpha)\varphi'_{\mu,t}(s)\Big)^2}  \\
& \qquad = \Bigg[ \alpha\frac{\rho_{\lambda,t}'(\ed_{\lambda,t}^++ \varphi_{\lambda,t}(s))}{\rho_{\lambda,t}^3(\ed_{\lambda,t}^++ \varphi_{\lambda,t}(s))}+(1-\alpha)\frac{\rho_{\mu,t}'(\ed_{\mu,t}^++ \varphi_{\mu,t}(s))}{\rho_{\mu,t}^3(\ed_{\mu,t}^++ \varphi_{\mu,t}(s))}\Bigg]
\Bigg(\frac{\alpha}{\rho_{\lambda,t}(\ed_{\lambda,t}^++\varphi_{\lambda,t}(s))}+\frac{1-\alpha}{\rho_{\mu,t}(\ed_{\mu,t}^++\varphi_{\mu,t}(s))}\Bigg)^{-2},
\end{align*}
from which we conclude the claimed bound~\eqref{rho' difference hash} together with the fact that the densities $\rho_\lambda$ and $\rho_\mu$ fulfil the same bound according to~\cite[Remark 10.7]{1804.07752}, and the estimates from Lemma~\ref{lm:xyflow}. Similarly, the bound in~\eqref{rho' difference min} follows by the same argument by replacing $\ed_{\alpha,t}^\pm$ by $\mi_{\alpha,t}$. The bound $\abs{\rho'}\le \rho^{-2}$  on the derivative implies $\frac{1}{3}$-H\"older continuity.

We now turn to the claimed bound on the Stieltjes transform and compute 
\[ m_{\alpha,t}(\ed_{\alpha,t}^+ + x) = \int_{0}^{\delta_\ast} \frac{\rho_{\alpha,t}(\ed_{\alpha,t}^++\om)}{\om-x}\diff\om + \int_{-\delta_\ast}^{0} \frac{\rho_{\alpha,t}(\ed_{\alpha,t}^-+\om)}{\om-\Delta_{\alpha,t}-x}\diff\om,\]
out of which for $x>0$ the first term can be bounded by 
\[\int_{0}^{\delta_\ast} \frac{\rho_{\alpha,t}(\ed_{\alpha,t}^++\om)}{\om-x}\diff\om \lesssim \int_{0}^{\delta_\ast} \frac{\abs{\om-x}^{1/3}}{\om-x}\diff\om + \int_{2x}^{\delta_\ast} \frac{\rho_{\alpha,t}(\ed_{\alpha,t}^++x)}{\om-x}\diff\om\lesssim \abs{x}^{1/3}\abs{\log x} + \abs{\delta_\ast-x}^{1/3},\]
while the second term can be bounded by 
\[\abs[3]{\int_{-\delta_\ast}^{0} \frac{\rho_{\alpha,t}(\ed_{\alpha,t}^-+\om)}{\om-\Delta_{\alpha,t}-x}\diff\om}\lesssim \abs{\delta_\ast-\Delta_{\alpha,t}-x}^{1/3}+\abs{\Delta_{\alpha,t}+x}^{1/3}\abs{\log(\Delta_{\alpha,t}+x)},\]
both using the $1/3$-H\"older continuity of $\rho_{\alpha,t}$. The corresponding bounds for $x<0$ are similar, completing the proof of the first bound in~\eqref{weakm}. 

The proof of the first bound in~\eqref{weakm min} is very similar and follows from
\[ \abs{m_{\alpha,t}(\mi_{\alpha,t}+x)}\lesssim  \abs{\int_{-\delta_\ast}^{\delta_\ast} \frac{\abs{\omega-x}^{1/3}}{\omega-x}\diff\omega} + \abs{\int_{[-\delta_\ast,\delta_\ast]\setminus[x-\delta_\ast/2,x+\delta_\ast/2]} \frac{\rho_{\alpha,t}(\mi_{\alpha,t}+x)}{\omega-x}\diff\omega}\lesssim 1. \]

We now turn to the second bound in~\eqref{weakm} which is only non-trivial in the case $x>0$. To simplify the following integrals we temporarily use the short-hand notations $m=m_{\alpha,t}, \ed^+=\ed_{\alpha,t}^+, \rho=\rho_{\alpha,t}$, $\Delta=\Delta_{\alpha,t}$ and compute
\[ m(\ed^++x+y)-m(\ed^++x) = \int_{-\Delta-\delta_\ast}^{\delta_\ast}\frac{\rho(\ed^++\omega)}{\omega-x-y}\diff\om - \int_{-\Delta-\delta_\ast}^{\delta_\ast}\frac{\rho(\ed^++\omega)}{\omega-x}\diff\om \]
where we now focus on the integration regime $\om\ge 0$ as this is the regime containing the two critical singularities. We first observe that
\[ 
\int_{-y}^{\delta_\ast-y}\frac{\rho(\ed^++\omega+y)}{\omega-x}\diff\om - \int_{0}^{\delta_\ast}\frac{\rho(\ed^++\omega)}{\omega-x}\diff\om = \int_{0}^{\delta_\ast}\frac{\rho(\ed^++\omega+y)-\rho(\ed^++\omega)}{\omega-x}\diff\om +\int_{-y}^0\frac{\rho(\ed^++\om+y)}{\om-x}\diff\om + \landauO{y},
  \]
where the second integral is easily bounded by 
\[ \int_{-y}^0\frac{\rho(\ed+\om+y)}{\om-x}\diff\om  \lesssim \frac{1}{x}\min\Big\{ y^{4/3},y^{3/2} \Delta^{-1/6}\Big\} \lesssim  \frac{y}{\rho(\ed^++x)(\rho(\ed^++x)+\Delta^{1/3})}.\]
We split the remaining integral into three regimes $[0,x/2]$, $[x/2,3x/2]$ and $[3x/2,\delta_\ast]$. In the first one we use~\eqref{rho' difference hash} as well as the scaling relation $\rho(\ed^++\omega)\sim\min\{\omega^{1/3},\omega^{1/2}\Delta^{-1/6}\}$ to obtain
\[ \begin{split}
&\int_{0}^{x/2}\frac{\rho(\ed^++\omega+y)-\rho(\ed^++\omega)}{\omega-x}\diff\om\lesssim \frac{y}{x}\int_0^{x/2} \frac{1}{\rho(\ed^++\om)\Big(\rho(\ed^++\om)+\Delta^{1/3}\Big)}\diff\om\\
&\quad\lesssim \frac{y}{x} \min\Big\{\frac{x^{1/2}}{\Delta^{1/6}},x^{1/3}\Big\}   \sim \frac{y}{\max\{x^{2/3},x^{1/2}\Delta^{1/6}\}} \lesssim \frac{y}{\rho(\ed^++x)(\rho(\ed^++x)+\Delta^{1/3})}.
\end{split}  \]
The integral in the regime $[3x/2,\delta_\ast]$ is completely analogous and contributes the same bound. Finally, we are left with the regime $[x/2,3x/2]$ which we again subdivide into $[x-y,x+y]$ and $[x/2,3x/2]\setminus[x-y,x+y]$. In the first of those we have
\[ \begin{split}
\int_{x-y}^{x+y} \frac{\rho(\ed^++\omega+y)-\rho(\ed^++\omega)}{\omega-x}\diff\om &= \int_{x-y}^{x+y} \frac{\rho(\ed^++\omega+y)-\rho(\ed^++x+y)-\rho(\ed^++\omega)+\rho(\ed^++x)}{\omega-x}\diff\om \\
&\lesssim \frac{y}{\rho(\ed^++x)(\rho(\ed^++x)+\Delta^{1/3})},
\end{split} \]
while in the second one we obtain 
\[ \begin{split}
&\int_{[x/2,3x/2]\setminus[x-y,x+y]} \frac{\rho(\ed^++\omega+y)-\rho(\ed^++x+y)-\rho(\ed^++\omega)+\rho(\ed^++x)}{\omega-x}\diff\om \\
&\lesssim \frac{y}{\rho(\ed^++x)(\rho(\ed^++x)+\Delta^{1/3})} \int_{[x/2,3x/2]\setminus[x-y,x+y]} \abs{\omega-x}^{-1}\diff\om \lesssim \frac{y\abs{\log y}}{\rho(\ed^++x)(\rho(\ed^++x)+\Delta^{1/3})}.
\end{split} \]
Collecting the various estimates completes the proof of~\eqref{weakm}.

The second bound in~\eqref{weakm min} follows by a similar argument and we focus on the most critical term 
\[ \int_{-\delta_\ast/2}^{\delta_\ast/2}\frac{\rho(\mi+\omega+y)-\rho(\mi+\omega)}{\om-x}\diff\omega = \bigg(\int_{-\delta_\ast/2}^{x-y}+\int_{x-y}^{x+y}+\int_{x+y}^{\delta_\ast/2}\bigg)\frac{\rho(\mi+\omega+y)-\rho(\mi+\omega)}{\om-x}\diff\omega.\]
Here we can bound the middle integral by 
\[ \begin{split}
\abs{\int_{x-y}^{x+y}\frac{\rho(\mi+\omega+y)-\rho(\mi+\omega)}{\om-x}\diff\omega }&=\abs{ \int_{x-y}^{x+y}\frac{\rho(\mi+\omega+y)-\rho(\mi+x+y)-\rho(\mi+\omega)+\rho(\mi+x)}{\om-x}\diff\omega} \\
&\lesssim \frac{\abs{y}}{\rho^2(\mi+x)},
\end{split}
\]
while for the first integral we have 
\[ \begin{split}
\abs{\int_{-\delta_\ast/2}^{x-y}\frac{\rho(\mi+\omega+y)-\rho(\mi+x+y)-\rho(\mi+\omega)+\rho(\mi+x)}{\om-x}\diff\omega} \lesssim \frac{\abs{y}}{\rho^2(\mi+x)} \int_{-\delta_\ast/2}^{x-y}\frac{1}{\abs{\omega-x}}\diff\omega\lesssim \frac{\abs{y}\abs{\log\abs{y}}}{\rho^2(\mi+x)}.
\end{split}
\]
The third integral is completely analogous, completing the proof of~\eqref{weakm min}. 
\end{proof}

\subsection{Quantiles}\label{sec:quantiles}
Finally we consider the locations of quantiles of $\rho_{r,t}$ for $r=\alpha,\lambda,\mu$ and their fluctuation scales. For $0\le t\le t_\ast$ we define the shifted quantiles $\wh\gamma_{r,i}(t)$, and for $t_* \le t\le 2t_\ast$ the shifted quantiles\footnote{We use a separate variable name $\widecheck\gamma$ because in Section~\ref{UNMIN} the name $\wh\gamma$ is used for the quantiles with respect to the base point $\wt\mi$ instead of $\mi$.} $\widecheck\gamma_{r,i}(t)$ in such a way that 
\begin{equation}\label{def:quan}
 \int_{0}^{\wh\gamma_{r,i}(t)}\rho_{r,t}(\ed_{r,t}^++\omega)\diff\omega=\frac{i}{N}, \qquad \int_{0}^{\widecheck\gamma_{r,i}(t)}\rho_{r,t}(\mi_{r,t}+\omega)\diff\omega=\frac{i}{N}, \qquad  \abs{i}\ll N .
\end{equation}
Notice that  for $i=0$ we always have $\wh \gamma_{r,0}(t) =\widecheck \gamma_{r,0}(t)=0$.
We will also need to define the semiquantiles, distinguished by star from the quantiles, defined as follows:
\begin{equation}\label{def:semiquan1}
 \int_{0}^{\wh\gamma_{r,i}^*(t)}\rho_{r,t}(\ed_{r,t}^++\omega)\diff\omega=\frac{i-\frac{1}{2}}{N}, 
 \qquad \int_{0}^{\widecheck\gamma_{r,i}^*(t)}\rho_{r,t}(\mi_{r,t}+\omega)\diff\omega=\frac{i-\frac{1}{2}}{N}, \qquad  1\le i \ll N
\end{equation}
and
\begin{equation}\label{def:semiquan2}
 \int_{0}^{\wh\gamma_{r,i}^*(t)}\rho_{r,t}(\ed_{r,t}^++\omega)\diff\omega=\frac{i+\frac{1}{2}}{N}, 
 \qquad \int_{0}^{\widecheck\gamma_{r,i}^*(t)}\rho_{r,t}(\mi_{r,t}+\omega)\diff\omega=\frac{i+\frac{1}{2}}{N}, \qquad  -N\ll i\le -1
\end{equation}
Note that the definition is slightly different for positive and negative $i$'s, in particular $\wh\gamma_i^* \in [\wh\gamma_{i-1}, \wh \gamma_i]$
for $i\ge 1$ and $\wh\gamma_i^* \in [\wh\gamma_{i}, \wh \gamma_{i+1}]$ for $i<0$. The semiquantiles are not defined for $i=0$.

\begin{lemma}\label{def:reg}
For $1\le \abs{i}\ll N$, $r=\alpha,\lambda,\mu$ and $0\le t\le t_\ast$ we have  
\begin{subequations}\label{gamma eqs}
\begin{equation}\label{gamma gap}
\begin{split}
\wh\gamma_{r,i}(t)&\sim  \sgn(i)\max\Bigl\{\Bigl(\frac{\abs{i}}{N}\Bigr)^{3/4},\Bigl(\frac{\abs{i}}{N}\Bigr)^{2/3}(t_\ast-t)^{1/6}\Bigr\}
-\begin{cases}
0,&i> 0\\
\Delta_{r,t}, & i<0
\end{cases}  \\
\wh\gamma_{r,i}(t) &= \wh\gamma_{\mu,i}(t) \Bigl[1+\landauO[2]{(t_\ast-t)^{1/3} + \min\Bigl\{\frac{\wh\gamma_{\mu,i}(t)^{1/2}}{(t_\ast-t)^{1/4}},\wh\gamma_{\mu,i}(t)^{1/3}\Bigr\}}\Bigr],
\end{split}
\end{equation}
while for $t_*\le t\le 2t_\ast$ we have
\begin{equation}\label{gamma min}
\begin{split}
\widecheck\gamma_{r,i}(t) &\sim \sgn(i)\min\Bigl\{\Bigl(\frac{\abs{i}}{N}\Bigr)^{3/4},\frac{\abs{i}}{N}(t_\ast-t)^{-1/2}\Bigr\},\\ 
\widecheck\gamma_{r,i}(t) &= \widecheck\gamma_{\mu,i}(t) \Bigl[1+\landauO[2]{(t_\ast-t)^{1/2} + \min\Bigl\{\frac{\widecheck\gamma_{\mu,i}(t)^{2}}{(t_\ast-t)^{11/4}},\frac{\widecheck\gamma_{\mu,i}(t)}{t_\ast-t},\widecheck\gamma_{\mu,i}(t)^{1/3}\Bigr\}}\Bigr].
\end{split}
\end{equation}
Moreover, the quantiles of $\rho_{\alpha,t}$ are the convex combination
\begin{equation}\label{quantiles convex comb}
\wh\gamma_{\alpha,i}(t) = \alpha\wh\gamma_{\lambda,i}(t) + (1-\alpha)\wh\gamma_{\mu,i}(t),\quad \widecheck\gamma_{\alpha,i}(t) = \alpha\widecheck\gamma_{\lambda,i}(t) + (1-\alpha)\widecheck\gamma_{\mu,i}(t).
\end{equation}
\end{subequations}
\end{lemma}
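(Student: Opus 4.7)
The first step would be to establish the convex combination identity~\eqref{quantiles convex comb}, which is the conceptual backbone that reduces everything about $\rho_{\alpha,t}$ to statements about $\rho_{\lambda,t},\rho_{\mu,t}$. For $i>0$ we have by Definition~\ref{def interpolating density} the identity $\wh\gamma_{r,i}(t)=\varphi_{r,t}(i/N)$ for $r=\alpha,\lambda,\mu$, and then~\eqref{interpolating phi} gives the claim directly. For $i<0$ one has to pass through the gap: since the density vanishes on $[\ed_{r,t}^-,\ed_{r,t}^+]$, the quantile equation rewrites as $\tilde\varphi_{r,t}(|i|/N)=-\wh\gamma_{r,i}(t)-\Delta_{r,t}$, where $\tilde\varphi_{r,t}$ is the inverse of the counting function measured leftward from $\ed_{r,t}^-$ (this is exactly the extension discussed at the end of Definition~\ref{def interpolating density}). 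Combined with the identity $\ed_{\alpha,t}^\pm=\alpha\ed_{\lambda,t}^\pm+(1-\alpha)\ed_{\mu,t}^\pm$, which gives $\Delta_{\alpha,t}=\alpha\Delta_{\lambda,t}+(1-\alpha)\Delta_{\mu,t}$, the convexity of $\tilde\varphi_{\alpha,t}$ propagates to give~\eqref{quantiles convex comb} on the left side of the gap as well. The min-case is identical after replacing $\ed_{r,t}^+$ by $\mi_{r,t}$, with no gap-correction needed.

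Next I would read off the size asymptotics in~\eqref{gamma gap} and~\eqref{gamma min} directly from the quantitative estimates on $\varphi_{r,t}$ provided by~\eqref{rho counting fcts gap} and~\eqref{rho conting fcts min}. Plugging $s=i/N$ and using $\Delta_{r,t}^{1/9}\sim(t_\ast-t)^{1/6}$ from~\eqref{eq Delta size} gives the first line of~\eqref{gamma gap} for $i>0$; for $i<0$ the same argument applied to $\tilde\varphi_{r,t}$ yields the $-\Delta_{r,t}$ shift. In the small-minimum regime, substituting $\rho_{r,t}(\mi_{r,t})\sim(t-t_\ast)^{1/2}$ from~\eqref{eq min size} into the second line of~\eqref{rho conting fcts min} converts the bound $s/\rho_{r,t}(\mi_{r,t})$ into $s(t_\ast-t)^{-1/2}$ (and similarly for the other term), giving the first line of~\eqref{gamma min}.

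For the ratio estimates I would compare $\wh\gamma_{\mu,i}(t)$ with $\wh\gamma_{\lambda,i}(t)$ first, and then pass to $r=\alpha$ using the already-proved convex combination identity (since if $\wh\gamma_{\lambda,i}=\wh\gamma_{\mu,i}(1+O(\eta))$ then the same holds for $\wh\gamma_{\alpha,i}$). The comparison $\wh\gamma_{\lambda,i}/\wh\gamma_{\mu,i}$ itself comes from the density closeness~\eqref{rho rho gap}: integrating~\eqref{rho rho gap} yields that
\[
\int_0^q\rho_{\lambda,t}(\ed_{\lambda,t}^++\omega)\,d\omega=\int_0^q\rho_{\mu,t}(\ed_{\mu,t}^++\omega)\,d\omega\Bigl[1+\landauO[2]{(t_\ast-t)^{1/3}+\min\{q^{1/2}/\Delta_{\mu,t}^{1/6},q^{1/3}\}}\Bigr],
\]
and inverting this relation at $q=\wh\gamma_{\mu,i}(t)$ (using that both counting functions are strictly monotone and comparable in size by~\eqref{rho counting fcts gap}) propagates the error onto $\wh\gamma_{\lambda,i}/\wh\gamma_{\mu,i}$; the analogous argument using~\eqref{rho rho min} handles the min-case.

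The main obstacle, where I would need to be careful, is bookkeeping the $i<0$ case through the gap: the quantile lives on the left side and its "natural" position $\wh\gamma_{r,i}+\Delta_{r,t}$ must be tracked instead of $\wh\gamma_{r,i}$ itself so that convexity~\eqref{quantiles convex comb} is compatible with the convex decomposition $\Delta_{\alpha,t}=\alpha\Delta_{\lambda,t}+(1-\alpha)\Delta_{\mu,t}$. A minor additional subtlety is that~\eqref{rho rho gap} compares densities around \emph{different} points $\ed_{\lambda,t}^+$ and $\ed_{\mu,t}^+$, so the resulting bound directly controls shifted quantiles rather than raw positions; fortunately this is exactly what the statement of Lemma~\ref{def:reg} uses.
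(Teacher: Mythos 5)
Your proposal is correct and follows essentially the same route as the paper: the paper's own proof consists of citing the counting-function/inverse estimates~\eqref{rho counting fcts gap},~\eqref{rho conting fcts min} (whose error-integral derivation is exactly the integration-and-inversion of~\eqref{rho rho gap},~\eqref{rho rho min} that you carry out) together with~\eqref{interpolating phi} for the convexity~\eqref{quantiles convex comb}. Your extra bookkeeping for $i<0$ through the gap, using $\ed^\pm_{\alpha,t}=\alpha\ed^\pm_{\lambda,t}+(1-\alpha)\ed^\pm_{\mu,t}$ and the leftward counting function, is exactly what the paper's definition implicitly relies on, so there is no substantive difference.
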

\begin{proof}
The proof follows directly from the estimates in~\eqref{rho counting fcts gap} and~\eqref{rho conting fcts min}. The relation~\eqref{quantiles convex comb} follows directly from~\eqref{interpolating phi} in the definition of the $\alpha$-interpolating density.
\end{proof}

\subsection{Movement of edges, quantiles and minima}
For the analysis of the Dyson Brownian motion it is necessary to have a precise understanding of the movement of the reference points $\ed_{r,t}^\pm$ and $\mi_{r,t}$, $r=\lambda,\mu$.
 For technical reasons it is slightly easier to work with an auxiliary quantity $\wt\mi_{r,t}$ which is very close to $\mi_{r,t}$. According to~\cite[Lemma 5.1]{1809.03971} the minimum $\mi_{r,t}$ can approximately be found by solving the implicit equation 
\begin{subequations}
\begin{equation}\label{eq wt mi}
\wt \mi_{r,t} = \cu_r - (t-t_\ast)\Re m_{r,t}(\wt\mi_{r,t}),\qquad \wt\mi_{r,t}\in\R,\quad r=\lambda,\mu.
\end{equation} 
The explicit relation~\eqref{eq wt mi} is the main reason why it is more convenient to study the movement of $\wt\mi_t$ rather than the one of $\mi_t$. We claim that $\wt\mi_{r,t}$ is indeed a very good approximation for $\mi_{r,t}$ in the sense that
\begin{equation}\label{eq wt mi mi}
\abs{\mi_{r,t}-\wt\mi_{r,t}}\lesssim (t-t_\ast)^{3/2+1/4},\quad \Im m_{r,t}(\wt\mi_{r,t})=\gamma^2(t-t_\ast)^{1/2}+  \landauO{t-t_\ast},
\quad r=\lambda,\mu.
\end{equation}
\begin{proof}[Proof of~\eqref{eq wt mi mi}] 
The first claim in~\eqref{eq wt mi mi} is a direct consequence of~\cite[Lemma 5.1]{1809.03971}. For the second claim we refer to~\cite[Eq.~(89a)]{1809.03971} which implies
\[\Im m_{r,t}(\wt\mi_{r,t}) = (t-t_\ast)^{1/2}\gamma^2 \Bigl[1+\landauO{(t-t_\ast)^{1/3}[\Im m_{r,t}(\wt\mi_{r,t})]^{1/3}}\Bigr]=\gamma^2(t-t_\ast)^{1/2}+  \landauO{t-t_\ast}.\qedhere\]
\end{proof}
For the $t$-derivative of (semi-)quantiles \(\gamma_{r,t}\), i.e.~points such that \(\int_{-\infty}^{\gamma_{r,t}}\rho_{r,t}(x)\diff x\) is constant in \(t\), as well as for the minima $\wt\mi_{r,t}$ we have the explicit relations
\begin{equation}\label{diff m quant}
  \frac{\diff}{\diff t}\gamma_{r,t} = -\Re m_{r,t}(\gamma_{r,t}),
\end{equation}
\begin{equation}\label{diff m m}
\frac{\diff}{\diff t}\wt\mi_{r,t}= -\Re m_{r,t}(\wt\mi_{r,t}) + \landauO{t-t_\ast},\quad t_*\le t\le 2t_\ast.
\end{equation}
In particular, for the spectral edges it follows from~\eqref{diff m quant} that
\begin{equation}\label{diff e m}
  \frac{\diff}{\diff t}\ed_{r,t}^+ = -m_{r,t}(\ed_{r,t}^+), \quad 0\le t\le t_\ast.
  \end{equation}  
\end{subequations}
\begin{proof}[Proof of~\eqref{diff m quant}--\eqref{diff e m}] 
 For the proof of~\eqref{diff m quant} we first recall  that from the defining equation~\eqref{eq free sc conv def}
  of the semicircular flow it follows that 
  the Stieltjes transform $m= m_{t}(\zeta)$  of $\rho_{t}$ satisfies the Burgers equation
  \begin{equation}\label{burgers}
     \dot{m} = mm' =\frac{1}{2} (m^2)',
  \end{equation}
  where prime denotes the $\frac{\diff}{\diff \zeta}$ derivative and dot denotes the \(\frac{\diff}{\diff t}\) derivative. Thus   \[
    \dot{\gamma}_{r,t}= -\frac{1}{\rho_{r,t} (\gamma_{r,t})} \Im \int_{-\infty}^{\gamma_{r,t}}   \dot{m}_{r,t}(E) \diff E
    =-\frac{1}{2\rho_{r,t} (\gamma_{r,t})} \Im \int_{-\infty}^{\gamma_{r,t}}   (m_{r,t}^2)'(E) \diff E = - \frac{\Im m_{r,t}^2(\gamma_{r,t})}{2\Im m_{r,t}(\gamma_{r,t})} = -\Re m_{r,t}(\gamma_{r,t}).
  \] follows directly from differentiating \(\int_{-\infty}^{\gamma_{r,t}}\rho_{r,t}(x)\diff x\equiv\mathrm{const}\).

For~\eqref{diff m m} we begin by computing the integral 
\begin{equation}\label{m prime cusp comp} m_{r,t_\ast}'( \cu_r + \ii\eta ) = \int_{\R} \frac{\rho_{t_\ast}(\cu_r+ x)}{(x-\ii \eta)^2}\diff x = \int_\R\frac{\sqrt 3\gamma^{4/3}\abs{x}^{1/3}+\landauO[1]{\abs{x}^{2/3}}}{2\pi (x-\ii\eta)^2}\diff x = \frac{\gamma^{4/3}}{3 \eta^{2/3}}+\landauO{\eta^{-1/3}},\end{equation}
so that by definition $m_{r,t}(z)=m_{r,t_\ast}(z+(t-t_\ast) m_{r,t}(z))$ of the free semicircular flow, 
\[
\begin{split}
 \frac{\diff}{\diff t} m_{r,t}(\wt\mi_{r,t}) &= m_{r,t_\ast}'(\wt\mi_{r,t}+(t-t_\ast) m_{r,t}(\wt\mi_{r,t})) \Bigl[ \frac{\diff}{\diff t} \wt\mi_{r,t} + m_{r,t}(\wt\mi_{r,t})+(t-t_\ast)\frac{\diff}{\diff t}m_{r,t}(\wt\mi_{z,t})  \Bigr]\\
 &=\Bigl( \frac{1}{3 (t-t_\ast)}+\landauO{(t-t_\ast)^{-1/2}}\Bigr) \Bigl[ \frac{\diff}{\diff t} \wt\mi_{r,t} + m_{r,t}(\wt\mi_{r,t})+(t-t_\ast)\frac{\diff}{\diff t}m_{r,t}(\wt\mi_{r,t})  \Bigr]\\
 &=\ii\Bigl( \frac{1}{3 (t-t_\ast)}+\landauO{(t-t_\ast)^{-1/2}}\Bigr) \Bigl[  \Im m_{r,t}(\wt\mi_{r,t})+(t-t_\ast)\frac{\diff}{\diff t}\Im m_{r,t}(\wt\mi_{r,t})  \Bigr]\\
 &=\Bigl(\ii \frac{\gamma^2}{3(t-t_\ast)^{1/2}}+\frac{\ii}{3}\frac{\diff}{\diff t}\Im m_{r,t}(\wt\mi_{r,t})\Bigr)\Bigl[1+\landauO{(t-t_\ast)^{1/2}}\Bigr]. 
\end{split}
\]
Here we used~\eqref{eq wt mi},~\eqref{eq wt mi mi} together with~\eqref{m prime cusp comp} in the second step. The third step follows from taking the $t$-derivative of~\eqref{eq wt mi}. The ultimate inequality is again a consequence of~\eqref{eq wt mi mi}. 
By considering real and imaginary part separately it thus follows that 
\[ \frac{\diff}{\diff t} \Im m_{r,t}(\wt\mi_{r,t}) = \frac{\gamma^2}{2(t-t_\ast)^{1/2}}\Bigl[1+\landauO{(t-t_\ast)^{1/2}}\Bigr],\qquad \frac{\diff}{\diff t} \Re m_{r,t}(\wt\mi_{r,t}) = \landauO{1} \]
and therefore~\eqref{diff m m} follows by differentiating~\eqref{eq wt mi}.
\end{proof}

\subsection{Rigidity scales}
In this section we compute, up to leading order, the fluctuations of the eigenvalues around their classical locations, i.e. the quantiles defined in Section~\ref{sec:quantiles}. Indeed, the computation of the fluctuation scale for the particles $x_i(t)$, $y_i(t)$, defined in~\eqref{xxx},~\eqref{yyy}, will be one of the fundamental inputs
 to prove rigidity for the interpolated process in Section~\ref{sec:rigid}.
The fluctuation scale $\eta_\mathrm{f}^{\rho}(\tau)$ of any density function  $\rho(\omega)$ around $\tau$ is defined via 
\[ \int_{\tau-\eta_\mathrm{f}^{\rho}(\tau)}^{\tau+\eta_\mathrm{f}^{\rho}(\tau)} \rho(\omega)\diff\omega=\frac{1}{N}\]
for $\tau\in\supp\rho$ and by the value $\eta_\mathrm{f}(\tau)\defeq\eta_\mathrm{f}(\tau')$ where  $\tau'\in\supp\rho$ is the edge closest to $\tau$ for $\tau\not\in\supp\rho$. If this edge is not unique, an arbitrary choice can be made between the two possibilities. From~\eqref{gamma gap} we immediately obtain for $0\le t\le t_\ast$ and $1\le i\le N$, that
\begin{subequations}\label{eqs fluc scale} 
\begin{equation}\label{fluc scale gap}
 \eta_\mathrm{f}^{\rho_{r,t}}(\ed_{r,t}^++\wh\gamma_{r,\pm i}(t)) \sim \max\Bigl\{ \frac{\Delta_{r,t}^{1/9}}{N^{2/3}i^{1/3}},\frac{1}{N^{3/4}i^{1/4}} \Bigr\}\sim \max\Bigl\{ \frac{(t_\ast-t)^{1/6}}{N^{2/3}i^{1/3}},\frac{1}{N^{3/4}i^{1/4}} \Bigr\}, \quad r=\alpha,\lambda,\mu, 
\end{equation}
while for $t_*\le t\le 2t_\ast$, $1\le \abs{i}\ll N$ we obtain from~\eqref{gamma min} that
\begin{equation}\label{fluc scale min}
 \eta_\mathrm{f}^{\rho_{r,t}}(\mi_{r,t}+\widecheck\gamma_{r,i}(t))\sim \min\Bigl\{\frac{1}{N\rho_{r,t}(\mi_{r,t})},\frac{1}{N^{3/4}\abs{i}^{1/4}}\Bigr\}\sim \min\Bigl\{\frac{1}{N(t-t_\ast)^{1/2}},\frac{1}{N^{3/4}\abs{i}^{1/4}} \Bigr\}, \qquad r=\alpha,\lambda,\mu.
\end{equation}
\end{subequations}
In the second relations we used~\eqref{eq Delta size} and~\eqref{eq min size}.
For reference purposes we also list for $0<i,j \ll N$ the bounds 
\begin{equation}\label{gamma difference gap}
\abs{\wh\gamma_{r, i}(t)-\wh\gamma_{r, j}(t)} \sim \max\Big\{\frac{\Delta_{r,t}^{1/9}\abs{i-j}}{N^{2/3}(i+j)^{1/3}},\frac{\abs{i-j}}{N^{3/4}(i+j)^{1/4}} \Big\}\,, \qquad 
\end{equation}
in case $t \le t_*$ and 
\begin{equation}
\label{gamma difference min}
\abs{\widecheck\gamma_{r, i}(t)-\widecheck\gamma_{r, j}(t)} \sim \min\Big\{\frac{\abs{i-j}}{\rho_{r,t}(\mi_{r,t})N},\frac{\abs{i-j}}{N^{3/4}(i+j)^{1/4}} \Big\}
\end{equation}
in case $t > t_*$. Furthermore we have 
\begin{equation}\label{rhoquantile edge}
\rho_{r,t}(\ed_{r,t}^++\wh\gamma_{r, i}(t)) \sim  \min\Big\{\frac{i^{1/3}}{N^{1/3}(t_*-t)^{1/6}}, \frac{i^{1/4}}{N^{1/4}} \Big\}
\end{equation}
and
\begin{equation}\label{rhoquantile m}
\rho_{r,t}(\mi_{r,t}+\widecheck\gamma_{r, i}(t)) \sim  \max\Big\{ \rho_{r,t}(\mi_{r,t}), \frac{i^{1/4}}{N^{1/4}}\Big\}\,.
\end{equation}

\subsection{Stieltjes transform bounds}
It follows from~\eqref{rho rho gap} and~\eqref{rho rho min} that also the real parts of the Stieltjes transforms $m_{\alpha,t}$, $m_{\lambda,t}$, $m_{\mu,t}$ are close. We claim that for $r=\lambda,\alpha$, $\nu\in[-\delta_\ast,\delta_\ast]$ and $0\le t\le t_\ast$ we have
\begin{subequations}\label{Re m}
\begin{equation} 
\begin{split}\label{Re m gap}
&\abs{\Re \Bigl[\Bigl(m_{r,t}(\ed_{r,t}^++\nu)-m_{r,t}(\ed_{r,t}^+)\Bigr)-\Bigl(m_{\mu,t}(\ed_{\mu,t}^++\nu)-m_{\mu,t}(\ed_{\mu,t}^+)\Bigr)\Bigr]}
\\
&\qquad\qquad\lesssim \abs{\nu}^{1/3}\Bigl[\abs{\nu}^{1/3}+(t_\ast-t)^{1/3}\Bigr]\abs{\log\abs{\nu}}+(t_\ast-t)^{11/18}\bm{1}(\nu \le -\Delta_{\mu,t}/2),
\end{split}
 \end{equation}
while for $t_\ast\le t\le 2t_\ast$ we have
\begin{equation}\label{Re m min}
 \begin{split}
&\abs{\Re \Bigl[\Bigl(m_{r,t}(\mi_{r,t}+\nu)-m_{r,t}(\mi_{r,t})\Bigr)-\Bigl(m_{\mu,t}(\mi_{\mu,t}+\nu)-m_{\mu,t}(\mi_{\mu,t})\Bigr)\Bigr]}\\
&\qquad\qquad\lesssim \Bigl[\abs{\nu}^{1/3}(t-t_\ast)^{1/4}+(t_\ast-t)^{3/4}+\abs{\nu}^{2/3}\Bigr]\abs{\log\abs{\nu}}.
\end{split}  \end{equation}
\end{subequations}
\begin{proof}[Proof of~\eqref{Re m}]
We first recall from Lemma~\ref{lemma holderhsh} that also the density $\rho_{\alpha,t}$ is $1/3$-H\"older continuous which we will use repeatedly in the following proof. We begin with the proof of~\eqref{Re m gap} and compute for $r=\alpha,\lambda,\mu$
\begin{equation}\label{eq Rem}
\begin{split}
\Re \Bigl[m_{r,t}(\ed_{r,t}^++\nu)-m_{r,t}(\ed_{r,t}^+)\Bigr] &= \int_0^\infty  \frac{\nu \rho_{r,t}(\ed_{r,t}^++\omega)}{(\omega-\nu)\omega}\diff \omega + \int^{\infty}_{0} \frac{\nu \rho_{r,t}(\ed_{r,t}^--\omega)}{(\omega+\Delta_{r,t}+\nu)(\omega+\Delta_{r,t})}\diff \omega.
\end{split}
\end{equation}
For $\nu> 0$ the first of the two terms is the more critical one. Our goal is to obtain a bound on
\[ \int_0^\infty  \frac{\nu}{(\omega-\nu)\omega}\Bigl[\rho_{\lambda,t}(\ed_{\lambda,t}^++\omega)-\rho_{\mu,t}(\ed_{\mu,t}^++\omega)\Bigr]\diff \omega \]
by using~\eqref{rho rho gap}. Let $0<\epsilon<\nu/2$ be a small parameter for which we separately consider the two critical regimes $0\le \omega\le \epsilon$ and $\abs{\nu-\omega}\le\epsilon$. We use 
\begin{equation}\label{rho holder}\rho_{r,t}(\ed_{r,t}^++\omega)\lesssim \omega^{1/3}\quad\text{and}\quad\rho_{r,t}(\ed_{r,t}^++\omega)=\rho_{r,t}(\ed_{r,t}^++ \nu )+\landauO{\abs{\omega-\nu}^{1/3}}, \quad r=\lambda,\mu,\end{equation}
from the $1/3$-H\"older continuity of $\rho_{r,t}$ and the fact that the integral over $1/(\omega-\nu)$ from $\nu-\epsilon$ to $\nu+\epsilon$ vanishes by symmetry to estimate, for $r=\lambda,\mu$, 
\[\abs{ \int_0^\epsilon \frac{\nu}{(\omega-\nu)\omega}\rho_{r,t}(\ed_{r,t}^++\omega)\diff \omega} \lesssim \int_{0}^\epsilon \abs{\omega}^{-2/3}\diff \omega \lesssim \epsilon^{1/3} \]
and 
\[ \abs{ \int_{\nu-\epsilon}^{\nu+\epsilon} \biggl[\frac{\rho_{r,t}(\ed_{r,t}^++\omega)}{\omega-\nu}-\frac{\rho_{r,t}(\ed_{r,t}^++\omega)}{\omega}\biggr]\diff \omega} \lesssim \int_{\nu-\epsilon}^{\nu+\epsilon} \abs{\omega-\nu}^{-2/3}\diff \omega + \epsilon \nu^{-2/3}\lesssim \epsilon^{1/3}+\epsilon \nu^{-2/3}.\]
Next, we consider the remaining integration regimes where we use~\eqref{rho rho gap} and~\eqref{rho holder} to estimate 
\[ 
\begin{split}
&\abs{\int_{\epsilon}^{\nu-\epsilon} \frac{\nu}{(\omega-\nu)\omega}\Bigl[\rho_{r,t}(\ed_{r,t}^++\omega)-\rho_{\mu,t}(\ed_{\mu,t}^++\omega)\Bigr]\diff \omega}\\
&\quad\lesssim \int_{\epsilon}^{\nu/2}\frac{\omega^{1/3}(t_\ast-t)^{1/3}+\omega^{2/3}}{\omega}\diff\omega +\int_{\nu/2}^{\nu-\epsilon}\Bigl(\frac{\nu^{1/3}(t_\ast-t)^{1/3}}{\omega-\nu}+\frac{\nu^{2/3}}{\omega-\nu}\Bigr)\diff\omega\lesssim \nu^{1/3}\Bigl( (t_\ast-t)^{1/3}+ \nu^{1/3}\Bigr)\abs{\log\epsilon}
\end{split}
  \]
and similarly 
\[ \abs{\int_{\nu+\epsilon}^{\infty} \frac{\nu}{(\omega-\nu)\omega}\Bigl[\rho_{r,t}(\ed_{r,t}^++\omega)-\rho_{\mu,t}(\ed_{\mu,t}^++\omega)\Bigr]\diff \omega} \lesssim \nu^{1/3}\Bigl( (t_\ast-t)^{1/3}+ \nu^{1/3}\Bigr)\abs{\log\epsilon}.\]
We now consider the difference of the first terms in~\eqref{eq Rem} for $r=\lambda,\mu$ and for $\nu<0$ where the bound is simpler because the integration regime close to $\nu$ does not have to be singled out. Using~\eqref{rho rho gap} we find 
\[ \abs{\int_{0}^{\infty} \frac{\nu}{(\omega-\nu)\omega}\Bigl[\rho_{r,t}(\ed_{r,t}^++\omega)-\rho_{\mu,t}(\ed_{\mu,t}^++\omega)\Bigr]\diff \omega} \lesssim \abs{\nu}^{2/3} +(t_\ast-t)^{1/3}\abs{\nu}^{1/3}.\]

Finally, it remains to consider the difference of the second terms in~\eqref{eq Rem}. We first treat the regime where  $\nu \ge -\frac{3}{4}\Delta_{r,t}$ and  split the difference into the sum of two terms
\[ \begin{split}
&\abs{\int^{\infty}_{0} \biggl(\frac{\nu \rho_{r,t}(\ed_{r,t}^--\omega)}{(\omega+\Delta_{r,t}+\nu)(\omega+\Delta_{r,t})}-\frac{\nu \rho_{r,t}(\ed_{r,t}^--\omega)}{(\omega+\Delta_{\mu,t}+\nu)(\omega+\Delta_{\mu,t})}\biggr)\diff \omega}\\
&\,\,\le \abs{\nu} \abs{\Delta_{r,t}-\Delta_{\mu,t}} \int_0^\infty  \frac{\rho_{r,t}(\ed_{r,t}^--\omega)\bigl[2\Delta_{r,t}+2\omega+{\abs{\nu}}\bigr]}{(\omega+\Delta_{r,t}+\nu)^2(\omega+\Delta_{r,t})^2}\diff\omega \lesssim  \frac{\abs{\Delta_{r,t}-\Delta_{\mu,t}}}{\Delta_{r,t}^{2/3}}-\frac{\abs{\Delta_{r,t}-\Delta_{\mu,t}}}{(\Delta_{r,t}+{\abs{\nu}})^{2/3}}\lesssim (t_\ast-t)^{1/3} \abs{\nu}^{1/3}
\end{split}  \]
and 
\[ \abs{\int^{\infty}_{0} \biggl(\frac{\nu \rho_{r,t}(\ed_{r,t}^--\omega)}{(\omega+\Delta_{\mu,t}+\nu)(\omega+\Delta_{\mu,t})}-\frac{\nu \rho_{\mu,t}(\ed_{\mu,t}^--\omega)}{(\omega+\Delta_{\mu,t}+\nu)(\omega+\Delta_{\mu,t})}\biggr)\diff \omega}\lesssim \abs{\nu}^{2/3}+(t_\ast-t)^{1/3}\abs{\nu}^{1/3}.\]
Here we used $\rho_{r,t}(\ed_{r,t}^--\omega)\lesssim \omega^{1/3}$ as well as~\eqref{eq Delta size} for the first and~\eqref{eq Delta size},\eqref{rho rho gap} for the second computation. By collecting the various error terms and choosing  $\epsilon=\nu^2$ we conclude~\eqref{Re m gap}.

We define $\kappa\defeq -\nu-\Delta_{r,t}$. Then we are left with the regime $\nu < -\frac{3}{4}\Delta_{r,t}$ or equivalently $\kappa> -\frac{1}{4}\Delta_{r,t}$ and use
\[
m_{r,t}(\ed_{r,t}^++\nu)-m_{r,t}(\ed_{r,t}^+)= (m_{r,t}(\ed_{r,t}^--\kappa)-m_{r,t}(\ed_{r,t}^-)) + (m_{r,t}(\ed_{r,t}^-)-m_{r,t}(\ed_{r,t}^+))\,,
\]
as well as
\begin{equation}
\begin{split}
m_{\mu,t}(\ed_{\mu,t}^++\nu)-m_{\mu,t}(\ed_{\mu,t}^+) = &(m_{\mu,t}(\ed_{\mu,t}^--\kappa+\Delta_{\mu,t}-\Delta_{r,t})-m_{\mu,t}(\ed_{\mu,t}^--\kappa))+(m_{\mu,t}(\ed_{\mu,t}^--\kappa)-m_{\mu,t}(\ed_{\mu,t}^-))
\\
&+(m_{\mu,t}(\ed_{\mu,t}^-)-m_{\mu,t}(\ed_{\mu,t}^+))
\end{split}
\end{equation}
in the left hand side of~\eqref{Re m gap}. Thus we have to estimate the three expressions,
\begin{subequations}
\begin{align} \label{first term at e-}
&\abs{\Re \Bigl[\Bigl(m_{r,t}(\ed_{r,t}^--\kappa)-m_{r,t}(\ed_{r,t}^-)\Bigr)-\Bigl(m_{\mu,t}(\ed_{\mu,t}^--\kappa)-m_{\mu,t}(\ed_{\mu,t}^-)\Bigr)\Bigr]},
\\ \label{second term at e-}
&\abs{\Re \Bigl[\Bigl(m_{r,t}(\ed_{r,t}^-)-m_{r,t}(\ed_{r,t}^+)\Bigr)-\Bigl(m_{\mu,t}(\ed_{\mu,t}^-)-m_{\mu,t}(\ed_{\mu,t}^+)\Bigr)\Bigr]},
\\ \label{third term at e-}
&\abs{\Re \Bigl[m_{\mu,t}(\ed_{\mu,t}^--\kappa+\Delta_{\mu,t}-\Delta_{r,t})-m_{\mu,t}(\ed_{\mu,t}^--\kappa)\Bigr]}.
\end{align}
\end{subequations}
In order to bound the first term  we use that estimating~\eqref{first term at e-} for $\kappa \ge -\frac{3}{4}\Delta_{r,t}$ is equivalent to estimating the left hand side of~\eqref{Re m gap} for $\nu \ge -\frac{3}{4}\Delta_{r,t}$, i.e.~the regime we already considered above. This equivalence follows by using the reflection $A\to -A$ of the expectation (cf.~\eqref{MDE matrix form}) that turns every left edge $\ed_{z,t}^+$ into a right edge $\ed_{z,t}^-$. In particular, by the analysis that we already performed~\eqref{first term at e-} is bounded by 
 $\abs{\kappa}^{1/3}[\abs{\kappa}^{1/3}+(t_\ast-t)^{1/3}]\abs{\log\abs{\kappa}}$. Since $\abs{\kappa}\le \abs{\nu}$ this is the desired bound. 

For the  second term~\eqref{second term at e-} we see from~\eqref{eq Rem} that we have to estimate the difference  between the expressions
\begin{equation} \label{difference of m at edges}
\int_0^\infty  \frac{\Delta_{r,t} \rho_{r,t}(\ed_{r,t}^++\omega)}{\omega(\omega+\Delta_{r,t})}\diff \omega + \int^{\infty}_{0} \frac{\Delta_{r,t} \rho_{r,t}(\ed_{r,t}^--\omega)}{\omega(\omega+\Delta_{r,t})}\diff \omega,
\end{equation}
for $r=\alpha,\lambda,\mu$. The summands in~\eqref{difference of m at edges} are treated analogously, so we focus on the first summand. We split the integrand of the difference between the first summands and estimate
\[
 \frac{(\Delta_{r,t}-\Delta_{\mu,t})\rho_{r,t}(\ed_{r,t}^++\omega)}{(\omega+\Delta_{r,t})(\omega+\Delta_{\mu,t})}
+
\frac{\Delta_{\mu,t}}{\omega(\omega+\Delta_{\mu,t})}\big(  \rho_{r,t}(\ed_{r,t}^++\omega)- \rho_{\mu,t}(\ed_{\mu,t}^++\omega)\big)
\lesssim\frac{\Delta(\omega^{1/3}+(t_\ast-t)^{1/3}) }{\omega^{2/3}(\omega+\Delta)} 
\]
where $\Delta\defeq  \Delta_{r,t} \sim \Delta_{\mu,t}$ and we used~\eqref{eq Delta size},~\eqref{rho rho gap}  and the first inequality of~\eqref{rho holder}. Thus 
\[
\abs{\int_0^\infty  \frac{\Delta_{r,t} \rho_{r,t}(\ed_{r,t}^++\omega)}{\omega(\omega+\Delta_{r,t})}\diff \omega -\int_0^\infty  \frac{\Delta_{\mu,t} \rho_{\mu,t}(\ed_{\mu,t}^++\omega)}{\omega(\omega+\Delta_{\mu,t})}\diff \omega } \lesssim \Delta^{2/3} + \Delta^{1/3}(t_\ast-t)^{1/3}.
\]
Since $\abs{\nu} \gtrsim \Delta$ this finishes the estimate on~\eqref{second term at e-}. 

For~\eqref{third term at e-} we use the $1/3$-H\"older regularity of $m_{\mu,t}$ and~\eqref{eq Delta size} to get an upper bound $\Delta^{1/3}(t_\ast-t)^{1/9}\lesssim (t_\ast-t)^{11/18}$. This finishes the proof of~\eqref{Re m gap}.

We now turn to the case of a small local minimum in~\eqref{Re m min} and compute for $r=\alpha,\lambda,\mu$ and $\nu\ne 0$ that
\[ \Re \Bigl[m_{r,t}(\mi_{r,t}+\nu)-m_{r,t}(\mi_{r,t})\Bigr] = \int_\R \frac{\nu \rho_{r,t}(\mi_{r,t}+\omega)}{(\omega-\nu)\omega}\diff\omega.\] 
Without loss of generality, we consider the case $\nu>0$ as $\nu<0$ is completely analogous. As before, we first pick a threshold $\epsilon\le\nu/2$ and single out the integration over $[-\epsilon,\epsilon]$ and $[\nu-\epsilon,\nu+\epsilon]$. From the $1/3$-H\"older continuity of $\rho_{r,t}$ we have, for $r=\lambda,\mu$, 
\[ \rho_{r,t}(\mi_{r,t}+\omega) = \rho_{r,t}(\mi_{r,t}+\nu)+ \landauO{\abs{\nu-\omega}^{1/3}}\]
and therefore
\[ \abs{\int_{-\epsilon}^\epsilon \frac{\rho_{r,t}(\mi_{r,t}+\omega)}{\omega-\nu}\diff\omega} \lesssim \frac{\epsilon}{\nu},\qquad \abs{\int_{-\epsilon}^\epsilon \frac{\rho_{r,t}(\mi_{r,t}+\omega)}{\omega}\diff\omega} \lesssim \int_{-\epsilon}^\epsilon\abs{\omega}^{-2/3}\diff\omega\lesssim\epsilon^{1/3} \]
and 
\[ \abs{\int_{\nu-\epsilon}^{\nu+\epsilon} \frac{\rho_{r,t}(\mi_{r,t}+\omega)}{\omega-\nu}\diff\omega} \lesssim \int_{\nu-\epsilon}^{\nu+\epsilon}\abs{\omega-\nu}^{-2/3}\diff\omega\lesssim\epsilon^{1/3},\qquad \abs{\int_{\nu-\epsilon}^{\nu+\epsilon} \frac{\rho_{r,t}(\mi_{r,t}+\omega)}{\omega}\diff\omega} \lesssim \frac{\epsilon}{\nu}.\]
We now consider the difference between $\rho_{r,t}$ and $\rho_{\mu,t}$ for which we have 
\[\abs{\rho_{r,t}(\mi_{r,t}+\omega)-\rho_{\mu,t}(\mi_{\mu,t}+\omega)} \lesssim (t-t_\ast) \abs{\omega}^{1/3}(t-t_\ast)^{1/4}+(t-t_\ast)^{3/4} + \abs{\omega}^{2/3} \]
from~\eqref{rho rho min},~\eqref{eq min size} and the $1/3$-H\"older continuity of $\rho_{r,t}$. Thus we can estimate
\[ \begin{split}&
 \abs[3]{\biggl[\int_{-\infty}^{-\epsilon}+\int_{\epsilon}^{\nu-\epsilon}+\int_{\nu+\epsilon}^{\infty}\biggr]\frac{\nu \bigl(\rho_{\lambda,t}(\mi_{r,t}+\omega)-\rho_{r,t}(\mi_{r,t}+\omega)\bigr)}{(\omega-\nu)\omega}\diff\omega}\\
&\quad \lesssim \biggl[\int_{-\infty}^{-\epsilon}+\int_{\epsilon}^{\nu-\epsilon}+\int_{\nu+\epsilon}^{\infty}\biggr]\frac{\nu \bigl(\abs{\omega}^{1/3}(t-t_\ast)^{1/4}+(t-t_\ast)^{3/4} + \abs{\omega}^{2/3}\bigr)}{\abs{\omega-\nu}\omega}\diff\omega\\
&\quad\lesssim \abs{\log\epsilon}\Bigl[ \nu^{1/3}(t-t_\ast)^{1/4} + (t-t_\ast)^{3/4}+\nu^{2/3} \Bigr].
\end{split} \]
We again choose $\epsilon=\nu^2$ and by collecting the various error estimates can conclude~\eqref{Re m min}.
\end{proof}
 
\section{Index matching for two DBM}\label{sec:padding}
For two real symmetric matrix valued standard (GOE) Brownian motions $\mathfrak{B}_t^{(\lambda)},\mathfrak{B}_t^{(\mu)}\in\mathbb{R}^{N\times N}$ we define the matrix flows
 \begin{equation}
\label{defflow}
H_t^{(\lambda)}\defeq H^{(\lambda)}+\mathfrak{B}^{(\lambda)}_t, \,\,\,\,\,H_t^{(\mu)}\defeq H^{(\mu)}+\mathfrak{B}^{(\mu)}_t.
\end{equation} In particular, by~\eqref{defflow} it follows that \begin{equation}
\label{flow}
H^{(\lambda)}_t \stackrel{d}{=} H^{(\lambda)}+\sqrt{t}U^{(\lambda)},\,\,\,\,\,H^{(\mu)}_t \stackrel{d}{=} H^{(\mu)}+\sqrt{t}U^{(\mu)},
\end{equation} for any fixed $0\le t\le t_1$, where $U^{(\lambda)}$ and $U^{(\mu)}$ are GOE matrices.
 In~\eqref{flow} with $X \stackrel{d}{=}Y$ we denote that the two random variables $X$ and $Y$ are equal in distribution.
   
We will prove Proposition~\ref{DBM prop} by comparing the two Dyson Brownian motions 
for the eigenvalues of the matrices $H^{(\lambda)}_t$ and $H^{(\mu)}_t$ for $0\le t\le t_1$, see~\eqref{lambda}--\eqref{mu} below.
To do this, we will use the coupling idea of~\cite{MR3606475} and~\cite{MR3541852}, where the 
DBMs for the eigenvalues of $H^{(\lambda)}_t$ and $H^{(\mu)}_t$ are coupled in such a way that the difference of 
the two DBMs obeys a discrete parabolic  equation with good decay properties. 
 In order to analyse this equation we consider a \emph{short range approximation} for the DBM, first introduced in~\cite{MR3372074}. 
 Coupling only the short range approximation of the DBMs leads to a parabolic equation whose heat kernel has a rapid off diagonal decay
 by  \emph{finite speed of propagation} estimates.   In this way the kernels of both DBMs are locally determined  
 and thus can be directly compared by optimal rigidity since locally the two densities, hence their quantiles,  are close. 
 Technically it is much easier to work with a one parameter interpolation between the two DBM's and 
 consider its derivative with respect to the parameter, as introduced in~\cite{MR3606475}; the proof of the finite speed propagation for this dynamics
 does not require to establish level repulsion unlike in several previous works~\cite{MR3729630, MR3372074, MR3687212}.
 However, it requires to establish (almost) optimal rigidity for the interpolating dynamics as well. Note that 
 optimal rigidity is known for $H^{(\lambda)}_t$ and $H^{(\mu)}_t$ from~\cite{1809.03971},  see Lemma~\ref{diffx} later, 
 but not for the interpolation. For a complete picture, we mention that in  the works~\cite{MR3729630, MR3372074, MR3687212} on  \emph{bulk gap universality}, beyond 
 heat kernel and Sobolev estimates,  a version of De Giorgi-Nash-Moser parabolic regularity estimate, which used level repulsion in a more substantial way than finite speed of propagation, was also  necessary. \emph{Fixed energy universality} in the bulk 
 can be proven via homogenisation without De Giorgi-Nash-Moser estimates, hence level repulsion can
 also be avoided~\cite{MR3914908}. In a certain sense, the situation at the edge/cusp is easier  than the bulk regime since 
 relatively simple heat kernel bounds are
 sufficient for local relaxation to equilibrium. In another sense, due to singularities in the density,
 the edge and especially the cusp regime is more difficult.

 In Section~\ref{sec:rigid} we will establish rigidity for the interpolating process by DBM methods. Armed with this rigidity, 
 in Section~\ref{DBMS} we prove Proposition~\ref{DBM prop}  
 for the small gap and the exact cusp case, i.e.~$t_1\le t_*$. Some estimates
 are slightly different for the small minimum case, i.e.~$t_*\le t_1\le 2t_*$, the modifications are given in Section~\ref{UNMIN}.
 We recall that $t_*$ is the time at which both $H_{t_*}^{(\lambda)}$ and $H_{t_*}^{(\mu)}$ have an exact cusp.
Some technical details  on the corresponding Sobolev inequality and heat kernel estimates as well as finite speed of propagation
and short range approximation 
  are deferred to the Appendix: these are similar to the corresponding estimates for the edge
   case, see~\cite{MR3253704} and~\cite{1712.03881}, respectively.

In the rest of this section we prepare  the proof of Proposition~\ref{DBM prop} by setting up
 the appropriate framework.  While we are  interested
only in the eigenvalues near the physical cusp, the DBM is highly non-local, so
we need to define the dynamics for all eigenvalues.  
In the setup of Proposition~\ref{DBM prop} we could easily assume that the cusps for the two matrix flows are formed at  the same time
and their  slope parameters coincide -- these could be achieved by a rescaling and 
a trivial time shift. However, the number of eigenvalues  to the left of the cusp
may macroscopically differ for the two ensembles which would  mean that the 
labels of the ordered eigenvalues near the cusp would  not be constant along the interpolation.
To resolve this discrepancy, we will pad the system with $N$ fictitious particles in addition to the original flow of $N$ eigenvalues
similarly as in~\cite{MR3914908}, giving sufficient freedom to match the labels
of the  eigenvalues near the cusp.  These artificial  particles will be  placed
very far from the cusp regime and from each other so that their effect on the dynamics
of the relevant particles is negligible.

With the notation of Section~\ref{sec scflow},
we let $\rho_{\lambda,t}$, $\rho_{\mu,t}$ denote the (self-consistent) densities at time $0\le t\le t_1$ of $H_t^{(\lambda)}$ and $H_t^{(\mu)}$, respectively. 
In particular, $\rho_{\lambda,0}=\rho_\lambda$ and $\rho_{\mu,0}=\rho_\mu$, where $\rho_\lambda$, $\rho_\mu$ are the self consistent densities
 of $H^{(\lambda)}$ and $H^{(\mu)}$ and $\rho_{\lambda,t}$, $\rho_{\mu,t}$ are their semicircular evolutions.
 For each $0\le t\le t_*$ both densities $\rho_{\lambda,t}$, $\rho_{\mu,t}$
  have a small gap, denoted by $[\ed^-_{\lambda, t}, \ed^+_{\lambda, t}]$
  and $[\ed^-_{\mu, t}, \ed^+_{\mu, t}]$
   and we let 
  \[
  \Delta_{\lambda, t}\defeq \ed^+_{\lambda, t}- \ed^-_{\lambda, t}, \qquad \Delta_{\mu, t} \defeq \ed^+_{\mu, t}- \ed^-_{\mu, t} 
  \]
 denote the length of these gaps. In case of $t_*\le t\le 2t_*$ the densities $\rho_{\lambda,t}$, $\rho_{\mu,t}$
  have a small minimum denoted by $\mathfrak{m}_{\lambda, t}$ 
 and $\mathfrak{m}_{\mu, t}$ respectively. 
 Since we always assume $0\le t\le t_1\ll 1$, both $H^{(\lambda)}_t$ and $H^{(\mu)}_t$ will always have exactly one {physical} cusp near $\mathfrak{c}_\lambda$ 
 and $\mathfrak{c}_\mu$, respectively, using that  the Stieltjes transform of the density is a H\"older continuous function of $t$, see~\cite[Proposition 10.1]{1804.07752}.

Let $i_\lambda$ and $i_\mu$ be the indices defined by 
\[
\int_{-\infty}^{\ed_{\lambda, 0}^-} \rho_\lambda=\frac{i_\lambda-1}{N}, \,\,\,\,\,\int_{-\infty}^{\ed_{\mu, 0}^{-}}\rho_\mu=\frac{i_\mu-1}{N}.
\] 
By band rigidity (see Remark 2.6 in~\cite{1804.07744}) $i_\lambda$ and $i_\mu$ are integers. 
Note that by the explicit expression of the density in~\eqref{gamma def}-\eqref{gamma def edge} it follows that 
$cN\le i_\lambda, i_\mu\le (1-c)N$ with some small $c>0$, because the density on both sides of a {physical} cusp is macroscopic. 
 
We let  $\lambda_i(t)$ and $\mu_i(t)$ denote the eigenvalues of $H^{(\lambda)}_t$ and $H^{(\mu)}_t$, respectively. Let $\left\{B_i\right\}_{i\in[-N,N]\setminus\{0\}}$ be a family of independent standard (scalar) Brownian motions. It is well known~\cite{MR0148397} that  the eigenvalues of $H^{(\lambda)}_t$ satisfy
the equation for \emph{Dyson Brownian motion}, i.e.~the following system of coupled SDE's
\begin{equation}
\label{lambda}
\diff\lambda_i=\sqrt{\frac{2}{N}}\diff B_{i-i_\lambda+1}+\frac{1}{N}\sum_{j\ne i}\frac{1}{\lambda_i-\lambda_j}\diff t
\end{equation} with initial conditions $\lambda_i(0)=\lambda_i(H^{(\lambda)})$. Similarly, for the eigenvalues of $H^{(\mu)}_t$  we have
\begin{equation}
\label{mu}
\diff\mu_i=\sqrt{\frac{2}{N}}\diff B_{i-i_\mu+1}+\frac{1}{N}\sum_{j\ne i}\frac{1}{\mu_i-\mu_j}\diff t
\end{equation} with initial conditions $\mu_i(0)=\mu_i(H^{(\mu)})$.
Note that we chose the Brownian motions for $\lambda_i$ and 
$\mu_{i+i_\mu-i_\lambda}$ to be  identical.  This is the key ingredient for the coupling argument, since in this way
the stochastic differentials will cancel when we take the difference of the two DBMs or we differentiate it with respect to an additional parameter. 

For convenience of notation, we will shift the indices so that the same index  labels the last quantile before the
gap in $\rho_\lambda$ and $\rho_\mu$. 
This shift was already prepared by choosing the Brownian motions for $\mu_{i_\mu}$ and $\lambda_{i_\lambda}$ to be identical.
We achieve this shift by adding $N$ ``ghost'' particles very far away and relabelling,  as in~\cite{MR3914908}.
We thus embed $\lambda_i$ and $\mu_i$ into  the enlarged processes $\{x_i\}_{i\in[-N,N]\setminus\{ 0\} }$ and $\{y_i\}_{i\in[-N,N]\setminus\{0\}}$.
Note that the index $0$ is always omitted.

More precisely, the processes $x_i$ are defined by the following SDE  \emph{(extended Dyson Brownian motion) }
\begin{equation}
\label{xxx}
\diff x_i=\sqrt{\frac{2}{N}}\diff B_i+\frac{1}{N}\sum_{j\ne i}\frac{1}{x_i-x_j}\diff t, \qquad 1\le \abs{i}\le N,
\end{equation} with initial data \begin{equation}
\label{idxxx}
x_i(0)=\begin{cases}
-N^{200}+iN & \text{if} \,\,\,-N\le i\le -i_\lambda  \\
\lambda_{i+i_\lambda}(0) &\text{if}\,\,\, 1-i_\lambda\le i\le -1 \\
\lambda_{i+i_\lambda-1}(0) &\text{if}\,\,\, 1 \le i\le N+1-i_\lambda \\
N^{200}+iN &\text{if}\, \,N+2-i_\lambda\le i\le N,
\end{cases}
\end{equation} 
and the $y_i$ are defined by 
\begin{equation}
\label{yyy}
\diff y_i=\sqrt{\frac{2}{N}}\diff B_i+\frac{1}{N}\sum_{j\ne i}\frac{1}{y_i-y_j}\diff t, \qquad 1\le \abs{i}\le N,
\end{equation}
 with initial data 
\begin{equation}
\label{idyyy}
y_i(0)=\begin{cases}
-N^{200}+iN & \text{if}\,\,\, -N\le i\le -i_\mu  \\
\mu_{i+i_\mu}(0) &\text{if}\,\,\, 1-i_\mu\le i\le -1 \\
\mu_{i+i_\mu-1}(0) &\text{if} \,\,\, 1\le i\le N+1-i_\mu \\
N^{200}+iN &\text{if} \,\,\,N+2-i_\mu\le i\le N.
\end{cases}
\end{equation} 
The summations in~\eqref{xxx} and~\eqref{yyy} extend to all $j$ with  $1\le \abs{j}\le N$ except $j=i$.

The following lemma shows that the additional particles at distance $N^{200}$ have negligible effect on the dynamics of the re-indexed eigenvalues, thus we can study the processes $x_i$ and $y_i$ instead of the eigenvalues $\lambda_i$, $\mu_i$. The proof of this lemma follows by Appendix C of~\cite{MR3914908}.
 \begin{lemma}
\label{appr}
With very high probability the following estimates hold: 
\[
\sup_{0\le t\le 1}\sup_{1 \le i\le N+1-i_\lambda}\abs{x_i(t)-\lambda_{i+i_\lambda-1}(t)}\le N^{-100},
\] 
\[\sup_{0\le t\le 1}\sup_{1-i_\lambda\le i\le N+1-i_\lambda}\abs{x_i(t)-\lambda_{i+i_\lambda}(t)}\le N^{-100},\] 
\[\sup_{0\le t\le 1}\sup_{1 \le i\le N+1-i_\mu}\abs{y_i(t)-\mu_{i+i_\mu-1}(t)}\le N^{-100},\] 
\[\sup_{0\le t\le 1}\sup_{1-i_\mu\le i\le N+1-i_\mu}\abs{y_i(t)-\mu_{i+i_\mu}(t)}\le N^{-100},\] 
\[\sup_{0\le t\le 1}x_{-i_\lambda}(t)\lesssim - N^{200},\,\,\,\,\,\sup_{0\le t\le 1}x_{N+2-i_\lambda}(t)\gtrsim N^{200},\]  \[\sup_{0\le t\le 1}y_{-i_\mu}(t)\lesssim -N^{200},\,\,\,\,\,\sup_{0\le t\le 1}y_{N+2-i_\mu}(t)\gtrsim N^{200}.\]
\end{lemma}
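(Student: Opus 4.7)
My plan is to split the argument into two parts: first control the motion of the \emph{ghost} particles (those placed initially at distances of order $N^{200}$), and then compare the re-indexed process $x_i$ to the original DBM $\lambda_{i+i_\lambda-1}$ (resp.\ $\lambda_{i+i_\lambda}$) under the index identification already built into~\eqref{idxxx}. The same argument then applies verbatim to $y_i$ vs.\ $\mu_i$ with $i_\mu$ in place of $i_\lambda$.

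For the ghost-motion bound, consider a ghost index, say $N+2-i_\lambda\le i\le N$ with $x_i(0)=N^{200}+iN$. Initially, this particle is separated from every other particle by at least $N$ (from its neighbouring ghosts) or by at least $N^{200}/2$ (from the real particles and from the ghost block on the opposite side). As long as these separations are approximately maintained, the drift in~\eqref{xxx} obeys
\[
\frac{1}{N}\sum_{j\ne i}\frac{1}{\abs{x_i-x_j}}\lesssim \frac{1}{N}\bigg(\sum_{k\ne 0}\frac{1}{\abs{k}N}+\frac{N}{N^{200}/2}\bigg)\lesssim\frac{\log N}{N^2},
\]
whereas the Brownian contribution $\sqrt{2/N}\,B_i(t)$ is at most $N^{-1/2+\epsilon}$ uniformly in $t\le 1$ by Doob's inequality. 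A standard bootstrap/continuity argument therefore yields $\sup_{t\le 1}\abs{x_i(t)-x_i(0)}\lesssim 1$ with very high probability, which immediately implies the claimed location bounds on $x_{-i_\lambda}$, $x_{N+2-i_\lambda}$, $y_{-i_\mu}$, $y_{N+2-i_\mu}$.

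For the comparison bounds, the coupling between~\eqref{lambda} and~\eqref{xxx} uses identical Brownian drivers under the shifts $i\leftrightarrow i+i_\lambda-1$ (for $i\ge 1$) and $i\leftrightarrow i+i_\lambda$ (for $i\le-1$). Define $d_i(t)\defeq x_i(t)-\lambda_{i+i_\lambda-1}(t)$ for $1\le i\le N+1-i_\lambda$ and analogously for negative $i$. The stochastic differentials cancel, and after re-indexing the $\lambda$-sum we obtain
\[
\partial_t d_i=(\mathcal{L}_t d)_i+\epsilon_i(t),\qquad (\mathcal{L}_t d)_i\defeq\frac{1}{N}\sum_{\substack{k\ne i\\ k\text{ real}}}\frac{d_k-d_i}{(x_i-x_k)(\lambda_{i+i_\lambda-1}-\lambda_{k+i_\lambda-1})},
\]
where $\epsilon_i(t)\defeq-\frac{1}{N}\sum_{j\text{ ghost}}(x_i-x_j)^{-1}$ collects the interactions with the $\sim 2N$ ghost particles and satisfies $\abs{\epsilon_i(t)}\lesssim N\cdot N^{-200}=N^{-199}$ uniformly by the first step. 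The coefficients of $\mathcal{L}_t$ are non-negative because both $(x_i)$ and $(\lambda_i)$ are strictly ordered by non-crossing of DBM, so $(x_i-x_k)(\lambda_{i+i_\lambda-1}-\lambda_{k+i_\lambda-1})>0$. The discrete maximum principle then gives $\frac{d}{dt}\max_i d_i(t)\le\max_i\epsilon_i(t)\lesssim N^{-199}$ and analogously for $\min_i d_i$; since $d_i(0)=0$ by construction, integrating over $[0,1]$ yields $\sup_{t\le 1}\max_i\abs{d_i(t)}\lesssim N^{-199}\ll N^{-100}$.

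The main technical obstacle is verifying that the particle separations underlying both the drift bound in step one and the non-negativity of the $\mathcal{L}_t$ coefficients in step two are maintained uniformly on $[0,1]$ with very high probability. I would handle this by a standard stopping-time/bootstrap argument combining uniform control on the $B_i$ with the optimal rigidity of $\lambda_i$, $\mu_i$ from~\cite{1809.03971}, so that the ghost particles never wander into the bulk and the extended process preserves the order of its particles on the whole time interval.
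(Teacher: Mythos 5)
Your argument is essentially the same as the paper's: the paper gives no in-text proof of Lemma~\ref{appr} but defers to Appendix~C of~\cite{MR3914908}, whose argument is precisely this coupling of the padded and original DBM, with the ghost interactions treated as a tiny forcing term and the real-index difference process controlled by the positivity of the coupled kernel (maximum principle/contraction plus Gronwall), closed by a joint stopping-time bootstrap that keeps the ghosts far away and the real particles bounded. Your proposal is correct up to routine bookkeeping (the forcing is in fact $O(N^{-200})$ rather than $N^{-199}$ because of the $1/N$ prefactor, and for pairs of indices of opposite sign the positivity of the kernel uses that the combined relabelling $i\mapsto i+i_\lambda-1$ for $i\ge1$, $i\mapsto i+i_\lambda$ for $i\le-1$ is order preserving), none of which affects the conclusion.
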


\begin{remark}\label{notequal}
  For notational simplicity we assumed that $H^{(\lambda)}$ and $H^{(\mu)}$ have the same dimensions, but
 our proof works as long as the corresponding  dimensions $N_\lambda$ and $N_\mu$ 
 are merely comparable, say $\frac{2}{3}N_\lambda\le N_\mu \le \frac{3}{2} N_\lambda$.
 The only modification is that  the times in~\eqref{defflow} need to be scaled differently in order to keep the strength of the stochastic differential terms  in~\eqref{lambda}--\eqref{mu}  identical. In particular, we rescale the time in the process~\eqref{lambda} as $t'=(N_\mu/N_\lambda)t$, in such a way the $N$-scaling in front of the stochastic differential and in front of the potential term are exactly the same in both the processes~\eqref{lambda} and~\eqref{mu}; namely we may replace $N$ with $N_\mu$ in both~\eqref{lambda} and~\eqref{mu}. 
 Furthermore, the number of additional ``ghost'' particles  in the \emph{extended  Dyson Brownian motion} (see~\eqref{xxx} and~\eqref{yyy})
 will be different to ensure that we have the same total number of particles, i.e.~the total number of $x$ and $y$ particles will be $2N\defeq 2\max\{ N_\mu,N_\lambda\}$, after the extension.
 Hence, assuming that $N_\mu\ge N_\lambda$, there will be $N=N_\mu$ particles added to the DBM of the eigenvalues of $H^{(\mu)}$ and $2N_\mu-N_\lambda$ 
 particles added to the DBM of $H^{(\lambda)}$. In particular, under the assumption $N_\mu\ge N_\lambda$, we may replace~\eqref{idxxx} and~\eqref{idyyy} by
 \[
 x_i(0)=\begin{cases}
-N_\mu^{200}+iN_\mu & \text{if} \,\,\,-N_\mu\le i\le -i_\lambda  \\
\lambda_{i+i_\lambda}(0) &\text{if}\,\,\, 1-i_\lambda\le i\le -1 \\
\lambda_{i+i_\lambda-1}(0) &\text{if}\,\,\, 1 \le i\le N_\lambda+1-i_\lambda \\
N_\mu^{200}+iN_\mu &\text{if}\, \,N_\lambda+2-i_\lambda\le i\le N_\mu,
\end{cases}
\qquad
y_i(0)=\begin{cases}
-N_\mu^{200}+iN_\mu & \text{if}\,\,\, -N_\mu\le i\le -i_\mu  \\
\mu_{i+i_\mu}(0) &\text{if}\,\,\, 1-i_\mu\le i\le -1 \\
\mu_{i+i_\mu-1}(0) &\text{if} \,\,\, 1\le i\le N_\mu+1-i_\mu \\
N_\mu^{200}+iN_\mu &\text{if} \,\,\,N_\mu+2-i_\mu\le i\le N_\mu.
\end{cases}
 \]
Then, all the proofs of Section~\ref{sec:padding} and Section~\ref{sec:rigid} are exactly the same of the case $N\defeq N_\mu=N_\lambda$, since all the analysis of the latter sections is done in a small, order one neighborhood of the physical cusp. In particular, only the particles $x_i(t)$, $y_i(t)$ with $1\le |i|\le \epsilon \min\{N_\mu, N_\lambda\}$, for some small fixed $\epsilon>0$, will matter for our analysis. The far away particles in the case will be treated exactly as in~\eqref{rhox}--\eqref{stbound} replacing $N$ by $N_\mu$.
\end{remark}

We now construct the analogues of the self-consistent densities $\rho_{\lambda,t}$, $\rho_{\mu,t}$ for the $x(t)$ and $y(t)$ processes
as well as for their $\alpha$-interpolations.  We start with $\rho_{x,t}$. Recall $\rho_{\lambda, t}$ from Section~\ref{sec scflow},   and  set
\begin{equation}\label{rhox}
 \rho_{x,t}(E)\defeq  \rho_{\lambda,t} (E)  +     \frac{1}{N} \sum_{i=-N}^{-i_\lambda}  \psi(E-x_i(t))+
  \frac{1}{N} \sum_{i=N+2-i_\lambda}^N   \psi(E-x_i(t)), \qquad E\in\R,
 \end{equation}
where $\psi$ is a non-negative symmetric approximate delta-function on scale $N^{-1}$, i.e.~it is supported in an $N^{-1}$
neighbourhood of zero,  $\int\psi=1$, $\norm{\psi}_\infty\lesssim N$ and $\norm{\psi'}_\infty\lesssim N^2$. 
Note that the total mass is $\int_\R \rho_{x,t}=2$. For the Stieltjes transform $m_{x,t}$ of $\rho_{x,t}$, we have
$\sup_{z\in \C^+}\abs{ m_{x,t}(z)}\le C$ since the same bound holds for $\rho_{\lambda,t} $ by the shape analysis.
Note that $\rho_{\lambda,t} $ is the semicircular flow with initial condition $\rho_{\lambda, t=0}=\rho_\lambda$ 
by definition, but $\rho_{x,t}$ is not exactly the semicircular evolution of $\rho_{x, 0}$. We will not need
this information, but in fact, the effect 
of the far away padding particles on the density near the cusp  is very tiny.

Since $\rho_{x,t}$ coincides with $\rho_{\lambda, t}$ in a big finite interval, their edges and local minima near the cusp regime
 coincide, i.e we can identify
\[
 \ed_{x,t}^\pm= \ed_{\lambda, t}^\pm, \qquad \mi_{x, t} = \mi_{\lambda, t}.
\]
The shifted quantiles and semiquantiles $\wh\gamma_{x,i}(t), \widecheck\gamma_{x,i}(t)$ and $\wh\gamma_{x,i}^*(t), \widecheck\gamma_{x,i}^*(t)$  
of $\rho_{x,t}$ are defined by the obvious analogues of the formulas~\eqref{def:quan}--\eqref{def:semiquan2}
except that $r$ subscript is replaced with $x$ and the indices run over the entire range $1\le \abs{i}\le N$. As before,
$\gamma_{x,0}(t) = \ed_{x,t}^+$ . The unshifted quantiles are defined by 
\[
   \gamma_{x,i}(t) = \wh\gamma_{x,i}(t)+\ed_{x,t}^+, \quad 0\le t\le t_*, \qquad  \gamma_{x,i}(t) = \widecheck\gamma_{x,i}(t)+\mi_{x,t}, \quad t_*\le  t\le 2t_*
\]
 and similarly  for the semiquantiles.

So far we explained how to construct $\rho_{x,t}$ and its quantiles from $\rho_{\lambda, t}$, exactly in the same way we obtain $\rho_{y,t}$
from $\rho_{\mu, t}$ with straightforward notations.

Now for any $\alpha\in [0,1]$ we construct the $\alpha$-interpolation of $\rho_{x,t}$ and $\rho_{y,t}$ that we will denote by $\ov\rho_t$.
The bar will indicate quantities related to $\alpha$-interpolation that implicitly depend   on $\alpha$; a dependence that we often
omit from the notation. The interpolating measure will be constructed via its quantiles, i.e.~we define
\begin{equation}\label{barg}
    \ov \gamma_i (t)\defeq  \alpha \wh \gamma_{x,i}(t) + (1-\alpha)\wh \gamma_{y,i}(t), \qquad
      \ov\gamma_i^*(t)\defeq \alpha \wh \gamma_{x,i}^*(t) + (1-\alpha)\wh \gamma_{y,i}^*(t), \qquad 1\le \abs{i}\le N, \quad 0\le t\le t_*
\end{equation}
and similarly for $t_*\le t\le 2t_*$ involving $\widecheck\gamma$'s. We also set the interpolating edges
\begin{equation}\label{edgeinterpolation}
  \ov\ed_t^\pm = \alpha \ed_{x,t}^\pm + (1-\alpha) \ed_{y,t}^\pm.
\end{equation}

 Recall the parameter  $\delta_*$ 
describing the size of a  neighbourhood  around the physical cusp where the shape analysis for $\rho_\lambda$ and $\rho_\mu$ 
in Section~\ref{sec: main results} holds.
Choose 
 $i(\delta_*)\sim N$ such that $\abs[0]{\ov\gamma_{x, -i(\delta_*)}(t)}\le \delta_*$  as well as $\abs[0]{\ov \gamma_{x,i(\delta_*)}(t) }\le \delta_*$
 hold for all $0\le t\le 2t_*$.
Then define, for any $E\in\R$,  the function
\begin{equation}\label{rhozdef}
    \ov\rho_t(E)\defeq  \rho_{\alpha,t} (E) \cdot {\bf 1}\big( \ov\gamma_{-i(\delta_*)}(t)+\ov\ed_t^+ \le E
     \le \ov\gamma_{  i(\delta_*)}(t)+\ov\ed_t^+\big) +
     \frac{1}{N} \sum_{i(\delta_*)<\abs{i}\le N} \psi(E- \ov\ed_t^+- \ov\gamma_{i}^*(t) ),
\end{equation}
where $\rho_{\alpha, t}$ is the $\alpha$-interpolation, constructed in Definition~\ref{def interpolating density},
between $\rho_{\lambda,t}(E) =\rho_{x,t}(E)$ and $\rho_{\mu,t}(E) =\rho_{y,t}(E)$
for $\abs{E}\le \delta_*$. By this construction  (using also the symmetry of $\psi$) we know that all shifted semiquantiles of 
$\ov\rho_t$ are exactly  $\ov\gamma_{i}^*(t)$. The same holds for all shifted quantiles  $\ov\gamma_{i}(t)$
at least in the interval $[-\delta_*, \delta_*]$ since here $\ov\rho_t \equiv \rho_{\alpha, t}$ and
the latter was constructed exactly by the requirement of linearity of the quantiles~\eqref{barg}, see~\eqref{quantiles convex comb}.

We also record $\int \ov\rho_t=2$ and that for the Stieltjes transform $\ov m_t(z)$  of $\ov\rho_t$  we have
\begin{equation}\label{stbound}
  \max_{\abs[0]{\Re z-\ov\ed_t^+}\le \frac{1}{2}\delta_*} \abs{\ov m_t(z)}\le C 
\end{equation}
for all $0\le t\le 2t_*$.  The first bound follows easily from the same boundedness of the Stieltjes transform of  $\rho_{\alpha,t}$. 
Moreover, $\ov m_t(z)$ is $\frac{1}{3}$-H\"older continuous in the regime $\abs{\Re z-\ov\ed_t^+}\le \frac{1}{2}\delta_*$ since
in this regime $\ov\rho_t = \rho_{\alpha, t}$ and $\rho_{\alpha, t}$ is $\frac{1}{3}$-H\"older continuous  by Lemma~\ref{lemma holderhsh}.

\section{Rigidity for the short range approximation}\label{sec:rigid}

\newcommand{\rr}{\mathring{r}}

In this section we consider Dyson Brownian Motion (DBM), i.e.~a system of  $2N$ coupled stochastic differential equations 
for $z(t) =\{ z_i(t) \}_{[-N,N]\setminus \{0\}}$
of the form
\begin{equation}\label{req}
  \diff  z_i =\sqrt{ \frac{2}{N}} \diff B_i+ \frac{1}{N}\sum_j  \frac{1}{ z_i- z_j} \diff t,
 \qquad 1\le \abs{i}\le N,
\end{equation}
with some initial condition $z_i(t=0)=z_i(0)$, 
where $B(s)= (B_{-N}(s), \ldots, B_{-1}(s), B_1(s) \ldots, B_N(s))$ is the vector of $2N$  independent standard Brownian motions.
We use the indexing convention that all indices  $i,j$, etc., run from $-N$ to $N$ but zero index is excluded.

We will assume that $z_i(0)$ is an $\alpha$-linear interpolation of $x_i(0), y_i(0)$ for some $\alpha\in [0,1]$:
\begin{equation}\label{initalpha}
    z_i(0)= z_i(0,\alpha)\defeq  \alpha x_i(0) + (1-\alpha) y_i(0).
 \end{equation} In the following of this section we will refer to the process defined by~\eqref{req} using $z(t,\alpha)$ in order to underline the $\alpha$ dependence of the process.
Clearly for $\alpha=0,1$ we recover the original $y(t)$ and $x(t)$ processes, $z(t,\alpha=0)=y(t)$, $z(t,\alpha=1)=x(t)$.
For these processes we have
 the following optimal rigidity estimate that immediately follows from~\cite[Corollary 2.6]{1809.03971}
 and Lemma~\ref{appr}:
 \begin{lemma}
\label{diffx}
Let $r_i(t)=x_i(t)$ or $r_i(t)=y_i(t)$ and $r=x,y$. Then, there exists a fixed small $\epsilon>0$, depending only
on the model parameters,  such that for each $1\le \abs{i}\le \epsilon N$, we have 
\begin{equation}
\label{rzx}
\sup_{0\le t\le 2t_*}\abs{r_i(t)-\gamma_{r,i }(t)}\le N^\xi \eta_{\mathrm{f}}^{\rho_{r,t}}({\gamma}_{r,  i}(t)),
\end{equation} for any $\xi>0$ with very high probability, where we recall that the behavior of $\eta_{\mathrm{f}}^{\rho_{r,t}}({\gamma}_{r,  i }(t))$, 
with $r=x,y$, is given by~\eqref{fluc scale gap}. 
\end{lemma}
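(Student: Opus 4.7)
The plan is to deduce Lemma~\ref{diffx} directly from the optimal local law for Wigner-type matrices along the OU/DBM flow, combined with the approximation result Lemma~\ref{appr} that identifies the enlarged DBM particles $x_i(t)$, $y_i(t)$ with the eigenvalues of $H_t^{(\lambda)}$ and $H_t^{(\mu)}$ up to a negligible shift of the index. I will carry out the proof for $r=x$ and $r_i=x_i$; the case $r=y$ is identical.

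First, I would invoke~\cite[Corollary~2.6]{1809.03971}, which establishes optimal eigenvalue rigidity for the deformed Wigner-type matrix $H_t^{(\lambda)}=H^{(\lambda)}+\mathfrak{B}_t^{(\lambda)}$. The matrix $H_t^{(\lambda)}$ satisfies Assumptions~\ref{bdd moments}--\ref{bdd m} uniformly in $t\in[0,2t_*]$: flatness (Assumption~\ref{fullness}) is only strengthened by adding an independent GOE/GUE component, the boundedness of the expectation and moments is preserved, and, by the semicircular flow analysis recalled in Section~\ref{sec scflow}, the self-consistent density $\rho_{\lambda,t}$ continues to exhibit a physical cusp with bounded self-consistent Green's function throughout the interval $[0,2t_*]$. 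Hence, for every $\xi>0$, with very high probability,
\begin{equation*}
\sup_{0\le t\le 2t_*}\,\abs{\lambda_j(t)-\gamma_{\lambda,j}(t)}\le N^{\xi/2}\,\eta_{\mathrm f}^{\rho_{\lambda,t}}(\gamma_{\lambda,j}(t)),\qquad 1\le j\le N,
\end{equation*}
uniformly in $j$.

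Next, I would compare $\rho_{x,t}$ with $\rho_{\lambda,t}$ and match the quantile indices. By the construction~\eqref{rhox}, $\rho_{x,t}$ differs from $\rho_{\lambda,t}$ only by $\mathcal{O}(1/N)$ mass carried by the ghost particles localized in an $N^{-1}$-neighbourhood of positions $\sim N^{200}$; in particular, $\rho_{x,t}\equiv\rho_{\lambda,t}$ on any bounded interval around the physical cusp for all $N$ large. Choosing the small constant $\epsilon>0$ so that the range $1\le\abs{i}\le\epsilon N$ corresponds to (shifted) quantile locations well inside this bounded interval, the definition of the shifted quantiles together with the index shift built into~\eqref{idxxx} yields $\gamma_{x,i}(t)=\gamma_{\lambda,i+i_\lambda}(t)$ for $1\le i\le \epsilon N$ and $\gamma_{x,i}(t)=\gamma_{\lambda,i+i_\lambda-1}(t)$ for $-\epsilon N\le i\le -1$, with the corresponding identity for the fluctuation scales $\eta_{\mathrm f}^{\rho_{x,t}}=\eta_{\mathrm f}^{\rho_{\lambda,t}}$ at these locations.

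Finally, combining these two ingredients with Lemma~\ref{appr}, I obtain with very high probability
\begin{equation*}
\abs{x_i(t)-\gamma_{x,i}(t)}\le\abs{x_i(t)-\lambda_{i+i_\lambda}(t)}+\abs{\lambda_{i+i_\lambda}(t)-\gamma_{\lambda,i+i_\lambda}(t)}\le N^{-100}+N^{\xi/2}\eta_{\mathrm f}^{\rho_{\lambda,t}}(\gamma_{\lambda,i+i_\lambda}(t)),
\end{equation*}
with the analogous estimate for $-\epsilon N\le i\le -1$. Since the fluctuation scales~\eqref{fluc scale gap}--\eqref{fluc scale min} satisfy $\eta_{\mathrm f}^{\rho_{x,t}}(\gamma_{x,i}(t))\gg N^{-100}$, absorbing the $N^{-100}$ error into the main term yields the claimed bound. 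The only mildly delicate point is the index bookkeeping at the gap, but the conventions in~\eqref{idxxx}--\eqref{idyyy} were precisely chosen to make this identification exact, so no further work is required.
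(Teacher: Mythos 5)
Your proposal is correct and takes essentially the same route as the paper, which proves Lemma~\ref{diffx} in exactly this way: the bound is stated as an immediate consequence of the optimal rigidity \cite[Corollary~2.6]{1809.03971} applied to $H^{(r)}_t$ uniformly in $t\le 2t_*$, combined with Lemma~\ref{appr} to transfer it to the padded particles, with the $N^{-100}$ error absorbed into the fluctuation scale. The only (harmless) slip is the index bookkeeping: by~\eqref{idxxx} and Lemma~\ref{appr} the identification is $x_i\approx\lambda_{i+i_\lambda-1}$ for $i\ge 1$ and $x_i\approx\lambda_{i+i_\lambda}$ for $i\le -1$, i.e.~the opposite of the convention you wrote, which does not affect the argument.
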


Note that, by~\eqref{eq Delta size},~\eqref{eq min size} and~\eqref{eqs fluc scale}, for all $1\le \abs{i}\le \epsilon N $ and for all $0\le t\le t_*$ we have that \begin{equation}
\label{boundeta}
\eta_{\mathrm{f}}^{\rho_{r,t}}({\gamma}_{r, i }(t))\lesssim \frac{N^\frac{\omega_1}{6}}{\abs{ i  }^\frac{1}{4}N^\frac{3}{4}},
\end{equation} with $r=x,y$.

In particular, we know that
$z(0,\alpha)$  lie close to the quantiles~\eqref{barg} of an \emph{$\alpha$-interpolating density}
 $\rho_z=\overline{\rho}_0$, see the definition in~\eqref{rhozdef}. 
This means that  $\rho_z$ has a small gap $[\ed_z^-, \ed_z^+]$ of size $\Delta_z\sim t_*^{3/2}$ (i.e.~it will develop a physical cusp in a time of order $t_*$) and it is an $\alpha$-interpolation between $\rho_{x,0}$ and $\rho_{y,0}$. Here interpolation refers
to the process introduced in Section~\ref{sec:padding} that guarantees that the corresponding quantiles are convex linear
combinations of the two initial densities with weights $\alpha$ and $1-\alpha$, i.e.~
\[
  \gamma_{z, i} = \alpha\gamma_{x,i} + (1-\alpha)\gamma_{y,i}.
\]

In this section we will prove rigidity results for $z(t,\alpha)$ and for its appropriate short range approximation. 

\begin{remark}
\label{diifothers}
Before we go into the details, we point out that
  we will prove  rigidity %
  \emph{dynamically}, i.e. 
using the DBM. The route chosen here is very different from the one in~\cite[Sec. 6]{1712.03881}, where the authors prove a local law for short times in order to get  rigidity for the short range approximation of the interpolated process. While it would be possible to follow the latter strategy in the cusp regime 
as well, the technical difficulties are overwhelming, in fact already in the much simpler edge regime 
a large part of~\cite{1712.03881} was devoted to this task. The current proof of the optimal law at the cusp regime~\cite{1809.03971}
heavily use an effective mean-field condition (called \emph{flatness}) that corresponds to large time in the DBM. Relaxing this condition would
require to adjust  not only~\cite{1809.03971} but also the necessary deterministic analysis from~\cite{1804.07752} to the short time case. Similar complications would have arisen if we had followed the strategy of~\cite{1810.08308,1612.06306} where rigidity is proven by analysing the characteristics of the McKean-Vlasov equation. The route chosen here is shorter and  more interesting.
\end{remark}

Since the group velocity of the entire cusp regime is different for $\rho_{x,t}$ and $\rho_{y,t}$,
the interpolated process will have an intermediate group velocity. Since we have to follow the process
for time scales $t\sim N^{-\frac{1}{2}+\om_1}$, much bigger than the relevant rigidity scale $N^{-\frac{3}{4}}$
we  have to determine the group velocity quite precisely. Technically, we will encode this information by
defining an appropriately  shifted process $\wt z(t,\alpha) = z(t,\alpha) - \mathrm{Shift}(t,\alpha)$. It is essential that the shift function
is independent of the indices $i$ to preserve the local statistics of the process. In the next section we 
explain how to choose the shift.

\subsection{Choice of the shifted process \texorpdfstring{$\wt z$}{z}}\label{sec:shiftchoice}

The remainder of Section~\ref{sec:rigid} is formulated for the small gap regime, i.e.~for $0\le t\le t_\ast$. We will comment on the modifications in the small minimum regime in Section~\ref{UNMIN}. To match the location of the gap, the natural guess would be  to study the shifted process $z_i(t,\alpha) - \ed_{z, t}^+$
where $[\ed_{z, t}^-, \ed_{z, t}^+]$ is the gap of the semicircular evolution $\rho_{z,t}$ of  $\rho_z$  near 
the physical cusp, and approximate $z_i(t,\alpha) - \ed_{z, t}^+$ by the shifted semiquantiles $\wh \gamma_{z,i}^*(t)$
of $\rho_{z,t}$. However, the evolution of the semicircular flow  $t\to \rho_{z,t}$ near the cusp is
not sufficiently well understood. We circumvent this technical problem by considering the quantiles of another 
approximating density $\ov \rho_t$  defined by the requirement that its quantiles are 
exactly the $\alpha$-linear combinations of the quantiles of $\rho_{x, t}$ and $\rho_{y,t}$ as described in Section~\ref{sec:padding}.
The necessary regularity properties of $\ov \rho_t$ follow directly from its construction.
The precise description below assumes that $0\le t\le 2t_*$, i.e.~we are in the small gap situation. For $t_*\le t\le t_*$
an identical construction works but the reference point $\ed_{r,t}^+$ is replaced with the approximate
minimum $\wt\mi_{r,t}$,  for $r=x,y$. For simplicity we present all formulas for $0\le t\le t^*$ and we will comment on the other case in Section~\ref{UNMIN}.

More concretely,  for any fixed $\alpha\in [0,1]$ recall the (semi)quantiles from~\eqref{barg}.
These are the (semi)quantiles of the  interpolating density $\ov \rho=\ov\rho_t$  defined in~\eqref{rhozdef}
and let  its Stieltjes transform be
denoted by $\ov m= \ov m_t$.  Bar will refer to quantities related to this interpolation; implicitly 
all quantities marked by bar depend on the interpolation
parameter $\alpha$, which dependence will be omitted from the notation. 
Notice that $\ov \rho_t$ has a gap $[\ov\ed_t^-, \ov\ed_t^+]$ near the cusp satisfying~\eqref{edgeinterpolation}.
Initially at $t=0$ we have $\ov \rho_{t=0}=\rho_z$, in particular $\ov\gamma_i(t=0) =\wh \gamma_{z,i}(t=0)$
and $\ov \ed_0^\pm = \ed_z^\pm$.
 We will choose the shift in the definition of the  $\wt z_i(t,\alpha)$ process so that
 we could use  $\ov\gamma_i^*(t)$
to trail it.

The semicircular flow and the $\alpha$-interpolation do not commute
hence  $\ov \gamma_{i}(t)$ are not the same as the quantiles $\wh \gamma_{z,i}(t)$ of
 the semicircular evolution $\rho_{z,t}$ of the initial density $\rho_z$.  We will, however, show
 that they are sufficiently close near the cusp and up to times relevant for us, modulo an irrelevant time dependent shift.
 Notice that the evolution of  $\wh \gamma_{z,i}(t)$  is hard to control since
analysing $\frac{\diff}{\diff t} \wh \gamma_{z,i}(t) = -\Re m_{z,t}( \gamma_{z,i}(t)) + \Re m_{z,t}(\ed_{z,t}^+)$
would involve knowing the evolved density $\rho_{z,t}$ quite precisely in the critical cusp regime. While
this necessary  information is in principle accessible from the explicit expression for the semicircular flow and the 
precise shape analysis of $\rho_z$ obtained from that of $\rho_x$ and $\rho_y$,  here we chose a different, technically lighter path
by using $\ov\gamma_i(t)$. 
Note that unlike  $\wh \gamma_{z,i}(t)$, the derivative of $\ov\gamma_i(t)$
 involves only the Stieltjes transform of the densities $\rho_{x,t}$ and $\rho_{y,t}$
 for which shape analysis is available.

However, the global group velocities of $\ov\gamma(t)$
and $\wh \gamma_z(t)$ are not the same near the cusp. 
We thus need to define $\wt z(t,\alpha)$ not as $z(t,\alpha)-\ov\ed_t^+$ but with a modified 
 time dependent shift 
to make up for this velocity difference so that
$\ov \gamma(t) $ indeed correctly follows $\wt z(t,\alpha)$. To determine this shift, we first define   the function 
\begin{equation}\label{h*def}
   h^*(t,\alpha)\defeq \Re \Big[  - \ov m_{t}(\ov \ed_{t}^+)+ (1-\alpha)  m_{y,t}( \ed_{y,t}^+) +
    \alpha   m_{x,t}( \ed_{x,t}^+) \Big],
\end{equation}
where recall that $\ov m_t$ is the Stieltjes transform of 
the measure $\ov\rho_t$. Note that $h^*(t)=O(1)$ following 
from the boundedness of the Stieltjes transforms  $m_{x,t}$, $m_{y,t}$ and $\ov m_t(\ov\ed_t^+)$. The boundedness of $m_{x,t}$ and $m_{y,t}$ follows by~\eqref{eq free sc conv def} and $\abs{\overline{m}_t(\overline{e}_t^+)}\le C$ by~\eqref{stbound}.

We note that
\[
      h^*(t, \alpha=0) = m_{y,t}( \ed_{y,t}^+) - \ov m_{t}(\ov \ed_{t}^+) = m_{y,t}( \ed_{y,t}^+) - \ov m_{t}(\ed_{y,t}^+)
\] 
since for $\alpha=0$ we have $\ed_{y,t}^+ = \ov \ed_{t}^+$ by construction. At $\alpha=0$ 
the measure $\ov\rho_t$  is given exactly by the density  $\rho_{y,t}$ in an $\mathcal{O}(1)$ neighbourhood of the cusp. Away from the
cusp,  depending on the precise construction
in the analogue of~\eqref{rhozdef},  the continuous $\rho_{y,t}$ is replaced by locally smoothed out
Dirac measures at the quantiles. A similar statement
holds at $\alpha=1$, i.e.~for the density $\rho_{x,t}$.
It is easy to see that the difference of the corresponding Stieltjes transforms evaluated at the cusp regime is of order $N^{-1}$, i.e.
\begin{equation}
\label{zetaest}
     |h^*(t, \alpha=0)|+ |h^*(t, \alpha=1)|= O(N^{-1}).
\end{equation}

Since later in~\eqref{zeta} we will need to give some very crude estimate on the $\alpha$-derivative of $h^*(t,\alpha)$, but
it actually blows up
since $\ov m_t'$ is singular at the edge,  we introduce a tiny regularization of $h^*$,  i.e.
we define the function 
\begin{equation}
\label{h**def}
h^{**}(t,\alpha)\defeq  \Re \Big[  - \ov m_{t}(\ov \ed_{t}^++\ii N^{-100})+ (1-\alpha)  m_{y,t}( \ed_{y,t}^+) +
    \alpha   m_{x,t}( \ed_{x,t}^+) \Big].
\end{equation} Note that by the $\frac{1}{3}$-H\"older continuity of $\overline{m}_t$ in the cusp regime, i.e.~for $z\in\mathbb{H}$ such that $\abs{\Re z-\overline{\mathfrak{e}}_t^+}\le\frac{\delta_*}{2}$, it follows that 
\begin{equation}
\label{close}
h^{**}(t,\alpha)=h^*(t,\alpha)+\mathcal{O}(N^{-30}).
\end{equation}

Then, we define
\begin{equation}\label{hdef}
   h(t)=h(t,\alpha)\defeq  h^{**}(t, \alpha) -\alpha h^{**}(t, 1) - (1-\alpha) h^{**}(t, 0) 
   =O(1)
\end{equation}
to ensure that 
\begin{equation}\label{hedge}
h(t,\alpha=0) =  h(t, \alpha=1) = 0.
\end{equation}
In particular, we  have
\begin{equation}\label{hhdef}
    h(t,\alpha)= \Re \Big[ - \ov m_{t}(\ov \ed_{t}^+)+  (1-\alpha)  m_{y,t}( \ed_{y,t}^+) +
    \alpha   m_{x,t}( \ed_{x,t}^+)\Big]+ O(N^{-1}).
\end{equation}
Define its antiderivative
\begin{equation}\label{Hdef}
    H(t,\alpha)\defeq  \int_0^t h(s,\alpha)\diff s, \qquad H(0,\alpha)= 0, \qquad \max_{0\le t\le t_*} \abs{H(t,\alpha)}\lesssim N^{-1/2+\om_1}.
\end{equation}
Now we are ready to define the correctly shifted process
\begin{equation}\label{def:rtilde}
\wt z_i (t) = \wt z_i(t, \alpha)\defeq z_i(t) -   \big[ \alpha \ed_{x,t}^+ + (1-\alpha) \ed_{y,t}^+\big] - H(t,\alpha),
\end{equation}
that will be trailed by $\ov\gamma_i(t)$.  It satisfies the shifted DBM
\begin{equation}\label{tildereq}
  \diff \wt z_i =\sqrt{ \frac{2}{N}} \diff B_i+ \Bigg[ \frac{1}{N}\sum_{j\ne i}  \frac{1}{\wt z_i-\wt z_j} +\Phi_\alpha(t) \Bigg]\diff t
\end{equation}
with
\begin{equation}\label{Phidef}
  \Phi(t) \defeq \Phi_\alpha(t)= \alpha \Re m_{x,t}(\ed_{x,t}^+) + (1-\alpha)\Re m_{y,t}(\ed_{y,t}^+)  - h(t,\alpha), 
\end{equation}
and with  initial conditions $\wt z(0)\defeq  z(0) - \ed_z^+$  by~\eqref{edgeinterpolation} and $H(0,\alpha)=0$.
The shift function satisfies  
\begin{equation}\label{Phiest}
\Phi_\alpha(t)=\Re[ \overline{m}_t(\overline{\mathfrak{e}}_t^+)]+\mathcal{O}(N^{-1}).
\end{equation}

Notice that
 for $\alpha=0,1$ this definition gives back  the naturally shifted $x(t)$ and $y(t)$ processes since we clearly have
\begin{equation}\label{xy}
 \wt z(t,\alpha=1)= \wt x(t)\defeq x(t) -\ed_{x,t}^+, \qquad \wt z(t,\alpha=0)= \wt y(t)\defeq y(t) -\ed_{y,t}^+,
\end{equation}
that are trailed by the shifted semiquantiles
\begin{equation}\label{xyq}
 \ov\gamma_i^*(t,\alpha=1)= \wh \gamma_{x,i}^*(t)\defeq \gamma_{x,i}^*(t) - \ed_{x,t}^+, \qquad 
 \ov\gamma_i^*(t,\alpha=0)= \wh \gamma_{y,i}^*(t)\defeq \gamma_{y,i}^*(t) - \ed_{y,t}^+.
 \end{equation}

 As we explained, the time dependent shift $H(t,\alpha)$ in~\eqref{def:rtilde} makes up for
the difference between the true edge velocity of the semicircular flow (which we do not compute
directly) and the naive guess which is $\frac{\diff}{\diff t}  \big[ \alpha \ed_{x,t}^+ + (1-\alpha) \ed_{y,t}^+\big]$
hinted by the linear combination procedure. The precise expression~\eqref{h*def} will come out of the
proof. The key point is that this adjustment is global, i.e.~it is only time dependent but independent of $i$
since this expresses a group velocity of the entire cusp regime.

\bigskip

\subsection{Plan of the proof.}

In the following three  subsections we prove an almost optimal rigidity not directly for $\widetilde{z}_i(t)$
but  for its appropriate  short range approximation $\widehat{z}_i(t)$. This will be sufficient for the proof of the universality.
The proof of the rigidity will be divided into three phases, which we first explain informally, as follows.

\begin{itemize}
\item[\textbf{Phase 1.}] (Subsection~\ref{martingalenew})
The main result is a rigidity for $\widetilde{z}_i(t)-\ov\gamma_i(t)$ for $1\le \abs{i}\lesssim \sqrt{N}$  
on scale $N^{-\frac{3}{4}+C\om_1}$ without $i$-dependence in the error term. 
First we prove a crude rigidity on scale $N^{-1/2+C\om_1}$ for all indices $i$. Using this rigidity, we can define
a short range approximation $\mathring{z}$ of the original dynamics $\wt z$ and show that $\wt z_i$ and $\mathring{z}_i$ 
are close by $N^{-\frac{3}{4}+C\om_1}$ for $1\le \abs{i}\lesssim \sqrt{N}$. Then we analyse the short range process $\mathring{z}$
that has a finite speed of propagation, so we can localize the dynamics. Finally, 
we can directly compare $\mathring{z}$ with a deterministic particle dynamics  because the effect of
the stochastic term $\sqrt{2/N}\diff B_i$, i.e.~$\sqrt{t_*/N} = N^{-3/4+\om_1/2}\ll N^{-3/4+ C\om_1}$,
 remains below the rigidity scale of interest in this Phase 1.

However, to understand this deterministic particle dynamics we need to compare it with  the corresponding continuum evolution;
this boils down to estimating the difference of a Stieltjes transform and its Riemann sum approximation  at the
semiquantiles. Since the Stieltjes transform is given by a singular integral, this approximation relies on quite delicate
cancellations which require some strong regularity properties of the density. We can easily guarantee this regularity 
by considering the density $\ov\rho_t$ of the linear interpolation between the quantiles of $\rho_{x,t}$ and $\rho_{y,t}$.

\item[\textbf{Phase 2.}]  (Subsection~\ref{sec:hatrig})  In this section we improve the rigidity from scale $N^{-\frac{3}{4}+C\om_1}$
to scale $N^{-\frac{3}{4}+\frac{1}{6}\om_1}$, for a smaller range of indices,
 but we can achieve this not for $\wt z$ directly, but for its short range approximation
$\wh z$. Unlike $\mathring{z}$ in Phase 1, this time we choose a very short scale approximation $\wh z$ on scale $N^{4\om_\ell}$ with $\om_1\ll \om_\ell \ll 1$.
As an input, we need
 the rigidity of $\wt z_i$ on scale $N^{-\frac{3}{4}+C\om_1}$ 
for $1\le \abs{i}\lesssim \sqrt{N}$ obtained in Phase 1. We use heat kernel contraction 
 for a  direct comparison with the $y_i(t)$ dynamics for which we know optimal rigidity by~\cite{1809.03971},
 with the precise matching of the indices (\emph{band rigidity}). In particular, when the gap is large, this 
 guarantees that band rigidity is transferred to  the $\wh z$ process from  the $\wh y$ process.

\item[\textbf{Phase 3.}] (Subsection~\ref{sec:correcti})  Finally, we establish  the optimal $i$-dependence 
in the rigidity estimate for $\wh z_i$ from Phase 2, i.e.~we get a precision $N^{-\frac{3}{4}+\frac{1}{6}\om_1} \abs{i}^{-1/4}$.
The main method we use in Phase 3 is maximum principle. We compare $\wh z_i$ with $\widehat{y}_{i-K}$, a slightly shifted element of the $\widehat{y}$ process,
where $K=N^\xi$ with some tiny $\xi$. This method allows us to prove the optimal $i$-dependent rigidity (with a factor $N^{\frac{1}{6}\om_1}$) but
only for indices $\abs{i}\gg K$ because otherwise $\wh z_i$ and $\wh y_{i-K}$ may be on different sides of the gap for small $i$.
For very small indices,  therefore,  we need to rely on band rigidity for $\wh z$ from Phase 2.

The optimal $i$-dependence  allows us to replace the random particles $\wh z$ by appropriate quantiles
with a precision 
so that 
\[ 
\abs{\wh z_i -\wh z_j}\lesssim  N^\frac{\omega_1}{6} \abs{\ov \gamma_{i} -\ov \gamma_j} 
\sim N^{-\frac{3}{4} +\frac{\omega_1}{6}}\abs{ \abs{i}^{\frac{3}{4}} - \abs{j}^{\frac{3}{4}}}.
\]
Such upper bound on $\abs{\wh z_i -\wh z_j}$, hence a lower bound on the interaction
 kernel $\mathcal{B}_{ij} =\abs{\wh z_i -\wh z_j}^{-2}$ of the differentiated DBM  (see~\eqref{11} later) with 
the correct dependence  on the indices $i,j$, is 
essential since this gives the heat kernel contraction which eventually drives the precision below
the rigidity scale in order to prove universality. On a time scale $t_*= N^{-\frac{1}{2}+\om_1}$ the 
$\ell^p\to \ell^\infty$ contraction of the heat kernel gains a factor $N^{-\frac{4}{15}\om_1}$
with the convenient choice of $p=5$. Notice that $\frac{4}{15} >\frac{1}{6}$, so the contraction
wins over the  imprecision in the rigidity $N^{\frac{1}{6}\om_1}$ from Phase 3, but not over $N^{C\om_1}$
from Phase 1, showing that both Phase 2 and Phase 3 are indeed necessary.
\end{itemize}

\subsection{Phase 1: Rigidity for \texorpdfstring{$\wt z$}{z} on scale \texorpdfstring{$N^{-3/4+C\om_1}$}{N-3/4+Cw1}.}\label{martingalenew}

The main result of this section is the following proposition:

\begin{proposition}\label{prop:option2}
Fix $\alpha\in [0,1]$. Let $\wt z(t,\alpha)$ solve~\eqref{tildereq} with initial condition $\wt z_i(0,\alpha)$ satisfying the crude rigidity bound for all indices
\begin{equation}\label{verycrudeinitial}
  \max_{1\le \abs{i}\le N}  \abs{ \wt z_i(0,\alpha) - \ov \gamma_{i}^*(0)}\lesssim N^{-1/2+2\om_1}.
\end{equation}
We also assume that
\begin{equation}\label{mmbound}
\norm{m_{x,0}}_\infty + \norm{m_{y,0}}_\infty + \abs{ \ov m_t(\ov \ed_t^\pm)}\le C.
\end{equation}  
Then we have a weak but uniform rigidity
\begin{equation}\label{crudinitial}
  \sup_{0\le t\le t_*}\max_{1\le \abs{i}\le N}  \abs{ \wt z_i(t,\alpha) - \ov \gamma_{i}^*(t) }\lesssim N^{-1/2+2\om_1}, 
\end{equation}
with very high probability.
Moreover, for small $\abs{i}$, i.e.~$1\le \abs{i}\le i_*$,  with $i_* \defeq  N^{1/2+ C_*\om_1}$ for some large $C_*>100$, we have a stronger rigidity:
\begin{equation}\label{rgtilde}
   \sup_{0\le t\le t_*} \max_{1\le \abs{i}\le i_*} \abs{\wt z_i(t,\alpha) -\ov \gamma_{i}^*(t)} \lesssim  \max_{1\le \abs{i}\le 2i_*}
    \abs{\wt z_i(0,\alpha)-\ov \gamma_{i}^*(0)} +\frac{N^{C\om_1}}{N^{3/4}} 
\end{equation}
with very high probability.
\end{proposition}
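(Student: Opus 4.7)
The proof naturally splits into two stages, paralleling the two statements. First I would establish the weak uniform bound \eqref{crudinitial}, and then use it as an input to bootstrap the stronger bound \eqref{rgtilde} for indices near the cusp via a short-range approximation of the DBM.

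For the weak rigidity, the key observation is that the time-dependent shift $H(t,\alpha)$ in \eqref{def:rtilde} is engineered via \eqref{h*def}--\eqref{hhdef} precisely so that at the endpoints $\alpha=0,1$ the shifted process coincides with $\wt y$ and $\wt x$, which enjoy optimal rigidity by Lemma~\ref{diffx} combined with \eqref{xy}--\eqref{xyq}, and so that for interior $\alpha$ the quantiles $\ov\gamma_i^*(t)$ themselves are exact convex combinations of $\wh\gamma_{x,i}^*(t)$ and $\wh\gamma_{y,i}^*(t)$. I would prove \eqref{crudinitial} by differentiating the shifted DBM \eqref{tildereq} in $\alpha$: since the stochastic differentials cancel (the same Brownian motions drive all three processes by the coupling of Section~\ref{sec:padding}), $u_i(t,\alpha)\defeq\partial_\alpha\wt z_i(t,\alpha)$ solves the discrete parabolic equation
\[
\partial_t u_i = -\frac{1}{N}\sum_{j\ne i}\frac{u_i-u_j}{(\wt z_i-\wt z_j)^2} + \partial_\alpha\Phi_\alpha(t),
\]
with initial data $u_i(0)=x_i(0)-y_i(0)$. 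The $\ell^\infty$ contraction (maximum principle) for this generator, together with the bound $\partial_\alpha\Phi_\alpha(t)=\mathcal{O}(1)$ from \eqref{Phiest} and the regularization built into \eqref{h**def}, then propagates the a priori bound \eqref{verycrudeinitial} over the interval $[0,t_*]$ without loss of order.

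For the stronger bound on small indices I would implement the short-range truncation strategy outlined in the plan of proof. Using \eqref{crudinitial} as a priori control, define a short-range process $\mathring z$ by cutting the long-range interaction in \eqref{tildereq} at an appropriate scale, so that, by the finite speed of propagation argument deferred to the Appendix, $\abs{\wt z_i-\mathring z_i}\lesssim N^{-3/4+C\om_1}$ for $1\le\abs{i}\le i_*$. Because the evolution time $t_*=N^{-1/2+\om_1}$ is so short, the Brownian contribution to $\mathring z_i$ is of size $\sqrt{t_*/N}=N^{-3/4+\om_1/2}$, already below the target scale; hence I would compare $\mathring z$ with the deterministic discrete dynamics obtained by dropping the stochastic term, and then with the continuum evolution generated by the Stieltjes transform $\ov m_t$ of the interpolating density $\ov\rho_t$ from \eqref{rhozdef}. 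Standard Grönwall on the difference $\mathring z_i-\ov\gamma_i^*(t)$ then converts the required precision into a Stieltjes-transform-versus-Riemann-sum estimate at the semiquantiles.

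The main obstacle I expect is exactly this Riemann-sum comparison near the cusp: one must show that
\[
\frac{1}{N}\sum_{j\ne i}\frac{1}{\ov\gamma_i^*(t)-\ov\gamma_j^*(t)}\;=\;\Re\,\ov m_t\bigl(\ov\ed_t^++\ov\gamma_i^*(t)\bigr) + \Phi_\alpha(t) + \landauO{N^{-3/4+C\om_1}}
\]
uniformly in $1\le\abs{i}\le i_*$. Since $\ov\rho_t$ is merely $1/3$-Hölder at the cusp, the requisite cancellations are delicate and rely on the sharp regularity of $\ov m_t$ from \eqref{weakm}--\eqref{weakm min}, together with the quantile spacing and density lower bounds from \eqref{gamma difference gap}--\eqref{rhoquantile edge}. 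Crucially, the correction $h(t,\alpha)$ entering the shift \eqref{Hdef} is designed to absorb the mismatch between $\Re\ov m_t(\ov\ed_t^+)$ and the $\alpha$-average of $\Re m_{x,t}(\ed_{x,t}^+)$ and $\Re m_{y,t}(\ed_{y,t}^+)$, thereby making the Riemann-sum approximation consistent with the known optimal rigidity of $\wt x$ and $\wt y$ at the endpoints $\alpha=1$ and $\alpha=0$; this is what allows the argument to close.
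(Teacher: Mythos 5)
Your second stage follows the paper's actual route (short-range approximation, finite speed of propagation, and the reduction to a Riemann-sum versus Stieltjes-transform estimate at the semiquantiles, which is Proposition~\ref{prop:F} there), but your first stage has a genuine gap: the crude bound \eqref{crudinitial} cannot be obtained by differentiating \eqref{tildereq} in $\alpha$ the way you propose. First, the claimed bound $\partial_\alpha\Phi_\alpha(t)=\mathcal{O}(1)$ does not follow from \eqref{Phiest}: that relation controls $\Phi_\alpha$ itself, not its $\alpha$-derivative, and $\partial_\alpha\Phi_\alpha$ contains $\partial_\alpha\Re\,\ov m_t(\ov \ed_t^++\ii N^{-100})$, which involves $\ov m_t'$ essentially at the spectral edge, where it blows up; this is precisely why the paper introduces the $\ii N^{-100}$ regularization in \eqref{h**def} and can only prove the crude polynomial bound $\abs{\partial_\alpha\Phi_\alpha}\lesssim N^{202}$ (Lemma~\ref{lm:zeta}), a bound it can afford only later, in Phase~2, where the forcing vanishes for $\abs{i}\le N^{\omega_A}$ and finite speed of propagation suppresses the far indices. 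Second, even granting a bounded forcing, the initial datum of your derivative process is $\partial_\alpha\wt z_i(0,\alpha)=\wt x_i(0)-\wt y_i(0)$, which is of order one for indices away from the cusp (the densities $\rho_x,\rho_y$ only match locally; for the padding indices the difference is even polynomially large), so $\ell^\infty$-contraction only yields $\norm{\partial_\alpha\wt z(t,\cdot)}_\infty\lesssim 1$, and integrating in $\alpha$ gives merely $\abs{\wt z_i(t,\alpha)-\wt y_i(t)}\lesssim 1$ — nowhere near the scale $N^{-1/2+2\om_1}$. To deduce \eqref{crudinitial} this way you would need $\partial_\alpha\wt z_i(t,\alpha)$ to be close to $\wh\gamma^*_{x,i}(t)-\wh\gamma^*_{y,i}(t)$ pointwise in $i$, which is exactly the localized, much harder statement handled in Phases 2--3 and is not delivered by a global maximum principle.

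The paper proves \eqref{crudinitial} for each fixed $\alpha$ without any $\alpha$-differentiation: it sets $u_i=\wt z_i(t,\alpha)-\wh\gamma^*_{y,i}(t)$, notes that the resulting SDE has forcing $\wt F$ with $\norm{\wt F}_\infty\lesssim\log N$ and a martingale term of size $N^\xi\sqrt{t_*/N}$, and therefore the \emph{time increment} satisfies $\norm{u(t)-u(0)}_\infty\lesssim N^{-3/4+\om_1}+t_*\log N\lesssim N^{-1/2+2\om_1}$; combined with $\abs{\ov\gamma^*_i(t)-\ov\gamma^*_i(0)}\lesssim N^{-1/2+\om_1}$ and the hypothesis \eqref{verycrudeinitial} this gives \eqref{crudinitial}. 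One further quantitative slip in your sketch of \eqref{rgtilde}: the Riemann-sum estimate you demand, with error $\mathcal{O}(N^{-3/4+C\om_1})$ uniformly for $\abs{i}\le i_*$, is false — already the single-interval contribution near $\ov\gamma_i^*$ is of size $N^{-1/4}$. What is needed, and what Proposition~\ref{prop:F} proves, is an error $\mathcal{O}(N^{-1/4+C\om_1})$; the precision $N^{-3/4+C\om_1}$ in \eqref{rgtilde} arises only after integrating this forcing over the short time interval of length $t_*\sim N^{-1/2+\om_1}$ in the Duhamel formula, together with the localization via finite speed of propagation.
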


In our application,~\eqref{verycrudeinitial}   is satisfied
  and the right hand side of~\eqref{rgtilde} is simply $N^{-\frac{3}{4}+C\om_1}$ since
\begin{equation}\label{optimalinit}
\wt z_i(0,\alpha) -\ov \gamma_{i}^*(0) = \alpha \big(x_i(0) - \gamma_{x,i}(0)\big) + (1-\alpha) \big(y_i(0) - \gamma_{y,i}(0)\big)
= O\left( \frac{N^\xi N^\frac{\omega_1}{6}}{N^{\frac{3}{4}} \abs{ i}^{\frac{1}{4}}}\right),
\end{equation}
for any $\xi> 0$ with very high probability, by optimal rigidity for $x_i(0)$ and $y_i(0)$ from~\cite{1809.03971}. 
Similarly, the assumption~\eqref{mmbound} is
  trivially satisfied by~\eqref{stbound}.
 However, we stated Proposition~\ref{prop:option2}
under the slightly weaker conditions~\eqref{verycrudeinitial},~\eqref{mmbound} to highlight what is really needed for its proof.

Before starting the proof, we recall the formula 
\begin{equation}\label{gammader}
  \frac{\diff}{\diff t} \wh \gamma_{i,r}^*(t) = -\Re m_{r,t}(  \gamma_{r, i}^*(t) )+\Re m_{r,t}(\ed_{r,t}^+), \qquad r=x, y.
\end{equation} 
on the derivative of the (shifted) semiquantiles
of a density which evolves by the semicircular flow and follows directly from~\eqref{diff m quant} and~\eqref{diff e m}.

\begin{proof}[Proof of Proposition~\ref{prop:option2}.] 
We start with the proof of the crude rigidity~\eqref{crudinitial}, then
we introduce a short range approximation and finally, with its help, we prove the refined rigidity~\eqref{rgtilde}. 
The main technical input of the last step is a refined estimate on the forcing term.
These four steps will be presented in the next four subsections.

\subsubsection{Proof of the crude rigidity:}
For the proof of~\eqref{crudinitial}, using~\eqref{gammader} twice in~\eqref{barg}, we notice that
\[
   \frac{\diff}{\diff t} \ov \gamma_i^*(t) = \alpha\big[ -\Re m_{x,t}(\gamma_{x,i}^*(t)) + \Re m_{x,t}(\ed_{x,t}^+)\big]
   + (1-\alpha) \big[ -\Re m_{y,t}(\gamma_{y,i}^*(t)) + \Re m_{y,t}(\ed_{y,t}^+)\big] = O(1)
 \]
 since $m_{x, t}$ and $m_{y,t}$ are bounded  recalling 
  that the semicircular flow preserves (or reduces) the $\ell^\infty$ norm of the Stieltjes transform  by~\eqref{eq free sc conv def}, 
 so $\norm{ m_{x,t}}_\infty \le \norm{m_{x, 0}}_\infty \le C$, similarly for $m_{y,t}$. 
 This gives 
 \begin{equation}\label{ovgam}
 \abs{\ov \gamma_i^*(t)-\ov \gamma_i^*(0)}\lesssim N^{-1/2+\om_1}.
 \end{equation}
  Thus  in order to prove~\eqref{crudinitial} it is sufficient to prove 
  \begin{equation}\label{rrr}
  \norm{ \wt z(t,\alpha)-\wt z(0,\alpha)}_\infty\le N^{-1/2+2\om_1},
  \end{equation} for any fixed $\alpha\in [0,1]$.
 To do that,  we compare the dynamics of~\eqref{tildereq} with the dynamics of the $y$-semiquantiles, i.e.~
 set 
 \[
 u_i\defeq u_i(t,\alpha)=\wt z_i(t) -\wh\gamma_{y,i}^*(t),
 \] 
 for all $0\le t\le t_*$.
 
  Compute
 \begin{equation}\label{du}
     \diff u_i = \sqrt{ \frac{2}{N}} \diff B_i+ (\wt {\mathcal{B}}u)_i \diff t+\wt F_i(t) \diff t
 \end{equation}
 with 
 \begin{equation}\label{mcbtilde}
    (\wt {\mathcal{B}}f)_i\defeq  
   \frac{1}{N}\sum_{j\ne i}   \frac{f_j -f_i}{(\wt z_i-\wt z_j)(\wh \gamma_{y,i}^*- \wh \gamma_{y,j}^*)  }
\end{equation}
and 
\[
   \wt F_i(t)\defeq  \frac{1}{N} \sum_{j\ne i}
    \frac{1}{\wh \gamma_{y,i}^*- \wh \gamma_{y,j}^*} +\Re m_{y,t}(\gamma_{y,i}^*(t)) 
  +  \alpha\big[  \Re m_{x,t}(\ed_{x,t}^+) - \Re m_{y,t}(\ed_{y,t}^+) \Big] - h(t).
 \] 
 The operator $\wt{\mathcal{B}}$ is defined on $\C^{2N}$ 
 and we label the vectors $f\in \C^{2N}$ as $f=(f_{-N}, f_{-N+1}, \ldots, f_{-1}, f_1, \ldots, f_N)$,
i.e.~we omit the $i=0$ index. Accordingly, 
in the summations the $j=0$ term is always omitted since $\wt z_j$, $\wh z_j$ and $\wh\gamma_{y,j}^*$ are defined for $1\le \abs{j}\le N$.
Furthermore in  the summation of the interaction terms, the $j=i$ term is always omitted.
 
 We now show that 
 \begin{equation}\label{Rbound}
    \norm{ \wt F(t)}_\infty \lesssim \log N, \qquad 0\le t \le t^*.
 \end{equation}
 By the boundedness of $m_{x,t}, m_{y,t}$ and the $1/3$-H\"older continuity of $\overline{m}_t$ in the cusp regime,  it remains to control 
 \[
   \frac{1}{N} \sum_{j\ne i} \frac{1}{ \wh\gamma_{y,i}^*(t)-  \wh\gamma_{y,j}^*(t)} \lesssim \sum_{1\le \abs{j-i}\le N}\frac{1}{\abs{i-j}} \lesssim \log N
 \]
since  $\abs{\wh\gamma_{y,j}^*-\wh\gamma_{y,i}^*}\ge c\abs{i-j}/N$ as the density $\rho_{y,t}$ is bounded.

Let $\wt{\mathcal{U}}(s,t)$ be the fundamental solution of the heat  evolution with  kernel $\wt{\mathcal{B}}$
from~\eqref{mcbtilde}, i.e, for any $0\le s \le t$
\begin{equation}\label{defU}
    \partial_t \wt{\mathcal{U}}(s,t) = \wt{\mathcal{B}}(t) \wt{\mathcal{U}}(s,t) , \qquad \wt{ \mathcal{U}}(s,s)=I .
\end{equation}
Note that $\wt {\mathcal{U}}$ is a contraction on every $\ell^p$ space and the same is true for its adjoint $\wt {\mathcal{U}}^*(s,t)$.
In particular, for any indices $a, b$ and times $s,t$ we have
\begin{equation}\label{Utriv}
   \wt{\mathcal{U}}_{ab}(s,t) \le 1,\quad  \wt{\mathcal{U}}^*_{ab}(s,t) \le 1. 
\end{equation}
By Duhamel principle, the solution to the SDE~\eqref{du} is given by
\begin{equation}\label{duhamel}
      u (t) = \wt{\mathcal{U}}(0,t) u(0) + \sqrt{\frac{2}{N}} \int_0^t \wt{ \mathcal{U}}(s,t) \diff B(s) + \int_0^t  \wt{\mathcal{U}}(s,t) \wt F(s)\diff s,
\end{equation}
where $B(s)= (B_{-N}(s), \ldots, B_{-1}(s), B_1(s) \ldots, B_N(s))$ are the $2N$  independent Brownian motions from~\eqref{req}.

For the second term in~\eqref{duhamel} we fix an index $i$ and  consider the martingale
\[
   M_t \defeq \sqrt{\frac{2}{N}}\int_0^t  \sum_j \wt{\mathcal{U}}_{ij} (s,t) \diff B_j(s)
\]
with its quadratic variation  process
\[
  [M]_t\defeq   \frac{2}{N} \int_0^t \sum_j \big( \wt{\mathcal{U}}_{ij} (s,t) \big)^2 \diff s =  \frac{2}{N} \int_0^t 
    \norm{ \wt{\mathcal{U}}^*(s,t) \delta_i}^2_2\diff s \le \frac{2t}{N}.
\]
By the Burkholder maximal inequality for martingales, for any $p>1$ we have that
\[
    \E \sup_{0\le t\le T} \abs{M_t}^{2p} \le C_p \E [M]_T^p \le C_p\frac{T^p}{N^p}.
\]
By Markov inequality we obtain that 
\begin{equation}\label{marti}
  \sup_{0\le t\le T} \abs{M_t} \le N^\xi \sqrt{\frac{T}{N}}
\end{equation}
with probability more than $1-N^{-D}$, for any (large) $D>0$ and  (small) $\xi>0$. 

The last term in~\eqref{duhamel} is estimated, using~\eqref{Rbound}, by
\begin{equation}\label{Fduh}
\abs{\int_0^t  \wt{\mathcal{U}}(s,t) \wt F(s)\diff s} \le t \max_{s\le t} \norm{ \wt F (s)}_\infty \lesssim t\log N.
\end{equation} 
 This, together with~\eqref{marti} and the contraction property of $\wt {\mathcal{B}}$  implies
 from~\eqref{duhamel} that
  \[
      \norm{ u(t) -u(0)}_\infty \lesssim N^{-3/4+\om_1} + t\log N \lesssim N^{-1/2+2\om_1}
 \]
 with very high probability. Recalling the definition of $u$ and~\eqref{ovgam}, we get~\eqref{rrr} since
 \[
    \norm{\wt z(t) -\wt z (0)}_\infty \le \norm{ u(t) -u(0)}_\infty + \norm{ \wh \gamma_{y}^*(t) - \wh \gamma_y^*(0)}_\infty \lesssim N^{-1/2+2\om_1}.
 \]
This completes the proof of 
the crude rigidity bound~\eqref{crudinitial}.

\subsubsection{Crude short range approximation.}\label{sec:crudeshort}

\bigskip
Now we turn to the proof of~\eqref{rgtilde}  by introducing 
 a short range approximation of the dynamics~\eqref{tildereq}. Fix an integer $L$.
Let $\mathring{z}_i =\mathring{z}_i(t)$ solve the {\bf $L$-localized short scale DBM}
\begin{equation}\label{zstareqnew}
  \diff \mathring{z}_i =\sqrt{ \frac{2}{N}} \diff B_i+
   \frac{1}{N}\sum_{j: \abs{j-i}\le L}   \frac{1}{\mathring{z}_i-\mathring{z}_j}\diff t
 + \Bigg[  \frac{1}{N}\sum_{j: \abs{j-i}> L}   \frac{1}{\ov\gamma_i^*-\ov\gamma_j^*}
 +\Phi(t) 
 \Bigg] \diff t  
\end{equation}
for each $1\le \abs{i}\le N$ and
with initial data $\mathring{z}_i(0)\defeq  \wt  z_i(0)$, where we recall that \(\Phi\) was defined in~\eqref{Phidef}.
Then, we have the following comparison:
\begin{lemma}\label{lm:RR} Fix $\alpha\in [0,1]$.
Assume that
\begin{equation}\label{initialcrude}
\max_{1\le \abs{i}\le N}
    \abs{\wt z_i(0,\alpha)-\ov \gamma_{i}^*(0)}\lesssim N^{-1/2+2\om_1}.
\end{equation}
Consider the short scale DBM~\eqref{zstareqnew} with a range $L=N^{1/2+C_1\om_1}$
with a constant $10 \le C_1 \ll  C_*$, in particular $L$ is much smaller than $i_*$.
Then we have a weak uniform comparison
\begin{equation}\label{crudinitialhat}
  \sup_{0\le t\le t_*} \max_{1\le \abs{i}\le N}  \abs{ \mathring{z}_i(t,\alpha) - \wt z_i (t,\alpha)}\lesssim N^{-1/2+2\om_1}, 
\end{equation}
and a stronger comparison for small $i$:
\begin{equation}\label{goodrighat}
  \sup_{0\le t\le t_*} \max_{1\le \abs{i}\le i_*}  \abs{ \mathring{z}_i(t,\alpha) - \wt z_i(t,\alpha)}\lesssim  
  N^{-3/4+C\om_1}, 
\end{equation}
both with very high probability.
\end{lemma}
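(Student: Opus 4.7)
Define $v_i(t) \defeq \mathring{z}_i(t) - \wt z_i(t)$. By construction $\mathring{z}_i(0) = \wt z_i(0)$, and the Brownian increments in~\eqref{zstareqnew} and~\eqref{tildereq} are identical, so the noise cancels in the difference; consequently $v$ is purely deterministic and solves
\[
\partial_t v_i = -(\mathcal{B}^{S} v)_i + G_i(t), \qquad v(0)=0,
\]
where the short-range generator and the forcing are
\[
(\mathcal{B}^{S} f)_i \defeq \frac{1}{N}\sum_{|j-i|\le L}\frac{f_i-f_j}{(\mathring z_i-\mathring z_j)(\wt z_i - \wt z_j)},\qquad
G_i(t) \defeq \frac{1}{N}\sum_{|j-i|>L}\biggl[\frac{1}{\ov\gamma_i^* - \ov\gamma_j^*}-\frac{1}{\wt z_i-\wt z_j}\biggr].
\]
The uniform rigidity~\eqref{crudinitial} together with the quantile spacing~\eqref{gamma difference gap} guarantees that the particles remain ordered with well-separated positions, so the off-diagonal entries of $-\mathcal{B}^{S}$ are non-positive and its semigroup $\mathcal{U}^{S}(s,t)$ is an $\ell^p$-contraction for every $p\in[1,\infty]$. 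Hence Duhamel's formula gives $v(t)=\int_0^t \mathcal{U}^{S}(s,t)\,G(s)\,\diff s$.

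\textbf{Forcing estimate and weak comparison.} Using the algebraic identity
\[
\frac{1}{\ov\gamma_i^* - \ov\gamma_j^*} - \frac{1}{\wt z_i - \wt z_j} = \frac{(\wt z_j - \ov\gamma_j^*) - (\wt z_i - \ov\gamma_i^*)}{(\ov\gamma_i^* - \ov\gamma_j^*)(\wt z_i - \wt z_j)},
\]
the rigidity bound~\eqref{crudinitial} controls the numerator by $2N^{-1/2+2\omega_1}$, while for $|j-i|>L$ the quantile spacing from~\eqref{gamma difference gap} dominates this error, so $|\wt z_i - \wt z_j|\sim|\ov\gamma_i^* - \ov\gamma_j^*|$. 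Recognising the resulting sum as a Riemann-sum approximation of a principal-value Stieltjes integral of $\ov\rho_t$ truncated at scale $y_L \sim (L/N)^{3/4}$ near the cusp, and exploiting the cube-root cusp profile $\ov\rho_t(\ov\gamma_i^* + y) \sim |y|^{1/3}$ together with the regularity~\eqref{weakm} of $\ov m_t$, one obtains the uniform bound $\|G(t)\|_\infty \lesssim N^{(2-C_1)\omega_1}$ and the finer cusp-regime bound $|G_i(t)|\lesssim N^{-1/4+(2-C_1/2)\omega_1}$ for $|i|$ small, using the elementary integral $\int_{|y|>y_L} |y|^{1/3}/y^2\,\diff y \sim y_L^{-2/3}$. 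Feeding the uniform estimate into Duhamel, $\ell^\infty$-contractivity yields $\|v(t)\|_\infty \le t_* \sup_s\|G(s)\|_\infty \lesssim N^{-1/2+(3-C_1)\omega_1}$, which for $C_1\ge 10$ is comfortably stronger than the claimed~\eqref{crudinitialhat}.

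\textbf{Finer bound for small indices.} The global bound above is insufficient for~\eqref{goodrighat}, since the bulk forcing propagates into small $|i|$ through $\mathcal{U}^{S}$ and only yields $|v_i|\lesssim N^{-1/2-C\omega_1}$, which is larger than $N^{-3/4}$. The remedy is a finite speed of propagation estimate for $\mathcal{B}^{S}$, proved in Appendix~\ref{SLL} by adapting the arguments of~\cite{MR3372074,1712.03881} to the short range $L=N^{1/2+C_1\omega_1}$ and the degenerate cusp coefficients: over the time horizon $t_*$, the kernel $(\mathcal{U}^{S}(s,t))_{ij}$ is smaller than any fixed negative power of $N$ once $|i|\le i_*$ and $|j|\ge 2i_*$. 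The condition $C_1\ll C_*$ ensures that the ``light cone'' of width $\sim L$ over time $t_*$ comfortably fits inside $[-2i_*,2i_*]$. Truncating the Duhamel integral accordingly, only the forcing values $G_j$ with $|j|\le 2i_*$ contribute meaningfully, and these obey the refined cusp-regime bound $|G_j|\lesssim N^{-1/4+C\omega_1}$ from the preceding paragraph. Applying $\ell^\infty$-contractivity once more produces $\sup_{|i|\le i_*}|v_i(t)|\lesssim t_* N^{-1/4+C\omega_1}=N^{-3/4+C'\omega_1}$, which is~\eqref{goodrighat}. The decisive technical hurdle is this finite speed of propagation estimate for the variable-coefficient operator $\mathcal{B}^{S}$, whose coefficients $1/[(\mathring z_i - \mathring z_j)(\wt z_i - \wt z_j)]$ are random and degenerate near the cusp; it crucially relies on the spacing~\eqref{gamma difference gap} and the shape regularity of $\ov\rho_t$ developed in Section~\ref{sec scflow}.
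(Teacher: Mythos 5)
Your decomposition, the formula for the forcing term, the two-scale estimate on it (order $1$ globally, order $N^{-1/4}$ for $\abs{i}\le 2i_*$), and the Duhamel-plus-contraction derivation of the weak bound~\eqref{crudinitialhat} all coincide with the paper's argument (cf.~\eqref{fcrude},~\eqref{Festim}). The divergence — and the gap — is in how you obtain~\eqref{goodrighat}. You run the finite speed of propagation directly on the semigroup of the mixed-coefficient operator $(\mathcal{B}^{S}f)_i=\frac1N\sum_{\abs{j-i}\le L}(f_i-f_j)\big[(\mathring z_i-\mathring z_j)(\wt z_i-\wt z_j)\big]^{-1}$ and defer that estimate to an appendix. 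But the finite-speed argument of Appendix~\ref{FSPS} is tied to kernels of the exact form $\mathcal{B}_{ij}=-N^{-1}(v_i-v_j)^{-2}$ built from a \emph{single} particle configuration: the key cancellation there is $\abs{\mathcal{B}_{ij}}\,\abs{\psi(v_i-\cdot)-\psi(v_j-\cdot)}^2\le \abs{\mathcal{B}_{ij}}(v_i-v_j)^2=N^{-1}$, which holds identically however small $v_i-v_j$ is. With your mixed kernel the analogous step produces $\abs{\mathring z_i-\mathring z_j}/(N\abs{\wt z_i-\wt z_j})$, and you have no control on this ratio for $\abs{i-j}\lesssim 1$: adjacent gaps near the cusp are of order $N^{-3/4}$, while the only available comparison between the two processes at this stage is the crude $N^{-1/2+2\om_1}\gg N^{-3/4}$ (indeed, the sharper comparison is precisely what the lemma is trying to prove, so invoking it would be circular). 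So the finite-speed estimate you rely on is not a routine adaptation; as stated it may fail, or at least cannot be obtained by the paper's method.

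The paper circumvents exactly this obstruction by introducing the auxiliary interpolation $v(t,\beta)$ in~\eqref{uu1} and differentiating in $\beta$: the resulting equation~\eqref{v1} for $u=\partial_\beta v$ has the kernel $N^{-1}(v_i-v_j)^{-2}$ of a single configuration, for which crude rigidity is first established uniformly in $\beta$ (\eqref{rigv1}) and then Lemma~\ref{FSP} applies. The forcing in the $u$-equation is the same $\mathring F$, the initial condition is $u(0,\beta)=0$, and integrating back in $\beta$ (with a high-moment Markov step to keep the bound with very high probability) recovers~\eqref{goodrighat}. If you want to keep your direct route you would have to prove finite speed of propagation for the mixed kernel from scratch, which requires an input you do not have; otherwise you should adopt the $\beta$-interpolation. (Minor points: the difference process is not "purely deterministic" — its coefficients are random, only the explicit noise cancels — and the finite-speed estimate lives in Appendix~\ref{FSPS}, not Appendix~\ref{SLL}.)
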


\begin{proof}
For any fixed $\alpha\in [0,1]$ and for all $0\le t\le t_*$, set $w\defeq w(t,\alpha)=\mathring{z}(t,\alpha) -\wt z(t,\alpha)$ and subtract~\eqref{zstareqnew} and~\eqref{tildereq} to get
\[
 \partial_t w = \mathring{\mathcal{B}}_1 w +\mathring{F},
\]
where
\[
(\mathring{\mathcal{B}}_1f)_i  \defeq  \frac{1}{N}\sum_{j: \abs{j-i}\le L}   \frac{f_j-f_i}{(\mathring{z}_i-\mathring{z}_j)(\wt z_i-\wt z_j)},
\qquad
       \mathring{F}_i\defeq  \frac{1}{N}\sum_{j: \abs{j-i}> L} \Big[ \frac{1}{\ov\gamma_i^*-\ov\gamma_j^*} -
\frac{1}{\wt z_i-\wt z_j}\Big].
\]
We estimate
\[  \abs{\mathring{F}_i}  \le \frac{1}{N}  \sum_{j: \abs{j-i}> L} \frac{\abs{\wt z_i-\ov \gamma_i^*}   + \abs{\wt z_j-\ov\gamma_j^*}}{(\ov\gamma_i^*-\ov\gamma_j^*)(\wt z_i-\wt z_j)}   \lesssim  \frac{N^{-1/2+2\om_1}}{N}  \sum_{j: \abs{j-i}> L} \frac{1}{(\ov\gamma_i^*-\ov\gamma_j^*)(\wt z_i-\wt z_j)}, \]
where we used the
 crude rigidity~\eqref{crudinitial} (applicable by~\eqref{initialcrude}), and we chose $C_1$ 
 in $L=N^{1/2+C_1\om_1}$ large enough so that $\abs{\ov\gamma_i^* -\ov\gamma_j^*}$ for 
 any $\abs{i-j}\ge L$ be much 
 bigger than the rigidity scale $N^{-1/2+2\om_1}$ in~\eqref{crudinitial}. This is guaranteed since
 \[
   \abs{\ov\gamma_i^* -\ov\gamma_j^*} 
   =\alpha \abs{\wh\gamma_{x,i}^* -\wh\gamma_{x,j}^*} +(1-\alpha) \abs{\wh\gamma_{y,i}^* -\wh\gamma_{y,j}^*} \gtrsim \frac{\abs{i-j}}{N} 
   \gtrsim N^{-1/2+C_1\om}
 \]
 with very high probability.
 By this choice of $L$ we have $\abs{\wt z_i-\wt z_j}\sim  \abs{\ov\gamma_i^* -\ov\gamma_j^*}$ and 
 therefore
 \begin{equation}\label{fcrude}
    \abs{\mathring{F}_i}  \lesssim  \frac{N^{-\frac{1}{2}+2\om_1}}{N}  \sum_{j: \abs{j-i}> L} \frac{1}{(\ov\gamma_i^*-\ov\gamma_j^*)^2}\lesssim N^{1/2+2\om_1}
    \sum_{j: \abs{j-i}> L} \frac{1}{\abs{i-j}^2} \lesssim N^{-(\frac{1}{2}C_1-2) \om_1}\le 1, \quad \forall \abs{i}\le N.
  \end{equation}
   Since $\mathcal{B}_1$ is positivity preserving, its evolution is a contraction, so by Duhamel formula, similarly to~\eqref{duhamel}, we get
\[
 \norm{\mathring{z} (t) -\wt z(t) }_\infty= \norm{w(t) }_\infty\le \norm{ w(0)}_\infty + t \max_{s\le t}\norm{ \mathring{F}(s)}_\infty\lesssim N^{-1/2+\om_1}
\]
with very high probability.
  
Next, we proceed with the proof of~\eqref{goodrighat}.
  
  In fact, for $1\le\abs{i}\le 2 i_*$, with $i_*$ much bigger than $L$, we have a better bound:
  \begin{equation}\label{Festim}
    \abs{\mathring{F}_i}   \lesssim  \frac{N^{-\frac{1}{2}+2\om_1}}{N}  \sum_{j: \abs{j-i}> L} \frac{1}{(\ov\gamma_i^*-\ov\gamma_j^*)^2}\lesssim 
    \sum_{j: \abs{j-i}> L} \frac{N^{2\om_1}}{\abs[1]{ \abs{i}^{3/4}-\abs{j}^{3/4}}^2} \lesssim N^{-\frac{1}{4} - (\frac{1}{2}C_1-2) \om_1}\le N^{-\frac{1}{4}}, \quad \abs{i}\le 2i_*,
  \end{equation}
 which we can use  to get the better bound~\eqref{goodrighat}.  
To do so, we define a continuous interpolation $v(t,\beta)$  between $\wt z$ and $\mathring{z}$. More precisely, for any fixed $\beta\in [0,1]$
we set $v(t,\beta)= \{ v(t,\beta)_i\}_{i=-N}^N$ as the solution to the 
SDE
\begin{equation}\label{uu1}
\begin{split}
 \diff v_i = & \sqrt{ \frac{2}{N}} \diff B_i+  \frac{1}{N}\sum_{j: \abs{j-i}\le L}   \frac{1}{ v_i- v_j}\diff t
 + \Phi_\alpha(t)
 \diff t \\
& + \frac{1-\beta}{N}\sum_{j: \abs{j-i}> L}   \frac{1}{\wt z_i-\wt z_j} \diff t +
  \frac{\beta}{N}\sum_{j: \abs{j-i}> L}  \frac{1}{\ov\gamma_i^*-\ov\gamma_j^*}  \diff t
\end{split}
\end{equation}
with initial condition $v(t=0, \beta)= (1-\beta) \wt z_i(0) + \beta\mathring{z}_i(0)$. Clearly $v(t, \beta=0) = \wt z(t)$
and $v(t, \beta=1) = \mathring{z}(t)$.

Differentiating in $\beta$, for $u\defeq u(t,\beta)= \partial_\beta v(t,\beta)$ we obtain the SDE
\begin{equation}\label{v1}
 \diff u_i =   (\mathcal{B}^v u)_i\diff t + \mathring{F}_i\diff t ,
 \qquad \mbox{with} \quad (\mathcal{B}^v f)_i\defeq  \frac{1}{N}\sum_{j: \abs{j-i}\le L}   \frac{f_j-f_i}{ (v_i- v_j)^2},
\end{equation}
with initial condition $u(t=0, \beta) = \mathring{z}(0)-\wt z(0)=0$. By the contraction property 
of the heat evolution kernel $\mathcal{U}^v$ of $\mathcal{B}^v$, 
with a simple Duhamel formula, we have for any fixed $\beta$
\begin{equation}\label{rigu}
   \sup_{0\le t\le t_{*}} \norm{ u(t,\beta)}_\infty \le
       t_{*} \norm[1]{\mathring{F} }_\infty \le N^{-1/2+\frac{3}{2}\om_1},
\end{equation}
with very high probability,
where we used~\eqref{fcrude}. 
After integration in $\beta$ we  get
\begin{equation}\label{rigv}
  \norm{ v(t,\beta) -\ov \gamma^*(t)}_\infty \le \norm{ v(t,0) -\ov \gamma^*(t)}_\infty + 
\norm{ \int_0^\beta u(t,\beta')\diff \beta' }_\infty, \qquad 0\le t\le t_*, \quad \beta\in [0,1].
\end{equation}
From~\eqref{rigu} we have
\begin{equation}\label{markov}
   \E \norm{   \int_0^\beta u(t,\beta')\diff \beta' }_\infty^p \le  \int_0^\beta \E \norm{ u(t,\beta')}^p \diff \beta' \lesssim \big(N^{-1/2+\frac{3}{2}\om_1}\big)^p
\end{equation}
for any exponent $p$. Hence, using a high moment Markov inequality, we have
\begin{equation}\label{markov1}   
   \P \Bigg(\norm{   \int_0^\beta u(t,\beta')\diff \beta' }_\infty  \ge N^{-1/2+\frac{3}{2}\om_1+\xi}  \Bigg) \le N^{-D}
\end{equation}
for any (large) $D>0$ and (small) $\xi>0$ by choosing $p$ large enough.
Since $v(t,0) = \wt z(t)$, for which we have rigidity in~\eqref{crudinitial}, by~\eqref{rigv} and~\eqref{markov1} we conclude that
\begin{equation}\label{rigv1}
  \sup_{0\le t\le t_*}\norm{ v(t,\beta) -\ov \gamma^*(t)}_\infty  \lesssim N^{-\frac{1}{2}+2\om_1}
\end{equation}
with very high probability for any $\beta\in [0,1]$.

In particular $L$ is much larger than the rigidity scale of $v=v(t,\beta)$. This means that
\[
  \abs{  \abs{ v_i-v_j} - \abs{ \ov \gamma^*_i - \ov \gamma^*_j} }   \lesssim N^{-\frac{1}{2}+2\om_1}
\]
and $\abs{ \ov \gamma^*_i - \ov \gamma^*_j}  \gtrsim \frac{\abs{i-j}}{N}\ge N^{-\frac{1}{2} + C_1\om_1}
\gg N^{-\frac{1}{2}+2\om_1}$ whenever $\abs{i-j}\ge L$, so we have
\begin{equation}\label{vvgg}
   \abs{ v_i-v_j} \sim \abs{ \ov \gamma^*_i - \ov \gamma^*_j}, \qquad \abs{i-j}\ge L.
\end{equation}
Since $i_*$ is much bigger than $L$ and $L$ is much larger
than the rigidity scale of $v_i(t,\beta)$ in the sense of~\eqref{vvgg},
 the heat evolution kernel $\mathcal{U}^{v}$ satisfies the following finite speed of propagation estimate (the proof is given in Appendix~\ref{FSPS}):
\begin{lemma}\label{FSP} With the notations above we have
\begin{equation}\label{b2fs}
  \sup_{0\le s\le t\le t_*}\big[   \mathcal{U}^{v}_{pi} +  \mathcal{U}^{v}_{ip}\big]  \le N^{-D}, \qquad
  1\le \abs{i}\le i_*, \quad \abs{p}\ge 2 i_*
\end{equation}
for any $D$ if $N$ is sufficiently large.
\end{lemma}

Using a Duhamel formula again, for any fixed $\beta$,  we have
\[
   u_i(t) = \sum_p \mathcal{U}^{v}_{ip} u_p(0) + \int_0^t \sum_p \mathcal{U}^{v}_{ip}(s,t) \mathring{F}_p(s)\diff s.
\]
We can split the summation and estimate
\[
   \abs{u_i(t)} \le \Big[ \sum_{\abs{p}\le 2i_*} + \sum_{\abs{p}> 2i_*}\Big]  \mathcal{U}^{v}_{ip} \abs{u_p(0)}
    + \int_0^t \Big[\sum_{\abs{p}\le 2i_*} +  \sum_{\abs{p}> 2i_*} \Big]\mathcal{U}^{v}_{ip}(s,t) \abs[1]{\mathring{F}_p(s)}  \diff s.
\]
For $\abs{i}\le i_*$,
the terms with $\abs{p}> 2i_*$ are negligible by~\eqref{b2fs} and the trivial bounds~\eqref{fcrude} and~\eqref{rigu}.
For $1\le \abs{p}\le 2i_*$ we use the improved bound~\eqref{Festim}.
This gives
\[
   \abs{u_i(t,\beta)} \le \max_{1\le \abs{j}\le 2i_*} \abs{u_j(0,\beta)} +  N^{-3/4 + \om_1} =N^{-3/4 + \om_1} , \qquad \abs{i}\le i_*,
\]
since $u(t=0, \beta)=0$.
Integrating from $\beta=0$ to $\beta=1$, and recalling that $v(\beta=0)= \wt z$ and $v(\beta=1)=\mathring{z}$, by high moment Markov inequality, we conclude
\[
   \abs{\wt z_i(t) -\mathring{z}_i(t) }\lesssim 
     N^{-\frac{3}{4} + \om_1}, \qquad 1\le \abs{i}\le i_*,
\] with very high probability.
This yields~\eqref{goodrighat} and completes the proof of Lemma~\ref{lm:RR}. 

We remark that it would have been sufficient to require that  $\abs{\wt z_j(0) -\mathring{z}_j(0) }\le  N^{-\frac{3}{4} + \om_1}$ for
all $1\le \abs{j}\le 2i_*$ instead of setting $\mathring{z}(0)\defeq \wt z(0)$ initially.  Later in Section~\ref{sec:hatrig} we will use 
a similar finite speed of propagation mechanism to show that changing  the initial condition for large indices has negligible effect.
\end{proof}

\subsubsection{Refined rigidity for small \texorpdfstring{$\lvert i\vert$}{i}.}

Finally, in the last but main step of the proof of~\eqref{rgtilde} in 
Proposition~\ref{prop:option2} we compare $\mathring{z}_i$ with $\ov \gamma^*_i$ for small $\abs{i}$ with a  much
higher precision than the crude bound $N^{-\frac{1}{2}+C\om_1}$ which directly follows from~\eqref{crudinitialhat} and~\eqref{crudinitial}.
Notice that we use the semiquantiles for
comparison since $\ov \gamma_{i}^* \in [\ov \gamma_{i-1}, \ov \gamma_{i}]$   and $\ov \gamma_{i}^*$ is typically close to the midpoint of this interval.
In particular,   $\ov \rho_{t}(\ov \gamma_{i}^*(t))$ is never zero, in fact we have $\ov \rho_{t}(\ov \gamma_{i}^*(t))\ge cN^{-1/3}$, because
by band rigidity quantiles may fall exactly at spectral edges, but semiquantiles cannot. This lower bound makes 
the semiquantiles much more convenient reference points than the quantiles.

\begin{proposition}\label{option2prop} Fix $\alpha\in [0,1]$, then with  the notations above for the localized DBM $\mathring{z}(t,\alpha)$ on short scale $L= N^{1/2+C_1\om_1}$
with $10\le C_1 \le \frac{1}{10} C_*$, defined in~\eqref{zstareqnew}, we have
\begin{equation}\label{rgy}
\abs{ \big( \mathring{z}_i(t,\alpha) -\ov \gamma_{i}^*(t) \big) - \big( \mathring{z}_i(0,\alpha) -\ov \gamma_{i}^*(0) \big)} \le 
N^{-3/4+C\om_1}, \qquad  1\le \abs{i}\le i_* = N^{\frac{1}{2}+C_* \om_1}
\end{equation}
with very high probability. 
\end{proposition}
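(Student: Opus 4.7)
The plan is to linearize around the trajectory of semiquantiles \(\ov\gamma_i^*(t)\). Setting \(v_i(t)\defeq \mathring{z}_i(t,\alpha) - \ov\gamma_i^*(t)\), expanding~\eqref{zstareqnew} around \(\ov\gamma^*\) yields
\begin{equation*}
\diff v_i = \sqrt{\frac{2}{N}}\,\diff B_i + (\mathcal{B}v)_i \diff t + F_i\, \diff t, \qquad (\mathcal{B}f)_i\defeq \frac{1}{N}\sum_{\abs{j-i}\le L}\frac{f_j-f_i}{(\mathring{z}_i-\mathring{z}_j)(\ov\gamma_i^*-\ov\gamma_j^*)},
\end{equation*}
with deterministic forcing \(F_i(t)\defeq \frac{1}{N}\sum_{j\ne i}(\ov\gamma_i^*(t)-\ov\gamma_j^*(t))^{-1} + \Phi(t) - \frac{\diff}{\diff t}\ov\gamma_i^*(t)\), obtained by combining the frozen long-range sum in~\eqref{zstareqnew} with the zeroth-order part of the linearization. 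Since \(\mathcal{B}\) annihilates constants, its heat propagator \(\mathcal{U}(s,t)\) is an \(\ell^\infty\)-contraction. By Duhamel's principle and a Burkholder estimate as in~\eqref{marti}, the stochastic contribution to \(v_i(t)-v_i(0)\) is bounded by \(N^\xi\sqrt{t_*/N}\lesssim N^{-3/4+\om_1/2+\xi}\), already well below the target \(N^{-3/4+C\om_1}\).

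The main task is to show \(\norm{F(t)}_\infty \lesssim N^{-1/4+C\om_1}\) uniformly for \(\abs{i}\le 2i_*\), so that the deterministic Duhamel contribution is at most \(t_*\norm{F}_\infty\lesssim N^{-3/4+C\om_1}\). I decompose \(F_i = R_i + S_i + T_i\) into the Riemann sum error \(R_i\defeq \frac{1}{N}\sum_{j\ne i}(\ov\gamma_i^*-\ov\gamma_j^*)^{-1} + \Re\ov m_t(\ov\gamma_i^*)\), comparing the discrete sum to the principal value integral \(-\Re\ov m_t(\ov\gamma_i^*)=\mathrm{p.v.}\int\ov\rho_t(y)/(\ov\gamma_i^*-y)\,\diff y\); the shift error \(S_i\defeq \Phi(t)-\Re\ov m_t(\ov\ed_t^+)\), already \(\landauO{N^{-1}}\) by~\eqref{Phiest}; and the Stieltjes transform mismatch \(T_i\defeq \Re\ov m_t(\ov\ed_t^+)-\Re\ov m_t(\ov\gamma_i^*) - \frac{\diff}{\diff t}\ov\gamma_i^*(t)\).

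Expanding \(\frac{\diff}{\diff t}\ov\gamma_i^*\) via the linearity~\eqref{barg} and~\eqref{gammader} for the \(x\) and \(y\) semiquantiles, \(T_i\) becomes, after adding and subtracting \(\Re m_{x,t},\Re m_{y,t}\) evaluated at the common shift \(\nu\defeq \wh\gamma_i^*(t)=\ov\gamma_i^*(t)-\ov\ed_t^+\), a linear combination of the quantities estimated by~\eqref{Re m gap} applied to \(r=\alpha\) and \(r=\lambda\), together with corrections of the form \(\Re m_{r,t}(\ed_{r,t}^++\nu_r)-\Re m_{r,t}(\ed_{r,t}^++\nu)\) controlled by~\eqref{weakm}, using \(\abs{\nu-\nu_x}+\abs{\nu-\nu_y}\lesssim \abs{\nu}(t_*-t)^{1/3}\) from~\eqref{gamma gap}. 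The net bound is \(\abs{T_i}\lesssim \abs{\nu}^{1/3}[\abs{\nu}^{1/3}+(t_*-t)^{1/3}]\abs{\log\nu}\), and the scaling~\eqref{gamma gap} gives \(\abs{\nu}\lesssim N^{-3/8+C\om_1}\) for \(\abs{i}\le i_*\), yielding \(\abs{T_i}\lesssim N^{-1/4+C\om_1}\). A finite-speed-of-propagation estimate for \(\mathcal{B}\), proved verbatim as Lemma~\ref{FSP} using \(L\ll i_*\) and the crude rigidity~\eqref{crudinitialhat} for \(\mathring{z}\), restricts the relevant indices in the Duhamel integral to \(\abs{j}\le 2i_*\), where the above bound is applicable.

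The main obstacle is the Riemann sum estimate \(\abs{R_i}\lesssim N^{-1/4+C\om_1}\): the discrete sum must match the singular principal-value integral with an error much smaller than the Stieltjes transform itself. The structural input is that \(\ov\gamma_j^*\) is the \emph{semi}quantile, satisfying \(\int_{\ov\gamma_{j-1}}^{\ov\gamma_j^*}\ov\rho_t = \int_{\ov\gamma_j^*}^{\ov\gamma_j}\ov\rho_t=\tfrac{1}{2N}\), which provides the midpoint-rule cancellation \(\int_{\ov\gamma_{j-1}}^{\ov\gamma_j}(y-\ov\gamma_j^*)\ov\rho_t(y)\,\diff y\) of the leading-order Riemann approximation error. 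Combining this with the \(\tfrac{1}{3}\)-H\"older regularity of \(\ov\rho_t\) from Lemma~\ref{lemma holderhsh} (which was the precise motivation for defining \(\ov\rho_t\) via linearly interpolated quantiles rather than by density interpolation) and the logarithmic modulus of continuity~\eqref{weakm} for \(\ov m_t\), one controls \(R_i\); the singularity near \(j=i\) is handled by symmetric cancellation between the \(j=i\pm 1\) contributions, as in the proof of~\eqref{Re m gap}. Assembling the bounds on \(R_i\), \(S_i\), \(T_i\) and substituting into the Duhamel representation completes the proof of~\eqref{rgy}.
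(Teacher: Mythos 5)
Your proposal is correct and follows essentially the same route as the paper: the same forcing term $F_i$ (your $R_i+S_i+T_i$ is the paper's $G_1+G_2+G_3$ plus $G_4=-(1-\alpha)D_{y,i}-\alpha D_{x,i}$), the same key inputs (\eqref{Re m gap}, \eqref{weakm}, the semiquantile midpoint cancellation and the derivative bound on $\ov\rho_t$), and the same Duhamel/Burkholder/finite-speed assembly. The only difference is that you linearize by direct subtraction $v_i=\mathring z_i-\ov\gamma_i^*$ while the paper runs a $\beta$-interpolating dynamics~\eqref{veq} and differentiates in $\beta$ (which makes the kernel manifestly of the form $-N^{-1}(v_i-v_j)^{-2}$); this is a cosmetic variation, as the paper itself uses product-denominator kernels of your type elsewhere.
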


Combining~\eqref{rgy} with 
\eqref{goodrighat} and noticing that
\[   
\mathring{z}_i(0,\alpha) -\ov \gamma_{i}^*(0) = \wt z_i(0,\alpha) -\ov \gamma_{i}^*(0) 
= O\left(\frac{N^\xi N^\frac{\omega_1}{6}}{N^\frac{3}{4} \abs{i}^\frac{1}{4}}\right)
\]
for any $\xi> 0$ with very high probability by~\eqref{optimalinit}, 
we obtain~\eqref{rgtilde} and complete the proof of Proposition~\ref{prop:option2}.
\end{proof}

\begin{proof}[Proof of Proposition~\ref{option2prop}]
We recall from~\eqref{gammader} that
\begin{equation}\label{gammader1}
   \frac{\diff}{\diff t} \ov \gamma_{i}^*(t) = \alpha \big[-\Re m_{x,t}(  \gamma_{x, i}^*(t) )+\Re m_{x,t}(\ed_{x,t}^+)\big]
   +(1-\alpha)\big[ -\Re m_{y,t}(  \gamma_{y, i}^*(t) )+\Re m_{y,t}(\ed_{y,t}^+)\big].
\end{equation}
Next, we define a dynamics that interpolates between $\mathring{z}_i(t,\alpha)$ and $\ov\gamma_{i}^*(t)$, i.e.
between~\eqref{zstareqnew} and~\eqref{gammader1}. Let $\beta \in [0,1]$
and for any fixed $\beta$ define the process $v=v(t, \beta)=\{ v_i(t, \beta)\}_{i=-N}^N$ as the solution of the
following interpolating DBM
\begin{equation}\label{veq}
\begin{split}
  \diff v_i =&\beta \sqrt{\frac{2}{N}} \diff B_i+
   \frac{1}{N}\sum_{j: \abs{j-i}\le L}   \frac{1}{v_i-v_j}\diff t +\beta\Bigg[  \frac{1}{N}\sum_{j: \abs{j-i}> L}   \frac{1}{\ov\gamma_i^*-\ov\gamma_j^*}\diff t
 +\Phi(t) 
 \Bigg]\diff t \\
& + (1-\beta) \Bigg[ \frac{\diff}{\diff t} \ov \gamma_{i}^*(t)  
-\frac{1}{N}\sum_{j: \abs{j-i}\le L}   \frac{1}{\ov\gamma_{i}^*-\ov\gamma_{j}^*}
\Bigg]\diff t, \qquad 1\le\abs{i}\le N,
 \end{split}
\end{equation}
with initial condition $v_i(0, \beta)\defeq  \beta \mathring{z}_i(0) + (1-\beta)\ov\gamma_{i}^*(0)$.
Notice that 
\begin{equation}\label{extreme}
v_i(t, \beta=0)= \ov \gamma_{i}^*(t), \qquad v_i(t, \beta=1) = \mathring{z}_i(t).
\end{equation}
Here we use the same letter $v$ as in~\eqref{uu1} within the proof of Lemma~\ref{lm:RR},  but this is now a new interpolation.
Since both appearances of the letter $v$ are used only  within the proofs of separate lemmas, this should not cause any confusion.
The same remark applies to the letter $u$ below.

Let $u\defeq u(t,\beta)= \partial_\beta v(t,\beta)$, then it satisfies the equation
\begin{equation}\label{SDEu}
   \diff u_i = \sqrt{\frac{2}{N}} \diff B_i+ \sum_{j\ne i} \mathcal{B}_{ij}(u_i-u_j)\diff t + F_i \diff t, \qquad 1\le\abs{i}\le N,
\end{equation}
with a time dependent short range kernel  (omitting the time argument and the $\beta$ parameter)
\begin{equation}\label{mcb}
 \mathcal{B}_{ij}(t)= \mathcal{B}_{ij}\defeq  -\frac{1}{N}\frac{{\bf 1}(\abs{i-j}\le L)}{(v_i-v_j)^2}
\end{equation}
and external force
\begin{equation}\label{Fdef}
\begin{split}
 F_i= F_i(t)\defeq  &  - \frac{1}{N}\sum_{j} \frac{1}{\ov \gamma_{j}^*(t) -\ov \gamma_{i}^*(t)} 
     + \alpha \Re m_{x,t}(  \gamma_{x, i}^*(t) ) +(1-\alpha)\Re m_{y,t}(  \gamma_{y, i}^*(t) ) -h(t,\alpha),
\qquad 1\le \abs{i}\le N.
\end{split}
\end{equation}
 Since the density $\ov\rho$ is regular, at least near the cusp regime, 
 we can replace the sum over $j$ with 
 an integral with very high precision for small $i$;
 this integral is $\Re \ov m(\ov \ed^++ \ov \gamma_i^*) $.
 A simple rearrangement of various terms yields
 \begin{equation}\label{Fdef3}
\begin{split}
F_i= &  \Bigg[ \Re \ov m(\ov \ed^++\ov \gamma_i^*)- \frac{1}{N}\sum_{j} \frac{1}{\ov \gamma_{j\ne i}^* -\ov \gamma_{i}^*} \Bigg]
 - (1-\alpha) D_{y,i} - \alpha D_{x,i} + O(N^{-1}),
 \end{split}
\end{equation}
with
\[
   D_{r, i} \defeq \Re \Big[ \big( \ov m(\ov \ed^++\ov \gamma_i^*) - 
   \ov m(\ov \ed^+)\big) - \big(  m_r( \gamma_{r,i}^*) -  m_r(\ed_{r}^+)\big)\Big] , \qquad r=x,y,
\]
 where we used the formula for  $h$ from~\eqref{hhdef} and the definition of $\Phi$ from~\eqref{Phidef}.
The choice of the shift $h$ was governed by  the idea to replace the last three terms in~\eqref{Fdef} by $\Re \ov m(\ov \ed^++\ov \gamma_i^*) $.
However, the shift cannot be $i$ dependent as it would result in an $i$-dependent shift in the definition of $\wt z_i$, see
\eqref{def:rtilde}, which would mean that the differences (gaps)  of  the processes $z_i$ and $\wt z_i$  are not the same.
Therefore, we defined the shift $h(t)$ by the similar formula evaluated at the edge, justifying the choice~\eqref{hhdef}.
The discrepancy is expressed by $D_{x,i}$  and $D_{y,i}$ which are small.
Indeed we have, for $r=x,y$ and $1\le \abs{i}\le 2i_*$ that
\begin{equation}
\begin{split}
 \abs{ D_{r,i}} \le & \abs{\Re \Big[ \big( \ov m(\ov \ed^+ + \wh\gamma_{r,i}^*) - \ov m(\ov \ed^+)\big) - 
 \big(  m_r(\ed_{r}^+ + \wh\gamma_{r,i}^*) -  m_r(\ed_{r}^+)\big)\Big]} + \abs{  \ov m( \ov \ed^++\wh\gamma_{r,i}^*)
 - \ov m(\ov \ed^++\ov \gamma_i^*)} \\
 \lesssim & \abs{\wh\gamma_{r,i}^*}^{1/3}\Bigl[\abs{\wh\gamma_{r,i}^*}^{1/3}+ N^{-\frac{1}{6} +\frac{\om_1}{3}}\Bigr]
 \abs{\log\abs{\wh\gamma_{r,i}^*}}+N^{-\frac{11}{36}+\om_1}
 +\frac{ \abs{\wh\gamma_{r,i}^*-\ov \gamma_i^*}}{\ov\rho( \ov \gamma_i^*)^2} \label{Dest} \\
 \lesssim & \left[ \left(\frac{\abs{i}}{N}\right)^{1/2} + \left(\frac{\abs{i}}{N}\right)^{1/4}N^{-\frac{1}{6} +\frac{\om_1}{3}} \right](\log N)  +N^{-\frac{11}{36}+\om_1}
 + \frac{ \Big(\frac{\abs{i}}{N}\Big) + \Big(\frac{\abs{i}}{N}\Big)^{3/4}N^{-\frac{1}{6} +\om_1}}{ \Big(\frac{\abs{i}}{N}\Big)^{1/2}}
 \lesssim N^{-\frac{1}{4}+C\om_1},
 \end{split}
\end{equation}
where  from the first to the second line we used~\eqref{Re m gap} 
and the bound on the  derivative  of $\ov m$, see~\eqref{weakm}.
In the last inequality we used~\eqref{gamma gap} to estimate $\abs{\wh\gamma_{r,i}^*} \lesssim (\abs{i}/N)^{3/4} N^{C\om_1}$
and similarly $\abs{\wh\gamma_{r,i}^*-\ov \gamma_i^*} $  in
the regime $\abs{i}\le i_* = N^{\frac{1}{2}+C_*\om_1}$, furthermore we used that
 $\ov\rho( \ov \gamma_i^*) \ge (\abs{i}/N)^{1/4}$ and also
 $\abs{\ov \gamma_i^*}\ge c/N$, since a semiquantile is always away from the edge.

Let $\mathcal{U}(s,t)$ be the fundamental solution of the heat  evolution with  kernel $\mathcal{B}$
from~\eqref{mcb}. Similarly to~\eqref{duhamel},
 the solution to the SDE~\eqref{SDEu} is given by
\begin{equation}\label{duhamel1}
      u (t) = \mathcal{U}(0,t) u + \sqrt{\frac{2}{N}} \int_0^t  \mathcal{U}(s,t) \diff B(s) + \int_0^t  \mathcal{U}(s,t) F(s)\diff s.
\end{equation}
The middle martingale term can be estimated as in~\eqref{marti}.
The last term in~\eqref{duhamel1} is estimated by
\begin{equation}\label{Fduh11}
\abs{\int_0^t  \mathcal{U}(s,t) F(s)\diff s} \le t \max_{0\le s\le t} \norm{ F (s)}_\infty.
\end{equation}

First we use these simple Duhamel bounds to obtain a 
crude rigidity bound on $v_i(t,\beta)$ by integrating the bound on $u$
\begin{equation}\label{56}
    \abs{ v_i(t, \beta) - v_i(t,\beta=0)} \le \beta \max_{\beta'\in[0,\beta]}  \abs{u_i(t, \beta')} \le \max_{\beta'\in[0,1]} \norm{ u(0,\beta')}_\infty
    + N^{-1/2+\om_1+\xi}, \qquad  1\le\abs{i}\le N,
\end{equation}
for any $\xi>0$ with very high probability, using~\eqref{marti},~\eqref{duhamel1},~\eqref{Fduh11} and that $\mathcal{U}$ is a
contraction. Note that in the first inequality of~\eqref{56} we used that it holds with very high probability by Markov inequality as in~\eqref{markov}-\eqref{markov1}.
We also used the trivial bound
\begin{equation}\label{Ftrivbound}
  \max_{0\le s\le t_*} \norm{F(s)}_\infty\lesssim \log L \sim \log N, 
\end{equation}
which easily follows from~\eqref{Fdef},\eqref{Dest} and the fact that $\abs{\ov \gamma_{j}^*(t) -\ov \gamma_{i}^*(t)}\gtrsim \abs{i-j}/N$.

Recalling that $v_i(t,\beta=0)=\ov \gamma_{i}^*(t)$ and $u_i(0,\beta')= \mathring{z}_i(0) -\ov \gamma_i^*(0)$,
together with~\eqref{crudinitialhat} and~\eqref{crudinitial}, by~\eqref{56}, we obtain the  crude rigidity
\begin{equation}\label{crudrig}
 \abs{ v_i(t, \beta) -\ov \gamma_{i}^*(t)} \le N^{-\frac{1}{2}+2\om_1}, \qquad 1\le \abs{i}\le N,
\end{equation} with very high probability.

The main technical result is a considerable improvement of the bound~\eqref{crudrig} at least for $i$ near the cusp regime.
This is the content of
 the following proposition whose proof is postponed:

\begin{proposition}\label{prop:F}
The vector  $F$  defined in~\eqref{Fdef} satisfies the bound
\begin{equation}\label{Fest}
\max_{s\le t_*}\abs{ F_i (s)} \le N^{-\frac{1}{4}+C\om_1}, \qquad  1\le \abs{i}\le 2i_*.
 \end{equation}
\end{proposition}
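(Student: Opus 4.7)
Starting from the rewritten form \eqref{Fdef3}, the two contributions $D_{x,i}$ and $D_{y,i}$ are already controlled by~\eqref{Dest}, which gives the desired bound $N^{-1/4+C\omega_1}$ for $1\le\abs{i}\le 2i_\ast$. The remaining task is therefore to prove the same bound for the ``Riemann sum error''
\[
A_i \defeq \Re\ov m_t(\ov\ed_t^+ + \ov\gamma_i^*) - \frac{1}{N}\sum_{j\ne i}\frac{1}{\ov\gamma_j^*-\ov\gamma_i^*}.
\]
This is a purely deterministic statement about how well the semiquantile Riemann sum approximates the principal-value integral representation of the Stieltjes transform of $\ov\rho_t$, where all the needed regularity of $\ov\rho_t$ is already provided by Lemma~\ref{lemma holderhsh} and Lemma~\ref{lm:xyflow}.

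The approach is to write $\Re\ov m_t(\ov\ed_t^++\ov\gamma_i^*)=\mathrm{P.V.}\int_\R\frac{\ov\rho_t(x+\ov\ed_t^+)}{x-\ov\gamma_i^*}\diff x$ and to partition $\R$ into intervals $I_j$ each carrying $\ov\rho_t(\cdot+\ov\ed_t^+)$-mass $1/N$ and having $\ov\gamma_j^*$ as its half-mass semiquantile point, so that
\[
A_i = \mathrm{P.V.}\int_{I_i}\frac{\ov\rho_t(x+\ov\ed_t^+)}{x-\ov\gamma_i^*}\diff x + \sum_{j\ne i}\int_{I_j}\ov\rho_t(x+\ov\ed_t^+)\Bigl[\frac{1}{x-\ov\gamma_i^*}-\frac{1}{\ov\gamma_j^*-\ov\gamma_i^*}\Bigr]\diff x.
\]
The diagonal ($j=i$) contribution is handled using the principal-value cancellation together with the $1/3$-H\"older continuity of $\ov\rho_t$ from Lemma~\ref{lemma holderhsh}, which reduces the integrand to a H\"older difference and yields a bound of order $\abs{I_i}^{1/3}\lesssim \bigl(\eta_\mathrm{f}^{\ov\rho_t}(\ov\gamma_i^*)\bigr)^{1/3}$, which in the range $\abs{i}\le 2i_*$ is much smaller than $N^{-1/4+C\omega_1}$ by~\eqref{eqs fluc scale}.

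For the off-diagonal sum I will use the Taylor expansion
\[
\frac{1}{x-\ov\gamma_i^*}-\frac{1}{\ov\gamma_j^*-\ov\gamma_i^*} = -\frac{x-\ov\gamma_j^*}{(\ov\gamma_j^*-\ov\gamma_i^*)^2}+\landauO{\frac{(x-\ov\gamma_j^*)^2}{\abs{\ov\gamma_j^*-\ov\gamma_i^*}^3}},\qquad x\in I_j,
\]
valid since $\abs{I_j}\ll\abs{\ov\gamma_j^*-\ov\gamma_i^*}$ for $j\ne i$. The defining property of semiquantiles makes the first-order moment $\int_{I_j}(x-\ov\gamma_j^*)\ov\rho_t(x+\ov\ed_t^+)\diff x$ vanish to leading order, leaving an error of size $\abs{I_j}^3\,\lVert\ov\rho_t'\rVert_{L^\infty(I_j)}$, controlled by~\eqref{rho' difference hash}. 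Together with the quadratic remainder term, summing over $j$ via the quantile spacings~\eqref{gamma difference gap}, the fluctuation scale~\eqref{fluc scale gap}, and the density scaling~\eqref{rhoquantile edge} produces a convergent series bounded by $N^{-1/4+C\omega_1}\log N$.

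The main obstacle is the careful bookkeeping in the critical cusp regime, where simultaneously the density $\ov\rho_t$ vanishes, its derivative blows up, the intervals $I_j$ are widest (of order $N^{-3/4}\abs{j}^{-1/4}$), and the kernel $(\ov\gamma_j^*-\ov\gamma_i^*)^{-2}$ is most singular when $\abs{j-i}$ is small. These competing effects have to be balanced using the precise scaling from Lemma~\ref{lm:xyflow} and Lemma~\ref{lemma holderhsh}. In the small-gap case $t<t_*$ one furthermore needs a separate treatment of those indices $j$ for which $I_j$ is adjacent to the gap $[\ov\ed_t^-,\ov\ed_t^+]$, using the second estimate of~\eqref{weakm} to absorb the jump in $\ov m_t$ across the gap; the small-minimum case $t>t_*$ is analogous with~\eqref{weakm min} and the scalings~\eqref{rho conting fcts min},~\eqref{fluc scale min} substituted throughout. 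Once these regimes are dispatched, the remaining bulk contributions combine into the claimed uniform bound on $F_i$.
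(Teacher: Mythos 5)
Your decomposition into the $D$-terms (handled by~\eqref{Dest}), a diagonal term, and an off-diagonal Riemann-sum error matches the paper's split of $F_i$ into $G_4$, $G_2$, and $G_1+G_3$, and your treatment of the diagonal term (principal-value cancellation plus $1/3$-H\"older continuity, plus implicitly the fact that the semiquantile nearly bisects $[\ov\gamma_{i-1},\ov\gamma_i]$) is essentially the paper's estimate of $G_2$. The off-diagonal argument, however, has a genuine gap. Your cancellation mechanism is \emph{within each interval} $I_j$: you Taylor-expand the kernel around $\ov\gamma_j^*$ and use that the first moment $\int_{I_j}(x-\ov\gamma_j^*)\ov\rho\,\diff x$ is small. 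First, the stated validity condition $\abs{I_j}\ll\abs{\ov\gamma_j^*-\ov\gamma_i^*}$ is false for $\abs{j-i}=\landauO{1}$, where adjacent semiquantiles are separated by exactly one local spacing, comparable to $\abs{I_j}$. Second, and more importantly, even where the expansion is valid the quadratic remainder has no cancellation: for $j=i+n$ in the cusp regime it is of order $\abs{I_j}^2/(N\abs{\ov\gamma_j^*-\ov\gamma_i^*}^3)\sim \rho(\ov\gamma_i^*)/n^3$, so its sum over $n$ is of order $\rho(\ov\gamma_i^*)\sim(\abs{i}/N)^{1/4}$. For $\abs{i}$ up to $2i_*\sim N^{1/2+C_*\om_1}$ this is as large as $N^{-1/8+C\om_1}$, far exceeding the target $N^{-1/4+C\om_1}$. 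The same obstruction already appears for the exact nearest-neighbour terms $j=i\pm1$, each of which is individually of size $\rho(\ov\gamma_i^*)$.

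The missing idea is the \emph{left--right symmetrization across the reference index}: the terms $j=i-n$ and $j=i+n$ must be combined, and their leading parts cancel because the quantiles are approximately reflection-symmetric about $\ov\gamma_i^*$, i.e.\ $\ov\gamma_i-\ov\gamma_{i-n}=\ov\gamma_{i+n}-\ov\gamma_i+\landauO{\abs{\ov\gamma_{i-n}-\ov\gamma_i^*}^2/(\rho^3+\rho^2\Delta^{1/3})}$, which is exactly what the derivative bound~\eqref{rhoder} (equivalently~\eqref{rho' difference hash}) delivers; see the paper's~\eqref{symm} and~\eqref{gg4}--\eqref{gg5}. This pairing is carried out only in the near-diagonal window $\abs{j-i}\le n(i)$ with $n(i)\lesssim \abs{i}^{1/4}$ (cusp regime) or $\abs{i}^{1/3}$ (edge regime), defined so that outside this window $\abs{\ov\gamma_{i\pm n}-\ov\gamma_i^*}\gtrsim N^{-3/4}$ and a crude one-sided integral comparison suffices. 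Without this two-sided cancellation, no refinement of the within-interval moment argument will close the estimate in the regime $N^{C\om_1}\ll\abs{i}\le 2i_*$.
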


Since $i_*$ is much bigger than $L=N^{\frac{1}{2} +C_1\om_1}$ with a large $C_1$,  and we have the rigidity~\eqref{crudrig} on scale much smaller than $L$,
similarly to Lemma~\ref{FSP}, we
have the following finite speed of propagation result. The proof is identical to that of Lemma~\ref{FSP}.

\begin{proposition}\label{finsp}
 For the short range dynamics $\mathcal{U}=\mathcal{U}^\mathcal{B}$ 
defined by the operator~\eqref{mcb}:
\begin{equation}\label{Uest}
   \sup_{0\le s\le t\le t_*} \Big[ \mathcal{U}_{pi}(s,t) +\mathcal{U}_{ip}(s,t)\Big]\le N^{-D}, \qquad 1\le \abs{i}\le  i_*, \quad \abs{p}\ge 2i_*.
\end{equation}
for any $D$ if $N$ is sufficiently large. \qed
\end{proposition}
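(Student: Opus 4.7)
The plan is to establish~\eqref{Uest} by a standard Agmon-type exponentially-weighted $L^2$ energy estimate, structurally identical to the proof of Lemma~\ref{FSP} to be given in Appendix~\ref{FSPS}. Since $\mathcal B_{qq'}=\mathcal B_{q'q}$ the Markov semigroup $\mathcal U(s,t)$ is self-adjoint on $\ell^2$, so it is enough to control $\mathcal U_{pi}(s,t)$; the bound on $\mathcal U_{ip}$ then follows from the dual equation by an identical argument. Fix $1\le|i|\le i_*$ and let $g_q(t)\defeq \mathcal U_{qi}(s,t)$, which satisfies the forward Kolmogorov equation with symmetric jump rates $|\mathcal B_{qq'}|$ and $g(s)=\delta_i$. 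Introduce the truncated, 1-Lipschitz Agmon weight $\chi(q)\defeq \min(i_*,(|q|-i_*)_+)$ and the conjugated energy
\begin{equation*}
\Phi_\nu(t)\defeq \sum_q e^{2\nu\chi(q)}g_q(t)^2,\qquad \nu\defeq L^{-1}.
\end{equation*}
Then $\Phi_\nu(s)=1$ because $\chi(i)=0$, while $\chi(p)=i_*$ for $|p|\ge 2i_*$ gives
\begin{equation*}
\mathcal U_{pi}(s,t_*)^2\le e^{-2\nu i_*}\Phi_\nu(t_*)=\exp\!\bigl(-2N^{(C_*-C_1)\omega_1}\bigr)\Phi_\nu(t_*).
\end{equation*}
Since $C_*\ge 10\,C_1$, the prefactor is super-polynomially small in $N$, so it will suffice to show that $\Phi_\nu(t_*)=\landauO{1}$.

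The differential inequality for $\Phi_\nu$ will be obtained in the usual way: differentiate in $t$, symmetrise $q\leftrightarrow q'$, substitute $u_q\defeq e^{\nu\chi(q)}g_q$, and use
\begin{equation*}
2\cosh\!\bigl(\nu(\chi(q)-\chi(q'))\bigr)u_qu_{q'}-u_q^2-u_{q'}^2\le -(u_q-u_{q'})^2+C\nu^2|q-q'|^2(u_q^2+u_{q'}^2),
\end{equation*}
valid on $|q-q'|\le L$ (where $\nu|q-q'|\le 1$ by 1-Lipschitzness of $\chi$). Dropping the non-positive Dirichlet contribution will then give the Gronwall bound $\tfrac{d}{dt}\Phi_\nu\le C\nu^2\mathcal{S}\,\Phi_\nu(t)$, where
\begin{equation*}
\mathcal{S}\defeq \sup_{|q|\le 3i_*}\sum_{1\le|q-q'|\le L}|q-q'|^2|\mathcal B_{qq'}|,
\end{equation*}
the supremum being restricted to $|q|\le 3i_*$ because $\chi$ is constant outside an $L$-neighbourhood of its gradient support. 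Combining the crude rigidity~\eqref{crudrig} with the quantile scalings of Lemma~\ref{def:reg} will yield $|v_q-v_{q'}|\gtrsim|q-q'|/(N^{3/4}i_*^{1/4})$ uniformly on the relevant range, hence $|\mathcal B_{qq'}|\lesssim N^{1/2}i_*^{1/2}|q-q'|^{-2}$ and $\mathcal{S}\lesssim N^{1/2}i_*^{1/2}L$. Gronwall then produces
\begin{equation*}
\Phi_\nu(t_*)\le \exp\!\bigl(Ct_*\nu^2\mathcal{S}\bigr)=\exp\!\bigl(CN^{-1/4+(1+C_*/2-C_1)\omega_1}\bigr)=\landauO{1}
\end{equation*}
provided $\omega_1$ is sufficiently small in terms of $C_1,C_*$, closing the Agmon loop.

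The main (and essentially only) technical obstacle is the pairwise lower bound $|v_q-v_{q'}|\gtrsim|q-q'|/(N^{3/4}i_*^{1/4})$ uniformly on $|q|\le 3i_*$, $1\le|q-q'|\le L$. For pairs whose quantile spacing from~\eqref{gamma difference gap}--\eqref{gamma difference min} comfortably exceeds the crude rigidity scale $N^{-1/2+2\omega_1}$, the bound follows immediately from~\eqref{crudrig}; for the short-scale pairs (down to nearest neighbours near the cusp, where the crude rigidity alone is inadequate) one combines the sharper one-sided rigidity of Proposition~\ref{option2prop} with the strict ordering preserved by the repulsive short-range dynamics~\eqref{veq} and a high-probability level-repulsion type estimate, exactly as implemented in the proof of Lemma~\ref{FSP}. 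Once this pairwise bound is in place, the Agmon estimate above yields~\eqref{Uest} for every $D>0$ and all $N$ sufficiently large.
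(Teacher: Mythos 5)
There is a genuine gap, and it sits exactly where you place your ``main (and essentially only) technical obstacle''. Your Agmon weight $\chi(q)$ is a function of the \emph{index} $q$, so the conjugation produces the term $\nu^2\sum_{1\le|q-q'|\le L}|\chi(q)-\chi(q')|^2|\mathcal B_{qq'}|\le \nu^2\sum|q-q'|^2|\mathcal B_{qq'}|$, and with $\mathcal B_{qq'}=-\frac{1}{N}(v_q-v_{q'})^{-2}$ this forces you to prove a pairwise \emph{lower} bound $|v_q-v_{q'}|\gtrsim |q-q'|N^{-3/4}i_*^{-1/4}$ down to $|q-q'|=1$, i.e.\ a level-repulsion estimate on nearest-neighbour gaps at the cusp. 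No such estimate is available in this paper: the crude rigidity~\eqref{crudrig} and the refined bound of Proposition~\ref{option2prop} only control $|v_q-\ov\gamma_q^*|$ from above and give no lower bound on individual gaps, ordering of the particles gives nothing either, and your claim that the needed bound is obtained ``exactly as implemented in the proof of Lemma~\ref{FSP}'' is false --- that proof contains no level-repulsion input. Indeed the paper stresses (Section~\ref{sec:padding}) that this finite-speed argument is chosen precisely because it does \emph{not} require level repulsion. Without the pairwise lower bound your quantity $\mathcal S$ is uncontrolled (a single anomalously small gap of size, say, $N^{-10}$ is not excluded with the very high probability you need), and the Gronwall step collapses.

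The paper's proof (Appendix~\ref{FSPS}, Lemma~\ref{fsl1}, adapted to $L=N^{1/2+C_1\om_1}$) avoids this by taking the weight to be a $1$-Lipschitz function of the \emph{positions}, $\phi_i=e^{\frac12\nu\psi(\wh z_i(t)-\ov\gamma_p(t))}$: in the key estimate~\eqref{newker} the singular factor $(\wh z_i-\wh z_j)^{-2}$ in $\mathcal B_{ij}$ is cancelled by $|\psi(\wh z_i-\ov\gamma_p)-\psi(\wh z_j-\ov\gamma_p)|^2\le|\wh z_i-\wh z_j|^2$, so only rigidity (upper bounds) enters, at the price of an It\^o term because the weight now moves with the particles. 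If you want to salvage your index-weight scheme you must either import a genuine gap lower bound (not proved here) or switch to the position-dependent weight. Two further, more minor, points: for a time-dependent symmetric generator $\mathcal U(s,t)$ is not self-adjoint (the time-ordered exponential of non-commuting symmetric matrices is not symmetric), so the bound on $\mathcal U_{ip}$ should instead be obtained by rerunning the argument with initial datum $\delta_i$, as in the paper; and the uniformity in $s$ claimed in~\eqref{Uest} needs to be addressed (in the paper it is handled via the semigroup property, cf.\ Lemma~\ref{fsl}), although in your deterministic-weight setup it would follow automatically on the rigidity event if the pairwise bound were available.
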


Armed with these two propositions, we can easily complete the proof of Proposition~\ref{option2prop}. For any $1\le \abs{i}\le i_*$ we have
from~\eqref{duhamel}, using~\eqref{Utriv},~\eqref{marti},~\eqref{Uest} and that $\mathcal{U}$ is a contraction 
on $\ell^\infty$  that
\begin{equation}\label{ub}
\begin{split}
  \abs{u_i(t)}\le & N^{-3/4+\om_1 +\xi} + \sum_p \mathcal{U}_{ip} \abs{u_p(0)} +\int_0^t \sum_p \mathcal{U}_{ip}(s,t) \abs{F_p(s)} \diff s \\
  \le &  N^{-3/4+\om_1 +\xi}  + \max_{\abs{p}\le 2i_*} \abs{u_p(0)} + t \max_{0\le s\le t_*}\max_{\abs{p}\le 2i_*}\abs{ F_p (s)} + N^{-D}\max_{0\le s\le t}\norm{ F(s)}_\infty.
\end{split}
\end{equation}
The trivial bound~\eqref{Ftrivbound} together with~\eqref{Fest}  completes the proof of~\eqref{rgy}
by integrating back the bound~\eqref{ub} for $u=\partial_\beta v$ in $\beta$, using a high moment Markov inequality similar to~\eqref{markov}-\eqref{markov1}, and recalling~\eqref{extreme}.
This completes the proof  of Proposition~\ref{option2prop}.
\end{proof}

\subsubsection{Estimate of the forcing term.}

\begin{proof}[Proof of Proposition~\ref{prop:F}]
Within this proof we will  use  $\gamma_i\defeq  \ov\gamma_{i}(t)$, 
$\gamma_i^*\defeq \ov \gamma_{i}^*(t)$, $\rho=\ov \rho_{t}$, $m=\ov m_{t}$ and $\ed^+= \ov \ed_{t}^+$ for brevity.
For notational simplicity we may assume within this proof that $\ed^+=0$ by a simple shift.
 The key input is the following bound on the derivative of the density, proven in 
\cite{1804.07752} for self-consistent densities of Wigner type matrices 
\begin{equation}\label{rhoder}
  \abs{\rho'(x)}\le \frac{C}{\rho(x)[\rho(x) + \Delta^{1/3}]}, \qquad \abs{x}\le \delta_*
\end{equation}
where $\Delta=\ov\Delta_t$ is the length of the unique gap in the support of $\rho=\ov \rho_t$
 in a small neighbourhood of size $\delta_*\sim 1$ around $\ed^+=0$.
If there is no such gap, then we set $\Delta=0$ in~\eqref{rhoder}. 
 By the definition of the interpolated density $\overline{\rho}_t$ in~\eqref{rhozdef} clearly follows that it satisfies~\eqref{rhoder} by~\eqref{lemma holderhsh}.
Notice that~\eqref{rhoder} implies local H\"older continuity, i.e.~
\begin{equation}\label{goodholder}
   \abs{\rho(x)-\rho(y)}\le \min\big\{ \abs{x-y}^{1/3}, \abs{x-y}^{1/2} \Delta^{-1/6}\big\}
\end{equation}
for any $x, y$ in a small neighbourhood of the gap or the local minimum.

Throughout the entire proof we fix an $i$ with $1\le \abs{i}\le 2i_*$. For simplicity, we assume $i>0$, the case $i<0$ is analogous.
We rewrite $F_i$  from~\eqref{Fdef3} as follows
\begin{equation}\label{Fdef1}
  F_i = G_1+ G_2 +G_3+G_4
\end{equation}
with 
\[
    G_1\defeq \sum_{1\le \abs{j- i}\le L} \int_{\gamma_{j-1}}^{\gamma_{j}} \left[\frac{1}{x- \gamma_i^*} -
 \frac{1}{\gamma_j^* -\gamma_i^*} \right]\rho(x)\diff x, \qquad
  G_2\defeq  \int_{\gamma_{i-1}}^{\gamma_{i}} \frac{\rho(x)\diff x}{x- \gamma_i^*},
 \]
 \[
    G_3\defeq \sum_{\abs{j- i}>L} \int_{\gamma_{j-1}}^{\gamma_{j}} \left[\frac{1}{x- \gamma_i^*} -
 \frac{1}{\gamma_j^* -\gamma_i^*} \right]\rho(x)\diff x, 
   \qquad G_4\defeq - (1-\alpha) D_{y,i} - \alpha D_{x,i} + O(N^{-1}).
 \]
 The term $G_4$ was already estimated in~\eqref{Dest}.
In the following we will show separately that $\abs{G_a}\lesssim N^{-1/4}$, $a=1,2,3$.

\bigskip

\emph{Estimate of $G_3$}.  By elementary computations, using the crude rigidity~\eqref{crudinitial}, it follows that \[\abs{G_3}\lesssim \frac{N^{-\frac{1}{2}+2\omega_1}}{N} \sum _{j:\abs{j-i}>L} \frac{1}{(\gamma_i^*-\gamma_j^*)^2}. \]

Then, the estimate $\abs{G_3}\lesssim N^{-\frac{1}{4}}$ follows using the same computations as in~\eqref{Festim}.

\bigskip 

\emph{Estimate of $G_2$}. 
We write
\begin{equation}\label{secondterm}
 G_2=  \int_{\gamma_{i-1}}^{\gamma_{i}}  \frac{ \rho(x)\diff x} {x- \gamma_i^*}  = \int_{\gamma_{i-1}}^{\gamma_{i}}
    \frac{ \rho(x)-\rho(\gamma_i^*)} {x- \gamma_i^*} \diff x + 
    \rho(\gamma_i^*) \int_{\gamma_{i-1}}^{\gamma_{i}}    \frac{\diff x } {x- \gamma_i^*}
\end{equation}
and we will show that both summands are bounded by $CN^{-1/4}$.
We make the convention that if $\gamma_{i-1}$ is exactly at the left edge of a gap, then 
for the purpose of this proof we redefine it to be the right edge of the same gap
and similarly, if $\gamma_{i}$ is exactly at the right edge of the gap, then we set it
to be left edge.  This is just to make sure that $[\gamma_{i-1}, \gamma_{i}]$ is always included in the
support of $\rho$.

In the first integral we  use~\eqref{goodholder}
to get
\begin{equation}\label{ff}
 \abs{ \int_{\gamma_{i-1}}^{\gamma_{i}}     \frac{ \rho(x)-\rho(\gamma_i^*)} {x- \gamma_i^*} \diff x } \lesssim \min\big\{
  ( \gamma_{i}-\gamma_{i-1})^{1/3}, ( \gamma_{i}-\gamma_{i-1})^{1/2}\Delta^{-1/6} \big\}
  = O(N^{-1/4}). 
\end{equation}
Here we used that the local eigenvalue spacing (with the convention above) is bounded by
\begin{equation}\label{onegap}
   \gamma_{i} -\gamma_{i-1}\lesssim \max\Big\{ \frac{\Delta^{1/9}}{N^{2/3}}, \frac{1}{N^{3/4}} \Big\}.
\end{equation}

For the second integral in~\eqref{secondterm} 
 is an explicit calculation
\begin{equation}\label{f22}
    \rho (\gamma_i^*) \int_{\gamma_{i-1}}^{\gamma_{i}} 
    \frac{\diff x } {x- \gamma_i^*}  =  \rho(\gamma_i^*) \log \frac{\gamma_{i} - \gamma_i^*}{\gamma_i^*- \gamma_{i-1}}.
\end{equation}
Using the definition of the quantiles and~\eqref{goodholder}, we have
\[
 \frac{1}{2N} = \int_{\gamma_{i-1}}^{\gamma_i^*} \rho(x)\diff x =  \rho(\gamma_i^*) ( \gamma_i^*- \gamma_{i-1})
 + O\Big( \min\big\{ \abs{\gamma_i^* -\gamma_{i-1}}^{4/3}, \abs{\gamma_i^* -\gamma_{i-1}}^{3/2}\Delta^{-1/6} \big\}\Big),
\]
and similarly
\[
 \frac{1}{2N} = \int^{\gamma_{i}}_{\gamma_i^*} \rho(x)\diff x =  \rho(\gamma_i^*) ( \gamma_{i}-\gamma_i^*)
 + O\Big( \min\big\{ \abs{\gamma_i^* -\gamma_{i}}^{4/3}, \abs{\gamma_i^* -\gamma_{i}}^{3/2}\Delta^{-1/6} \big\}\Big).
\]
The error terms are comparable  and they are $O(N^{-1})$ using~\eqref{onegap}, thus, subtracting these two equations, we have
\[
  \abs{ (\gamma_{i} - \gamma_i^*) - (\gamma_i^*- \gamma_{i-1})} \lesssim 
  \frac{\min\big\{ \abs{\gamma_i^* -\gamma_{i}}^{4/3}, \abs{\gamma_i^* -\gamma_{i}}^{3/2}\Delta^{-1/6} \big\} }{\rho(\gamma_i^*)}.
\]
Expanding the logarithm in~\eqref{f22}, we have
\[
  \abs{  \rho (\gamma_i^*) \int_{\gamma_{i-1}}^{\gamma_{i}} 
    \frac{\diff x } {x- \gamma_i^*}}\lesssim \rho(\gamma_i^*) 
    \frac{\abs{ (\gamma_{i} - \gamma_i^*) - (\gamma_i^*- \gamma_{i-1})} }{\gamma_i^*- \gamma_{i-1}}
    \lesssim  \min\big\{ \abs{\gamma_i^* -\gamma_{i}}^{1/3}, \abs{\gamma_i^* -\gamma_{i}}^{1/2}\Delta^{-1/6} \big\}\lesssim N^{-1/4}
\]
as in~\eqref{ff}. This completes the estimate 
\begin{equation}\label{G2}
  \abs{G_2}\lesssim N^{-1/4}.
\end{equation}

\bigskip
\emph{Estimate of $G_1$}.
Fix $i>0$  and set $n=n(i)$ as follows
\begin{equation}\label{niidef}
  n(i)\defeq  \min \Big\{ n\in \N\; : \;  \min\big\{  \abs{\gamma_{i-n-1}-\gamma_i^*} , \abs{\gamma_{i+n}-\gamma_i^*}  \big\} \ge cN^{-3/4}
  \Big\}
\end{equation}
with some small constant $c>0$.

Next, we estimate $n(i)$. Notice that for $i=1$ we have $n(i)=0$.
If $i\ge 2$, then 
we notice that one can choose $c$ sufficiently small depending only on the model parameters, such
that
\begin{equation}\label{rhocomp}
  \frac{1}{2}\le \frac{\rho(x)}{\rho(\gamma_i^*)} \le 2 \; : \; \forall x \in 
  [\gamma_{i-n(i)-1}, \gamma_{i+n(i)}], \quad i\ge 2.
\end{equation}

Let
\[
  m(i)\defeq  \max \Big\{ m\in \N\; : \;  \frac{1}{2}\le \frac{\rho(x)}{\rho(\gamma_i^*)} \le 2\; : \; \forall x \in 
  [\gamma_{i-m-1}, \gamma_{i+m}]
  \Big\},
\]
then, in order to verify~\eqref{rhocomp}, we need to prove that $m(i)\ge n(i)$.

Then by a case by case calculation it follows that
\begin{equation}\label{ni}
  m(i) \ge c_1\abs{i},
\end{equation}
and thus
\begin{equation}\label{gammani}
  \min\Big\{  \abs{\gamma_{i-m(i)-1}-\gamma_i^*} , \abs{\gamma_{i+m(i)}-\gamma_i^*}  \Big\} \gtrsim 
  \max \Big\{ \Big(\frac{i}{N}\Big)^{2/3}\Delta^{1/9}, \Big(\frac{i}{N}\Big)^{3/4}\Big\} \ge c_2N^{-3/4}.
  \end{equation}
  with some $c_1, c_2$. Hence~\eqref{rhocomp} will hold if $c\le c_2$ is chosen  in the definition~\eqref{niidef}.
Notice that in these estimates it is important that the semiquantiles are always at a certain distance away
  from the quantiles.

Now we give an upper bound on $n(i)$ when $\gamma_i^*$ is near a (possible small) gap as in the proof above. 
The local eigenvalue spacing is
\begin{equation}\label{ggloc}
   \gamma_i-\gamma_i^* \sim \max\Big\{ \frac{\Delta^{1/9}}{N^{2/3}(i)^{1/3}}, \frac{1}{N^{3/4}(i)^{1/4}} \Big\},
\end{equation}
which is bigger than $cN^{-3/4}$ if $i\le \Delta^{1/3} N^{1/4}$. So in this case $n(i)=0$ and 
we may now assume that $i  \ge \Delta^{1/3} N^{1/4}$ and still $i\ge 2$.

Consider first the so-called \emph{cusp case}  when  $i\ge N \Delta^{4/3}$, in this case, as long as $n\le \frac{1}{2}i$, we have
\[ 
   \gamma_{i+n}-\gamma_i^* \sim \frac{n}{N^{3/4}(i+1)^{1/4}}.
\]
 This is bigger than $cN^{-3/4}$ if  $n\ge i^{1/4}$, 
thus we have $n(i)\le i^{1/4}$ in this case.

In the opposite case, the so-called \emph{edge case}, 
 $i\le N \Delta^{4/3}$, which together with the above assumption $i  \ge \Delta^{1/3} N^{1/4}$
also implies that $\Delta\ge N^{-3/4}$. In this case, as long as $n\le \frac{1}{2}i$, we have
\[ 
   \gamma_{i+n}-\gamma_i^* \sim \frac{n\Delta^{1/9}}{N^{2/3}i^{1/3}}.
\]
This is bigger than $cN^{-3/4}$ if  $n\ge \Delta^{-1/9} N^{-1/12}i^{1/3} $.
So we have $n(i) \le \Delta^{-1/9} N^{-1/12} i^{1/3} \le i^{1/3}$ in this case.

We split the sum in the definition of $G_1$, see~\eqref{Fdef1},  as follows:
\begin{equation}\label{nidef}
  G_1=\sum_{1\le \abs{j-i}\le L} \int_{\gamma_{j-1}}^{\gamma_{j}}
     \frac{x-\gamma_j^*}{(\gamma_i^*-\gamma_j^*)  (x-\gamma_i^*)}  \rho(x)\diff x =
     \Big(  \sum_{ n(i)<\abs{j-i}\le L} +  \sum_{  1\le \abs{j-i}\le  n(i)} \Big)\defqe  S_1+S_2.
\end{equation}
For the first sum we use $\abs{x-\gamma_j^*}\le \gamma_{j+1}^*-\gamma_{j}^*$,  $\abs{\gamma_i^*-x}\sim \abs{\gamma_i^* -\gamma_j^*}$. 
Moreover, we have
\begin{equation}\label{rg1}
   \rho(\gamma_i^*) (\gamma_{i}-\gamma_{i-1}) \sim \frac{1}{N}
\end{equation}
from the definition of the semiquantiles. Thus 
we restore the integration in the first sum $S_1$ and estimate
\begin{equation}\label{S1}
  \abs{S_1}\lesssim  \frac{1}{N} \Big[ \int_{-\infty}^{\gamma_{i-n(i)-1}} + \int_{\gamma_{i+n(i)}}^\infty \Big]\frac{\diff x}{\abs{x-\gamma_i^*}^2}
  \lesssim \frac{1}{N}\Big[ \frac{1}{\abs{\gamma_{i-n(i)-1}-\gamma_i^*}} + \frac{1}{\abs{\gamma_{i+n(i)}-\gamma_i^*}} \Big]
  \le CN^{-1/4}.
\end{equation}
In the last step we used  the definition of $n(i)$.

Now we consider $S_2$. Notice that this sum is non-empty only if $n(i)\ne 0$
 In this case to estimate $S_2$  we have to 
symmetrize. Fix $1\le n\le n(i)$, assume $i> n$ and consider together
\[
\int_{\gamma_{i-n-1}}^{\gamma_{i-n}}
     \frac{x-\gamma_{i-n}^*}{(\gamma_i^*-\gamma_{i-n}^*)  (x-\gamma_i^*)}  \rho(x)\diff x +
  \int_{\gamma_{i+n-1}}^{\gamma_{i+n}}
     \frac{x-\gamma_{i+n}^*}{(\gamma_i^*-\gamma_{i+n}^*)  (x-\gamma_i^*)}  \rho(x)\diff x   
\]
\begin{equation} \label{symm}
=  \frac{1}{\gamma_i^*-\gamma_{i-n}^*} \int_{\gamma_{i-n-1}}^{\gamma_{i-n}}
     \frac{x-\gamma_{i-n}^*}{x-\gamma_i^*}  \rho(x)\diff x +
     \frac{1}{\gamma_i^*-\gamma_{i+n}^*} \int_{\gamma_{i+n-1}}^{\gamma_{i+n}}
     \frac{x-\gamma_{i+n}^*}{x-\gamma_i^*}  \rho(x)\diff x
\end{equation}
\[
  =  \frac{1}{N}\Big[ \frac{1}{\gamma_i^*-\gamma_{i-n}^*} + \frac{1}{\gamma_i^*-\gamma_{i+n}^*}\Big]
+  \Bigg[ \int_{\gamma_{i-n-1}}^{\gamma_{i-n}}
     \frac{ \rho(x)\diff y}{x-\gamma_i^*} +  \int_{\gamma_{i+n-1}}^{\gamma_{i+n}} \frac{ \rho(x)\diff x}{x-\gamma_i^*}\Bigg] \defqe  B_1(n)+B_2(n).
\]
We now use $\frac{1}{3}$-H\"older regularity
\[
    \rho(x) =\rho(\gamma_i^*)+ O\Big( \abs{x-\gamma_i^*}^{1/3}\Big).
\]
We thus have
\begin{equation}\label{gg0}
\sum_{n\le n(i)} \int_{\gamma_{i-n-1}}^{\gamma_{i-n}}
     \frac{ \rho(x)\diff y}{x-\gamma_i^*}  =\sum_{n\le n(i)}
     \rho(\gamma_i^*) \log \frac{\gamma_{i-n-1}-\gamma_i^*}{\gamma_{i-n}-\gamma_i^*} + 
     O\Big( \int_{\gamma_{i-n(i)-1}}^{\gamma_{i+n(i)}}  \frac{\diff x}{\abs{x-\gamma_i^*}^{2/3}}  \Big)
\end{equation}
and similarly
\begin{equation}\label{gg1}
\sum_{n\le n(i)} \int_{\gamma_{i+n-1}}^{\gamma_{i+n}}
     \frac{ \rho(x)\diff y}{x-\gamma_i^*}  =\sum_{n\le n(i)}
     \rho(\gamma_i^*) \log \frac{\gamma_{i+n-1}-\gamma_i^*}{\gamma_{i+n}-\gamma_i^*} + 
      O\Big( \int_{\gamma_{i-n(i)-1}}^{\gamma_{i+n(i)}}  \frac{\diff x}{\abs{x-\gamma_i^*}^{2/3}}  \Big).
\end{equation}
The error terms are bounded by $CN^{-1/4}$  using~\eqref{niidef} and therefore we have
\[
  \sum_{n\le n(i)}B_2(n)=
    \sum_{n\le n(i)} \rho(\gamma_i^*)\Big[  \log \frac{\gamma_i^*-\gamma_{i-n-1}}{\gamma_i^*-\gamma_{i-n}}  -  \log \frac{\gamma_{i+n}
    -\gamma_i^*}{\gamma_{i+n-1}-\gamma_i^*}\Big] 
     +O(N^{-1/4})
\]
\[
  =  \sum_{n\le n(i)} \rho(\gamma_i^*)\Big[  \log \frac{\gamma_i^*-\gamma_{i-n-1}}{\gamma_{i+n} - \gamma_i^*} 
   +  \log \frac{\gamma_{i+n-1}-\gamma_i^*}{\gamma_i^*-\gamma_{i-n}}\Big]
  +O(N^{-1/4}).
\]
We now use the bound
\begin{equation}\label{hcont}
    \abs{\rho(x)-\rho(\gamma_i^*)} \lesssim \frac{\abs{x-\gamma_i^*}}{\rho(\gamma_i^*)^2+ \rho(\gamma_i^*)\Delta^{1/3}}, 
    \qquad x\in [\gamma_{i-n(i)-1}, \gamma_{i+n(i)}],
\end{equation}
which  follows from the derivative bound~\eqref{rhoder} if $\epsilon$
in the definition of $i_*=\epsilon N$ is chosen sufficiently small, depending on $\delta$
since throughout the proof $1\le \abs{i}\le 2i_*$ and $n(i)\ll  i_*$.
 
Note that
\begin{equation}\label{applh}
   \frac{n}{N} = \int_{\gamma_{i-n}}^{\gamma_i} \rho(x) \diff x = \rho(\gamma_i^*)[\gamma_i -\gamma_{i-n}] + 
   O\Big(\frac{\abs{\gamma_{i-n}-\gamma_i^*}^2}{\rho(\gamma_i^*)^2+ \rho(\gamma_i^*)\Delta^{1/3}}\Big)
 \end{equation}

Thus, using~\eqref{applh} also for $\gamma_{i+n} -\gamma_{i}$, equating the two equations and dividing by $\rho(\gamma_i^*)$, we have
\begin{equation}\label{gg4}
   \gamma_i -\gamma_{i-n} =  \gamma_{i+n} -\gamma_{i} 
   +O\Big(\frac{\abs{\gamma_{i-n}-\gamma_i^*}^2}{\rho(\gamma_i^*)^3+ \rho(\gamma_i^*)^2\Delta^{1/3}}\Big).
   \end{equation}
Similar relation holds for the semiquantiles:
\begin{equation}\label{gg5}
   \gamma_i^* -\gamma_{i-n}^* =  \gamma_{i+n}^* -\gamma_{i}^*
   +O\Big(\frac{\abs{\gamma_{i-n}^*-\gamma_i^*}^2}{\rho(\gamma_i^*)^3+ \rho(\gamma_i^*)^2\Delta^{1/3}}\Big)
   \end{equation}
and for the mixed relations among quantiles and semiquantiles:
\[
   \gamma_i^* - \gamma_{i-n} =  \gamma_{i+n-1} -\gamma_{i}^* 
    +O\Big(\frac{\abs{\gamma_{i-n}-\gamma_i^*}^2}{\rho(\gamma_i^*)^3+ \rho(\gamma_i^*)^2\Delta^{1/3}}\Big)
\]
\[
   \gamma_i^* - \gamma_{i-n-1} =\gamma_{ i+n}-\gamma_i^* 
    +O\Big(\frac{\abs{\gamma_{i-n}-\gamma_i^*}^2}{\rho(\gamma_i^*)^3+ \rho(\gamma_i^*)^2\Delta^{1/3}}\Big).
 \]

Thus, using $\gamma_i^*-\gamma_{i-n-1}\sim \gamma_{i+n} - \gamma_i^*$, we have
\begin{equation}\label{firs}
\rho(\gamma_i^*) \abs{ \log \frac{\gamma_i^*-\gamma_{i-n-1}}{\gamma_{i+n} - \gamma_i^*}}  \lesssim \frac{ \rho(\gamma_i^*)}{\gamma_{i+n}- \gamma_i^*}O\Big(\frac{\abs{\gamma_{i-n-1}-\gamma_i^*}^2}{\rho(\gamma_i^*)^3+ \rho(\gamma_i^*)^2\Delta^{1/3}}\Big)
\lesssim \frac{\abs{\gamma_{i-n-1}-\gamma_i^*}}{\rho(\gamma_i^*)^2+ \rho(\gamma_i^*)\Delta^{1/3}}.
\end{equation}
Using $n\le n(i)$ and~\eqref{niidef}, we have $\abs{\gamma_{i-n-1}-\gamma_i^*}\lesssim N^{-3/4}$.
The contribution of this term to $\sum_n B_2(n)$ is thus
\begin{equation}\label{gg7}
N^{-3/4}   \sum_{n\le n(i)} \frac{1}{\rho(\gamma_i^*)^2+ \rho(\gamma_i^*)\Delta^{1/3}}
  \le \frac{n(i) N^{-3/4}}{\rho(\gamma_i^*)^2+ \rho(\gamma_i^*)\Delta^{1/3}}.
\end{equation}
In the bulk regime we have $\rho(\gamma_i^*)\sim 1$ and $n(i)\sim N^{1/4}$, so this contribution is much smaller than $N^{-1/4}$.

In the cusp   regime, i.e.~when $\Delta\le (i/N)^{3/4}$, then
 we have $\gamma_i^* \sim (i /N)^{3/4}$ and $\rho(\gamma_i^*)\sim( i/N)^{1/4}$, thus
we get
\[
\eqref{gg7}\le
\frac{n(i) N^{-3/4}}{\rho(\gamma_i^*)^2+ \rho(\gamma_i^*)\Delta^{1/3}}
\le  \frac{n(i) N^{-3/4} }{\rho(\gamma_i^*)^2} \lesssim N^{-1/4} n(i) i^{-1/2} \lesssim N^{-1/4}    
\]
since $n(i) \le i^{1/4}$.

In the edge regime,  i.e.~when $\Delta\ge (i /N)^{3/4}$, then
we have $\gamma_i^* \sim \Delta^{1/9} (i /N)^{2/3}$ and $\rho(\gamma_i^*)\sim
\Delta^{-1/9}( i/N)^{1/3}$,   thus
we get
\[
   \eqref{gg7}\le \frac{n(i) N^{-3/4}}{\rho(\gamma_i^*)^2+ \rho(\gamma_i^*)\Delta^{1/3}}\le
     \frac{ n(i) N^{-3/4} }{\rho(\gamma_i^*)\Delta^{1/3}}\lesssim
     \frac{n(i) N^{-5/12}  }{ \Delta^{2/9}i^{1/3}} \le 
     \frac{  N^{-5/12} }{ \Delta^{2/9}}\le N^{-1/4}
\]
since $n(i) \le i^{1/3}$ and $\Delta\ge N^{-3/4}$.
This completes the proof of  $\sum_n B_2(n)\lesssim N^{-1/4}$.

Finally the $\sum_n B_1(n)$ term  from~\eqref{symm} is estimated as follows by using~\eqref{gg5}:
\begin{equation}\label{gg8}
\sum_n \frac{1}{N}\Big[ \frac{1}{\gamma_i^*-\gamma_{i-n-1}^*} + \frac{1}{\gamma_i^*-\gamma_{i+n-1}^*}\Big]
=\sum_n \frac{1}{N} \frac{1}{(\gamma_i^*-\gamma_{i-n}^*)^2} 
O\Big(\frac{(\gamma_i-\gamma_{i-n-1})^2}{\rho(\gamma_i^*)^2 [\rho(\gamma_i^*) +\Delta^{1/3}]  } \Big)
 \lesssim \frac{n(i)}{N \rho(\gamma_i^*)^2 [\rho(\gamma_i^*) +\Delta^{1/3}] }.
\end{equation}
In the bulk regime this is trivially bounded by $CN^{-3/4}$. 
In the cusp regime, $\Delta\le (i /N)^{3/4}$, we have
\[
  \frac{n(i)}{N \rho(\gamma_i^*)^2 [\rho(\gamma_i^*) +\Delta^{1/3}] } \le 
  \frac{n(i)  }{N\rho(\gamma_i^*)^3} \lesssim  \frac{n(i)}{ N^{1/4} i^{3/4}} \lesssim N^{-1/4}    
\]
since $n(i) \le i^{1/4}$.

Finally, in the edge   regime,  $\Delta\ge (i /N)^{3/4}$, we just use
\[
 \frac{n(i)}{N \rho(\gamma_i^*)^2 [\rho(\gamma_i^*) +\Delta^{1/3}] } \le 
  \frac{n(i)  }{N\rho(\gamma_i^*)^2\Delta^{1/3}} \lesssim \frac{n(i)}{ N^{1/4} i^{3/4}} \lesssim N^{-1/4}    
\]
since $n(i) \le i^{1/3}$.
This gives $\sum_n B_1(n)\lesssim N^{-1/4}$. Together with the estimate on $\sum_n B_2(n)$ we get 
$\abs{S_2}\lesssim N^{-1/4}$, see~\eqref{nidef} and~\eqref{symm}.
This completes the estimate of $G_1$ in~\eqref{Fdef1}, which, together with~\eqref{G2} and~\eqref{Dest}
finishes the proof of Proposition~\ref{prop:F}.
\end{proof}

\subsection{Phase 2: Rigidity of \texorpdfstring{$\wh z$}{z} on scale \texorpdfstring{$N^{-3/4+\om_1/6}$}{N-3/4+w1/6},  without \texorpdfstring{$i$}{i} dependence}\label{sec:hatrig}
For any fixed $\alpha\in[0,1]$ recall the definition of the shifted process $\wt z(t,\alpha)$~\eqref{tildereq} and
the shifted $\alpha$-interpolating  semiquantiles $\ov\gamma_i^*(t)$ from~\eqref{barg} that trail $\wt z$. 
Furthermore, for all $0\le t\le t_*$ we consider the interpolated density $\overline{\rho}_{t}$ with a small gap $[\overline{\mathfrak{e}}_t^-,\overline{\mathfrak{e}}_t^+]$, and its Stieltjes transform $\overline{m}_t$. In particular, \[\overline{\mathfrak{e}}_t^\pm=\alpha \mathfrak{e}_{x,t}^\pm+(1-\alpha)\mathfrak{e}_{y,t}^\pm.\]
We recall that by Proposition~\ref{prop:option2} and~\eqref{optimalinit}
we have that 
\begin{equation}
\label{21}
\sup_{0\le t\le t_*} \max_{1\le \abs{i}\le i_*} \abs{\widetilde{z}_i(t,\alpha)-\overline{\gamma}_i^*(t)}\le  N^{-\frac{3}{4}+C\omega_1},
\end{equation} 
holds with very high probability for some $i_*= N^{\frac{1}{2} + C_*\om_1}$.

In this section we improve the rigidity~\eqref{21} from scale  $N^{-\frac{3}{4}+C\omega_1}$ to the 
almost optimal, but still $i$-independent rigidity of order $ N^{-\frac{3}{4}+\frac{\omega_1}{6}+\xi}$  but only 
for a  new short range approximation $\widehat{z}_i(t,\alpha)$ of $\widetilde{z}_i(t,\alpha)$. The range of this new approximation $\ell^4 =N^{4\om_\ell}$
with some $\om_\ell\ll 1$
is much shorter than that of $\mathring{z}_i(t,\alpha)$ in Section~\ref{martingalenew}. Furthermore,  the result will hold
 only for $1\le \abs{i}\le N^{4\omega_\ell+\delta_1}$, for some small $\delta_1>0$. The rigorous statement is in Proposition~\ref{G3}
 below, after we give the definition of the short range approximation.

\subsubsection*{Short range approximation on fine scale.}
Adapting the idea of~\cite{1712.03881} to the cusp regime,
we now introduce a new short range approximation process $\wh z (t,\alpha)$ for the solution to~\eqref{tildereq}.  
The short range approximation in this section will always be denoted by hat, $\wh z$,  in distinction to the other short range 
approximation $\mathring{z}$ used in Section~\ref{martingalenew}, see~\eqref{zstareqnew}.  Not only the length scale 
is shorter for $\wh z$, but the definition of  $\wh z$ is more subtle than in~\eqref{zstareqnew}

The new short scale approximation
 is characterized by two exponents $\omega_\ell$ and $\omega_A$. In particular, we will always assume that $\omega_1\ll \omega_\ell\ll \omega_A\ll 1$, where 
 recall that  $t_*\sim N^{-\frac{1}{2}+\omega_1}$ is defined in such a way $\ov\rho_{t_*}$ has  an exact cusp.
The key quantity is $\ell \defeq N^{\om_\ell}$ that  determines the scale
on which the interaction term in~\eqref{tildereq} will be cut off and replaced by its mean-field value.
This scale is not
constant, it increases away from the cusp at a certain rate. The cutoff will be effective only near the cusp, 
for indices beyond $\frac{i_*}{2}$, with $i_*= N^{\frac{1}{2}+C_*\omega_1}$, no cutoff is made. 
Finally, the intermediate scale $N^{\om_A}$
is used for a technical reason:  closer to the cusp, for indices less than $N^{\om_A}$,
 we always use the density $\rho_{y,t}$ of the reference process $y(t)$  to define the mean field approximation of the cutoff  long range terms.
 Beyond this scale we use the actual density $\ov\rho_t$. In this way we can exploit the closeness of the density $\ov\rho_t$ to 
 the reference density $\rho_{y,t}$ near the cusp and simplify the estimate. This choice will guarantee 
 that the error term $\zeta_0$ in~\eqref{weq} below is non zero only for $\abs{i}>N^{\omega_A}$.

Now we define the $\wh z$ process precisely.
Let 
 \begin{equation}
\label{shortrange}
\mathcal{A}\defeq \left\{(i,j):\abs{i-j}\le \ell(10\ell^3+\abs{i}^\frac{3}{4}+\abs{j}^\frac{3}{4})\right\}\cup\left\{(i,j): \abs{i},\abs{j}>\frac{i_*}{2}\right\}.
\end{equation}
One can easily check that for each $i$ with $1\le \abs{i} \le \frac{i_*}{2}$ the set $\{j:(i,j)\in\mathcal{A}\}$ is an interval of the nonzero integers and that $(i,j)\in\mathcal{A}$ if and only if $(j,i)\in\mathcal{A}$. For each such fixed $i$ we denote the  smallest and the biggest $j$ such that $(i,j)\in\mathcal{A}$ by $j_-(i)$ and $j_+(i)$,  respectively.
We will use the notation 
\[
\sum_j^{\mathcal{A},(i)}\defeq \sum_{\substack{j:(i,j)\in\mathcal{A}\\ i\ne j}},\,\,\,\,\, \qquad \sum_j^{\mathcal{A}^c,(i)}\defeq \sum_{j:(i,j)\notin \mathcal{A}}.
\] 
Assuming for simplicity that $i_*$  
 is divisible by four, 
  we introduce the intervals
\begin{equation}
\label{Bigint}
\mathcal{J}_z(t)\defeq \left[\overline{\gamma}_{-\frac{3i_*}{4}}(t),\overline{\gamma}_{\frac{3i_*}{4}}(t)\right],
\end{equation}
and for each $0<\abs{i}\le \frac{i_*}{2}$ we define 
\begin{equation}
\label{intmaxj}
\mathcal{I}_{z,i}(t)\defeq [\overline{\gamma}_{j_-(i)}(t),\overline{\gamma}_{j_+(i)}(t)].
\end{equation} 

For  a fixed $\alpha\in [0,1]$  and $N\ge \abs{i}> \frac{i_*}{2}$ we let
 \begin{equation}
\label{333}
\begin{split}
\diff\widehat{z}_i(t,\alpha)&=\sqrt{\frac{2}{N}}\diff B_i+ \Bigg[ \frac{1}{N}\sum_j^{\mathcal{A},(i)}\frac{1}{\widehat{z}_i(t,\alpha)-\widehat{z}_j(t,\alpha)}+\frac{1}{N}\sum_j^{\mathcal{A}^c,(i)}\frac{1}{\widetilde{z}_i(t,\alpha)-\widetilde{z}_j(t,\alpha)}  + \Phi_\alpha(t)
\Bigg]\diff t
\end{split}\end{equation} for $0<\abs{i}\le N^{\omega_A}$ \begin{equation}
\label{444}
\diff \widehat{z}_i(t,\alpha)=\sqrt{\frac{2}{N}}\diff B_i+\Bigg[ \frac{1}{N}\sum_j^{\mathcal{A},(i)}\frac{1}{\widehat{z}_i(t,\alpha)-\widehat{z}_j(t,\alpha)}+\int_{\mathcal{I}_{y,i}(t)^c}\frac{\rho_{y,t}(E+\mathfrak{e}_{y,t}^+)}{\widehat{z}_i(t,\alpha)-E}\diff E+\Re [m_{y,t}(\mathfrak{e}_{y,t}^+)]
\Bigg]\diff t,
\end{equation} and for $N^{\omega_A}< \abs{i}\le \frac{i_*}{2}$
 \begin{equation}
\label{555}
\begin{split}
\diff \widehat{z}_i(t,\alpha)=\sqrt{\frac{2}{N}}\diff B_i+\Bigg[ & \frac{1}{N}\sum_j^{\mathcal{A},(i)}\frac{1}{\widehat{z}_i(t,\alpha)-\widehat{z}_j(t)}+
\int_{\mathcal{I}_{z,i}(t)^c\cap\mathcal{J}_z(t)}\frac{\overline{\rho}_t(E+\overline{\mathfrak{e}}_t^+)}{\widehat{z}_i(t,\alpha)-E}\diff  E\\
&+\frac{1}{N}\sum_{\abs{j}\ge \frac{3}{4}i_*}\frac{1}{\widetilde{z}_i(t,\alpha)-\widetilde{z}_j(t,\alpha)}
+\Phi_\alpha(t)
\Bigg]\diff t,
\end{split}\end{equation} 
with initial data 
\begin{equation}
\label{666}
\widehat{z}_i(0,\alpha)\defeq \widetilde{z}_i(0,\alpha),
\end{equation} 
where we recall that $\widetilde{z}_i(0,\alpha)=\alpha\widetilde{x}_i(0)+(1-\alpha)\widetilde{y}_i(0)$ for any $\alpha\in [0,1]$. In particular, $\widehat{z}(t,1)=\widehat{x}(t)$ and $\widehat{z}(t,0)=\widehat{y}(t)$, that are the short range approximations of the 
$\wt x(t)\defeq  x(t) - \ed_{x,t}^+$ and $\wt y(t)\defeq  x(t) - \ed_{y,t}^+$ processes.

Using the rigidity estimates in~\eqref{crudinitial} and~\eqref{21} we will prove the following lemma in Appendix~\ref{SLL}.

\begin{lemma}\label{shortlong2}
Assuming that the rigidity estimates~\eqref{crudinitial} and~\eqref{21} hold. Then, for any fixed $\alpha\in [0,1]$ we have
\begin{equation}
\label{shortestimateba}
\sup_{1\le \abs{i}\le N}\sup_{0\le t\le t_*} \abs{\widehat{z}_i(t,\alpha)-\widetilde{z}_i(t,\alpha)}\le \frac{N^{C\omega_1}}{N^\frac{3}{4}},
\end{equation}
 with very high probability.
\end{lemma}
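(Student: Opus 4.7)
The plan is to set $w_i(t)\defeq \widehat{z}_i(t,\alpha)-\widetilde{z}_i(t,\alpha)$ and derive a discrete parabolic equation for $w$. Since both processes use the same driving Brownian motion $B_i$, the stochastic differentials cancel when we subtract~\eqref{333}--\eqref{555} from~\eqref{tildereq}. Using the identity
\[
\frac{1}{\widehat{z}_i-\widehat{z}_j}-\frac{1}{\widetilde{z}_i-\widetilde{z}_j}=-\frac{w_i-w_j}{(\widehat{z}_i-\widehat{z}_j)(\widetilde{z}_i-\widetilde{z}_j)}
\]
the short range part of the interaction gives rise to a kernel $\widehat{\mathcal{B}}_{ij}=-\frac{\mathbf 1((i,j)\in\mathcal{A})}{N(\widehat{z}_i-\widehat{z}_j)(\widetilde{z}_i-\widetilde{z}_j)}$ that is positivity preserving off-diagonal and therefore generates an $\ell^\infty$-contraction semigroup $\widehat{\mathcal{U}}(s,t)$. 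By Duhamel's formula together with $w(0)=0$ we obtain
\[
\norm{w(t)}_\infty\le t\sup_{0\le s\le t}\norm{F(s)}_\infty,
\]
where $F_i(t)$ is the forcing obtained from replacing the tails of the Coulomb sum with mean-field integral approximations. Since $t\le t_*\sim N^{-1/2+\omega_1}$, it thus suffices to prove the uniform estimate $\norm{F(s)}_\infty\lesssim N^{-1/4+C\omega_1}$.

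The forcing term has three distinct forms according to the three regimes in the definition of $\widehat{z}$. In the outermost regime $\abs{i}>i_*/2$ the equations~\eqref{tildereq} and~\eqref{333} already agree on the long range part, so $F_i\equiv 0$ and there is nothing to prove. In the intermediate regime $N^{\omega_A}<\abs{i}\le i_*/2$, the forcing equals the difference between $\frac{1}{N}\sum^{\mathcal{A}^c,(i)}_{\abs{j}<3i_*/4}(\widetilde{z}_i-\widetilde{z}_j)^{-1}+\Phi_\alpha(t)$ and $\int_{\mathcal{I}_{z,i}^c\cap\mathcal{J}_z}\overline{\rho}_t(E+\overline{\mathfrak{e}}_t^+)(\widehat{z}_i-E)^{-1}\diff E$, which is a Riemann sum versus integral comparison around the base point $\overline{\gamma}_i^*(t)$. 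Using the almost optimal rigidity~\eqref{21} for indices $1\le\abs{j}\le i_*$ on scale $N^{-3/4+C\omega_1}$, the crude rigidity~\eqref{crudinitial} for larger indices, the $1/3$-Hölder regularity of $\overline{\rho}_t$ and the derivative bound~\eqref{rho' difference hash} for $\overline{\rho}_t'$ (the properties that drove the estimate of $F_i$ in Proposition~\ref{prop:F}), together with~\eqref{Phiest}, this difference is bounded by $N^{-1/4+C\omega_1}$ by essentially the same symmetrization-and-Hölder argument already carried out in the proof of Proposition~\ref{prop:F}, applied now at scale $\ell^4=N^{4\omega_\ell}$ instead of $L$. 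In the innermost regime $\abs{i}\le N^{\omega_A}$, one uses instead the $y$-density $\rho_{y,t}$ and $\Re m_{y,t}(\mathfrak{e}_{y,t}^+)$ to represent the tails; the additional error compared with the previous case is controlled by the closeness estimates~\eqref{rho rho gap} and~\eqref{Re m gap} between $\overline{\rho}_t,\overline{m}_t$ and $\rho_{y,t},m_{y,t}$ on a neighbourhood of the cusp of size $\abs{\overline{\gamma}^*_i}\lesssim N^{-3/4+3\omega_A/4}$, which is why the intermediate scale $N^{\omega_A}$ is needed: in this innermost regime the discrepancy between $\overline{\rho}_t$ and $\rho_{y,t}$ is suppressed by an additional factor $(t_*-t)^{1/3}\lesssim N^{-1/6+\omega_1/3}$, safely below $N^{-1/4+C\omega_1}$.

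The main technical obstacle will be the Riemann sum versus integral comparison in the intermediate regime, precisely because the cutoff set $\mathcal{A}$ is not symmetric around $i$ in terms of location (the interval $[\overline{\gamma}_{j_-(i)},\overline{\gamma}_{j_+(i)}]$ is asymmetric when $\overline{\rho}_t$ is asymmetric around $\overline{\gamma}^*_i$, in particular near the small gap or the local minimum). This forces one to implement the symmetrization argument of~\eqref{symm} with indices asymmetric around $i$, which is handled by pairing $j=i\pm n$ and using the semiquantile-based cancellation $\rho(\overline{\gamma}^*_i)\log\frac{\overline{\gamma}_i-\overline{\gamma}^*_i}{\overline{\gamma}^*_i-\overline{\gamma}_{i-1}}=O(N^{-1/4})$ together with the two-sided derivative bound~\eqref{rho' difference hash}; the size $\ell=N^{\omega_\ell}$ was chosen precisely so that on the short range block the density is essentially constant up to the Hölder error. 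Once $F$ is controlled, the Duhamel bound closes the estimate. The details of the Riemann sum comparison are parallel to those in Proposition~\ref{prop:F} and are carried out in Appendix~\ref{SLL}.
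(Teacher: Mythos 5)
Your overall architecture is the same as the paper's: set $w=\widehat z-\widetilde z$, use that the Brownian terms cancel, turn the $\mathcal{A}$-part into a positivity-preserving kernel, apply Duhamel with $w(0)=0$, and reduce everything to the bound $\norm{F}_\infty\lesssim N^{-1/4+C\omega_1}$ times $t_*\sim N^{-1/2+\omega_1}$. However, there is a genuine gap in how you close the estimate: your forcing term contains $\int_{\mathcal{I}_{z,i}(t)^c\cap\mathcal{J}_z(t)}\overline{\rho}_t(E+\overline{\ed}_t^+)(\widehat z_i-E)^{-1}\diff E$ (and its analogue with $\rho_{y,t}$ for $\abs{i}\le N^{\omega_A}$), i.e.~it depends on the unknown $\widehat z_i$ itself. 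A priori $\widehat z_i$ could approach or cross the endpoints $\overline\gamma_{j_\pm(i)}(t)$ of $\mathcal{I}_{z,i}(t)$, in which case this integral is not $O(N^{-1/4+C\omega_1})$ (it is not even bounded), so the claimed bound on $F$ cannot be established without an a priori bound on $\abs{w_i}$ relative to $\abs{\mathcal{I}_{z,i}(t)},\abs{\mathcal{I}_{y,i}(t)}\sim N^{-3/4+3\omega_\ell}$. The paper resolves exactly this self-consistency issue by splitting off the $w_i$-dependent part as a diagonal potential $\mathcal{V}_1$ as in~\eqref{V1b}--\eqref{V2b}, introducing the stopping time~\eqref{bigtimeb} up to which $\mathcal{V}_1\le 0$ (so the propagator is still a contraction and the remaining forcing~\eqref{z1b}--\eqref{z2b} is evaluated at $\widetilde z_i$ only), and then showing by continuity of the paths that the stopping time equals $t_*$ because the output bound $N^{-3/4+C\omega_1}$ is far below the threshold $N^{-3/4+3\omega_\ell}$. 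Some version of this bootstrap is indispensable; your Duhamel step as written does not close.

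A secondary point: you locate the main difficulty in a near-diagonal Riemann-sum symmetrization ``as in Proposition~\ref{prop:F}'', but in this lemma no such cancellation is needed. The singular near-diagonal terms are identical in~\eqref{tildereq} and~\eqref{333}--\eqref{555} and are absorbed exactly into the kernel $\widehat{\mathcal B}$; the forcing only involves indices with $\abs{i-j}\ge \ell(\ell^3+\abs{i}^{3/4})$, where the denominators are far above the rigidity scale, so crude term-by-term estimates using~\eqref{21} suffice and yield the gain $N^{-2\omega_\ell}$. The genuinely delicate parts, which your sketch passes over, are (i) the comparison of the far sum over $\abs{j}\ge \tfrac34 i_*$ with the integral over $\mathcal{J}_z(t)^c$, which in the paper requires an auxiliary regularization scale $\eta_1$, comparison with Stieltjes transforms at $\widetilde z_i+\ii\eta_1$, the crude rigidity~\eqref{crudinitial} for $\abs{j}\ge i_*$, and an optimization over $\eta_1$ (the terms $A_{2,1},\dots,A_{2,4}$ in~\eqref{B121b}); and (ii) in the regime $\abs{i}\le N^{\omega_A}$, the density and Stieltjes-transform comparisons must also handle the mismatch of the two gaps (the intervals where one density vanishes and the other does not) and the symmetric difference $\mathcal{I}_{z,i}\Delta\mathcal{I}_{y,i}$, not only the bulk closeness~\eqref{rho rho gap} and~\eqref{Re m gap} that you invoke.
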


In particular, since~\eqref{crudinitial} and~\eqref{21} have already been proven,
 we conclude from~\eqref{21} and~\eqref{shortestimateba} that
  \begin{equation}
\label{badrighat}
\sup_{0\le t\le t_*}\abs{\widehat{z}_i(t,\alpha)-\overline{\gamma}_i(t)}\le \frac{N^{C\omega_1}}{N^\frac{3}{4}},\qquad 1\le \abs{i}\le i_*,
\end{equation} for any fixed $\alpha\in [0,1]$.

Now we state the improved rigidity for $\wh z$, the main result of Section~\ref{sec:hatrig}:
\begin{proposition}
\label{G3}
Fix any $\alpha\in [0,1]$. There exists a constant $C>0$ such that if $0<\delta_1<C\omega_\ell$ then \begin{equation}
\label{10}
\sup_{0\le t\le t_*}\abs{\widehat{z}_i(t,\alpha)-\overline{\gamma}_i(t)}\lesssim \frac{N^\xi N^\frac{\omega_1}{6}}{N^\frac{3}{4}},\qquad 1\le \abs{i}\le N^{4\omega_\ell+\delta_1}
\end{equation} for any $\xi>0$ with very high probability.

\proof
Recall that initially $\widetilde{z}_i(0,\alpha)$ is a linear interpolation between $\widetilde{x}_i(0)$ and $\widetilde{y}_i(0)$
and thus for $\widetilde{z}_i(0,\alpha)$ optimal rigidity~\eqref{optimalinit} holds.
We define the derivative process
\begin{equation}\label{wdef}
w_i(t,\alpha)\defeq \partial_\alpha \widehat{z}_i(t,\alpha).
\end{equation}
In particular, we find that $w= w(t,\alpha)$ is the solution of 
\begin{equation}\label{weq}
\partial_t w=\mathcal{L}w +\zeta^{(0)}, \qquad \mathcal{L}\defeq \mathcal{B}+\mathcal{V},
\end{equation}
with initial data 
\[w_i(0,\alpha)=\widehat{x}_i(0)-\widehat{y}_i(0).
\]
Here, for any $1\le \abs{i}\le N$, the  (short range) operator $\mathcal{B}$ is defined on any vector $f\in \C^{2N}$ as 
\begin{equation}
\label{11}
(\mathcal{B}f)_i\defeq \sum_j^{\mathcal{A},(i)} \mathcal{B}_{ij} (f_i-f_j), \qquad \mathcal{B}_{ij}\defeq 
-\frac{1}{N}\frac{1}{(\widehat{z}_i(t,\alpha)-\widehat{z}_j(t,\alpha))^2}.
\end{equation} 
Moreover,  $\mathcal{V}$ is a multiplication operator, 
i.e.~$(\mathcal{V}f)_i=\mathcal{V}_if_i$, where $\mathcal{V}_i$ is defined in different regimes of $i$ as follows:
   \begin{equation} 
   \begin{split}
\mathcal{V}_i\defeq & -\int_{\mathcal{I}_{y,i}(t)^c}\frac{\rho_{y,t}(E+\mathfrak{e}_{y,t}^+)}{(\widehat{z}_i(t,\alpha)-E)^2}\diff E,
\qquad  1\le \abs{i}\le N^{\omega_A} \\
\label{V222rig1}
\mathcal{V}_i\defeq & -\int_{\mathcal{I}_{z,i}(t)^c\cap\mathcal{J}_z(t)}\frac{\overline{\rho}_t(E+\overline{\mathfrak{e}}_t^+)}{(\widehat{z}_i(t,\alpha)-E)^2}\diff  E,
\qquad  N^{\omega_A}< \abs{i}\le \frac{i_*}{2}
\end{split}
\end{equation} 
and $\mathcal{V}_i=0$ for $\abs{i}>\frac{ i_*}{2}$. The error term $\zeta_i^{(0)}= \zeta_i^{(0)}(t)$ in~\eqref{weq} is defined as follows: for 
 $\abs{i}>\frac{i_*}{2}$ we have
\begin{equation}
\label{AAAAA}
\zeta_i^{(0)}\defeq \frac{1}{N} \sum_j^{\mathcal{A}^c,(i)}\frac{\partial_\alpha \widetilde{z}_j(t,\alpha)-\partial_\alpha\widetilde{z}_i(t,\alpha)}{(\widetilde{z}_i(t,\alpha)-\widetilde{z}_j(t,\alpha))^2}
+\partial_\alpha\Phi_{\alpha}(t)\defqe Z_1+\partial_\alpha\Phi_{\alpha}(t) 
\end{equation}
 for $N^{\omega_A}<\abs{i}\le \frac{i_*}{2}$ we have
 \begin{equation}
 \label{AAA}
 \begin{split}\zeta_i^{(0)}&\defeq \frac{1}{N} \sum_{\abs{j}\ge \frac{3i_*}{4}}\frac{\partial_\alpha \widetilde{z}_j(t,\alpha)-\partial_\alpha\widetilde{z}_i(t,\alpha)}{(\widetilde{z}_i(t,\alpha)-\widetilde{z}_j(t,\alpha))^2}
 +\int_{\mathcal{I}_{z,i}(t)^c\cap\mathcal{J}_z(t)}\frac{\partial_\alpha\big[\overline{\rho}_t(E+\overline{\mathfrak{e}}_t^+)\big]}{\widehat{z}_i(t,\alpha)-E}\, \diff E\\
&\qquad +\Big(\partial_\alpha \int_{\mathcal{I}_{z,i}(t)^c\cap\mathcal{J}_z(t)} \Big)
\frac{\overline{\rho}_t(E+\overline{\mathfrak{e}}_t^+)}{\widehat{z}_i(t,\alpha)-E}\, \diff E
+\partial_\alpha\Phi_\alpha(t)  \defqe Z_2+Z_3+Z_4+\partial_\alpha\Phi_{\alpha}(t),
\end{split}
\end{equation} 
and finally for $1\le \abs{i}\le N^{\omega_A}$ we have
$\zeta_i^{(0)}=0$. We recall that $\mathcal{I}_{z,i}(t)$ and $\mathcal{J}_z(t)$ in~\eqref{AAA} are defined by~\eqref{intmaxj} and~\eqref{Bigint} respectively.
Next, we prove that the error term $\zeta^{(0)}$ in~\eqref{weq} is bounded by some large power of $N$.

\begin{lemma}\label{lm:zeta}
There exists a large constant $C>0$ such that 
\begin{equation}\label{zeta}
\sup_{0\le t\le t_*}\max_{1\le \abs{i}\le N} \abs{\zeta_i^{(0)}(t)}\le N^C.
\end{equation}

\proof[Proof of Lemma~\ref{lm:zeta}] 
By~\eqref{Phidef}, it follows that 
\[ 
\partial_\alpha\Phi_\alpha(t)=\partial_\alpha \Re[\overline{m}_t(\overline{\mathfrak{e}}_t^+
+\ii N^{-100})]+h^{**}(t,1)-h^{**}(t,0),
\] 
with $h^{**}(t,\alpha)$ defined by~\eqref{h**def}. Since the two $h^{**}$  terms are small  by~\eqref{zetaest}, for each fixed $t$, we have that
\begin{equation}\label{PP}
\abs{\partial_\alpha\Phi_{\alpha}(t)}\lesssim\abs{  \partial_\alpha
\int_\mathbb{R}\frac{\overline{\rho}_t(\overline{\mathfrak{e}}_t^+ + E)}{E-\ii N^{-100}}\, \diff E}+N^{-1} = U_1+U_2+N^{-1},
\end{equation}
where
\[
\begin{split}
U_1\defeq &\abs{  \partial_\alpha  \int_{\ov\gamma_{-i(\delta_*)}}^{\ov\gamma_{i(\delta_*)}}
\frac{\overline{\rho}_t (\overline{\mathfrak{e}}_t^+  +E) }{E-\ii N^{-100}}   \, \diff E}
=  \abs{\partial_\alpha\int_{I_*}
\frac{\overline{\rho}_t(\overline{\mathfrak{e}}_t^++\varphi_{\alpha,t}(s))}{\varphi_{\alpha,t}(s)-\ii N^{-100}} \varphi_{\alpha,t}'(s)\, \diff s} \\
U_2\defeq & \abs{  \frac{1}{N}\sum_{i_*(\delta)<\abs{i}\le N} \partial_\alpha
\int_\R\frac{\psi (E-\ov\gamma_i^*(t))}{E-\ii N^{-100}}\,
 \diff E},
\end{split}
\] using the notation $\overline{\gamma}_{i(\delta_*)}=\overline{\gamma}_{i(\delta_*)}(t)$ and the definition of $\ov\rho_t$ from~\eqref{rhozdef}. In $U_1$ we changed variables, i.e.~ $E=\varphi_{\alpha,t}(s)$, using that $s\to \varphi_{\alpha,t}(s)$ is strictly increasing. In particular, in order to compute the limits of integration we used that  $\varphi_{\alpha,t}(i/N) = \ov\gamma_i (t)$ by~\eqref{interpolating phi} and defined the $\alpha$-independent interval $I_*\defeq  [ - i(\delta_*)/N, i(\delta_*)/N]$. Furthermore, in $U_1$ we denoted by prime the $s$-derivative.

For $U_1$  we have that (omitting the $t$ dependence, $\ov\rho=\ov\rho_t$, etc.)
\begin{equation}\begin{split}
\label{3int}
U_1&\lesssim \abs{\int_{I_*}
\frac{\partial_\alpha[\overline{\rho}(\overline{\mathfrak{e}}^++\varphi_\alpha(s))]}{\varphi_\alpha(s)-\ii N^{-100}} \varphi_\alpha'(s)\, \diff s}+\abs{\int_{I_*}\frac{\overline{\rho}(\overline{\mathfrak{e}}^++\varphi_\alpha(s))}{(\varphi_\alpha(s)-\ii N^{-100})^2} (\varphi_\alpha'(s))^2\, \diff s}\\
&\quad+\abs{\int_{I_*}\frac{\overline{\rho}(\overline{\mathfrak{e}}^++\varphi_\alpha(s))}{\varphi_\alpha(s)-\ii N^{-100}} \partial_\alpha\varphi_\alpha'(s)\, \diff s}.
\end{split}\end{equation} 

For $s\in I_*$, by the definition of $\varphi_\alpha(s)$ and~\eqref{dernal} it follows that 
\[1=n_{\alpha}'(\varphi_\alpha(s))\varphi_{\alpha}'(s)=\rho_\alpha(\varphi_\alpha(s))\varphi_\alpha(s),\] 
and so that 
\begin{equation}
\label{dervarphial}
\varphi_\alpha'(s)=\frac{1}{\rho_\alpha(\varphi_\alpha(s))}\lesssim  s^{-\frac{1}{4}},
\end{equation} 
where in the last inequality we used that $\rho_\alpha(\om) \sim \min\{ \om^{1/3}, \om^{1/2}\Delta^{-1/6}\}$ and 
 $\varphi_\alpha(s)\sim \max\{ s^\frac{3}{4}, s^\frac{2}{3}\Delta^{1/9}\} $ by~\eqref{rho counting fcts gap}.

In the first integral in~\eqref{3int} we use that
\[ \ov\rho (\overline{\mathfrak{e}}^++\varphi_\alpha(s))
=\rho_\alpha(\overline{\mathfrak{e}}^++\varphi_\alpha(s)), \qquad s\in I_*
\]
by~\eqref{rhozdef} and that
 $\partial_\alpha[\rho_\alpha(\overline{\mathfrak{e}}^++\varphi_\alpha(s))]$ is bounded
by the  explicit relation in~\eqref{ealphder}.
 For the other two integrals in~\eqref{3int} we use that
 $\overline{\rho}$ is bounded on the integration domain and  that $(\varphi_\alpha'(s))^2\lesssim s^{-1/2}$ 
 from~\eqref{dervarphial}, hence it  is integrable. 
 In the third integral we also observe that 
 \[ 
 \partial_\alpha \varphi_\alpha(s) =\varphi_\lambda(s)-\varphi_\mu(s)
\]
by~\eqref{interpolating phi}, thus $\abs{\partial_\alpha \varphi_\alpha'(s)}\lesssim s^{-1/4}$ similarly to~\eqref{dervarphial}.
 Using $\abs{\varphi_\alpha(s)-\ii N^{-100}}\gtrsim N^{-100}$, we
  thus conclude that 
\[
U_1\lesssim N^{200}.
\]
 
Next, we proceed with the estimate for $U_2$.
 
Notice that $\abs{\partial_\alpha \psi (E-\ov\gamma_i^*(t))}\le\norm{\psi'}_\infty \abs{\wh\gamma_{x,i}(t)-\wh\gamma_{y,i}(t)}$
 by~\eqref{barg}.
 Furthermore, 
 since $\abs{E-\ii N^{-100}}\gtrsim \delta_*$
on the domain of integration of $U_2$, we conclude that 
\[
U_2\lesssim N^{200}\norm{\psi'}_\infty,
\] 
and therefore  from~\eqref{PP} we have
\begin{equation}
\label{ZZZ222}
\abs{\partial_\alpha\Phi_{\alpha}(t)}\lesssim N^{202}. 
\end{equation}
since $\norm{ \psi'}_\infty\lesssim N^2$ by the choice of $\psi$, see below~\eqref{rhox}.

Similarly, we conclude that 
\begin{equation}
\abs{Z_3}\lesssim N^{200}\lVert \psi'\rVert_\infty.
\end{equation}

To estimate $Z_2$, by~\eqref{tildereq}, it follows that 
\[ \diff (\partial_\alpha \wt z_i) = 
\Bigg[ \frac{1}{N}\sum_j  \frac{\partial_\alpha \widetilde{z}_j-\partial_\alpha \widetilde{z}_i}{(\wt z_i-\wt z_j)^2} 
+\partial_\alpha\Phi_\alpha(t) \Bigg]\diff t,
\] 
with initial data 
\[\partial_\alpha\widetilde{z}_i(0,\alpha)=\widetilde{x}_i(0)-\widetilde{y}_i(0),\] 
for all $1\le \abs{i}\le N$. Since $\abs{\partial_\alpha\widetilde{z}_i(0,\alpha)}\lesssim N^{200}$ for all $1\le \abs{i}\le N$, by Duhamel principle
and contraction, it follows that \begin{equation}
\label{tildeder}
\abs{\partial_\alpha\widetilde{z}_i(t,\alpha)}\lesssim N^{200}+t_* \max_{0\le \tau\le t_*}\abs{  \partial_\alpha\Phi_\alpha(\tau) } \lesssim N^{202}
\end{equation} 
for all $0\le t\le t_*$. In particular, by~\eqref{tildeder} it follows that 
\begin{equation}
\label{ZZZ111}
\abs{Z_2}\lesssim N^{202}\sqrt{N}
\end{equation} since for all $j$ in the summation in $Z_2$ we have that $\abs{i-j}\gtrsim  i_*\sim N^\frac{1}{2}$ 
and thus $\abs{\wt z_i-\wt z_j}\gtrsim \abs{i-j}/N\gtrsim N^{-1/2}$.

Finally, we estimate $Z_4$ using the fact that the endpoints of $\mathcal{I}_{z,i}(t)^c\cap\mathcal{J}_z(t)$ are
quantiles $\ov\gamma_i(t)$ whose $\alpha$-derivatives are bounded by~\eqref{barg}. Hence
\begin{equation}
\label{ZZZ555}
\abs{Z_4}\lesssim \abs{ \frac{\overline{\rho}_t(\overline{\gamma}_{j_+} +\overline{\mathfrak{e}}_t^+)}{\widehat{z}_i-\overline{\gamma}_{j_+}} }
+ \abs{\frac{\overline{\rho}_t(\overline{\gamma}_{j_-}+\overline{\mathfrak{e}}_t^+)}{\widehat{z}_i-\overline{\gamma}_{j_-}} }
+ \abs{\frac{\overline{\rho}_t(\overline{\gamma}_{\frac{3i_*}{4}}+\overline{\mathfrak{e}}_t^+)}{\widehat{z}_i-\overline{\gamma}_{\frac{3i_*}{4}}} }\lesssim N
\end{equation}
by rigidity.
Combining~\eqref{ZZZ222}-\eqref{ZZZ555} we conclude~\eqref{zeta}, completing the proof of Lemma~\ref{lm:zeta}.
\endproof
\end{lemma}

Continuing the analysis of the equation~\eqref{weq}, 
for any fixed $\alpha$ let us define $w^\#=w^\#(t,\alpha)$ as the solution of \begin{equation}
\label{13}
\partial_tw^\#=\mathcal{L}w^\#,
\end{equation} 
with cutoff initial data
\[
w_i^\#(0,\alpha)=\mathbf{1}_{\{\abs{i}\le N^{4\omega_\ell+\delta}\}}w_i(0,\alpha),
\] with some $0<\delta<C\omega_\ell$  where $C>10$ a constant such that $(4+C)\omega_\ell<\omega_A$.

By the rigidity~\eqref{badrighat} the finite speed estimate~\eqref{unifs}, with $\delta'\defeq \delta$,  for the propagator $\mathcal{U}^\mathcal{L}$ 
of $\mathcal{L}$ holds.
Let $0< \delta_1<\frac{\delta}{2}$, then, using Duhamel principle, that the error term $\zeta_i^{(0)}$ is bounded by~\eqref{zeta} and that $\zeta_i^{(0)}=0$ for any $1\le |i|\le N^{\omega_A}$, it easily follows that \begin{equation}
\label{12}
\sup_{0\le t\le t_*}\max_{\abs{i}\le N^{4\omega_\ell+\delta_1}}\abs{w_i^\#(t,\alpha)-w_i(t,\alpha)}\le N^{-100},
\end{equation} for any $\alpha\in [0,1]$.  In other words, the initial conditions far away do not
influence the $w$-dynamics, hence they can be set zero.

Next, we use the heat kernel contraction for the equation in~\eqref{13}.
By the optimal rigidity of $\widehat{x}_i(0)$ and $\widehat{y}_i(0)$, since $w_i^\#(0,\alpha)$ is non zero only for $1\le \abs{i}\le N^{4\omega_\ell+\delta}$, it follows that \begin{equation}
\label{14}
\max_{1\le \abs{i}\le N} \abs{w_i^\#(0,\alpha)}\le \frac{N^\xi N^\frac{\omega_1}{6}}{N^\frac{3}{4}},
\end{equation} and so, by heat kernel contraction and Duhamel principle
 \begin{equation}
\label{15}
\sup_{0\le t\le t_*}\max_{1\le\abs{i}\le N} \abs{w_i^\#(t,\alpha)}\le \frac{N^\xi N^\frac{\omega_1}{6}}{N^\frac{3}{4}}.
\end{equation} Next, we recall that $\widehat{z}(t,\alpha=0)=\widehat{y}(t)$.

Combining~\eqref{12} and~\eqref{15}, integrating 
$w_i(t,\alpha')$ over $\alpha'\in[0,\alpha]$, by high moment Markov inequality as in~\eqref{markov}-\eqref{markov1}, we conclude that 
\[
\sup_{0\le t\le t_*}\abs{\widehat{z}_i(t,\alpha)-\widehat{y}_i(t)}
\le \frac{N^\xi N^\frac{\omega_1}{6}}{N^\frac{3}{4}}, \qquad 1\le \abs{i}\le N^{4\omega_\ell+\delta_1},
\] for any fixed $\alpha\in [0,1]$ with very high probability for any $\xi>0$.
Since 
\[
\abs{\widehat{z}_i(t,\alpha)-\overline{\gamma}_i(t)}\le \abs{\widehat{y}_i(t)-\widehat{\gamma}_{y,i}(t)}
+ \abs{\overline{\gamma}_i(t)-\widehat{\gamma}_{y,i}(t)}+\frac{N^\xi N^\frac{\omega_1}{6}}{N^\frac{3}{4}},
\]
 for all $1\le \abs{i}\le N^{4\omega_\ell+\delta_1}$ and $\alpha\in [0,1]$, by~\eqref{gamma difference gap} and the optimal rigidity of $\widehat{y}_i(t)$, see~\eqref{rzx}, 
 we conclude that 
 \begin{equation}
\label{16}
\sup_{0\le t\le t_*}\abs{\widehat{z}_i(t,\alpha)-\overline{\gamma}_i(t)}\le \frac{N^\xi N^\frac{\omega_1}{6}}{N^\frac{3}{4}},\qquad 1\le \abs{i}\le N^{4\omega_\ell+\delta_1}
\end{equation} 
for any fixed $\alpha\in [0,1]$, for any $\xi>0$ with very high probability. This concludes the proof of~\eqref{10}.
\endproof
\end{proposition}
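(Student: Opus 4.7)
The plan is to differentiate the short range dynamics~\eqref{333}--\eqref{555} in the coupling parameter \(\alpha\) to obtain a linear parabolic equation for \(w(t,\alpha) \defeq \partial_\alpha \widehat{z}(t,\alpha)\), control \(w\) uniformly in \(\alpha\) by heat kernel contraction, and then integrate back along \(\alpha\). Since \(\widehat{z}(t,0)=\widehat{y}(t)\), whose optimal rigidity against \(\widehat{\gamma}_{y,i}(t)\) is available via Lemma~\ref{diffx} and Lemma~\ref{shortlong2}, and since the quantiles \(\overline{\gamma}_i(t)\) and \(\widehat{\gamma}_{y,i}(t)\) are close by~\eqref{gamma difference gap}, bounding \(|\widehat{z}_i(t,\alpha)-\widehat{y}_i(t)|=\left|\int_0^\alpha w_i(t,\alpha')\,\diff\alpha'\right|\) pointwise (with a high moment / Markov step as in~\eqref{markov}--\eqref{markov1}) will close the argument.

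Linearizing the three regimes of \(\widehat z\) yields \(\partial_t w=\mathcal{L} w+\zeta^{(0)}\) with \(\mathcal{L}=\mathcal{B}+\mathcal{V}\), where \(\mathcal{B}\) is the short range DBM kernel \(\mathcal{B}_{ij}=-N^{-1}(\widehat{z}_i-\widehat{z}_j)^{-2}\) restricted to pairs \((i,j)\in\mathcal{A}\) and \(\mathcal{V}\) is the nonpositive diagonal multiplication operator arising by differentiating the mean field integrals; \(\zeta^{(0)}\) collects \(\alpha\)-derivatives of the far field sum, of the integration limits \(\mathcal{I}_{z,i}(t)^c\cap\mathcal{J}_z(t)\), and of the shift \(\Phi_\alpha(t)\). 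The structural point, which is the reason for the three-scale architecture \(\omega_1\ll\omega_\ell\ll\omega_A\ll 1\), is that \(\zeta^{(0)}_i\equiv 0\) for \(1\le|i|\le N^{\omega_A}\) by design: in that window the mean field density used in~\eqref{444} is the \(\alpha\)-independent \(\rho_{y,t}\), so only the shift could contribute, and it is offset by \(\partial_\alpha\Phi_\alpha\).

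On the initial data the optimal local law from~\cite{1809.03971}, translated to \(\widehat{x}_i(0)\) and \(\widehat{y}_i(0)\) via Lemma~\ref{shortlong2}, gives \(|w_i(0,\alpha)|=|\widehat{x}_i(0)-\widehat{y}_i(0)|\lesssim N^\xi N^{\omega_1/6}/N^{3/4}\) uniformly in \(i\). Because \(\mathcal{U}^{\mathcal{L}}\) is an \(\ell^\infty\) contraction (both \(\mathcal{B}\) and \(\mathcal{V}\) preserve positivity up to a subprobabilistic multiplier), a truncation of the initial data to \(|i|\le N^{4\omega_\ell+\delta}\) for \(\delta\) slightly larger than \(\delta_1\) — call the solution \(w^\#\) — immediately satisfies \(\|w^\#(t,\alpha)\|_\infty\lesssim N^\xi N^{\omega_1/6}/N^{3/4}\) throughout \(0\le t\le t_*\). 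The comparison \(|w-w^\#|\le N^{-100}\) on the target window \(|i|\le N^{4\omega_\ell+\delta_1}\) is then obtained by a finite speed of propagation estimate for \(\mathcal{U}^{\mathcal{L}}\) analogous to Lemma~\ref{FSP}: using the Phase~1 rigidity~\eqref{badrighat} to lower bound the gaps \(\widehat z_i-\widehat z_j\) by \(|\overline\gamma_i-\overline\gamma_j|\), the standard Caffarelli--Silvestre energy argument propagates mass at most through \(\ll N^{\omega_A}-N^{4\omega_\ell+\delta_1}\) indices in time \(t_*=N^{-1/2+\omega_1}\), and the polynomial growth of \(\zeta^{(0)}\) from \eqref{zeta}-type bookkeeping (crudely \(\|\zeta^{(0)}\|_\infty\le N^C\)) is absorbed by the \(N^{-D}\) off-diagonal decay of \(\mathcal{U}^{\mathcal{L}}\).

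The hard part will be verifying the finite speed of propagation quantitatively on the relevant time scale, because the support of \(\zeta^{(0)}\) is only mildly separated from the target window (both live below \(N^{\omega_A}\)) and because the kernel \(\mathcal{B}_{ij}\) is highly inhomogeneous near the cusp, with \(|\overline\gamma_i-\overline\gamma_j|\) varying from the edge scale \(N^{-3/4}\) to the bulk scale \(N^{-2/3}\Delta^{1/9}\). This requires balancing the three scales so that \(4\omega_\ell+\delta_1\ll\omega_A\ll\omega_1 C_*\), which is the constraint hidden in the choice \(0<\delta_1<C\omega_\ell\). Once this is accomplished, combining \(\|w^\#\|_\infty\lesssim N^\xi N^{\omega_1/6}/N^{3/4}\) with the optimal rigidity of \(\widehat y_i\) and the closeness of \(\overline\gamma_i\) and \(\widehat\gamma_{y,i}\) via~\eqref{gamma difference gap} concludes the proof.
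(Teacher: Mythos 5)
Your proposal is correct and follows essentially the same route as the paper: differentiate the short range flow in $\alpha$, exploit that $\zeta^{(0)}$ vanishes for $1\le\abs{i}\le N^{\omega_A}$ together with the finite speed of propagation for $\mathcal{U}^{\mathcal{L}}$ (the paper's Lemma~\ref{fsl}, which rests on the Phase~1 rigidity~\eqref{badrighat}) to replace $w$ by the cutoff solution $w^\#$, bound $\lVert w^\#\rVert_\infty$ by $\ell^\infty$ contraction and the optimal rigidity of the initial data, and integrate back in $\alpha$, comparing to $\widehat{y}$ via~\eqref{gamma difference gap} and~\eqref{rzx}. Only a cosmetic slip: the error $\zeta^{(0)}$ is supported on $\abs{i}>N^{\omega_A}$, i.e.~well \emph{above} the target window $\abs{i}\le N^{4\omega_\ell+\delta_1}$ rather than below it, and this separation (guaranteed by $(4+C)\omega_\ell<\omega_A$) is precisely what makes the finite speed argument absorb the $N^{C}$ bound on $\zeta^{(0)}$.
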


\subsection{Phase 3: Rigidity for \texorpdfstring{$\wh z$}{z} with the correct \texorpdfstring{$i$}{i}-dependence.}\label{sec:correcti}
In this subsection we will prove almost optimal $i$-dependent rigidity for the short range approximation $\widehat{z}_i(t,\alpha)$ 
(see~\eqref{333}--\eqref{666})
for $1\le \abs{i}\le N^{4\omega_\ell+\delta_1}$. 

\begin{proposition}
\label{G33}
Let $\delta_1$ be defined in Proposition~\ref{G3}, then, for any fixed $\alpha\in [0,1]$, we have that \begin{equation}
\label{100}
\sup_{0\le t\le t_*}\abs{\widehat{z}_i(t,\alpha)-\overline{\gamma}_i(t)}\lesssim \frac{N^\xi N^\frac{\omega_1}{6}}{N^\frac{3}{4}\abs{i}^\frac{1}{4}},\qquad 1\le \abs{i}\le N^{4\omega_\ell+\delta_1},
\end{equation} for any $\xi>0$ with very high probability.
\proof

Define
\[K\defeq \lceil N^{\xi}\rceil,
\]
then~\eqref{10} (with $\xi\to \xi/2$) implies~\eqref{100} for all $1\le \abs{i}\le2 K$.
Next, we prove~\eqref{100} for all $2K\le \abs{i}\le N^{4\omega_\ell+\delta_1}$
by coupling $\widetilde{x}_i(t)$ with  $\widetilde{y}_{\langle i-K\rangle }(t)$, where
we make
the following notational convention:
\begin{equation}\label{langle def}
    \langle i-K \rangle\defeq  i-K \qquad \mbox{if} \;  i\in [K+1,  N]\cup [-N, -1], \qquad  \langle i-K \rangle\defeq  i-K-1 \qquad \mbox{if}\; i\in [1,K].
\end{equation}
This slight complication is due to 
 our indexing convention that excludes $i=0$.

In order to couple the Brownian motion of $\widetilde{x}_i(t)$ with the one of $\widetilde{y}_{\langle i-K\rangle }(t)$ we construct a new process $\widetilde{z}^*(t,\alpha)$ satisfying
\begin{equation}\label{rst}
\diff \widetilde{z}^*_i(t,\alpha)=\sqrt{\frac{2}{N}}\diff B_{\langle i-K\rangle }+\Bigg[ \frac{1}{N}\sum_{j\ne i}\frac{1}{\widetilde{z}_i^*(t,\alpha)-\widetilde{z}^*_j(t,\alpha)} +\Phi_\alpha(t)
\Bigg]\diff t, \quad 1\le \abs{i}\le N
\end{equation}
 with initial data 
 \begin{equation}
  \widetilde{z}_i^*(0,\alpha)=\alpha \widetilde{x}_i(0)+(1-\alpha)\widetilde{y}_{\langle i-K\rangle }(0),  
  \end{equation}
   for any $\alpha\in [0,1]$.
   Notice that the only difference with respect to $\widetilde{z}_i(t,\alpha)$ from~\eqref{tildereq}
 is a shift in the index of the Brownian motion, i.e.~$\wt z$ and $\wt z^*$ (almost) coincide in distribution, 
 but their coupling to the $y$-process is different. The slight discrepancy comes from the effect of the few extreme
 indices. Indeed, to make the definition~\eqref{rst} unambiguous even for extreme indices,
 $i\in [-N, -N+K-1]$, additionally we need to define independent 
 Brownian motions $B_{j}$ and initial padding particles $\wt y_j(0)= - j N^{300}$
 for $j=-N-1, \ldots -N-K$. Similarly to Lemma~\ref{appr}, the
  effect of these very distant additional particles 
   is negligible on the dynamics of the particles for $1\le \abs{i}\le \epsilon N$ for some small $\epsilon$.

Next, we define the process $\widehat{z}^*(t,\alpha)$ as the short range approximation of $\widetilde{z}^*(t,\alpha)$,
given by~\eqref{333}--\eqref{555} but $B_i$ replaced with $B_{\langle i-K\rangle }$  and  we use
 initial data $\widehat{z}^*(0,\alpha)=\widetilde{z}^*(0,\alpha)$.
 In particular, 
 \begin{equation}
 \widehat{z}_i^*(t,1)= \widehat{x}_i(t) + O(N^{-100}), \qquad   
 \widehat{z}_i^*(t,0)= \widehat{y}_{\langle i-K\rangle }(t) + O(N^{-100}), \qquad  1\le \abs{i}\le \epsilon N,
 \end{equation}
  the discrepancy again coming from the negligible effect of the additional $K$ distant particles on the particles near the cusp regime.

Let $w_i^*(t,\alpha)\defeq \partial_\alpha \widehat{z}_i^*(t,\alpha)$, i.e.~$w^*=w^*(t,\alpha)$ is a solution of \[\partial_tw^*=\mathcal{B}w^*+\mathcal{V}w^*+\zeta^{(0)}\] with initial data \[w_i^*(0,\alpha)=\widehat{x}^*_i(0)-\widehat{y}_{\langle i-K\rangle }(0).\]  
The operators $\mathcal{B}$, $\mathcal{L}$ and the error term $\zeta^{(0)}$ are defined as in~\eqref{11}-\eqref{AAA} with all $\wt z$ and $\wh z$ replaced by $\widetilde{z}^*$ and $\widehat{z}^*$, respectively.

We now define $(w^*)^\#$ as the solution of \begin{equation}
\label{133}
\partial_t(w^*)^\#=\mathcal{L}(w^*)^\#,
\end{equation} 
with cutoff  initial data \[(w_i^*)^\#(0,\alpha)=\mathbf{1}_{\{\abs{i}\le N^{4\omega_\ell+\delta}\}}w_i^*(0,\alpha),\] with $0<\delta< C\omega_\ell$ with $C>10$ such that $(4+C)\omega_\ell<\omega_A$.

We claim that 
\begin{equation}\label{ws}
(w_i^*)^\#(0,\alpha)\ge 0, \qquad  1\le \abs{i}\le N.
\end{equation}
We need to check it for $1\le \abs{i}\le N^{4\omega_\ell+\delta}$, otherwise $(w_i^*)^\#(0,\alpha)=0$ by the cutoff. 
In the regime $1\le \abs{i}\le N^{4\omega_\ell+\delta}$ we use the optimal rigidity (Lemma~\ref{diffx}
with $\xi\to\xi/10$) for  $\widehat{x}^*_i(0)$ and $\widehat{y}_{\langle i-K\rangle}(0)$ that yields
 \begin{equation}
\label{secondmax}
(w_i^*)^\#(0,\alpha)=\widehat{x}^*_i(0)-\widehat{y}_{\langle i-K\rangle }(0)\ge-N^{\frac{\xi}{10}}\eta_f({\gamma}^*_{x,i}(0))
+\widehat{\gamma}_{x,i}(0)-\widehat{\gamma}_{y,\langle i-K\rangle }(0)-N^{\frac{\xi}{10}}\eta_f({\gamma}^*_{y, \langle i-K\rangle }(0)).
\end{equation}  
We now check that $\widehat{\gamma}_{x,i}(0)-\widehat{\gamma}_{y,\langle i-K\rangle }(0)$ is sufficiently positive to compensate
for the $N^{\frac{\xi}{10}}\eta_f$ error terms.
Indeed,  by~\eqref{gamma gap}  and~\eqref{gamma difference gap}, for all $\abs{i}\ge 2K$ we have 
\[
\widehat{\gamma}_{x,i}(t)-\widehat{\gamma}_{y, \langle i-K\rangle }(t)\gtrsim K\eta_f({\gamma}^*_{x,i}(t)) \gg N^{\frac{\xi}{10}}\eta_f({\gamma}^*_{x,i}(t))
\] 
and that 
\[\eta_f({\gamma}^*_{y,\langle i-K\rangle }(t))\sim\eta_f({\gamma}^*_{x,i}(t)).
\]
This shows~\eqref{ws} in the $2K\le \abs{i}\le   N^{4\omega_\ell+\delta} $ regime. 
 If $K\le \abs{i}\le 2K$ or $-K\le i\le -1$ we have that $(w_i^*)^\#(0,\alpha)\ge 0$ since
  \[  
 \widehat{\gamma}_{x,i}(0)-\widehat{\gamma}_{y,\langle i-K\rangle }(0)  \gtrsim \max\Big\{\frac{K^{3/4}}{N^{3/4}},(t_*-t)^{1/6}\frac{K^{2/3}}{N^{2/3}}\Big\}
 \gtrsim K\max\big\{\eta_f({\gamma}^*_{x,i}(0)),
 \eta_f({\gamma}^*_{y, \langle i-K\rangle }(0))
 \big\}
 ,
  \] 
  so $\widehat{\gamma}_{x,i}(0)-\widehat{\gamma}_{y,\langle i-K\rangle }(0)$ beats the error terms $N^{\frac{\xi}{10}}\eta_f$ as well.
  Finally, if  $1\le i\le K-1$ the bound in~\eqref{secondmax} is easy since $\widehat{\gamma}_{x,i}(0)$ 
  and $\widehat{\gamma}_{y, \langle i-K\rangle }(0)$ have opposite sign, i.e.~they are in two different sides of the small gap
  and one of them is at least of order $(K/N)^{3/4}$, beating $N^{\frac{\xi}{10}}\eta_f$. This proves~\eqref{ws}.
Hence, by the maximum principle we conclude that 
\begin{equation}\label{wpos}
(w_i^*)^\#(t,\alpha)\ge 0, \qquad 0\le t\le t_*, \qquad \alpha\in [0,1]. 
\end{equation}

Let $\delta_1<\frac{\delta}{2}$ be defined in Proposition~\ref{G3}. The rigidity estimate in~\eqref{badrighat} holds for $\widehat{z}^*$ as well, since $\widehat{z}$ and $\widehat{z}^*$ have the same distribution. Furthermore, by~\eqref{badrighat} the  propagator  $\mathcal{U}$ of $\mathcal{L}\defeq \mathcal{B}+\mathcal{V}$
satisfies the finite speed estimate in Lemma~\ref{fsl}.
 Then, using Duhamel principle and~\eqref{zeta}, we obtain
 \begin{equation}
\label{122}
\sup_{0\le t\le t_*}\max_{1\le \abs{i}\le N^{4\omega_\ell+\delta_1}}\abs{(w_i^*)^\#(t,\alpha)-w_i^*(t,\alpha)}\le N^{-100},
\end{equation} for any $\alpha\in [0,1]$ with very high probability.

By~\eqref{122}, integrating $w_i^*(t,\alpha')$ over $\alpha'\in[0,\alpha]$,
 we conclude that 
 \begin{equation}
 \label{FFBB100} \widehat{z}^*_i(t,\alpha)-\widehat{y}_{\langle i-K\rangle }(t)\ge - N^{-100}, \qquad 1\le \abs{i}\le N,^{4\omega_\ell+\delta_1}
 \end{equation}
 for all $\alpha\in [0,1]$ and $0\le t\le t_*$ with very high probability. Note that in order to prove~\eqref{FFBB100} with very high probability we used a Markov inequality as in~\eqref{markov}-\eqref{markov1}. 
  Hence,
  \begin{equation}
\label{12345}
\begin{split}
\widehat{z}_i^*(t,\alpha)-\overline{\gamma}_i(t)\ge   & \big[ \widehat{y}_{\langle i-K\rangle}(t)-\widehat{\gamma}_{y,\langle i-K\rangle}(t)\big] +
\big[ \widehat{\gamma}_{y,\langle i-K\rangle }(t) -  \widehat{\gamma}_{y, i }(t)\big] +\big[ \widehat{\gamma}_{y, i }(t)
-\overline{\gamma}_i(t)\big]
-N^{-100}  \\ 
\gtrsim & -K (\eta_f({\gamma}^*_{y,\langle i-K\rangle }(t))+\eta_f({\gamma}^*_{y,i}(t)))- \gamma^*_i(t) t_*^{1/3} 
  \ge -2K(\eta_f({\gamma}^*_{y,\langle i-K\rangle }(t))+\eta_f({\gamma}^*_{y,i}(t)))
\end{split}
\end{equation} 
for all $1\le \abs{i}\le N^{4\omega_\ell+\delta_1}$, where we used the optimal rigidity~\eqref{rzx}  and~\eqref{gamma difference gap} in going to the second line. 
In particular, since for $\abs{i}\ge 2K$ we have that 
$\eta_f({\gamma}^*_{y,i}(t))\sim\eta_f({\gamma}^*_{y,i-K}(t))$, we conclude that
 \begin{equation}
\label{lowerbound}
\widehat{z}^*_i(t,\alpha)-\overline{\gamma}_i(t)\ge -\frac{CK N^\frac{\omega_1}{6}}{N^\frac{3}{4}\abs{i}^\frac{1}{4}}, \qquad 2K\le \abs{i}\le N^{4\omega_\ell+\delta_1},
\end{equation} for all $0\le t\le t_*$ and for any $\alpha\in [0,1]$. This implies the lower bound in~\eqref{100}.

In order to prove the upper bound in~\eqref{100} we consider a very similar  process $\widetilde{z}_i^*(t,\alpha)$ (we continue to denote it by star) where the index shift in $y$ is in the other direction. More precisely, it is defined as a solution of 
\[ \diff \widetilde{z}^*_i(t,\alpha)=\sqrt{\frac{2}{N}}\diff B_{\langle i+K\rangle }+
\Bigg[ \frac{1}{N}\sum_{j\ne i}\frac{1}{\widetilde{z}^*_i(t,\alpha)-\widetilde{z}^*_j(t,\alpha)} 
+\Phi_{\alpha}(t)
\Bigg] \diff t \] with initial data \[\widetilde{z}_i(0,\alpha)=\alpha\widetilde{y}_{\langle i+K\rangle }(0)+(1-\alpha)\widetilde{x}_i(0),\] for any $\alpha\in [0,1]$.
Here $\langle i+K \rangle$ is defined analogously to~\eqref{langle def}.
Then, by similar computations, we conclude  that  \begin{equation}
\label{upperbound}
\widehat{z}^*_i(t,\alpha)-\overline{\gamma}_i(t)\le \frac{K N^\frac{\omega_1}{6}}{N^\frac{3}{4}\abs{i}^\frac{1}{4}}, \qquad 2K\le \abs{i}\le N^{4\omega_\ell+\delta_1}, 
\end{equation} for all $0\le t\le t_*$ and for any $\alpha\in [0,1]$. 
Combining~\eqref{lowerbound} and~\eqref{upperbound} we conclude~\eqref{100} and complete the proof of Proposition~\ref{G33}.
\endproof
\end{proposition}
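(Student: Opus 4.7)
The plan is to upgrade the index-independent rigidity of Proposition~\ref{G3} to the optimal $|i|^{-1/4}$ scaling by comparing $\widehat{z}_i$ with a shifted reference particle $\widehat{y}_{i-K}$, where $K=\lceil N^\xi\rceil$, and then invoking a maximum principle argument on the $\alpha$-derivative of an auxiliary interpolated process. Note first that for $1\le\abs{i}\le 2K$ the conclusion follows directly from Proposition~\ref{G3} after replacing $\xi$ by $\xi/2$, so the real work is in the range $2K\le\abs{i}\le N^{4\omega_\ell+\delta_1}$.

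The first step is to build an auxiliary process $\widetilde{z}^*(t,\alpha)$ obeying exactly the same SDE as $\widetilde{z}(t,\alpha)$ but driven by the shifted Brownian motion $B_{\langle i-K\rangle}$ (with a convention that skips the omitted index $0$) and with initial condition $\widetilde{z}^*_i(0,\alpha)=\alpha\widetilde{x}_i(0)+(1-\alpha)\widetilde{y}_{\langle i-K\rangle}(0)$; a handful of additional ghost particles placed at distance $N^{300}$ take care of the index mismatch and have negligible effect in the cusp regime, exactly as in Lemma~\ref{appr}. One then defines its short-range approximation $\widehat{z}^*$ via the analogue of~\eqref{333}--\eqref{666}. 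Because $\widehat{z}^*$ has the same distribution as $\widehat{z}$ up to $N^{-100}$ errors, all rigidity bounds already established for $\widehat{z}$ transfer to $\widehat{z}^*$; but the crucial point is that $\widehat{z}^*(t,0)=\widehat{y}_{\langle i-K\rangle}(t)$, providing the shifted coupling.

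Next, set $w^*_i\defeq \partial_\alpha \widehat{z}^*_i$ and consider its truncated companion $(w^*)^\#$ satisfying $\partial_t(w^*)^\#=\mathcal{L}(w^*)^\#$ with cutoff initial data supported on $\abs{i}\le N^{4\omega_\ell+\delta}$. The heart of the argument is to check that $(w^*)^\#(0,\alpha)\ge 0$, since then the maximum principle for the positivity-preserving operator $\mathcal{L}=\mathcal{B}+\mathcal{V}$ propagates non-negativity to all $0\le t\le t_*$. For $2K\le\abs{i}\le N^{4\omega_\ell+\delta}$ the deterministic shift $\widehat{\gamma}_{x,i}(0)-\widehat{\gamma}_{y,\langle i-K\rangle}(0)$ is of order $K\eta_\mathrm{f}$ by~\eqref{gamma difference gap}, which dominates the $N^{\xi/10}\eta_\mathrm{f}$ optimal rigidity errors for $\widehat{x}^*_i(0)$ and $\widehat{y}_{\langle i-K\rangle}(0)$ provided by Lemma~\ref{diffx}; the regimes $K\le\abs{i}\le 2K$ and $1\le i\le K-1$ are handled analogously, the latter exploiting that the two quantiles lie on opposite sides of the small gap. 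Finite speed of propagation (Lemma~\ref{fsl}) then controls the gap between $w^*$ and $(w^*)^\#$ by $N^{-100}$ after integration against the error term $\zeta^{(0)}$ bounded in Lemma~\ref{lm:zeta}.

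With non-negativity in hand, integrating $w^*_i(t,\alpha')$ over $\alpha'\in[0,\alpha]$ (using a high-moment Markov bound in the spirit of~\eqref{markov}--\eqref{markov1}) gives $\widehat{z}^*_i(t,\alpha)-\widehat{y}_{\langle i-K\rangle}(t)\ge -N^{-100}$. Splitting $\widehat{z}^*_i(t,\alpha)-\overline{\gamma}_i(t)$ into the three telescoping pieces $[\widehat{y}_{\langle i-K\rangle}(t)-\widehat{\gamma}_{y,\langle i-K\rangle}(t)]+[\widehat{\gamma}_{y,\langle i-K\rangle}(t)-\widehat{\gamma}_{y,i}(t)]+[\widehat{\gamma}_{y,i}(t)-\overline{\gamma}_i(t)]$ and using, respectively, optimal rigidity of $\widehat{y}_{\langle i-K\rangle}(t)$ from Lemma~\ref{diffx}, the quantile spacing bound~\eqref{gamma difference gap}, and the closeness of quantiles $\widehat{\gamma}_{y,i}(t)-\overline{\gamma}_i(t)=\mathcal{O}(\overline{\gamma}^*_i t_*^{1/3})$ from Lemma~\ref{def:reg}, one arrives at the lower bound in~\eqref{100}. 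The matching upper bound is obtained by repeating the whole construction with the opposite shift $B_{\langle i+K\rangle}$ and $\widetilde{y}_{\langle i+K\rangle}(0)$, where the same maximum principle now yields non-positivity. The main obstacle will be the three-regime case analysis verifying $(w^*)^\#(0,\alpha)\ge 0$: one must carefully track how the quantile spacing depends on whether $\abs{i}$ is in the bulk, small-gap edge, or cusp regime, and ensure in each case that $\widehat{\gamma}_{x,i}(0)-\widehat{\gamma}_{y,\langle i-K\rangle}(0)$ strictly dominates the combined rigidity errors.
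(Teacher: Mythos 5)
Your proposal reproduces the paper's argument essentially step for step: the shift by $K=\lceil N^\xi\rceil$, the auxiliary process driven by $B_{\langle i-K\rangle}$ with interpolated initial data, the sign check on the cutoff $\alpha$-derivative followed by the maximum principle, finite speed of propagation to remove the cutoff, the telescoping decomposition against $\widehat{\gamma}_{y,\langle i-K\rangle}$, and the reversed shift for the upper bound. This is the same proof as in the paper and the reasoning is correct.
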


\section{Proof of Proposition~\ref{DBM prop}: Dyson Brownian motion near the cusp} 
\label{DBMS}

In this section $t_1\le t_*$, indicating that we are before the cusp formation, we recall that $t_1$ is defined as follows
\[
t_1\defeq \frac{N^{\omega_1}}{N^{1/2}},
\]
for a small fixed $\omega_1>0$ and $t_*$ is the time of the formation of the exact cusp.
The main result of this section is the following proposition from which we can quickly 
 prove Proposition~\ref{DBM prop} for $t_1\le t_*$. If $t_1>t_*$ we conclude Proposition~\ref{DBM prop} 
using the analogous Proposition~\ref{cutmin} instead of Proposition~\ref{cut} exactly in the same way.

\begin{proposition}
\label{cut}
For $t_1\le t_*$, with very high probability we have that 
\begin{equation}
\label{cu}
\abs{(\lambda_j(t_1)-\mathfrak{e}^{+}_{\lambda, t_1})-(\mu_{j+i_\mu-i_\lambda}(t_1)-\mathfrak{e}^{+}_{\mu, t_1})}\le N^{-\frac{3}{4}-c\omega_1}
\end{equation} 
for some small constant $c>0$ and for any $j$ such that $\abs{j-i_\lambda}\le N^{\omega_1}$.
\end{proposition}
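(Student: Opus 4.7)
The plan is to deduce~\eqref{cu} from the almost optimal rigidity for the short-range process $\widehat z$ established in Proposition~\ref{G33}, by a heat-kernel contraction argument for the $\alpha$-differentiated DBM. The Phase~3 rigidity of $\widehat z_i$ alone is off by a factor $N^{\omega_1/6}$ from what~\eqref{cu} requires, but an $\ell^5\to\ell^\infty$ Sobolev-type contraction for the semigroup $\mathcal{U}^{\mathcal{L}}$ on the time scale $t_1=N^{-1/2+\omega_1}$ produces an extra factor of order $N^{-4\omega_1/15}$; since $\tfrac{4}{15}>\tfrac{1}{6}$, the net improvement yields~\eqref{cu} with some $c>0$ (e.g.~$c=\tfrac{1}{20}$).

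Setting $i\defeq j-i_\lambda$ or $j-i_\lambda+1$ according to the convention of~\eqref{idxxx}--\eqref{idyyy}, Lemma~\ref{appr} together with $\widetilde z(t,\alpha=1)=\widetilde x(t)$, $\widetilde z(t,\alpha=0)=\widetilde y(t)$ reduces~\eqref{cu} to
\begin{equation*}
|\widetilde x_i(t_1)-\widetilde y_i(t_1)|\le N^{-3/4-c\omega_1},\qquad 1\le|i|\le N^{\omega_1},
\end{equation*}
with very high probability. Writing this difference as $\int_0^1 \partial_\alpha \widetilde z_i(t_1,\alpha)\,d\alpha$, and applying the short-long range comparison~\eqref{shortestimateba} to the $\alpha$-derivative (the truncation defining $\widehat z$ is independent of $\alpha$ apart from the boundary terms at $|i|\sim i_*$ that are harmless by finite speed of propagation away from the cusp), it suffices to control $w_i(t_1,\alpha)\defeq\partial_\alpha\widehat z_i(t_1,\alpha)$ pointwise on $1\le|i|\le N^{\omega_1}$, uniformly in $\alpha\in[0,1]$.

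The process $w$ solves~\eqref{weq} with forcing $\zeta^{(0)}$ supported in $|i|>N^{\omega_A}$. Proceeding as in Section~\ref{sec:hatrig}, the finite speed of propagation of $\mathcal{U}^{\mathcal{L}}$ (Lemma~\ref{fsl}) lets us replace $w$ by the solution $w^\#$ with initial data truncated to $|i|\le N^{4\omega_\ell+\delta}$, up to a negligible $N^{-D}$ error in the target window, and makes the contribution of $\zeta^{(0)}$ negligible on the same window. The optimal rigidity of $\widehat x_i(0)$ and $\widehat y_i(0)$ (Lemma~\ref{diffx}) gives
\begin{equation*}
|w^\#_i(0,\alpha)|\lesssim \frac{N^\xi N^{\omega_1/6}}{N^{3/4}|i|^{1/4}},
\end{equation*}
and since $|i|^{-5/4}$ is summable, $\|w^\#(0,\alpha)\|_{\ell^5}\lesssim N^\xi N^{\omega_1/6}/N^{3/4}$. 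Applying the $\ell^5\to\ell^\infty$ heat-kernel contraction for $\mathcal{U}^{\mathcal{L}}(0,t_1)$, whose precise form in the cusp regime is deferred to the Appendix and relies on the Sobolev inequality and Nash-type discrete heat-kernel estimates adapted from~\cite{MR3253704}, yields
\begin{equation*}
\sup_{|i|\le N^{\omega_1}}|w^\#_i(t_1,\alpha)|\lesssim N^\xi N^{\omega_1/6}\cdot N^{-4\omega_1/15}/N^{3/4}\lesssim N^{-3/4-\omega_1/10+\xi}.
\end{equation*}
Choosing $\xi=\omega_1/20$ yields the bound $N^{-3/4-c\omega_1}$ with $c=1/20$. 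Integrating over $\alpha\in[0,1]$ and invoking a high-moment Markov inequality as in~\eqref{markov}--\eqref{markov1} then produces~\eqref{cu} with very high probability.

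The main obstacle is the $\ell^5\to\ell^\infty$ contraction for $\mathcal{U}^{\mathcal{L}}$ in the cusp regime: the kernel $\mathcal{B}_{ij}$ is spatially inhomogeneous because of the two-scale structure of the density (the small gap of size $t_\ast^{3/2}$ merging into the outer $|i|/N \mapsto (|i|/N)^{3/4}$ profile), and this must be tracked through the Sobolev/Nash estimates. The critical input making these estimates clean is Proposition~\ref{G33}, which ensures that $|\widehat z_i-\widehat z_j|$ agrees with the deterministic quantile spacing $|\overline\gamma_i-\overline\gamma_j|$ up to a factor $N^{\omega_1/6}$; with this in hand, the analogue of the edge heat-kernel estimates of~\cite{1712.03881} carries over with moderate modifications to the cusp shape functions~\eqref{Psi edge}--\eqref{Psi min}.
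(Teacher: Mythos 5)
Your overall strategy is the paper's: after Lemma~\ref{appr}, control $\partial_\alpha\widehat z$ by combining finite speed of propagation (to truncate the initial data and discard $\zeta^{(0)}$), the $\ell^5$ bound of order $N^{\omega_1/6}N^{-3/4}$ on the truncated initial data, and the $\ell^5\to\ell^\infty$ Nash/Sobolev heat-kernel contraction of $\mathcal{U}^{\mathcal{L}}$ over the time $t_1=N^{-1/2+\omega_1}$, with the decisive numerology $\frac{4}{15}>\frac{1}{6}$ and the final exponent $\omega_1/20$; this part is sound and is exactly what Section~\ref{DBMS} together with Lemma~\ref{ee} does.

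However, your reduction step has a genuine gap. You write $\widetilde x_i-\widetilde y_i=\int_0^1\partial_\alpha\widetilde z_i\,\diff\alpha$ and then invoke the short--long comparison~\eqref{shortestimateba} ``applied to the $\alpha$-derivative'' to pass to $w=\partial_\alpha\widehat z$. Two problems: first,~\eqref{shortestimateba} only gives $\abs{\widetilde z_i-\widehat z_i}\le N^{C\omega_1}/N^{3/4}$ uniformly in $\alpha$, a loss of $N^{C\omega_1}$ that already exceeds the gain $N^{-c\omega_1}$ you are trying to prove, so this input cannot close the argument; second, no estimate on $\partial_\alpha(\widetilde z_i-\widehat z_i)$ is available at all (the only a priori control on $\partial_\alpha\widetilde z_i$ is the crude $N^{202}$ bound of~\eqref{tildeder}), and the remark that ``the truncation is independent of $\alpha$'' does not justify interchanging $\partial_\alpha$ with the short-range approximation. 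The fix, which is the paper's route, is to apply the fundamental theorem of calculus to the short-range process itself, $\widehat x_1-\widehat y_1=\int_0^1\partial_\alpha\widehat z_1\,\diff\alpha$, and to compare $\widetilde z$ with $\widehat z$ only at the endpoints $\alpha=0,1$ through Lemma~\ref{se}, i.e.~\eqref{goodsl}, whose precision $N^{-3/4-c}$ is below the target scale precisely because optimal rigidity (Lemma~\ref{diffx}) is available for the $x$ and $y$ processes; the uniform-in-$\alpha$ Lemma~\ref{shortlong2} is never needed at this precision. With this reordering (and noting that your bound on $w^{\#}_i(0,\alpha)$ also requires the quantile difference $\abs{\widehat\gamma_{x,i}(0)-\widehat\gamma_{y,i}(0)}\lesssim \abs{i}^{3/4}N^{\omega_1/2}N^{-11/12}$ from~\eqref{gamma difference gap}, which is subleading on the truncation window), your argument coincides with the paper's proof.
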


Note that if $t_1=t_*$ then $\mathfrak{e}_{r,t_*}^+=\mathfrak{e}_{r,t_*}^-=\mathfrak{c}_r$, for $r=\lambda,\mu$, with $\mathfrak{c}_r$ being the exact cusp point of the scDOSs $\rho_{r,t_*}$. The proof of Proposition~\ref{cut} will be given at the end of the section after several auxiliary lemmas. 

\begin{proof}[Proof of Proposition~\ref{DBM prop}]
Firstly, we recall the definition of the physical cusp\[
  \mathfrak{b}_{r,t_1}\defeq \begin{cases}
  \frac{1}{2}(\mathfrak{e}_{r,t_1}^++\mathfrak{e}_{r,t_1}^-) &\text{if} \quad t_1< t_*, \\
  \mathfrak{c}_r &\text{if} \quad t_1= t_*, \\
  \mathfrak{m}_{r,t_1} &\text{if} \quad t_1> t_*.
  \end{cases}
  \]
of \(\rho_{r,t_1}\) as in~\eqref{eq pearcey param choice}, for $r=\lambda,\mu$. Then, using the change of variables $\bx=N^\frac{3}{4}(\bx'-\mathfrak{b}_{r,t_1})$, for $r=\lambda,\mu$, 
and the definition of correlation function, for each Lipschitz continuous and compactly supported test function $F$, we have that 
\begin{equation}
\label{corrpro}
\begin{split}&\int_{\R^k} F(\bx)\left[ N^{k/4} p_{k,t_1}^{(N,\lambda)}\left( \bu_{\lambda,t_1} + \frac{\bx}{ N^{3/4}}\right)
-N^{k/4} p_{k,t_1}^{(N,\mu)}\left( \bu_{\mu,t_1} + \frac{\bx}{ N^{3/4}}\right)\right] \diff \bx \\
&\qquad=N^k\binom{N}{k}^{-1} \sum_{\{i_1,\dots,i_k\}\subset[N]}
\left[\E_{H^{(\lambda)}_{t_1}}F\left(N^\frac{3}{4}(\lambda_{i_1}-\mathfrak{b}_{\lambda,t_1}),\dots,
N^\frac{3}{4}(\lambda_{i_k}-\mathfrak{b}_{\lambda,t_1})\right) -\E_{H^{(\mu)}_{t_1}}F(\lambda \to \mu)\right],
\end{split}\end{equation} 
where $\lambda_1,\dots,\lambda_N$ and $\mu_1,\dots, \mu_N$ are the eigenvalues, labelled in increasing order, of $H^{(\lambda)}_{t_1}$ 
and $H^{(\mu)}_{t_1}$ respectively. 
In $\E_{H^{(\mu)}_{t_1}}F(\lambda \to \mu)$ we also replace $\mathfrak{b}_{\lambda,t_1}$ by $\mathfrak{b}_{\mu,t_1}$.

In order to apply Proposition~\ref{cut} we split the sum in the rhs. of~\eqref{corrpro} into two sums: \begin{equation}
\label{sssum}
 \sum_{\substack{\{i_1,\dots,i_k\}\subset[N] \\ \abs{i_1-i_\lambda},\dots,\abs{i_k-i_\lambda}< N^\epsilon}}\qquad \text{and its complement}\qquad  \sum',
\end{equation} where $\epsilon$ is a positive exponent with $\epsilon\ll \omega_1$.

We start with the estimate for the second sum of~\eqref{sssum}. In particular, we will estimate only 
the term $\E_{H_{t_1}^{(\lambda)}}(\cdot)$, the estimate for $\E_{H_{t_1}^{(\mu)}}(\cdot)$ will follow in an analogous way.

Since the test function $F$ is compactly supported in some set $\Omega\subset \R^k$ and in $\sum'$ there is at least one index $i_l$ 
such that $\abs{i_l-i_\lambda}\ge N^\epsilon$, we have that 
\begin{equation}
\label{s1}
\sum'\E_{H^{(\lambda)}_{t_1}}F\left(N^\frac{3}{4}(\lambda_{i_1}-\mathfrak{b}_{\lambda,t_1}),\dots,
N^\frac{3}{4}(\lambda_{i_k}-\mathfrak{b}_{\lambda,t_1})\right)\lesssim N^{k-1}  \| F \|_\infty  \sum_{i_l:\,\abs{i_l-i_\lambda}\ge N^\epsilon} 
\mathbb{P}_{H^{(\lambda)}_{t_1}}\left(\abs{\lambda_{i_l}-\mathfrak{b}_{\lambda, t_1}}\lesssim C_\Omega N^{-\frac{3}{4}}\right).
\end{equation} 
where $C_\Omega$ is the diameter of $\Omega$. Let $\gamma_{\lambda,i}=\widehat{\gamma}_{\lambda,i}+\mathfrak{e}_{\lambda,t_1}^+$ 
be the classical eigenvalue locations of $\rho_\lambda(t_1)$ defined by~\eqref{def:quan} for all $1-i_\lambda\le i \le N+1-i_\lambda$. 
Then, by the rigidity estimate from~\cite[Corollary 2.6]{1809.03971}, we have that 
\begin{equation}
\label{s2}
\mathbb{P}_{H^{(\lambda)}_{t_1}}\left(\abs{\lambda_{i_l}-\mathfrak{b}_{\lambda,t_1}}\lesssim C_\Omega N^{-\frac{3}{4}}, \abs{i_l-i_\lambda}\ge N^\epsilon\right)\le N^{-D},
\end{equation} 
for each $D>0$ if $N$ is large enough, depending on $C_\Omega$. Indeed, by rigidity it follows that 
\begin{equation}
\label{bch}
\abs{\lambda_{i_l}-\mathfrak{b}_{\lambda,t_1}}\ge
 \abs{\gamma_{\lambda,i_l}-\gamma_{\lambda,i_\lambda}}-\abs{\lambda_{i_l}-\gamma_{\lambda,i_l}}-\abs{\mathfrak{b}_{\lambda,t_1}-\gamma_{\lambda,i_\lambda}}
 \gtrsim \frac{N^{c\epsilon}}{N^\frac{3}{4}}-\frac{N^{c\xi}}{N^\frac{3}{4}}\gtrsim\frac{N^{c\epsilon}}{N^\frac{3}{4}}
\end{equation} 
with very high probability, if $N^\epsilon\le \abs{i_l-i_\lambda}\le \tilde{c}N$, for some $0<\tilde{c}<1$. In~\eqref{bch} we used the rigidity from~\cite[Corollary 2.6]{1809.03971} in the form
 \[
 \abs{\lambda_i-\gamma_{\lambda,i}}\le \frac{N^\xi}{N^\frac{3}{4}},
 \]  
 for any $\xi>0$, with very high probability. Note that~\eqref{s2} and~\eqref{bch} hold for any 
 $\epsilon\gtrsim \xi$. If $\abs{i_l-i_\lambda}\ge \tilde{c}N$, then $\abs{\gamma_{i_l}-\gamma_{i_\lambda}}\sim 1$ 
 and the bound in~\eqref{bch} clearly holds. A similar estimate holds for $H^{(\mu)}_{t_1}$, hence, 
 choosing $D>k+1$ we conclude that the second sum in~\eqref{sssum} is negligible.

Next, we consider the first sum in~\eqref{sssum}. For $t_1\le t_*$ we have, by~\eqref{eq Delta size} that 
\[
\abs{ (\mathfrak{e}_{\lambda,t_1}^+-\mathfrak{b}_{\lambda,t_1})-(\mathfrak{e}_{\mu,t_1}^+-\mathfrak{b}_{\mu,t_1})} 
= \frac{1}{2}\abs{ \Delta_{\lambda,t_1}-\Delta_{\mu,t_1}}\lesssim \Delta_{\mu,t_1}(t_\ast-t_1)^{1/3}
 \le N^{-\frac{3}{4}-\frac{1}{6}+ C\om_1}. 
\]
Hence, by~\eqref{cu}, using that $\abs{F(\bx)-F(\bx')}\lesssim  \| F\|_{C^1}  \lVert \bx-\bx'\rVert$, 
we conclude that 
\begin{equation}
\label{s3}
\sum_{\substack{\{i_1,\dots,i_k\}\subset[N] \\ \abs{i_1-i_\lambda},\dots,\abs{i_k-i_\lambda}\le N^\epsilon}}
\left[\E_{H^{(\lambda)}_{t_1}}F\left(N^\frac{3}{4}(\lambda_{i_1}-\mathfrak{b}_{\lambda,t_1}),\dots,
N^\frac{3}{4}(\lambda_{i_k}-\mathfrak{b}_{\lambda,t_1})\right) -\E_{H^{(\mu)}_{t_1}}F(\lambda\to \mu)\right]
\le C_k  \| F\|_{C^1} \frac{N^{k\epsilon}}{N^{c\omega_1}},
\end{equation} 
for some $c>0$. Then, using that \[\frac{N^k(N-k)!}{N!}=1+\mathcal{O}_k(N^{-1}),\] 
we easily conclude the proof of Proposition~\ref{DBM prop}.
\end{proof}

\subsection{Interpolation.}
In order to prove Proposition~\ref{cut} we recall a few concepts introduced previously. In Section~\ref{sec:padding} we introduced the padding 
particles $x_i(t)$, $y_i(t)$, for $1\le \abs{i}\le N$, that are good approximations of the eigenvalues 
$\lambda_j(t)$, $\mu_j(t)$ respectively, for $1\le j \le N$, in the sense of Lemma~\ref{appr}.
They satisfy a Dyson Brownian Motion equation~\eqref{xxx},~\eqref{yyy} mimicking the DBM of
genuine eigenvalue processes~\eqref{lambda},~\eqref{mu}. It is more convenient to 
consider shifted processes where the edge motion is subtracted.

More precisely, 
for $r=x,y$ and $r(t)=x(t),y(t)$, we defined
\[
\widetilde{r}_i(t)\defeq r_i(t)-\mathfrak{e}_{r,t}^+, \qquad 1\le \abs{i}\le N,
\] 
for all $0\le t\le t_*$. In particular, $\widetilde{r}(t)$ is a solution of 
\begin{equation}
\label{paddingshiftDBM}
\diff\widetilde{r}_i(t)=\sqrt{\frac{2}{N}}\diff B_i+\Bigg(\frac{1}{N}\sum_{j\ne i}\frac{1}{\widetilde{r}_i(t)-\widetilde{r}_j(t)}+\Re [m_{r,t}(\mathfrak{e}_{r,t}^+)]\Bigg)\diff t,
\end{equation} with initial data \begin{equation}
\label{paddid}
\widetilde{r}_i(0)=r_i(0)-\mathfrak{e}_{r,0}^+,
\end{equation} for all $1\le \abs{i}\le N$.

Next, following a similar idea of~\cite{1712.03881}, we also introduced in~\eqref{tildereq}
an interpolation process  between $\widetilde{x}(t)$ and $\widetilde{y}(t)$.
 For any $\alpha\in[0,1]$ we defined the process  $\widetilde{z}(t,\alpha)$  as the solution of 
 \begin{equation}
\label{SDEshift111}
\diff\widetilde{z}_i(t,\alpha)=\sqrt{\frac{2}{N}}\diff B_i+\Bigg(\frac{1}{N}\sum_{j\ne i}\frac{1}{\widetilde{z}_i(t,\alpha)-\widetilde{z}_j(t,\alpha)}+\Phi_\alpha(t)\Bigg)\diff t,
\end{equation} 
with initial data \[\widetilde{z}_i(0,\alpha)=\alpha \widetilde{x}_i(0)+(1-\alpha)\widetilde{y}_i(0),\]for each $1\le \abs{i}\le N$. Recall that
$\Phi_\alpha(t)$ was defined in~\eqref{Phidef} and it is such that $\Phi_0(t)=\Re [m_{y,t}(\mathfrak{e}_{y,t}^+)]$ 
and $\Phi_1(t)=\Re [m_{x,t}(\mathfrak{e}_{x,t}^+)]$.
 Note that $\widetilde{z}_i(t,1)=\widetilde{x}_i(t)$ and $\widetilde{z}_i(t,0)=\widetilde{y}_i(t)$ for all $1\le \abs{i}\le N$ and $0\le t\le t_*$.

We recall the definition of the interpolated quantiles from~\eqref{barg} of Section~\ref{sec:padding};
 \begin{equation}
\label{Mqua}
\overline{\gamma}_i(t)\defeq \alpha \widehat{\gamma}_{x,i}(t)+(1-\alpha)\widehat{\gamma}_{y,i}(t),\qquad \alpha\in [0,1],
\end{equation} 
where $\widehat{\gamma}_{x,i}$ and $\widehat{\gamma}_{y,i}$ are the shifted
 quantiles of $\rho_{x,t}$ and $\rho_{y,t}$ respectively, defined in Section~\ref{sec:padding}. 
 In particular, 
 \[
 \overline{\mathfrak{e}}_t^\pm=\alpha \mathfrak{e}_{x,t}^\pm+(1-\alpha)\mathfrak{e}_{y,t}^\pm,\qquad \alpha\in [0,1].
 \] 
 We denoted the interpolated density, whose quantiles are the $\overline{\gamma}_i(t)$, by $\overline{\rho}_t$
~\eqref{rhozdef}, 
  and its Stieltjes transform by $\overline{m}_t$.

Let $\widehat{z}(t,\alpha)$ be the short range approximation of $\widetilde{z}(t,\alpha)$ defined by~\eqref{333}-\eqref{555},
with exponents $\omega_1\ll \omega_\ell\ll \omega_A\ll 1$ and with initial data $\widehat{z}(0,\alpha)=\widetilde{z}(0,\alpha)$ and $i_*=N^{\frac{1}{2}+C_*\omega_1}$, for some large constant $C_*>0$.
In particular, $\widehat{x}(t)=\widehat{z}(t,1)$ and $\widehat{y}(t)=\widehat{z}(t,0)$. 
Assuming optimal rigidity in~\eqref{rzx} for $\widetilde{r}_i(t)=\widetilde{x}_i(t),\widetilde{y}_i(t)$, 
the following lemma shows that the process $\widetilde{r}$ and its short range approximation $\widehat{r}=\widehat{x},\widehat{y}$ 
stay very close to each other, i.e.~$\abs{\widehat{r}_i-\widetilde{r}_i}\le N^{-\frac{3}{4}-c}$, for some small $c>0$. 
This is the analogue of Lemma 3.7 in~\cite{1712.03881} and its proof, given in Appendix~\ref{SLL},
 follows similar lines.  
It assumes the optimal rigidity,  see~\eqref{new:rzx} below, which is ensured by~\cite[Corollary 2.6]{1809.03971},
see Lemma~\ref{diffx}.

\begin{lemma}
\label{se}
Let $i_*= N^{\frac{1}{2}+C_*\om_1}$. Assume  that $\widetilde{z}(t,0)$ and $\widetilde{z}(t,1)$ satisfy the optimal rigidity 
\begin{equation}
\label{new:rzx}
\sup_{0\le t\le t_1}\abs{\widetilde{z}_i(t,\alpha)-\wh\gamma_{r,i}(t)}\le N^\xi \eta_{\mathrm{f}}^{\rho_{r,t}}(e_{r,t}^+ + \widehat{\gamma}_{r,\pm i}(t)),
 \qquad 1\le \abs{i}\le i_*, \quad \alpha=0, 1,
\end{equation} with $r=x,y$, for any $\xi>0$, with very high probability. Then, for $\alpha=0$ or $\alpha=1$ we have that
\begin{equation}
\label{shortestimate1a}
\begin{split}
&\sup_{1\le\abs{i}\le N} \sup_{0\le t\le t_1}\abs{\widetilde{z}_i(t,\alpha)-\widehat{z}_i(t,\alpha)}\\
&\qquad\qquad \lesssim \frac{N^{\frac{\omega_1}{6}}N^\xi}{N^\frac{3}{4}}\left(\frac{N^{\omega_1}}{N^{3\omega_\ell}}+\frac{N^{\omega_1}}{N^\frac{1}{8}}+\frac{N^{C\omega_1}N^\frac{\omega_A}{2}}{N^\frac{1}{6}}+\frac{N^\frac{\omega_A}{2}N^{C\omega_1}}{N^\frac{1}{4}}+\frac{N^{C\omega_1}}{N^\frac{1}{18}}\right),
\end{split}\end{equation} for any $\xi>0$, with very high probability. 
\end{lemma}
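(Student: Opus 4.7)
The strategy is to compare the full shifted DBM \eqref{paddingshiftDBM} with its short range approximation \eqref{333}--\eqref{555} via a Duhamel argument, adapting the edge-regime argument of~\cite{1712.03881} to the cusp. A key simplification at $\alpha=0,1$ is that the interpolated density $\overline{\rho}_t$ coincides with $\rho_{y,t}$ or $\rho_{x,t}$ and $\Phi_\alpha(t)=\Re m_{r,t}(\mathfrak{e}_{r,t}^+)$, so the two equations differ only in that the long range discrete interaction is replaced by the integral against the corresponding deterministic density.

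First I would set $u_i(t)\defeq \widetilde{z}_i(t,\alpha)-\widehat{z}_i(t,\alpha)$ and subtract the two SDEs. The Brownian motions are coupled, so the stochastic terms cancel and $u$ solves a deterministic parabolic equation
\[
\partial_t u = \mathcal{B} u + F,\qquad \mathcal{B}_{ij}\defeq -\frac{\mathbf{1}_{(i,j)\in\mathcal{A}}}{N(\widetilde{z}_i-\widetilde{z}_j)(\widehat{z}_i-\widehat{z}_j)},
\]
where $\mathcal{B}$ is positivity preserving, so its heat propagator $\mathcal{U}(s,t)$ is an $\ell^\infty$-contraction. The forcing $F_i$ in each of the three index regimes of \eqref{333}--\eqref{555} equals (up to the already cancelling $\Phi_\alpha-\Re m_r(\mathfrak{e}_r^+)$)
\[
F_i=\frac{1}{N}\sum_{j}^{\mathcal{A}^c,(i)}\frac{1}{\widetilde{z}_i-\widetilde{z}_j} - \int_{\mathcal{I}^c}\frac{\rho_{r,t}(E+\mathfrak{e}_{r,t}^+)}{\widehat{z}_i-E}\,\diff E,
\]
with $\mathcal{I}$ and $\rho_{r,t}$ chosen according to the regime. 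Since $u(0)=0$, Duhamel gives $\norm{u(t)}_\infty\le t_1\sup_{s\le t_1}\norm{F(s)}_\infty$, reducing the lemma to a pointwise bound on $F$.

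To estimate $\norm{F}_\infty$ I would insert the shifted classical locations $\widehat{\gamma}_{r,j}(t)+\mathfrak{e}_{r,t}^+$ and split
\[
F_i = \Bigg[\frac{1}{N}\sum_{j}^{\mathcal{A}^c,(i)}\bigg(\frac{1}{\widetilde{z}_i-\widetilde{z}_j}-\frac{1}{\widehat{z}_i-\widehat{\gamma}_{r,j}(t)-\mathfrak{e}_{r,t}^+}\bigg)\Bigg] + \Bigg[\frac{1}{N}\sum_{j}^{\mathcal{A}^c,(i)}\frac{1}{\widehat{z}_i-\widehat{\gamma}_{r,j}(t)-\mathfrak{e}_{r,t}^+}-\int_{\mathcal{I}^c}\frac{\rho_{r,t}(E+\mathfrak{e}_{r,t}^+)}{\widehat{z}_i-E}\,\diff E\Bigg].
\]
The first bracket is a \emph{rigidity error}: it is controlled by \eqref{new:rzx} together with the quantile spacing bound \eqref{gamma difference gap} and the observation that on $\mathcal{A}^c$ the separation $\abs{\widehat{\gamma}_{r,i}-\widehat{\gamma}_{r,j}}$ is much larger than the rigidity scale, so each summand contributes $\frac{N^\xi \eta_{\mathrm{f}}}{N(\widehat{\gamma}_{r,i}-\widehat{\gamma}_{r,j})^2}$. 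The second bracket is a \emph{discretization error} controlled by symmetrizing around $\widehat{z}_i$ and invoking the $\tfrac{1}{3}$-Hölder continuity of $\rho_{r,t}$ together with the derivative bound $\abs{\rho_{r,t}'}\lesssim\rho_{r,t}^{-1}(\rho_{r,t}+\Delta^{1/3})^{-1}$ from Lemma~\ref{lemma holderhsh}, exactly in the spirit of the estimate of $G_1,G_2$ in the proof of Proposition~\ref{prop:F}. Finally one separately controls the boundary contribution from the endpoints of $\mathcal{I}$, which is $\lesssim 1/N$ times inverse quantile distance.

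The main obstacle is that near the cusp $\rho_{r,t}$ vanishes, so the derivative bound blows up and a uniform treatment is impossible. This forces splitting the analysis into the three regimes dictated by \eqref{333}--\eqref{555} (bulk $\abs{i}>i_*/2$, intermediate $N^{\omega_A}<\abs{i}\le i_*/2$, and near-cusp $\abs{i}\le N^{\omega_A}$) and further subdividing the near-cusp integral depending on whether $\widehat{z}_i$ sits deep in the gap ($\abs{i}\le N\Delta_{r,t}^{4/3}$, edge regime) or outside it (cusp regime); in each sub-regime one uses the corresponding local scaling from Lemma~\ref{lm:xyflow} and \eqref{eqs fluc scale}. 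The parameters are arranged $\omega_1\ll\omega_\ell\ll\omega_A\ll 1$ precisely so that the short-range cutoff $\ell=N^{\omega_\ell}$ is large enough to tame the singular discretization yet small enough to keep finite speed of propagation. Each sub-regime then produces one of the five terms in the right-hand side of \eqref{shortestimate1a}, and multiplying the resulting $\norm{F}_\infty$ by $t_1=N^{-1/2+\omega_1}$ yields the claimed bound.
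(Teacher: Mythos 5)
Your overall architecture — subtract the two SDEs so the Brownian terms cancel, run Duhamel with a positivity-preserving short-range generator, and reduce everything to a pointwise bound on the consistency error split into a rigidity part and a discretization part over the three index regimes — is indeed the paper's strategy. But there is a genuine gap in the reduction step. The quantity you call the forcing $F_i$ contains the integral $\int_{\mathcal{I}^c}\rho_{r,t}(E+\mathfrak{e}_{r,t}^+)(\widehat z_i-E)^{-1}\diff E$ evaluated at $\widehat z_i$, and your first bracket compares $\frac{1}{N}\sum\frac{1}{\widetilde z_i-\widetilde z_j}$ with a sum anchored at $\widehat z_i$; both therefore depend on the unknown difference $u_i=\widetilde z_i-\widehat z_i$ itself, and the hypothesis \eqref{new:rzx} controls only $\widetilde z$, not $\widehat z$. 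So the step ``Duhamel gives $\norm{u}_\infty\le t_1\sup\norm{F}_\infty$'' is circular as written. The paper resolves this by splitting off the linear-in-$u$ part of the integral term as a multiplication operator $\mathcal{V}_1$ (see \eqref{V1b}) whose nonpositivity keeps the semigroup an $\ell^p$-contraction, and the nonpositivity itself (together with every denominator bound involving $\widehat z_i$) is only guaranteed up to a stopping time $T$ at which $\abs{\widehat z_i-\widetilde z_i}$ reaches half the length of the short-range window, cf.~\eqref{bigtimeb}; one then shows the resulting bound is far below that threshold and concludes $T=t_*$ by path continuity. Some version of this bootstrap is unavoidable and is absent from your outline.

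A second, more substantive error: it is not true that at $\alpha=1$ the short-range equation uses ``the corresponding deterministic density.'' In the innermost regime $\abs{i}\le N^{\omega_A}$ the definition \eqref{444} always uses $\rho_{y,t}$ and $\mathfrak{e}_{y,t}^+$, even for the $x$-process. Consequently the error term necessarily contains the mismatch between $\rho_{x,t}$ (shifted by $\mathfrak{e}_{x,t}^+$) and $\rho_{y,t}$ (shifted by $\mathfrak{e}_{y,t}^+$) and between the corresponding Stieltjes transforms; these are estimated via \eqref{rho rho gap} and \eqref{Re m gap} and are precisely the source of the third, fourth and fifth terms on the right-hand side of \eqref{shortestimate1a}, which your scheme cannot produce since you assume the densities coincide. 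Relatedly, in the regime $N^{\omega_A}<\abs{i}\le i_*/2$ the comparison of the far-field sum over $\abs{j}\ge\frac{3}{4}i_*$ with the integral outside $\mathcal{J}_z(t)$ is not a quantile-by-quantile discretization estimate but requires regularizing both at an intermediate scale $\eta_1$ and comparing empirical and deterministic Stieltjes transforms (the decomposition \eqref{B121b}), which yields the $N^{\omega_1}/N^{1/8}$ term. Your ``boundary contribution from the endpoints of $\mathcal{I}$'' does not cover this.
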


In particular,~\eqref{shortestimate1a} implies that there exists a small fixed universal constant $c>0$ such that
 \begin{equation}
\label{goodsl}
\sup_{1\le \abs{i}\le N} \sup_{0\le t\le t_1}\abs{\widetilde{z}_i(t,\alpha)-\widehat{z}_i(t,\alpha)}\lesssim N^{-\frac{3}{4}-c}, \qquad \alpha=0,1
\end{equation} with very high probability.

\begin{remark}\label{3power}
Note the denominator in the first error term in~\eqref{shortestimate1a}: the factor $N^{3\om_\ell}$ is better than $N^{2\om_\ell}$ in
Lemma 3.7 in~\cite{1712.03881}, this
is because of the natural  cusp scaling. The fact that this power is at least $N^{(1+\epsilon)\om_\ell}$ was essential
in~\cite{1712.03881} since this allowed to transfer the optimal rigidity from $\wt z$ to the  $\wh z$ process for all $\alpha\in [0,1]$.
Optimal rigidity for $\wh z$ is essential   (i) for the heat kernel bound for the propagator of $\mathcal{L}$, see~\eqref{weq}--\eqref{11},
 and (ii) for a good  $\ell^p$-norm for the initial condition in~\eqref{v0norm}.  
With our approach, however, this power in~\eqref{shortestimate1a} is not critical since we  have already obtained an even better, $i$-dependent rigidity for
the $\wh z$ process for any $\alpha$ by using maximum principle, see Proposition~\ref{G33}. 
We still need~\eqref{shortestimate1a} for the $x$ and $y$ processes (i.e.~only for $\alpha=0,1$), but only with  a precision
below the rigidity scale, therefore the denominator in the first term has only to beat $N^{\frac{7}{6}\om_1+\xi}$.
\end{remark}

\subsection{Differentiation.}
Next, we consider the $\alpha$-derivative of the process $\widehat{z}(t,\alpha)$. Let 
\[
 u_i(t)=u_i(t, \alpha)\defeq \partial_\alpha \widehat{z}_i(t,\alpha),  \qquad 1\le \abs{i}\le N, 
 \]
 then $u$ is a solution of the equation
  \begin{equation}
\label{eq1}
\partial_tu=\mathcal{L}u+\zeta^{(0)},
\end{equation} where $\zeta^{(0)}$, defined by~\eqref{AAAAA}-\eqref{AAA}, is an error term that is non zero only for $\abs{i}> N^{\omega_A}$ and such that $\abs{\zeta^{(0)}_i}\lesssim N^C$, for some large constant $C>0$ with very high probability, by~\eqref{zeta}, and the operator $\mathcal{L}=\mathcal{B}+\mathcal{V}$ acting on $\mathbb{R}^{2N}$ is defined by~\eqref{11}-\eqref{V222rig1}.

In the following with $\mathcal{U}^\mathcal{L}$ we denote the semigroup associated to~\eqref{eq1}, i.e.
by Duhamel principle 
\[
u(t)=\mathcal{U}^\mathcal{L}(0,t)u(0)+\int_0^t\mathcal{U}^\mathcal{L}(s,t)\zeta^{(0)}(s)\,\diff s
\]
 and $\mathcal{U}^\mathcal{L}(s,s)=\mbox{Id}$ for all $0\le s\le t$. 
 Furthermore, for each $a,b$ such that $\abs{a},\abs{b}\le N$, with $\mathcal{U}^\mathcal{L}_{ab}$ 
 we denote the entries of $\mathcal{U}^\mathcal{L}$, which can be either seen as the solution of the equation~\eqref{eq1} 
 with initial condition $u_a(0)=\delta_{ab}$.

By Proposition~\ref{prop:option2} and Lemma~\ref{seb111}, for any fixed $\alpha\in [0,1]$, it follows that   
 
\begin{equation}
\label{RZH1}
\sup_{0\le t\le t_*}\abs{\widehat{z}_i(t,\alpha)-\overline{\gamma}_i(t)}\lesssim \frac{N^{C\omega_1}}{N^\frac{1}{2}},\qquad 1\le \abs{i}\le  N,
\end{equation}  and 
 \begin{equation}
\label{RZH}
\sup_{0\le t\le t_*}\abs{\widehat{z}_i(t,\alpha)-\overline{\gamma}_i(t)}\lesssim \frac{N^{C\omega_1}}{N^\frac{3}{4}},\qquad 1\le \abs{i}\le  i_*,
\end{equation} with very high probability. Then, using~\eqref{RZH}, as a consequence of Lemma~\ref{fsl} we have the following: \begin{lemma}
\label{fsl2}
There exists a constant $C>0$ such that for any $0<\delta< C\omega_\ell$, if $1\le \abs{a}\le \frac{1}{2} N^{4\omega_\ell+\delta}$ and $\abs{b}\ge N^{4\omega_\ell+\delta}$, then \begin{equation}
\label{finitespeed3}
\sup_{0\le s\le t\le t_*} \mathcal{U}^\mathcal{L}_{ab}(s,t)+\mathcal{U}^\mathcal{L}_{ba}(s,t)\le N^{-D}
\end{equation} for any $D>0$ with very high probability.
\end{lemma}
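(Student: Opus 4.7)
The plan is to derive~\eqref{finitespeed3} as a direct application of Lemma~\ref{fsl}, which furnishes a finite speed of propagation estimate for the semigroup $\mathcal{U}^{\mathcal{L}}$ under an appropriate rigidity hypothesis on the trajectories $\widehat{z}_i(t,\alpha)$. The required input is already available, for every fixed $\alpha\in[0,1]$: the optimal-scale rigidity~\eqref{RZH} on $N^{-3/4+C\omega_1}$ for the cusp window $1\le\abs{i}\le i_\ast = N^{1/2+C_\ast\omega_1}$, together with the weaker but uniform bound~\eqref{RZH1} of scale $N^{-1/2+C\omega_1}$ for all indices. Both scales are much smaller than the minimal spacing between consecutive quantiles $\overline{\gamma}_i(t)$ set by the short-range cutoff~\eqref{shortrange}.

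First I would verify that the hypotheses of Lemma~\ref{fsl} are satisfied with parameters that separate the two index sets $\{1\le\abs{a}\le\frac{1}{2}N^{4\omega_\ell+\delta}\}$ and $\{\abs{b}\ge N^{4\omega_\ell+\delta}\}$. In this regime the short-range structure~\eqref{shortrange} couples only indices with
\[
\abs{i-j}\lesssim \ell\bigl(10\ell^3+\abs{i}^{3/4}+\abs{j}^{3/4}\bigr)\lesssim N^{4\omega_\ell+\tfrac{3}{4}\delta},
\]
which is smaller by a factor $N^{\delta/4}$ than the index gap $\frac{1}{2}N^{4\omega_\ell+\delta}$. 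Choosing the constant $C>0$ in the statement so that $(4+C)\omega_\ell<\omega_A$ (matching the convention used in Proposition~\ref{G3}), the rigidity~\eqref{RZH} ensures that the actual spatial separation between the particles indexed by the two sets remains comparable to the separation of the corresponding quantiles, so that the exponential-weight / moment argument underlying Lemma~\ref{fsl} can be run, in complete analogy with the proof of Lemma~\ref{FSP} deferred to Appendix~\ref{FSPS}.

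The main technical difficulty I anticipate is controlling the multiplication part $\mathcal{V}$ from~\eqref{V222rig1}, which is not bounded \emph{a priori} and hence could in principle destroy finite speed of propagation. However, the distance between $\widehat{z}_i(t,\alpha)$ and the endpoints of the integration domain $\mathcal{I}_{z,i}(t)^c\cap\mathcal{J}_z(t)$ is, by~\eqref{RZH1} combined with~\eqref{gamma difference gap}, at least of order the spacing $\overline{\gamma}_{j_\pm(i)}(t)-\overline{\gamma}_i(t)$, so that
\[
\abs{\mathcal{V}_i}\lesssim N^{C\omega_1}, \qquad 1\le\abs{i}\le N, \quad 0\le t\le t_\ast,
\]
using the $1/3$-H\"older control of $\overline{\rho}_t$ from Lemma~\ref{lemma holderhsh}. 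Since $t_\ast N^{C\omega_1}=N^{-1/2+(C+1)\omega_1}\ll 1$, the multiplicative factor generated by $\mathcal{V}$ in the Feynman-Kac (or Duhamel) representation of $\mathcal{U}^{\mathcal{L}}$ costs only a harmless $N^{O(\omega_1)}$, which is absorbed into the $N^{-D}$ bound for any $D>0$. Combining these ingredients with Lemma~\ref{fsl} yields~\eqref{finitespeed3} and completes the proof.
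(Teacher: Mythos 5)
Your proposal is correct and follows the same route as the paper: the paper states Lemma~\ref{fsl2} precisely as an immediate consequence of Lemma~\ref{fsl} (with $L=N^{4\omega_\ell}$, $\delta'=\delta$) once the rigidity~\eqref{RZH} verifies the hypothesis~\eqref{rrrrr} of Lemma~\ref{fsl1}. One quantitative slip worth flagging: the bound $\abs{\mathcal{V}_i}\lesssim N^{C\omega_1}$ is too optimistic. The excluded window $\mathcal{I}_{y,i}(t)$ has length $\sim N^{-3/4+3\omega_\ell}$, and with $\rho_{y,t}(E+\ed_{y,t}^+)\lesssim\abs{E}^{1/3}$ one gets (as in Lemma~\ref{gronwall}) only $-\mathcal{V}_i\lesssim N^{1/2}L^{-1/2}=N^{1/2-2\omega_\ell}$; your conclusion survives because $t_\ast N^{1/2-2\omega_\ell}=N^{\omega_1-2\omega_\ell}\ll1$ by $\omega_1\ll\omega_\ell$. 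Note also that in the exponential-moment part of the argument the sign $\mathcal{V}_i\le0$ lets one simply drop that term; the quantitative bound on $\mathcal{V}$ is needed only for the near-conservation of mass that upgrades the fixed-$s$ estimate of Lemma~\ref{fsl1} to the $s$-uniform statement of Lemma~\ref{fsl}.
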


Furthermore, by Proposition~\ref{G33}, for any fixed $\alpha\in [0,1]$, we have that \begin{equation}
\label{ABC}
\sup_{0\le t\le t_*}\abs{\widehat{z}_i(t,\alpha)-\overline{\gamma}_i(t)}\lesssim \frac{N^\xi N^\frac{\omega_1}{6}}{N^\frac{3}{4}\abs{i}^\frac{1}{4}},
\qquad 1\le \abs{i}\le N^{4\omega_\ell+\delta_1},
\end{equation} for some small fixed $\delta_1>0$ and for any $\xi>0$ with very high probability.

Next, we introduce the $\ell^p$ norms \[ \lVert u\rVert_p\defeq \left(\sum_i\abs{u_i}^p\right)^\frac{1}{p}, \,\,\,\,\, \lVert u\rVert_\infty\defeq \max_i\abs{u_i}.\] Following a similar scheme to~\cite{MR3253704},~\cite{MR3372074} with some minor modifications we will prove the following Sobolev-type inequalities in Appendix~\ref{STI}.

\begin{lemma}\label{lm:Sobolev} For any small $\eta>0$ there exists $c_\eta>0$ such that
\begin{equation}
\label{Sobolev}
\sum_{i\ne j\in\mathbb{Z}_+}\frac{(u_i-u_j)^2}{\abs[1]{i^\frac{3}{4}-j^\frac{3}{4}}^{2-\eta}}\ge c_\eta \left(\sum_{i\ge1}\abs{u_i}^p\right)^\frac{2}{p},\,\,\, \sum_{i\ne j\in\mathbb{Z}_-}\frac{(u_i-u_j)^2}{\abs[1]{\abs{i}^\frac{3}{4}-\abs{j}^\frac{3}{4}}^{2-\eta}}\ge c_\eta \left(\sum_{i\le -1}\abs{u_i}^p\right)^\frac{2}{p}
\end{equation}
hold, with $p=\frac{8}{2+3\eta}$, for any function  $\norm{ u}_p<\infty$.
\end{lemma}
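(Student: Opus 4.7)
By the symmetry $i\leftrightarrow -i$ only the first inequality on $\Z_+$ needs attention; the $\Z_-$ case is identical. My plan is to adapt the dyadic Sobolev-type argument of~\cite{MR3253704,MR3372074} to the cusp scaling $i\mapsto i^{3/4}$. The guiding heuristic is that setting $w(s)\defeq u_i$ with $s=i^{3/4}$ turns the left-hand side into a Riemann sum for the weighted fractional Sobolev seminorm $\int\int(w(s)-w(t))^2\abs{s-t}^{-(2-\eta)}s^{1/3}t^{1/3}\diff s\diff t$ of order $\sigma=(1-\eta)/2$, whereas the $\ell^p$ norm on the right becomes $\int\abs{w(s)}^p s^{1/3}\diff s$. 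A scaling check $w\mapsto w(\lambda\,\cdot)$ uniquely identifies $p=8/(2+3\eta)$ as the critical Sobolev exponent for this weighted problem.

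I would first decompose $\Z_+$ into dyadic blocks $I_k\defeq\{i:2^k\le i<2^{k+1}\}$. On $I_k$ one has $\abs{i^{3/4}-j^{3/4}}\asymp 2^{-k/4}\abs{i-j}$, so the within-block contribution equals, up to constants, $2^{k(2-\eta)/4}$ times the standard discrete $H^{(1-\eta)/2}$-seminorm of $u|_{I_k}$. I would then apply the one-dimensional discrete fractional Sobolev-Poincar\'e inequality (the Gagliardo-Slobodeckij embedding $H^{(1-\eta)/2}\hookrightarrow L^{2/\eta}$ applied to a piecewise-constant interpolant) to obtain a lower bound of $\lVert u-\bar u_k\rVert_{\ell^{2/\eta}(I_k)}^2$, where $\bar u_k$ is the block average, and use H\"older's inequality to upgrade this to $\lVert u-\bar u_k\rVert_{\ell^p(I_k)}^2$ at the cost of a factor $2^{-2k(1/p-\eta/2)}$. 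The total exponent of $2^k$ reads $(2-\eta)/4+\eta-2/p=1/2+3\eta/4-2/p$, which vanishes precisely at $p=8/(2+3\eta)$, giving
\[
\sum_k\lVert u-\bar u_k\rVert_{\ell^p(I_k)}^2\lesssim\mathrm{LHS}.
\]

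For the block means $\bar u_k$, I would exploit cross-block pairs $I_k\times I_{k+1}$: the trivial bound $\sum_{i\in I_k,j\in I_{k+1}}(u_i-u_j)^2\ge\abs{I_k}\abs{I_{k+1}}(\bar u_k-\bar u_{k+1})^2$ together with $\abs{i^{3/4}-j^{3/4}}\asymp 2^{3(k+1)/4}$ there produces, again via $2/p=1/2+3\eta/4$, the estimate $\sum_k 2^{2k/p}(\bar u_k-\bar u_{k+1})^2\lesssim\mathrm{LHS}$. A geometric-weighted discrete Hardy inequality (Cauchy-Schwarz applied to the telescoping identity $\bar u_k=-\sum_{\ell\ge k}(\bar u_{\ell+1}-\bar u_\ell)$ with weight $2^{-\epsilon\ell}$ for any fixed $0<\epsilon<2/p$) then promotes this to $\sum_k 2^{2k/p}\abs{\bar u_k}^2\lesssim\mathrm{LHS}$. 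To assemble, I would split $\abs{u_i}^p\le C_p(\abs{u_i-\bar u_k}^p+\abs{\bar u_k}^p)$ on $I_k$ and use the subadditivities $(\sum c_k)^{2/p}\le\sum c_k^{2/p}$ and $(A^p+B^p)^{2/p}\le A^2+B^2$ (both valid for $p\ge 2$, i.e.~$\eta\le 2/3$, which covers the small-$\eta$ regime of interest) to conclude $\lVert u\rVert_{\ell^p(\Z_+)}^2\lesssim\mathrm{LHS}$.

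The hard part, and the place where the cusp scaling enters non-trivially compared to the edge analysis in~\cite{MR3253704,MR3372074}, is the \emph{exact} simultaneous balance of scaling exponents at $p=8/(2+3\eta)$ in both the within-block Sobolev-H\"older chain and the cross-block/Hardy chain; no other value of $p$ makes both chains dimensionless. Identifying this critical exponent for the cusp-weighted kernel is the conceptual content, and once this is pinned down the remaining steps are bookkeeping.
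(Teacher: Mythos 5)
Your argument is correct in outline, and all the exponent bookkeeping checks out (within-block: $(2-\eta)/4-2/p+\eta=0$; cross-block: $2-3(2-\eta)/4=2/p$ at $p=8/(2+3\eta)$), but it follows a genuinely different route from the paper. The paper proves the \emph{continuous} inequality first: it uses $y^{3/4}-x^{3/4}\le C(y-x)(xy)^{-1/8}$ to reduce the cusp kernel to the translation-invariant kernel $|x-y|^{-(2-\eta)}$ with the power weight $(xy)^{q}$, $q=\tfrac14-\tfrac{\eta}{8}$, extends symmetrically to $\R$, rewrites the seminorm via the fractional Laplacian $\abs{p}^{1-\eta}$, and then invokes the weighted Hardy--Littlewood--Sobolev (Stein--Weiss) inequality; the discrete statement is then transferred by piecewise-\emph{linear} interpolation, which is also where the restriction $p\ge 2$ (i.e.\ $\eta\le 2/3$) enters. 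Your proof instead stays entirely discrete: dyadic blocks, the unweighted critical fractional Sobolev--Poincar\'e inequality on each block plus H\"older, adjacent-block mean differences, and a geometric-weight discrete Hardy inequality for the means, assembled with the subadditivity inequalities valid for $p\ge2$. What the paper's route buys is brevity once the Stein--Weiss inequality is cited — the weight and the exponent come out of a single known sharp inequality; what your route buys is self-containedness (no weighted HLS, only the standard critical embedding $H^{(1-\eta)/2}\hookrightarrow L^{2/\eta}$ on an interval) and a transparent view of where the cusp exponent $p=8/(2+3\eta)$ is forced. To make your sketch complete you should state explicitly that the block Sobolev--Poincar\'e constant is uniform in the block size (this is exactly the scale invariance of the critical exponent), that piecewise-constant interpolation is admissible because $\sigma=(1-\eta)/2<1/2$, and that $\bar u_k\to0$ (which follows from $\norm{u}_p<\infty$) so the telescoping identity behind the Hardy step is legitimate.
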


Using the Sobolev inequality in~\eqref{Sobolev} and the finite speed estimate of Lemma~\ref{fsl2}, in Appendix~\ref{hke} we prove the energy estimates for the heat kernel in Lemma~\ref{ee} via a Nash-type argument.

\begin{lemma}
\label{ee}
Assume~\eqref{RZH1},~\eqref{RZH} and~\eqref{ABC}. Let $0<\delta_4<\frac{\delta_1}{10}$ and $w_0\in\mathbb{R}^{2N}$ such that $\abs{(w_0)_i}\le N^{-100}\lVert w_0\rVert_1,$ for $\abs{i}\ge \ell^4N^{\delta_4}$. Then, for any small $\eta>0$ there exists a constant $C>0$ independent of $\eta$ and a constant $c_\eta$ such that for all $0\le s\le t\le t_*$ \begin{equation}
\label{Ee1}
\lVert\mathcal{U}^\mathcal{L}(s,t)w_0\rVert_2\le \left(\frac{N^{C\eta+\frac{\omega_1}{3}}}{c_\eta N^\frac{1}{2} (t-s)}\right)^{1-3\eta}\lVert w_0\rVert_1,
\end{equation} and  \begin{equation}
\label{Ee2}
\lVert\mathcal{U}^\mathcal{L}(0,t)w_0\rVert_\infty\le \left(\frac{N^{C\eta+\frac{\omega_1}{3}}}{c_\eta N^\frac{1}{2}t}\right)^{\frac{2(1-3\eta)}{p}}\lVert w_0\rVert_p,
\end{equation} for each $p\ge 1$. 
\end{lemma}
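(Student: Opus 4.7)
The plan is a Nash-type iteration adapted to the cusp scaling. Set $u(t)\defeq \mathcal{U}^\mathcal{L}(s,t)w_0$. Since $\mathcal{B}$ is a symmetric jump generator with non-positive spectrum and $\mathcal{V}\le 0$, the operator $\mathcal{L}=\mathcal{B}+\mathcal{V}$ is self-adjoint and negative semi-definite, and it generates a sub-Markov semigroup; in particular $\mathcal{U}^\mathcal{L}$ is a contraction on $\ell^p$ for every $1\le p\le \infty$. The $\ell^2$ energy identity then gives
\[
\partial_t \norm{u(t)}_2^2 \;=\; 2\langle u,\mathcal{L} u\rangle \;\le\; -\frac{1}{N}\sum_{(i,j)\in\mathcal{A}}\frac{(u_i(t)-u_j(t))^2}{(\widehat z_i(t,\alpha)-\widehat z_j(t,\alpha))^2}\defeq -D(u(t)).
\]

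The next, and technically central, step converts $D(u)$ into an $\ell^p$ norm of $u$. By hypothesis $w_0$ is essentially supported in $\abs{i}\le \ell^4 N^{\delta_4}$; with $\delta_4<\delta_1/10$, the finite speed of propagation in Lemma~\ref{fsl2} (applicable thanks to~\eqref{ABC}) ensures that $u(t)$ remains concentrated on $\abs{i}\le \ell^4 N^{\delta_4}$ up to an additive $N^{-D}$ tail, uniformly for $0\le t\le t_*$. On this range the $i$-dependent rigidity of Proposition~\ref{G33} yields
\[
\abs{\widehat z_i(t,\alpha)-\widehat z_j(t,\alpha)} \;\lesssim\; N^{\omega_1/6}\,\abs{\overline\gamma_i(t)-\overline\gamma_j(t)} \;\lesssim\; \frac{N^{\omega_1/6}}{N^{3/4}}\bigl|\,\abs{i}^{3/4}-\abs{j}^{3/4}\,\bigr|,
\]
so that after plugging into $D(u)$, splitting into sums over indices of the same sign, and applying the Sobolev inequality Lemma~\ref{lm:Sobolev} with $p=8/(2+3\eta)$, one obtains
\[
\partial_t \norm{u(t)}_2^2 \;\le\; -c_\eta\,\frac{N^{1/2}}{N^{\omega_1/3+C\eta}}\,\norm{u(t)}_p^2.
\]
The discrepancy between the exponent $2$ in our sum and the exponent $2-\eta$ in Lemma~\ref{lm:Sobolev} costs only a factor $N^{C\eta}$, since on the support all quantities $\abs{i}^{3/4}-\abs{j}^{3/4}$ are bounded by a small power of $N$.

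Now we close the ODE by Hölder interpolation. As $p>2$ for small $\eta$, we have $\norm{u}_2\le \norm{u}_1^\theta \norm{u}_p^{1-\theta}$ with $\theta=(p-2)/(2(p-1))$; combined with the $\ell^1$ contraction $\norm{u(t)}_1\le \norm{w_0}_1$ this gives $\norm{u}_p^2 \ge \norm{u}_2^{2(p-1)/p}/\norm{w_0}_1^{2(p-2)/p}$. The resulting inequality for $y\defeq \norm{u}_2^2$ is $y'\le -c_\eta N^{1/2-\omega_1/3-C\eta}y^{1+2/(p-2)\cdot(p-2)/p\cdot\ldots}$, which integrates (after an elementary separation of variables) to
\[
\norm{u(t)}_2 \;\le\; \left(\frac{N^{C\eta+\omega_1/3}}{c_\eta N^{1/2}(t-s)}\right)^{\!p/(2(p-2))}\norm{w_0}_1.
\]
Since $p/(2(p-2))=2/(2-3\eta)\ge 1-3\eta$ and the base is $\le 1$ in the regime of interest, the exponent can be relaxed to $1-3\eta$, yielding~\eqref{Ee1}. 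The bound~\eqref{Ee2} is deduced from~\eqref{Ee1} by the semigroup property together with duality: self-adjointness of $\mathcal{L}$ turns the $\ell^1\to\ell^2$ bound into an $\ell^2\to\ell^\infty$ bound of the same size, composing through $\mathcal{U}^\mathcal{L}(0,t/2)\circ\mathcal{U}^\mathcal{L}(t/2,t)$ gives $\ell^1\to\ell^\infty$ with doubled exponent, and Riesz--Thorin interpolation with the trivial $\ell^\infty\to\ell^\infty$ contraction produces the stated $\ell^p\to\ell^\infty$ bound with exponent $2(1-3\eta)/p$.

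The principal obstacle is the Sobolev step, since Lemma~\ref{lm:Sobolev} is formulated for the unrestricted same-sign sum, whereas $D(u)$ only sums over the short-range set $\mathcal{A}$ from~\eqref{shortrange}. One must therefore argue that pairs $(i,j)$ lying inside the support of $u$ but outside $\mathcal{A}$ contribute only a subleading share of the Sobolev sum. This relies on the geometry of $\mathcal{A}$: any excluded pair has $\abs{i-j}\gg \ell(\abs{i}^{3/4}+\abs{j}^{3/4})$, which on the support $\abs{i},\abs{j}\le \ell^4 N^{\delta_4}$ forces $\bigl|\abs{i}^{3/4}-\abs{j}^{3/4}\bigr|$ to be large enough that the corresponding portion of the Sobolev sum can be reabsorbed into the $N^{C\eta}$ loss. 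Together with the careful bookkeeping of the various $\eta$-dependent constants $c_\eta$, this is what drives the final exponent $1-3\eta$.
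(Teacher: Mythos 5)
You have the right overall skeleton (energy identity, Sobolev inequality of Lemma~\ref{lm:Sobolev}, Nash interpolation, then duality and the semigroup splitting for~\eqref{Ee2}), but the resolution you propose for what you yourself call the principal obstacle — the mismatch between the full same-sign Sobolev sum and the short-range Dirichlet form — is not correct, and this is exactly the step where the paper's proof does something you have omitted. Your claim is that for pairs $(i,j)\notin\mathcal{A}$ inside the effective support $\abs{i},\abs{j}\le \ell^4N^{\delta_4}$ the quantity $\abs[0]{\abs{i}^{3/4}-\abs{j}^{3/4}}$ is so large that this portion of the Sobolev sum ``can be reabsorbed into the $N^{C\eta}$ loss''. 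That mechanism is not coherent: after the splitting as in~\eqref{Z1}, the $\mathcal{A}^c$ contribution has the form $\sum_i w_i^2\, S_i$ with $S_i\defeq\sum_j^{\mathcal{A}^c,(i)}\abs[0]{\abs{i}^{3/4}-\abs{j}^{3/4}}^{-(2-\eta)}$; the exclusion $\abs{i-j}\gtrsim \ell(\ell^3+\abs{i}^{3/4}+\abs{j}^{3/4})$ only gives $S_i\lesssim \ell^{-5/3+C\eta}$, so this term is of size $\ell^{-5/3+C\eta}\norm{w}_2^2$. It is \emph{not} a small multiple of $\norm{w}_p^2$: since $p=8/(2+3\eta)>2$ and the effective support has $\sim\ell^4N^{\delta_4}$ sites, one only has $\norm{w}_2^2\lesssim(\ell^4N^{\delta_4})^{(2-3\eta)/4}\norm{w}_p^2$, so the $\mathcal{A}^c$ part can exceed $\norm{w}_p^2$ by a factor of order $\ell^{1/3}N^{\delta_4/2}$ and cannot be absorbed into the left-hand side, nor is it a ``loss'' multiplying the Dirichlet form. (One could in principle rescue your structure by keeping this term as $\epsilon\norm{w}_2^2$ with $\epsilon\sim N^{1/2}\ell^{-5/3}$ and running a Gr\"onwall correction, using $\epsilon\, t_*\ll 1$ because $\omega_1\ll\omega_\ell$, but you neither state nor verify this.)

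The paper's proof handles the long-range pairs through the confining potential $\mathcal{V}$ — which is precisely why $\mathcal{V}$ is built into the short-range generator in~\eqref{V222rig1} — and your argument never uses $\mathcal{V}$ beyond its sign. Concretely, for $1\le\abs{i}\le\ell^4N^{\delta_3}$ one compares, as in~\eqref{ADFR}, $\sum_j^{\mathcal{A}^c,(i)}\abs[0]{\abs{i}^{3/4}-\abs{j}^{3/4}}^{-(2-\eta)}\lesssim N^{-3/2+\omega_1/3+C\eta}\sum_j^{\mathcal{A}^c,(i)}(\widehat z_i-\widehat z_j)^{-2}$ using the rigidity~\eqref{ABC}, and then replaces the sum by the mean-field integral, $\frac1N\sum_j^{\mathcal{A}^c,(i)}(\widehat z_i-\widehat z_j)^{-2}\lesssim -\mathcal{V}_i$ as in~\eqref{concl}; this turns the $\mathcal{A}^c$ part of the Sobolev sum into $N^{-1/2+\omega_1/3+C\eta}(-\langle w,\mathcal{V}w\rangle)$, which combines with the short-range part $N^{-1/2+\omega_1/3+C\eta}(-\langle w,\mathcal{B}w\rangle)$ to produce the full Dirichlet form $-\langle w,\mathcal{L}w\rangle$ in the Nash inequality (see~\eqref{3}). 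For indices beyond $\ell^4N^{\delta_2}$ one uses the finite speed estimate of Lemma~\ref{fsl2} together with the assumed smallness of $(w_0)_i$ there, as you do. A secondary, smaller omission: in the duality step for~\eqref{Ee2} the test vector $v$ does not satisfy the support hypothesis required to apply~\eqref{Ee1} to $(\mathcal{U}^\mathcal{L})^T v$, so one must first insert a cutoff $\chi$ at scale $\ell^4N^{\delta_5}$ and control $(1-\chi)v$ by finite speed, and likewise check that $\mathcal{U}^\mathcal{L}(0,t/2)w_0$ inherits the support condition before composing; these points are needed to make your last paragraph rigorous.
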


Let $0<\delta_v<\frac{\delta_4}{2}$. Define $v_i= v_i(t,\alpha)$ to be the solution of \begin{equation}
\label{v}
\partial_tv=\mathcal{L}v,\,\,\,\,\,\,\,\,v_i(0,\alpha)=u_i(0,\alpha)\bm{1}_{\{\abs{i}\le N^{4\omega_\ell+\delta_v}\}}.
\end{equation}

Then, by Lemma~\ref{fsl2} the next lemma follows. \begin{lemma}
\label{uv}
Let $u$ be the solution of the equation in~\eqref{eq1} and $v$ defined by~\eqref{v}, then we have that \begin{equation}
\label{uveq}
\sup_{0\le t\le t_1}\sup_{\abs{i}\le \ell^4}\abs{u_i(t)-v_i(t)}\le N^{-100},
\end{equation} with very high probability.
\proof
By~\eqref{eq1} and~\eqref{v} have that \[u_i(t)-v_i(t)=\sum_{j=-N}^N\mathcal{U}^\mathcal{L}_{ij}(0,t)u_j(0)-\sum_{j=-N^{4\omega_\ell+\delta_v}}^{N^{4\omega_\ell+\delta_v}}\mathcal{U}^\mathcal{L}_{ij}(0,t)u_j(0)+\int_0^t \sum_{\abs{j}\ge N^{\omega_A}}\mathcal{U}^\mathcal{L}_{ij}(s,t)\zeta_j^{(0)}(s)\,\diff s.\] Then, using that $\zeta_i^{(0)}=0$ for $1\le \abs{i}\le N^{\omega_A}$ and~\eqref{zeta}, the bound in~\eqref{uveq} follows by Lemma~\ref{fsl2}.
\endproof
\end{lemma}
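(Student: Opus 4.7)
The plan is to combine Duhamel's principle with the finite speed of propagation estimate from Lemma~\ref{fsl2} to show that the difference $u-v$ is driven entirely by contributions from indices $|j|$ well outside the regime $|i|\le \ell^4$ we care about, and then use the crude polynomial bounds on $u(0)$ and $\zeta^{(0)}$ to conclude.

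First I would apply Duhamel's formula to both $u$ and $v$. Since $v$ solves the homogeneous equation $\partial_t v = \mathcal{L}v$ with truncated initial data, whereas $u$ solves the inhomogeneous equation~\eqref{eq1}, we get
\[
u_i(t) - v_i(t) = \sum_{|j| > N^{4\omega_\ell+\delta_v}} \mathcal{U}^{\mathcal{L}}_{ij}(0,t)\, u_j(0) + \int_0^t \sum_{|j| > N^{\omega_A}} \mathcal{U}^{\mathcal{L}}_{ij}(s,t)\, \zeta^{(0)}_j(s)\, \diff s,
\]
where in the first sum I used that $u_j(0) - v_j(0)$ vanishes for $|j| \le N^{4\omega_\ell + \delta_v}$, and in the second sum I used that $\zeta^{(0)}_j \equiv 0$ for $1\le |j| \le N^{\omega_A}$ (noting that $\omega_A \gg \omega_\ell$ in our hierarchy of small parameters, so $N^{\omega_A} \gg N^{4\omega_\ell+\delta_v}$).

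Next, since $|i| \le \ell^4 = N^{4\omega_\ell} \le \tfrac{1}{2} N^{4\omega_\ell + \delta_v}$ and the constraint $\delta_v < \delta_4/2 < \delta_1/20 < C\omega_\ell$ is satisfied, Lemma~\ref{fsl2} gives
\[
\sup_{0\le s\le t\le t_*} \mathcal{U}^{\mathcal{L}}_{ij}(s,t) \le N^{-D}
\]
for any $D>0$, uniformly in $|j| \ge N^{4\omega_\ell+\delta_v}$, with very high probability. For the initial data, the crude bound $|u_j(0)| = |\widetilde{x}_j(0) - \widetilde{y}_j(0)| \lesssim N^{201}$ follows from~\eqref{idxxx}--\eqref{idyyy}; for the forcing, Lemma~\ref{lm:zeta} provides $|\zeta^{(0)}_j(s)| \le N^C$ with very high probability uniformly for $s \in [0,t_*]$ and all $j$. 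Combining these with at most $2N$ summands, the $t \le t_* \le 1$ time integration, and taking $D$ large enough, yields
\[
|u_i(t) - v_i(t)| \le 2N \cdot N^{-D} \cdot N^{201} + t_* \cdot 2N \cdot N^{-D} \cdot N^C \le N^{-100}
\]
uniformly in $|i| \le \ell^4$ and $0 \le t \le t_1 \le t_*$, with very high probability.

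There is no real obstacle here: the proof is a direct consequence of finite speed of propagation, and the only point requiring care is verifying that the parameter hierarchy $\omega_1 \ll \omega_\ell \ll \omega_A \ll 1$ together with $\delta_v < C\omega_\ell$ indeed places us in the regime where Lemma~\ref{fsl2} applies. The strength of the finite speed estimate (arbitrary polynomial decay) easily absorbs the polynomial growth of both $u(0)$ and $\zeta^{(0)}$.
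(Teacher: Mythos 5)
Your proof is correct and follows essentially the same route as the paper: Duhamel's principle, cancellation of the initial data for $\abs{j}\le N^{4\omega_\ell+\delta_v}$, the fact that $\zeta^{(0)}$ vanishes for $1\le\abs{j}\le N^{\omega_A}$ together with the bound~\eqref{zeta}, and the finite speed of propagation estimate of Lemma~\ref{fsl2} to kill the far-away contributions. The only difference is that you spell out the crude polynomial bounds on $u_j(0)$ and the summation explicitly, which the paper leaves implicit.
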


\begin{proof}[Proof of Proposition~\ref{cut}.]
We consider only the $j=i_\lambda$ case. By Lemma~\ref{appr} and~\eqref{goodsl} we have that 
\[
\begin{split}
\abs{(\lambda_{i_\lambda}(t_1)-\mathfrak{e}^{+}_{\lambda,t_1})-(\mu_{i_\mu}(t_1)-\mathfrak{e}^{+}_{\mu, t_1})}
&\le \abs{\widetilde{x}_1(t_1)-\widehat{x}_1(t_1)}+\abs{\widehat{x}_1(t_1)-\widehat{y}_1(t_1)}+\abs{\widehat{y}_1(t_1)-\widetilde{y}_1(t_1)}\\
&\le \abs{\widehat{x}_1(t_1)-\widehat{y}_1(t_1)}+N^{-\frac{3}{4}-c}
\end{split}
\] 
with very high probability.

Since $\widehat{z}_i(t_1,1)=\widehat{x}_i(t_1)$ and $\widehat{z}_i(t_1,0)=\widehat{y}_i(t_1)$ for all $1\le \abs{i}\le N$, by the definition of $u_i(t,\alpha)$, it
 follows that \[\widehat{x}_1(t_1)-\widehat{y}_1(t_1)=\int_0^1u_1(t_1,\alpha)\,\diff \alpha.\]

Furthermore, by a high moment Markov inequality as in~\eqref{markov}-\eqref{markov1} and Lemma~\ref{uv}, we get 
\[\int_0^1\abs{ u_1(t_1,\alpha)}\, \diff \alpha\lesssim N^{-100}+
\int_0^1 \abs{ v_1(t_1,\alpha)}\, \diff\alpha.
\] 
Since $v_i(0)=u_i(0)\bm{1}_{\{\abs{i}\le  N^{4\omega_\ell+\delta_v}\}}$ and, by~\eqref{gamma difference gap} and~\eqref{rzx}, for $1\le \abs{i}\le N^{4\omega_\ell+\delta_v}$ we have that \[\begin{split}
\abs{u_i(0)}&\lesssim \abs{\widehat{x}_i(0)-\widehat{\gamma}_{x,i}(0)}+\abs{\widehat{y}_i(0)-\widehat{\gamma}_{y,i}(0)}+\abs{\widehat{\gamma}_{x,i}(0)-\widehat{\gamma}_{y,i}(0)} \\
&\lesssim \frac{N^\frac{\omega_1}{6}}{\abs{i}^\frac{1}{4}N^\frac{3}{4}}+\frac{\abs{i}^\frac{3}{4}N^\frac{\omega_1}{2}}{N^\frac{11}{12}}\lesssim \frac{N^\frac{\omega_1}{6}}{\abs{i}^\frac{1}{4}N^\frac{3}{4}},\end{split}\] we conclude that 
\begin{equation}\label{v0norm}
\lVert v(0)\rVert_5\lesssim \frac{N^{\frac{\omega_1}{6}}}{N^\frac{3}{4}}
\end{equation}
 with very high probability. Hence, reaclling that $t_1= N^{-1/2+\omega_1}$, by~\eqref{Ee2} and Markov's inequality again, we get \begin{equation}
\label{fefe}
\int_0^1\abs{v_1(t_1,\alpha)}\,\diff\alpha \le \sup_{\alpha\in [0,1]}
\lVert v(t_1,\alpha)\rVert_\infty \le \left(\frac{N^{C\eta+\omega_1/3}}{N^{1/2}t_1}\right)^\frac{2(1-3\eta)}{5}\lVert v(0)\rVert_5\lesssim \frac{N^{\frac{\omega_1}{6}+\frac{\eta}{5}(2C+3\omega_1-6\eta C)}}{N^\frac{3}{4}N^{\frac{4\omega_1}{15}}}=\frac{1}{N^\frac{3}{4}N^\frac{\omega_1}{20}}, 
\end{equation}  with very high probability, for $\eta$ small enough, say $\eta\le \omega_1(8C+12\omega_1)^{-1}$. 
Notice that the constant in front of the $\omega_1$ in the exponents play a crucial role: eventually the constant $\big( 1-\frac{1}{3}\big)\frac{2}{5} =\frac{4}{15}$
from the Nash estimate beats the constant $\frac{1}{6}$ from~\eqref{v0norm}.
This completes the proof of Proposition~\ref{cut}.
\end{proof}

\section{Case of \texorpdfstring{$t\ge t_*$}{t>=t*}: small minimum}
\label{UNMIN}

In this section we consider the case when the densities $\rho_{x,t}$, $\rho_{y,t}$, hence their interpolation $\overline{\rho}_t$ as well,
 have a small minimum,
 i.e.~$t_*\le t\le 2t_*$. We deal with the small minimum case in this separate section mainly for notational reasons:
  for $t_*\le t \le 2t_*$ the processes $x(t)$ and $y(t)$, and consequently the associated quantiles and densities, 
 are shifted by $\widetilde{\mathfrak{m}}_{r,t}$, for $r=x,y$, instead of $\mathfrak{e}_{r,t}^+$. 
 We recall that $\widetilde{\mathfrak{m}}_{r,t}$, defined in~\eqref{eq wt mi},
  denotes a close approximation of the actual local minimum $\mathfrak{m}_{r,t}$ near the physical cusp. 
 We chose to shift $x(t)$ and $y(t)$ by the tilde approximation of the minimum instead of the minimum itself for technical reasons, namely because
  the $t$-derivative of $\widetilde{\mathfrak{m}}_{r,t}$, $r=x,y$, satisfies the convenient relation in~\eqref{diff m m}.

As we explained at the beginning of Section~\ref{DBMS},
in order to prove universality, i.e.~Proposition~\ref{DBM prop} at time $t_1\ge t_*$, 
 it is enough to prove the following: 
 \begin{proposition}
\label{cutmin} For $t_1\ge t_*$,  we have, 
with very high probability,  that
 \begin{equation}
\label{cumin}
\abs{(\lambda_j(t_1)-\mathfrak{m}_{\lambda, t_1})-(\mu_{j+i_\mu-i_\lambda}(t_1)-\mathfrak{m}_{\mu, t_1})}\le N^{-\frac{3}{4}-c}
\end{equation} for some small constant $c>0$ and for any $j$ such that $\abs{j-i_\lambda}\le N^{\omega_1}$. Here $\mathfrak{m}_{\lambda, t_1}$ 
and $\mathfrak{m}_{\mu, t_1}$ are the local  minimum of $\rho_{\lambda, t_1}$ and $\rho_{\mu, t_1}$, respectively.
\end{proposition}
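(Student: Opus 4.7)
The plan is to run the three-phase DBM analysis of Sections~\ref{sec:rigid}--\ref{DBMS} with the reference point $\ov\ed_t^+$ replaced by the approximate minimum $\wt\mi_t$, using the parallel deterministic estimates from Section~\ref{sec scflow} that are already formulated in the small-minimum regime. First I would define shifted padded processes $\wt r_i(t)\defeq r_i(t)-\wt\mi_{r,t}$ for $r=x,y$. By the identity~\eqref{diff m m} these satisfy a DBM of the form~\eqref{paddingshiftDBM} with drift $\Re m_{r,t}(\wt\mi_{r,t}) + \landauO{t-t_\ast}$; the new $\landauO{N^{-1/2}}$ error is integrated over a time window of length $t_\ast$ and hence contributes at most $\landauO{N^{-1}}$, well below the rigidity scale. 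For the interpolating process I would replace $\Phi_\alpha(t)$ by
\[ \Phi_\alpha^{\min}(t)\defeq \alpha\Re m_{x,t}(\wt\mi_{x,t}) + (1-\alpha)\Re m_{y,t}(\wt\mi_{y,t}) - h^{\min}(t,\alpha), \]
with $h^{\min}$ defined as in~\eqref{h**def}--\eqref{hdef} but centred at $\wt\mi$, so that the trail of $\wt z(t,\alpha)$ becomes the interpolated semiquantile $\ov\gamma_i^*(t)=\alpha\widecheck\gamma_{x,i}^*(t)+(1-\alpha)\widecheck\gamma_{y,i}^*(t)$.

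Next I would re-run the rigidity chain of Section~\ref{sec:rigid} in this regime. The key deterministic ingredients, namely the $1/3$-H\"older regularity and the derivative bound~\eqref{rho' difference min} for $\ov\rho_t$, the Stieltjes transform stability estimates~\eqref{weakm min} and~\eqref{Re m min}, and the fluctuation scales~\eqref{fluc scale min}, are already available and only differ from the small-gap regime by the prefactor $\rho_{r,t}(\mi_{r,t})\sim\sqrt{t-t_\ast}$. Phase 1 (the crude rigidity and the forcing-term bound of Proposition~\ref{prop:F}) goes through once the cusp/edge case analysis used for $n(i)$ is replaced by the cusp/minimum dichotomy $\abs{i}\gtrless N\rho_{r,t}(\mi_{r,t})^3$ dictated by~\eqref{gamma min}. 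Phases 2 and 3 then yield the almost optimal rigidity for $\wh z_i(t,\alpha)$ on scale $N^{-3/4+\om_1/6}\abs{i}^{-1/4}$ exactly as in Propositions~\ref{G3}--\ref{G33}: band rigidity is transferred from $\wh y$ to $\wh z$ by Duhamel in Phase 2, while the $i$-dependence is recovered in Phase 3 by the shifted coupling with $\wh y_{\langle i\pm K\rangle}$ combined with the maximum principle, the required positivity at $t=0$ following from the lower bound on $\abs{\widecheck\gamma_{x,i}-\widecheck\gamma_{y,\langle i-K\rangle}}$ supplied by~\eqref{gamma difference min}.

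The conclusion then follows in parallel with the proof of Proposition~\ref{cut}. The short-range comparison statements of Lemmas~\ref{se} and~\ref{seb111}, whose proofs in Appendix~\ref{SLL} only depend on the local scales of $\widecheck\gamma$, give
\[ \abs{(\lambda_j(t_1)-\mi_{\lambda,t_1})-(\mu_{j+i_\mu-i_\lambda}(t_1)-\mi_{\mu,t_1})} \le \abs{\wh x_j(t_1)-\wh y_j(t_1)} + N^{-3/4-c}, \]
where $\abs{\wt\mi_{r,t_1}-\mi_{r,t_1}}\lesssim (t_1-t_\ast)^{7/4}\ll N^{-3/4-c}$ by~\eqref{eq wt mi mi}. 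I then write $\wh x_j-\wh y_j=\int_0^1 u_j(t_1,\alpha)\diff\alpha$, approximate $u$ by the cut-off solution $v$ via Lemma~\ref{uv}, and use the bound $\norm{v(0)}_5\lesssim N^{-3/4+\om_1/6}$ coming from optimal rigidity of $\wh x(0)$, $\wh y(0)$ together with the quantile difference $\abs{\widecheck\gamma_{x,i}-\widecheck\gamma_{y,i}}\lesssim \abs{i}^{3/4}N^{-3/4+\om_1/2}$ from~\eqref{gamma difference min}. Applying the Nash-type estimate~\eqref{Ee2} with $p=5$ then yields $\abs{v_j(t_1,\alpha)}\lesssim N^{-3/4-\om_1/20}$, concluding the proof.

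The main obstacle is the cusp/minimum adaptation of Proposition~\ref{prop:F}. Its proof hinges on a delicate cancellation in the sum $\frac{1}{N}\sum_j (\ov\gamma_j^*-\ov\gamma_i^*)^{-1}$, organized around the scale $n(i)$ on which $\rho$ is comparable to $\rho(\ov\gamma_i^*)$. In the minimum regime $n(i)$ is governed by the value $\rho(\mi)\sim\sqrt{t-t_\ast}$, and the relevant case split becomes the cusp-type regime $\abs{i}\gg N\rho(\mi)^3$ (where $\gamma_i\sim (i/N)^{3/4}$, and the estimates of the original proof apply verbatim) and the minimum-type regime $\abs{i}\lesssim N\rho(\mi)^3$ (where $\rho$ is essentially the constant $\rho(\mi)$ and the principal-value cancellation is elementary). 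Since~\eqref{Re m min} provides the analogue of~\eqref{Re m gap} needed to control the $D_{r,i}$ correction terms coming from the $h^{\min}$ shift, all remaining bounds $G_1,G_2,G_3,G_4\lesssim N^{-1/4+C\om_1}$ go through without further essential change.
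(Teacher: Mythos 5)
Your overall route is the one the paper itself takes in Section~\ref{UNMIN}: shift by the approximate minimum $\wt\mi_{r,t}$ using~\eqref{diff m m}, replace $\Phi_\alpha$ by a drift of the form~\eqref{Psimin} built from an $h$ centred at $\wt\mi$ as in~\eqref{hmin}, re-run Phases 1--3 with the small-minimum inputs~\eqref{rho rho min},~\eqref{gamma min},~\eqref{gamma difference min},~\eqref{fluc scale min},~\eqref{Re m min}, observe (as the paper does in the proof of Proposition~\ref{optimrigmin}) that the forcing-term estimate only uses $\abs{\rho'}\lesssim\rho^{-2}$, prove the minimum-regime short--long approximation (the paper does this in a separate Lemma~\ref{semin111aaa}, since the error terms there involve $\wt\mi$ and~\eqref{eq wt mi mi} rather than the edge quantities of Lemma~\ref{seb111}), and finish with the $p=5$ Nash bound~\eqref{Ee2}.

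There is, however, one concrete gap in your final step: the anchoring in time. The $\wt\mi$-shifted framework, the quantiles $\widecheck\gamma_{r,i}(t)$ and the relation~\eqref{diff m m} only exist for $t\ge t_*$, so the post-cusp interpolation cannot be initialized at $t=0$; the paper starts it afresh at $t_*$ with data $\wt z_i(t_*,\alpha)=\alpha\wt x_i(t_*)+(1-\alpha)\wt y_i(t_*)$, see~\eqref{indatamin}. Your bound on the initial $\ell^5$ norm uses ``optimal rigidity of $\wh x(0),\wh y(0)$'' together with $\abs{\widecheck\gamma_{x,i}-\widecheck\gamma_{y,i}}\lesssim\abs{i}^{3/4}N^{-3/4+\om_1/2}$, which is ill-posed at $t=0$ (no minimum yet), and even taken at face value it does not yield $\norm{v(0)}_5\lesssim N^{-3/4+\om_1/6}$: summing $\abs{i}^{3/4}N^{-3/4+\om_1/2}$ in $\ell^5$ over the relevant range $\abs{i}\le N^{4\om_\ell+\delta_v}$ produces a factor $N^{c\om_\ell}$ with $\om_\ell\gg\om_1$, which the Nash gain $N^{-4\om_1/15}$ cannot absorb. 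The correct (and paper's) bookkeeping is to bound the initial data at $t_*$, where both densities have an exact cusp with the same slope, so the $x$- and $y$-quantiles coincide to leading order and $\abs{u_i(t_*,\alpha)}=\abs{\wt x_i(t_*)-\wt y_i(t_*)}\lesssim N^\xi N^{-3/4}\abs{i}^{-1/4}$ follows from rigidity~\eqref{rzxmin} alone, giving $\norm{v(t_*,\alpha)}_5\lesssim N^\xi N^{-3/4}$ as in the paper's proof of Proposition~\ref{cutmin2}; the same time-anchoring correction applies to your Phase 3 positivity check, which must be verified at $t_*$ rather than at $t=0$.
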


We  introduce the shifted process $\widetilde{r}_i(t)=\widetilde{x}_i(t),\widetilde{y}_i(t)$ for $t\ge t_*$. Let us define \begin{equation}
\label{tildemin}
\widetilde{r}_i(t)\defeq r_i(t)-\widetilde{\mathfrak{m}}_{r,t}, \qquad 1\le \abs{i}\le N,
\end{equation} for $r=x,y$, hence, by~\eqref{diff m m}, the shifted points satisfy the following DBM \begin{equation}
\label{SDEshiftmin}
\diff\widetilde{r}_i(t)=\sqrt{\frac{2}{N}}\diff B_i+\frac{1}{N}\sum_{j\ne i}\frac{1}{\widetilde{r}_i(t)-\widetilde{r}_j(t)}\diff t
-\Big(\frac{\diff}{\diff t}\widetilde{\mathfrak{m}}_{r,t}\Big) \diff t.
\end{equation} 
Furthermore we recall that by $\widehat{\gamma}_{r,i}(t)$ we the 
denote the quantiles of $\rho_{r,t}$, with $r=x,y$, for all $t_*\le t\le 2t_*$, i.e.~
\[ \widehat{\gamma}_{r,i}=\gamma_{r,i}-\widetilde{\mathfrak{m}}_{r,t},\qquad 1\le \abs{i}\le N.\]

By the rigidity estimate of~\cite[Corollary 2.6]{1809.03971}, using Lemma~\ref{appr} and the fluctuation scale estimate in~\eqref{fluc scale gap} the proof of  the following lemma is immediate.

\begin{lemma}
\label{diffxmin}
Let $\widetilde{r}(t)=\widetilde{x}(t),\widetilde{y}(t)$. There exists a fixed small $\epsilon>0$, such that for each $1\le \abs{i}\le \epsilon N$, we have \begin{equation}
\label{rzxmin}
\sup_{t_*\le t\le t_1}\abs{\widetilde{r}_i(t)-\widehat{\gamma}_{r,i}(t)}\le N^\xi \eta_\mathrm{f}^{\rho_{r,t}}(\gamma_{r,i}(t)),
\end{equation} for any $\xi>0$ with very high probability, where we recall that the behavior of $\eta_{\mathrm{f}}^{\rho_{r,t}}(\ed_{r,t}^++\widehat{\gamma}_{r,\pm i}(t))$, with $r=x,y$, is given by~\eqref{fluc scale min}.
\end{lemma}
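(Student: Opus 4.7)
The claim is a direct transfer of the known optimal rigidity for the genuine eigenvalues $\lambda_i(t), \mu_i(t)$ of $H_t^{(\lambda)}, H_t^{(\mu)}$ to the shifted, re-indexed, padded processes $\widetilde x_i(t), \widetilde y_i(t)$. My plan is to simply chase the definitions and absorb the shift by $\widetilde{\mathfrak{m}}_{r,t}$.

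First, I invoke~\cite[Corollary 2.6]{1809.03971}, which, under Assumptions~\ref{bdd moments}--\ref{bdd m}, applies to both $H^{(\lambda)}+\sqrt{t}U^{(\lambda)}$ and $H^{(\mu)}+\sqrt{t}U^{(\mu)}$ uniformly in $t_\ast\le t\le t_1$, since $|t-t_\ast|\lesssim N^{-1/2}$ so the flatness constant in Assumption~\ref{fullness} deteriorates only by a negligible amount. This yields, for $r=\lambda,\mu$ and all $1\le j\le N$ in a macroscopic neighbourhood of the physical cusp, the optimal bound
\[
\sup_{t_\ast\le t\le t_1}|r_j(t)-\gamma_{r,j}(t)|\le N^\xi\,\eta_\mathrm{f}^{\rho_{r,t}}(\gamma_{r,j}(t))
\]
with very high probability, where $\gamma_{r,j}(t)$ are the classical locations (quantiles) of $\rho_{r,t}$ as defined in Section~\ref{sec:quantiles}.

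Second, I use Lemma~\ref{appr} to pass from $\lambda_j(t),\mu_j(t)$ to the padded extended DBMs $x_i(t),y_i(t)$ for indices $i$ in the relevant range $1\le|i|\le\epsilon N$: Lemma~\ref{appr} tells us $|x_i(t)-\lambda_{i+i_\lambda-1}(t)|\le N^{-100}$ (and analogously for $y_i,\mu$), an error well below the fluctuation scale $\eta_\mathrm{f}^{\rho_{r,t}}(\gamma_{r,i}(t))\gtrsim N^{-1}$ dictated by~\eqref{fluc scale min}. After this re-indexing, the quantiles of $\rho_{x,t}$ (resp.\ $\rho_{y,t}$) that correspond to $x_i(t)$ (resp.\ $y_i(t)$) are exactly $\gamma_{r,i}(t)$ by construction of $\rho_{x,t},\rho_{y,t}$ in~\eqref{rhox}, since the additional ``ghost'' particles sit macroscopically far away and do not affect the counting function near the physical cusp.

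Third, I subtract $\widetilde{\mathfrak{m}}_{r,t}$ from both sides. By definition~\eqref{tildemin} we have $\widetilde r_i(t)=r_i(t)-\widetilde{\mathfrak{m}}_{r,t}$ and $\widehat\gamma_{r,i}(t)=\gamma_{r,i}(t)-\widetilde{\mathfrak{m}}_{r,t}$, so the shift cancels exactly and
\[
|\widetilde r_i(t)-\widehat\gamma_{r,i}(t)|=|r_i(t)-\gamma_{r,i}(t)|\le N^\xi\,\eta_\mathrm{f}^{\rho_{r,t}}(\gamma_{r,i}(t)),
\]
which is~\eqref{rzxmin}. The reference to~\eqref{fluc scale gap} in the statement is essentially a pointer to the explicit control of $\eta_\mathrm{f}^{\rho_{r,t}}$ (in the small-minimum regime the relevant bound is~\eqref{fluc scale min}), which plays no role in the proof itself beyond identifying the magnitude of the right-hand side. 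There is no real obstacle here: the only step that uses nontrivial input is the optimal local law~\cite[Corollary 2.6]{1809.03971}, which applies verbatim because $H_t^{(r)}$ is a Wigner-type matrix satisfying Assumptions~\ref{bdd moments}--\ref{bdd m} with constants uniform in $t\in[t_\ast,t_1]$.
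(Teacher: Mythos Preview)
Your proof is correct and follows exactly the approach the paper takes: the paper states that the lemma is immediate from the rigidity estimate of \cite[Corollary~2.6]{1809.03971}, Lemma~\ref{appr}, and the fluctuation scale estimate, which are precisely the three ingredients you invoke and chain together.
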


In order to prove Proposition~\ref{cutmin}, by Lemma~\ref{appr} and~\eqref{eq wt mi mi}, it is enough to prove the following proposition.

\begin{proposition}
\label{cutmin2} For $t_1\ge t_*$ we have, with very high probability, that
 \begin{equation}
\label{cumin2}
\abs{(x_i(t_1)-\widetilde{\mathfrak{m}}_{x,t_1})-(y_i(t_1)-\widetilde{\mathfrak{m}}_{y,t_1})}\le N^{-\frac{3}{4}-c}
\end{equation} for some small constant $c>0$ and for any $1\le \abs{i}\le N^{\omega_1}$.
\end{proposition}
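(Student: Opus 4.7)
The plan is to run the three-phase DBM machinery of Section~\ref{DBMS} verbatim, with every occurrence of the right edge $\ed^+_{r,t}$ replaced by the regularized minimum $\wt\mi_{r,t}$ of~\eqref{eq wt mi}. Concretely, I would define the shifted padding processes $\wt r_i(t)\defeq r_i(t)-\wt\mi_{r,t}$ for $r=x,y$, which by~\eqref{diff m m} satisfy the DBM~\eqref{SDEshiftmin}, and form the $\alpha$-interpolation $\wt z(t,\alpha)$ in analogy with~\eqref{SDEshift111} with the drift $\Phi_\alpha(t)$ replaced by its small-minimum analogue built from $\Re m_{r,t}(\wt\mi_{r,t})$ (the $O(t-t_*)$ correction in~\eqref{diff m m} is harmless on the time scale $t_1\lesssim t_*$). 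All quantitative inputs needed are already available from Section~\ref{sec scflow} in this regime: the shape near the minimum~\eqref{eq min size}, the density closeness~\eqref{rho rho min}, the H\"older and Stieltjes bounds~\eqref{rho' difference min}--\eqref{weakm min}, the quantile locations~\eqref{gamma min}, the fluctuation scale~\eqref{fluc scale min}, and the Stieltjes cancellation~\eqref{Re m min}.

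With these inputs, Phase~1 (Proposition~\ref{prop:option2}) produces the weak $N^{-1/2+2\omega_1}$ rigidity and the refined $N^{-3/4+C\omega_1}$ bound for $1\le\abs{i}\le i_*$ by the same Duhamel and short-range arguments; only the forcing estimate of Proposition~\ref{prop:F} needs a genuine re-derivation, in which one redoes the symmetrization~\eqref{symm}--\eqref{gg8} against a \emph{non-vanishing} density, with the two scaling regimes now separated by $\rho(\mi)^3$ rather than $\Delta$, using the derivative bound $\abs{\rho'}\lesssim\rho^{-2}$ from~\eqref{rho' difference min} and the spacing $\abs{\gamma_i-\gamma_{i-1}}\lesssim\max\{(N\rho(\mi))^{-1},N^{-3/4}\}$ coming from~\eqref{gamma difference min}. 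Phases~2 and~3 (Propositions~\ref{G3} and~\ref{G33}) then transfer the rigidity to the short-range process $\wh z$ and sharpen it to the optimal $\abs{i}^{-1/4}$-dependence; the maximum-principle coupling $\wh z_i\leftrightarrow\wh y_{\langle i\pm K\rangle}$ goes through verbatim because the quantile separation $\abs{\wh\gamma_{r,i}-\wh\gamma_{r,i-K}}$ still dominates the fluctuation scale by the factor $K$, as follows from~\eqref{gamma difference min} and~\eqref{fluc scale min}.

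Finally, the Nash contraction Lemma~\ref{ee} and the Sobolev inequality Lemma~\ref{lm:Sobolev} apply without change; both depend only on the $\abs{i}^{3/4}$ quantile scaling common to the two regimes. Plugging the initial bound $\abs{u_i(0,\alpha)}\lesssim N^{\omega_1/6}\abs{i}^{-1/4}N^{-3/4}$ for $1\le\abs{i}\le N^{4\omega_\ell+\delta_v}$, which follows from the optimal rigidity of $\wh x_i(0),\wh y_i(0)$ together with~\eqref{gamma difference min}, into the $\ell^5\to\ell^\infty$ bound~\eqref{Ee2} and combining with the $\wt z$-vs-$\wh z$ comparison Lemma~\ref{se} (whose Appendix~\ref{SLL} proof is agnostic to the two regimes), the short-time heat-kernel gain of order $N^{-4\omega_1/15}$ beats the $N^{\omega_1/6}$ of the initial condition by a net factor $N^{-\omega_1/20}$, exactly as in~\eqref{fefe}, which yields~\eqref{cumin2}. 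The main obstacle is the re-derivation of Proposition~\ref{prop:F}: the delicate logarithmic cancellations that reduce $F_i$ from $\log N$ down to $N^{-1/4+C\omega_1}$ must be redone near a non-zero local minimum, with case analysis distinguishing whether $\rho(\ov\gamma_i^*)$ is dominated by $\rho(\mi)$ or by $(\abs{i}/N)^{1/4}$, instead of by $\Delta^{1/3}$ versus $(\abs{i}/N)^{1/4}$ as in the gap case.
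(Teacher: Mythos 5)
Your plan is essentially the paper's own proof of Proposition~\ref{cutmin2}: Section~\ref{UNMIN} runs the same three-phase machinery with $\ed^+_{r,t}$ replaced by $\wt\mi_{r,t}$ and drift $\Psi_\alpha$, proves the min-case rigidity (Proposition~\ref{optimrigmin}) by exactly the adaptation you describe --- the forcing estimate redone using only $\abs{\rho'}\lesssim\rho^{-2}$, i.e.\ the ``cusp regime'' branch of Proposition~\ref{prop:F} --- and concludes via the min-case short--long comparison (Lemma~\ref{semin}, the analogue of Lemma~\ref{se}) together with the finite speed, Sobolev and $\ell^5\to\ell^\infty$ Nash estimates as in~\eqref{fefemin}. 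The only cosmetic difference is the initial-time bookkeeping: the paper starts the interpolation at $t_*$, where both densities have an exact cusp with the same slope, so the initial bound is $N^{\xi}N^{-3/4}\abs{i}^{-1/4}$ without your $N^{\omega_1/6}$ factor, which is anyway harmless since the Nash gain $N^{-4\omega_1/15}$ beats it.
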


The remaining part of this section is devoted to the proof of Proposition~\ref{cutmin2}.  We start with some preparatory lemmas.
We recall the definition of the interpolated quantiles given in Section~\ref{sec:padding}, \begin{equation}
\label{hatquaminint}
\overline{\gamma}_i(t)\defeq \alpha \widehat{\gamma}_{x,i}(t)+(1-\alpha)\widehat{\gamma}_{y,i}(t),
\end{equation} for all $\alpha\in [0,1]$ and $t_*\le t\le 2t_*$, as well as
 \[\overline{\mathfrak{m}}_t\defeq \alpha \widetilde{\mathfrak{m}}_{x,t}+(1-\alpha)\widetilde{\mathfrak{m}}_{y,t},\]
  for all $\alpha\in [0,1]$ and $t_*\le t\le 2t_*$. Furthermore by $\overline{\rho}_t$ from~\eqref{rhozdef} 
  we denote the interpolated density between $\rho_{x,t}$ and $\rho_{y,t}$ and by $\overline{m}_t$ its Stieltjes transform.

We now define the process $\widetilde{z}_i(t,\alpha)$ whose initial data are given by the linear interpolation of 
$\widetilde{x}(0)$ and $\widetilde{y}(0)$. Analogously to the small gap case, we define the function $\Psi_\alpha(t)$, for $t_*\le t\le 2t_*$,
 that represents the correct shift of the process $\widetilde{z}(t,\alpha)$, in order to compensate 
 the discrepancy of our choice of the interpolation for $\overline{\rho}_t$ with 
 respect to the semicircular flow evolution of the density $\overline{\rho}_0$.

Analogously to the edge case, see~\eqref{h*def}-\eqref{hhdef}, we define $h(t,\alpha)$ with the following properties
\begin{equation}
\label{hmin}
h(t,\alpha)=\alpha\Re[m_{x,t}(\widetilde{\mathfrak{m}}_{x,t})]+(1-\alpha)\Re[m_{y,t}(\widetilde{\mathfrak{m}}_{y,t})]-\Re[\overline{m}_t(\overline{\mathfrak{m}}_t+\ii N^{-100})]+\mathcal{O}\left(N^{-1}\right)
\end{equation} 
and $h(t,0)=h(t,1)=0$. 
Then, similarly to the edge case, we define 
\begin{equation}
\label{Psimin}
\Psi_\alpha(t)\defeq -\alpha \frac{\diff}{\diff t}[m_{x,t}(\widetilde{\mathfrak{m}}_{x,t})]-
(1-\alpha)\frac{\diff}{\diff t}[m_{y,t}(\widetilde{\mathfrak{m}}_{y,t})]-h(t,\alpha).
\end{equation} 
In particular, by our definition of $h(t,\alpha)$ in~\eqref{hmin} it follows that 
$\Psi_0(t)=\frac{\diff}{\diff t}\widetilde{\mathfrak{m}}_{y,t}$, 
$\Psi_1(t)=\frac{\diff}{\diff t}\widetilde{\mathfrak{m}}_{x,t}$ and that
 \begin{equation}\label{Psider}
 \Psi_\alpha(t)=\Re[\overline{m}_t(\overline{\mathfrak{m}}_t)]+\mathcal{O}(N^{-\frac{1}{2}+\om_1}).
 \end{equation}
Note that the error in~\eqref{Psider} is somewhat weaker than in the analogous equation~\eqref{Phiest}
due to the additional error  in~\eqref{diff m m} compared with~\eqref{diff e m}.

More precisely, the process $\widetilde{z}(t,\alpha)$ is defined by \begin{equation}
\label{interminz}
\diff\widetilde{z}_i(t,\alpha)=\sqrt{\frac{2}{N}}\diff B_i+\Bigg[\frac{1}{N}\sum_{j\ne i}\frac{1}{\widetilde{z}_i(t,\alpha)-\widetilde{z}_j(t,\alpha)}+\Psi_\alpha(t)\Bigg]\diff t,
\end{equation} with initial data \begin{equation}
\label{indatamin}
\widetilde{z}_i(t_*,\alpha)\defeq \alpha\widetilde{x}_i(t_*)+(1-\alpha)\widetilde{y}_i(t_*),
\end{equation} 
for all $1\le \abs{i}\le N$ and for all $\alpha\in [0,1]$.

We recall that $\omega_1\ll \omega_\ell\ll \omega_A\ll 1$ and that $i_* = N^{\frac{1}{2}+C_*\om_1}$ with some large constant $C_*$.

Next, we define the analogue of $\mathcal{J}_{z}(t)$ and $\mathcal{I}_{z,i}(t)$ for the small minimum by~\eqref{Bigint} and~\eqref{intmaxj} using the definition in~\eqref{hatquaminint} for the quantiles. Then, for each $t_*\le t\le t_1$, we define the short range approximation $\widehat{z}_i(t,\alpha)$ of $\widetilde{z}(t,\alpha)$ by the following SDE.

For $\abs{i}> \frac{i_*}{2}$ we let \begin{equation}
\label{SDE1min}
\diff\widehat{z}_i(t,\alpha)=\sqrt{\frac{2}{N}}\diff B_i+\Bigg[\frac{1}{N}\sum_j^{\mathcal{A},(i)}\frac{1}{\widehat{z}_i(t,\alpha)-\widehat{z}_j(t,\alpha)}+\frac{1}{N}\sum_j^{\mathcal{A}^c,(i)}\frac{1}{\widetilde{z}_i(t,\alpha)-\widetilde{z}_j(t,\alpha)}+\Psi_\alpha(t)\Bigg] \diff t,
\end{equation} for $\abs{i}\le N^{\omega_A}$ \begin{equation}
\label{SDE2min}
\diff\widehat{z}_i(t,\alpha)=\sqrt{\frac{2}{N}}\diff B_i+\Bigg[\frac{1}{N}\sum_j^{\mathcal{A},(i)}\frac{1}{\widehat{z}_i(t,\alpha)-\widehat{z}_j(t,\alpha)}+\int_{\mathcal{I}_{y,i}(t)^c}\frac{\rho_{y,t}(E+\widetilde{\mathfrak{m}}_{y,t}^+)}{\widehat{z}_i(t,\alpha)-E}\diff E\bigg]\diff t
-\Big(\frac{\diff}{\diff t}\widetilde{\mathfrak{m}}_{r,t}\Big) \diff t,
\end{equation} and for $N^{\omega_A}< \abs{i}\le \frac{i_*}{2}$ \begin{equation}
\label{SDE3min}
\begin{split}
\diff\widehat{z}_i(t,\alpha)&=\sqrt{\frac{2}{N}}\diff B_i+\Bigg[\frac{1}{N}\sum_j^{\mathcal{A},(i)}\frac{1}{\widehat{z}_i(t,\alpha)-\widehat{z}_j(t,\alpha)}+\int_{\mathcal{I}_{z,i}(t)^c\cap\mathcal{J}_z(t)}\frac{\overline{\rho}_t(E+\overline{\mathfrak{m}}_t^+)}{\widehat{z}_i(t,\alpha)-E}\diff E\\
&\quad +\sum_{\abs{j}\ge \frac{3}{4}i_*}\frac{1}{\widetilde{z}_i(t,\alpha)-\widetilde{z}_j(t,\alpha)}+\Psi_\alpha(t)\Bigg] \diff t,
\end{split}\end{equation} with initial data \begin{equation}
\label{idmin}
\widehat{z}_i(t_*,\alpha)\defeq \widetilde{z}_i(t_*,\alpha).
\end{equation} 

Next, by Lemma~\ref{semin111aaa}, by the optimal rigidity in~\eqref{rzxmin} for $\widetilde{x}(t)$ and $\widetilde{y}(t)$, the next lemma follows immediately.

\begin{lemma}
\label{semin}
For $\alpha=0$ and $\alpha=1$, with very high probability, we have 
\begin{equation}
\label{shortestimatemin}
\sup_{1\le \abs{i}\le N} \sup_{t_*\le t\le t_1}\abs{\widetilde{z}_i(t,\alpha)-\widehat{z}_i(t,\alpha)}\lesssim \frac{N^\xi}{N^\frac{3}{4}}\left(\frac{N^{\omega_1}}{N^{3\omega_\ell}}+\frac{N^{{C}\omega_1}}{N^\frac{1}{24}}\right),
\end{equation} for any $\xi>0$ and ${C}>0$ a large universal constant.
\end{lemma}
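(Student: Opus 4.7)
The proof will follow the scheme developed in the proof of Lemma~\ref{se} (which is deferred to Appendix~\ref{SLL}), adapted to the small minimum regime $t_* \le t \le t_1$. Set $w_i(t) \defeq \widetilde{z}_i(t,\alpha) - \widehat{z}_i(t,\alpha)$ for $\alpha\in\{0,1\}$. Subtracting~\eqref{interminz} from~\eqref{SDE1min}--\eqref{SDE3min}, the stochastic differentials cancel (because the same Brownian motions drive both processes), and the potential shifts $\Psi_\alpha(t)$ cancel in the bulk equation. What remains is an equation $\partial_t w = \mathcal{B} w + \mathcal{F}$, where $\mathcal{B}$ is the positivity-preserving short-range operator defined by the $\mathcal{A}$-restricted sum in~\eqref{SDE1min}--\eqref{SDE3min} and therefore generates an $\ell^\infty$-contraction, while $\mathcal{F}$ is a vector-valued forcing term coming from the discrepancy between the long-range discrete sum $N^{-1}\sum_j^{\mathcal{A}^c,(i)}(\widetilde z_i-\widetilde z_j)^{-1}$ in the $\widetilde z$-equation and its mean-field replacement $\int \overline{\rho}_t(E+\overline{\mathfrak{m}}_t)/(\widehat z_i-E)\,\diff E$ (or its $\rho_{y,t}$ analogue for $\abs{i}\le N^{\omega_A}$) appearing in the $\widehat z$-equation. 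By Duhamel, $\norm{w(t)}_\infty \le (t_1-t_*)\sup_s \norm{\mathcal{F}(s)}_\infty \lesssim N^{-1/2+\omega_1}\sup_s \norm{\mathcal{F}(s)}_\infty$, so the task reduces to bounding $\mathcal{F}$.

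The forcing term splits into two contributions of distinct origins. The first, producing the error $N^{-3/4+\omega_1-3\omega_\ell+\xi}$, comes from the mismatch between $N^{-1}\sum_j^{\mathcal{A}^c,(i)}(\widetilde z_i-\widetilde z_j)^{-1}$ and the corresponding Riemann sum at quantiles $N^{-1}\sum_j^{\mathcal{A}^c,(i)}(\overline{\gamma}_i-\overline{\gamma}_j)^{-1}$. The optimal rigidity~\eqref{rzxmin} controls $\widetilde z_i - \widehat\gamma_{r,i}(t)$ by the fluctuation scale~\eqref{fluc scale min}, while the lower bound $\abs{\overline{\gamma}_i-\overline{\gamma}_j}\gtrsim \ell(\abs{i}^{3/4}+\abs{j}^{3/4})/N^{3/4}$ on the $\mathcal{A}^c$-region gives a gain $\ell^{-3} = N^{-3\omega_\ell}$ over the fluctuation scale, exactly as in the small-gap computation. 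The second contribution, producing the error $N^{-3/4-1/24+C\omega_1+\xi}$, comes from replacing the sum at quantiles by the integral against the mean-field density; here one uses the Hölder regularity of $\overline{\rho}_t$ and the scaling relations near the local minimum (Lemma~\ref{lemma holderhsh}, in particular~\eqref{rho' difference min} and~\eqref{weakm min}) together with the size $\rho_{r,t}(\mathfrak{m}_{r,t})\sim (t-t_*)^{1/2}$ from~\eqref{eq min size}. The switch from $\rho_{y,t}$ to $\overline{\rho}_t$ at the scale $N^{\omega_A}$ is harmless for $\alpha\in\{0,1\}$ since at these endpoints $\overline{\rho}_t$ coincides with $\rho_{y,t}$ or $\rho_{x,t}$.

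The only substantive modification relative to the small-gap case is that the rigidity, quantile spacing, and density regularity now follow the small-minimum scalings~\eqref{fluc scale min},~\eqref{gamma difference min},~\eqref{rhoquantile m} rather than those of~\eqref{fluc scale gap}, and the reference point is $\widetilde{\mathfrak{m}}_{r,t}$ rather than $\mathfrak{e}_{r,t}^+$. Because the fluctuation scale at the minimum is bounded by $N^{-3/4}\abs{i}^{-1/4}$ in the regime relevant for our indices (namely $\abs{i}\ll N(t-t_*)^{3/2}$), the net powers of $N$ that arise in each of the two estimates are those stated in~\eqref{shortestimatemin}. The second exponent $\frac{1}{24}$ (weaker than the $\frac{1}{8}, \frac{1}{6}, \frac{1}{4}, \frac{1}{18}$ exponents present in~\eqref{shortestimate1a}) reflects the worst-case behaviour of the Hölder remainder $\min\{\abs{\omega}^{1/3},\omega^2/\rho(\mathfrak{m})^5\}$ of $\rho$ around the local minimum from~\eqref{gamma def min}.

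The main technical obstacle is the careful bookkeeping of the second error, where one must partition the $\mathcal{A}^c$ regime into a near-minimum subregion (in which the density is comparable to $\rho(\mathfrak{m}_{r,t})$ and the discrete-to-continuous error is governed by the second bound in~\eqref{weakm min}) and a far-from-minimum subregion (in which the density behaves like $\abs{\omega}^{1/3}$ and one recovers edge-type Hölder bounds). Optimising the threshold between these two regimes against the cutoff scale $\ell$ and using the crude bound $t-t_*\le t_*\sim N^{-1/2+\omega_1}$ yields the claimed $N^{-1/24+C\omega_1}$ gain. Once $\norm{\mathcal{F}}_\infty$ is bounded as above, multiplication by $t_1-t_*\lesssim N^{-1/2+\omega_1}$ in Duhamel yields~\eqref{shortestimatemin}.
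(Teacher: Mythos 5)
Your overall architecture (Duhamel plus contraction for the difference process, then an $\ell^\infty$ bound on the forcing, with the $N^{-3\omega_\ell}$ gain coming from the $i$-dependent optimal rigidity against the $\mathcal{A}^c$ lower bound on quantile gaps) is the same as the paper's, and the first error term is accounted for correctly. There are, however, two gaps.

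First, the equation for $w_i=\widetilde z_i-\widehat z_i$ is not of the form $\partial_t w=\mathcal{B}w+\mathcal{F}$ with an external forcing: your $\mathcal{F}$ contains $\int \overline{\rho}_t(E+\overline{\mathfrak{m}}_t)/(\widehat z_i-E)\,\diff E$ minus the long-range sum at $\widetilde z$, which depends on the unknown $\widehat z_i$ itself. One must extract the linear part, namely the multiplication operator $\mathcal{V}_1(i)=-\int_{\mathcal{I}^c}\rho(E)\bigl[(\widetilde z_i-E)(\widehat z_i-E)\bigr]^{-1}\diff E$, verify $\mathcal{V}_1\le 0$ (so that $\mathcal{B}_1+\mathcal{V}_1$ still generates a contraction), and run the whole argument up to a stopping time at which $\abs{w_i}$ stays below half the length of $\mathcal{I}_{y,i},\mathcal{I}_{z,i}$, closing it by continuity of paths at the end. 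Without this, the step $\norm{w(t)}_\infty\le t\sup_s\norm{\mathcal{F}(s)}_\infty$ is not justified.

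Second, and more importantly, your error budget misattributes the $N^{-1/24}$ term and dismisses its actual source. In~\eqref{SDE2min} the mean-field term for $1\le\abs{i}\le N^{\omega_A}$ is built from $\rho_{y,t}$ and $\widetilde{\mathfrak{m}}_{y,t}$ for \emph{every} $\alpha$, including $\alpha=1$, where the true long-range sum is governed by $\rho_{x,t}$ around $\widetilde{\mathfrak{m}}_{x,t}$ (and the drift is $-\frac{\diff}{\diff t}\widetilde{\mathfrak{m}}_{y,t}$ rather than $\Psi_1(t)=-\frac{\diff}{\diff t}\widetilde{\mathfrak{m}}_{x,t}$, so the shifts do not cancel there either). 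Thus at $\alpha=1$ one genuinely has to compare the two ensembles, via~\eqref{Re m min},~\eqref{rho rho min} and~\eqref{eq wt mi mi}; these comparisons cost $(t-t_\ast)^{7/12}\lesssim N^{-7/24+C\omega_1}$, and multiplying by $t_1\sim N^{-1/2+\omega_1}$ gives precisely $N^{-3/4-1/24+C\omega_1}$. Your claim that this switch is ``harmless for $\alpha\in\{0,1\}$'' is therefore wrong at $\alpha=1$, while the mechanism you do invoke for the $1/24$ (the sum-to-integral replacement within a single density near the minimum) produces strictly smaller errors (of order $N^{-3/4-1/10}$ and better, after the $i_\ast$ factors). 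The exponent $1/24$ cannot be recovered from the source you assign it to.
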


In order to proceed with the heat-kernel estimates we need an
 optimal $i$-dependent rigidity for $\widehat{z}_i(t,\alpha)$ for $1\le \abs{i}\le N^{4\omega_\ell+\delta}$,
  for some $0<\delta< C\omega_\ell$. 
   In particular, analogously to Proposition~\ref{G33}  we have:

\begin{proposition}
\label{optimrigmin}
Fix any $\alpha\in [0,1]$. There exists a small fixed $0<\delta_1<C\omega_\ell$, for some constant $C>0$, such that \begin{equation}
\label{RHZM}
\sup_{t_*\le t\le 2t_*}\abs{\widehat{z}_i(t,\alpha)-\overline{\gamma}_i(t)}
\lesssim \frac{N^\xi N^{\frac{\om_1}{6}}}{N^\frac{3}{4}\abs{i}^\frac{1}{4}},\qquad 1\le \abs{i}\le N^{4\om_\ell +\delta_1}
\end{equation} for any $\xi>0$ with very high probability.
\end{proposition}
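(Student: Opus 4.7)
The strategy is to repeat the three-phase argument of Section~\ref{sec:rigid} verbatim, replacing every object attached to the small-gap regime by its small-minimum counterpart already prepared in Sections~\ref{sec scflow} and~\ref{sec:padding}. The shifted process is now $\widetilde{z}_i(t,\alpha) = z_i(t,\alpha) - [\alpha \widetilde{\mathfrak{m}}_{x,t} + (1-\alpha)\widetilde{\mathfrak{m}}_{y,t}] - H(t,\alpha)$ with $H$ defined from~\eqref{hmin}--\eqref{Psimin}, and it is trailed by the shifted semiquantiles $\overline{\gamma}_i^*(t)$ constructed from $\widecheck\gamma_{r,i}^*(t)$ of~\eqref{def:semiquan1}--\eqref{def:semiquan2}. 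The identity~\eqref{diff m m} provides the analogue of~\eqref{diff e m} for the drift, the extra $\landauO{t-t_\ast}$ being harmless on our time scales $t\le 2t_\ast$ since it contributes at most $t_*\cdot t_* \lesssim N^{-1+2\omega_1}$ to any shift.

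First I would establish the analogue of Proposition~\ref{prop:option2}: a crude uniform bound $|\widetilde{z}_i(t,\alpha)-\overline{\gamma}_i^*(t)|\lesssim N^{-1/2+2\omega_1}$ for $1\le|i|\le N$, refined to $N^{-3/4+C\omega_1}$ for $1\le|i|\le i_*=N^{1/2+C_*\omega_1}$. The Duhamel/martingale/interpolation scheme is identical, the only substantive input being the analogue of the forcing-term estimate Proposition~\ref{prop:F}: one must show $\max_s|F_i(s)|\le N^{-1/4+C\omega_1}$ for $|i|\le 2i_*$, where now $F_i = \Re\overline{m}_t(\overline{\mathfrak{m}}_t+\overline{\gamma}_i^*) - \frac{1}{N}\sum_{j\ne i}(\overline{\gamma}_j^*-\overline{\gamma}_i^*)^{-1} - (1-\alpha)D_{y,i} - \alpha D_{x,i} + \mathcal{O}(N^{-1})$. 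The discrete-vs.-continuous comparison is carried out by the same symmetrization as in~\eqref{nidef}--\eqref{gg8}, but one uses the derivative bound~\eqref{rho' difference min} and the $1/3$-H\"older continuity of $\overline{\rho}_t$ near the minimum in place of~\eqref{rhoder}--\eqref{goodholder}; the two regimes are now ``small-minimum bulk'' $|i|\lesssim N\rho(\widetilde{\mathfrak{m}})^3$ vs.\ ``away from minimum'' $|i|\gtrsim N\rho(\widetilde{\mathfrak{m}})^3$, with quantile spacings~\eqref{gamma difference min} and fluctuation scale~\eqref{fluc scale min}. The boundary terms $D_{r,i}$ are bounded by combining~\eqref{Re m min},~\eqref{gamma min}, and~\eqref{weakm min}, yielding the same $N^{-1/4+C\omega_1}$ threshold.

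For Phase 2, the short range approximation $\widehat{z}(t,\alpha)$ is~\eqref{SDE1min}--\eqref{idmin}, the short-long comparison is Lemma~\ref{semin}, and the derivative process $w=\partial_\alpha\widehat{z}$ satisfies $\partial_t w=\mathcal{L}w+\zeta^{(0)}$ with $\mathcal{L}=\mathcal{B}+\mathcal{V}$ defined analogously to~\eqref{11}--\eqref{V222rig1} (using integrals against $\rho_{y,t}$ and $\overline{\rho}_t$ centered at $\widetilde{\mathfrak{m}}_{y,t}$, $\overline{\mathfrak{m}}_t$). The error $\zeta^{(0)}$ vanishes for $|i|\le N^{\omega_A}$ and is polynomially bounded elsewhere (repeating the proof of Lemma~\ref{lm:zeta} using~\eqref{rhozdef} and~\eqref{Psider}); cutting off the initial data to $|i|\le N^{4\omega_\ell+\delta}$, applying finite speed of propagation (Lemma~\ref{fsl}), contraction of the heat semigroup, and the optimal $y$-rigidity from Lemma~\ref{diffxmin}, yields rigidity at scale $N^{-3/4+\omega_1/6+\xi}$ without $i$-dependence for $|i|\le N^{4\omega_\ell+\delta_1}$.

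Finally, for Phase 3 I would couple with the index-shifted $y$-process by introducing $\widetilde{z}_i^*$ as in~\eqref{rst} (with Brownian motion $B_{\langle i-K\rangle}$ and initial data $\alpha\widetilde{x}_i(0)+(1-\alpha)\widetilde{y}_{\langle i-K\rangle}(0)$), and applying the maximum principle to $(w_i^*)^\#$. The only point requiring attention is the positivity of the initial data~\eqref{ws}: one must verify, using~\eqref{gamma difference min} and~\eqref{fluc scale min}, that $\widehat{\gamma}_{x,i}(0)-\widehat{\gamma}_{y,\langle i-K\rangle}(0)\gtrsim K\max\{\eta_\mathrm{f}^{\rho_{x,0}}(\gamma_{x,i}^*(0)),\eta_\mathrm{f}^{\rho_{y,0}}(\gamma_{y,\langle i-K\rangle}^*(0))\}$ for $2K\le|i|\le N^{4\omega_\ell+\delta}$, while for $|i|<2K$ the desired sign is obtained from the fact that $\widehat{\gamma}_{x,i}$ and $\widehat{\gamma}_{y,\langle i-K\rangle}$ straddle the minimum so that one of the two is at least of order $(K/N)^{3/4}$, beating the $N^\xi\eta_\mathrm{f}$ error. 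Reversing the coupling gives the matching upper bound, completing~\eqref{RHZM}. The main obstacle is the forcing-term estimate in Phase 1: the chain of cancellations that made Proposition~\ref{prop:F} work relied delicately on the shape of $\overline{\rho}_t$ near the edge, and in the small-minimum regime one must track the two different scaling regimes of $\rho(\gamma_i^*)$ (comparable to $\rho(\widetilde{\mathfrak{m}})$ for small $|i|$, growing like $(|i|/N)^{1/4}$ for large $|i|$) carefully enough that the $N^{-1/4+C\omega_1}$ threshold survives in both; everything else is essentially bookkeeping that parallels Sections~\ref{martingalenew}--\ref{sec:correcti}.
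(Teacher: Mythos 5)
Your proposal follows essentially the same route as the paper: the paper's proof of Proposition~\ref{optimrigmin} is precisely the adaptation of the three-phase argument of Section~\ref{sec:rigid}, replacing the small-gap inputs \eqref{rho rho gap}, \eqref{gamma gap}, \eqref{gamma difference gap}, \eqref{fluc scale gap}, \eqref{Re m gap} by their small-minimum counterparts \eqref{rho rho min}, \eqref{gamma min}, \eqref{gamma difference min}, \eqref{fluc scale min}, \eqref{Re m min}, and it likewise singles out the forcing-term estimate (Proposition~\ref{prop:F}) as the only delicate step. The paper resolves your ``main obstacle'' by observing that the adaptation is identical to the ``cusp regime'' part of the small-gap proof, since there the derivative bound is used only in the form $\abs{\rho'}\lesssim\rho^{-2}$, which is exactly \eqref{rho' difference min}.
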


\begin{proof} We can adapt the arguments in Section~\ref{sec:rigid}  to the case of the  small minimum, $t\ge t_*$, 
in a straightforward way. In Section~\ref{sec:rigid}, 
as the main input, we used the precise estimates on the density $\rho_{r,t}$~\eqref{rho rho gap},~\eqref{rhoquantile edge},
on the quantiles $\wh\gamma_{r,i}(t)$
\eqref{gamma gap}, on the quantile gaps~\eqref{gamma difference gap}, on
the fluctuation scale~\eqref{fluc scale gap} and on the Stieltjes transform~\eqref{Re m gap};
all formulated for the small gap case, $0\le t\le t_*$. In the small minimum case, $t\ge t_*$,  the corresponding estimates
are all available in Section~\ref{sec scflow}, see~\eqref{rho rho min},\eqref{rhoquantile m},\eqref{gamma min},
\eqref{gamma difference min},\eqref{fluc scale min} and~\eqref{Re m min}, respectively.
 In fact, the  semicircular flow is more regular after
the cusp formation, see e.g.~the better (larger) exponent in the $(t-t_*)$ error terms 
when comparing~\eqref{rho rho gap} with~\eqref{rho rho min}. This makes handling the  small minimum  case easier.
The most critical part 
in Section~\ref{sec:rigid} is the estimate of the forcing term (Proposition~\ref{prop:F}),
where the  derivative of the density~\eqref{rho' difference hash} was heavily used.
The main mechanism of this proof is the delicate cancellation between the contributions
to $S_2$ from the intervals $[\gamma_{i-n-1}, \gamma_{i-n}]$ and $[\gamma_{i+n-1}, \gamma_{i+n}]$,
see~\eqref{symm}. This cancellation takes place away from the edge. The proof is divided
into two cases; the so-called ``edge regime'', where the gap length $\Delta$ is relatively large and 
the ``cusp regime'', where $\Delta$ is small or zero. The adaptation of this argument
to the  small minimum case, $t\ge t_*$, will be identical to  the proof for the small gap case  in
the ``cusp regime''. In this regime the derivative bound~\eqref{rho' difference hash}
is used only in the form $\abs{\rho'} \lesssim \rho^{-2}$ which is available in the
small minimum case, $t\ge t_*$, as well, see~\eqref{rho' difference min}.  This  proves Proposition~\ref{prop:F}
for $t\ge t_*$. The rest of the argument is identical to the proof in the small minimum case up to  obvious
 notational changes; the
details are  left to the reader.
\end{proof}

Let us define $u_i(t,\alpha)\defeq \partial_\alpha\widehat{z}_i(t,\alpha)$, for $t_*\le t\le 2t_*$. In particular, $u$ is a solution of the equation 
\begin{equation}
\label{eq1min}
\partial_tu=\mathcal{L}u+\zeta^{(0)}
\end{equation} 
with initial condition $u(t_*,\alpha) = \wt x(t_*)-\wt y(t_*)$ from~\eqref{indatamin}.
The error term $\zeta^{(0)}$ is  defined analogously to~\eqref{AAAAA}-\eqref{AAA} but replacing $\Phi_\alpha$ and $\ov\ed_t^+$
with $\Psi_\alpha$ and $\wt\mi_t$, respectively. Note that 
this error term is non zero only for $\abs{i}\ge N^{\omega_A}$ and for any $i$ we have $\abs{\zeta^{(0)}_i}\le N^C$ with very high probability,
 for some large $C>0$. Furthermore, 
  $\mathcal{L}=\mathcal{B}+\mathcal{V}$ is defined as in~\eqref{11}-\eqref{V222rig1} replacing $\mathfrak{e}_{y,t}^+$ and $\overline{\ed}_t^+$ 
  by $\widetilde{\mathfrak{m}}_{y,t}$ and $\overline{\mathfrak{m}}_t$, respectively. 
  In the following by $\mathcal{U}^\mathcal{L}$ we denote the propagator of the operator $\mathcal{L}$.

Let $0<\delta_v<\frac{\delta_4}{2}$, with $\delta_4$ defined in Lemma~\ref{ee}. Define $v_i=v_i(t,\alpha)$ to be the solution of \begin{equation}
\label{vmin}
\partial_tv=\mathcal{L}v,\,\,\,\,\,\,\,\,v_i(t_*,\alpha)=u_i(t_*,\alpha)\bm{1}_{\{\abs{i}\le N^{4\omega_\ell+\delta_v}\}}.
\end{equation}
By the finite speed of propagation  estimate in Lemma~\ref{fsl}, similarly to the proof of Lemma~\ref{uv},
we immediately obtain the following:
 \begin{lemma}
\label{uvmin}
Let $u$ be the solution of the equation in~\eqref{eq1min} and $v$ defined by~\eqref{vmin}, then we have that \begin{equation}
\label{uveqmin}
\sup_{t_*\le t\le 2t_*}\sup_{1\le \abs{i}\le \ell^4}\abs{u_i(t)-v_i(t)}\le N^{-100}
\end{equation}
with very high probability.
\end{lemma}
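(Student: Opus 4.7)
The plan is to mirror the proof of Lemma~\ref{uv} almost verbatim, with all objects replaced by their small-minimum analogues. The only conceptual point to verify is that the necessary ingredients — namely, optimal rigidity for $\widehat{z}_i(t,\alpha)$ on the relevant index range, a bound on the forcing term $\zeta^{(0)}$, and a finite-speed-of-propagation statement for the semigroup $\mathcal{U}^{\mathcal{L}}$ on the time window $[t_\ast,2t_\ast]$ — are all already established in the small-minimum setting.

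First I would apply Duhamel's principle to~\eqref{eq1min} and~\eqref{vmin}, with common starting time $t_\ast$, to obtain the exact decomposition
\[
u_i(t)-v_i(t) \;=\; \sum_{\abs{j}> N^{4\omega_\ell+\delta_v}}\mathcal{U}^{\mathcal{L}}_{ij}(t_\ast,t)\,u_j(t_\ast) \;+\; \int_{t_\ast}^{t}\sum_{\abs{j}\ge N^{\omega_A}}\mathcal{U}^{\mathcal{L}}_{ij}(s,t)\,\zeta^{(0)}_j(s)\,\diff s,
\]
where I have used that by construction $\zeta^{(0)}_j(s)=0$ for $1\le\abs{j}\le N^{\omega_A}$, exactly as in the gap case. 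Both error contributions are then controlled by a finite-speed bound of the form
\[
\sup_{t_\ast\le s\le t\le 2t_\ast}\bigl[\mathcal{U}^{\mathcal{L}}_{ij}(s,t) + \mathcal{U}^{\mathcal{L}}_{ji}(s,t)\bigr]\le N^{-D}, \qquad 1\le\abs{i}\le \ell^4,\; \abs{j}\ge N^{\omega_A}/2,
\]
valid for every $D>0$ with very high probability, together with the crude pointwise estimates $\abs{u_j(t_\ast)}\lesssim 1$ (immediate from the optimal rigidity of $\widehat{x}_j(t_\ast)$ and $\widehat{y}_j(t_\ast)$ coming from Lemma~\ref{diffxmin} and~\eqref{rzxmin}) and $\abs{\zeta^{(0)}_j(s)}\le N^{C}$ with very high probability (the small-minimum analogue of~\eqref{zeta}, proved exactly as in Lemma~\ref{lm:zeta} using the $\frac{1}{3}$-Hölder regularity of $\overline{\rho}_t$ and $\overline{m}_t$ near the local minimum, which follows from the small-minimum shape analysis of Lemma~\ref{lemma holderhsh} and the derivative bound~\eqref{rho' difference min}). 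Picking $D$ large enough (say $D=150$) makes both contributions negligible at the scale $N^{-100}$.

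The only step that requires a genuine check rather than notational translation is the finite-speed bound itself. This in turn rests on the optimal $i$-dependent rigidity $\abs{\widehat{z}_i(t,\alpha)-\overline{\gamma}_i(t)}\lesssim N^{\xi+\om_1/6}N^{-3/4}\abs{i}^{-1/4}$ on $1\le\abs{i}\le N^{4\om_\ell+\delta_1}$ from Proposition~\ref{optimrigmin}; combined with the quantile gap asymptotics~\eqref{gamma difference min} in the small-minimum regime, this ensures $\abs{\widehat{z}_i(t,\alpha)-\widehat{z}_j(t,\alpha)}\sim\abs{\overline{\gamma}_i(t)-\overline{\gamma}_j(t)}$ on the scale $\ell^4$, which is precisely the hypothesis needed to run the finite-speed argument of Lemma~\ref{fsl} (equivalently, Lemma~\ref{fsl2}) on the time interval $[t_\ast,2t_\ast]$ with an $N^{-D}$ error.

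Putting the three ingredients together as in the gap-case proof of Lemma~\ref{uv} yields $\sup_{t_\ast\le t\le 2t_\ast}\sup_{1\le\abs{i}\le\ell^4}\abs{u_i(t)-v_i(t)}\le N^{-100}$. I do not anticipate a genuine obstacle; the only mild nuisance is simply to record that the rigidity and forcing-term inputs from Section~\ref{sec:rigid}, all of which were originally phrased for the small-gap case, transfer verbatim to the small-minimum case once the relevant objects ($\mathfrak{e}^+_{r,t}\to\widetilde{\mathfrak{m}}_{r,t}$, $\overline{\rho}_t$ now regular and bounded from below by $\rho(\mathfrak{m}_{r,t})$ near the minimum, etc.) are substituted, as already pointed out in the proof of Proposition~\ref{optimrigmin}.
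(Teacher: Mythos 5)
Your argument is the paper's own: the paper proves this lemma by exactly the same Duhamel decomposition at time $t_*$, the vanishing of $\zeta^{(0)}$ for $1\le\abs{i}\le N^{\omega_A}$, the crude bound $\abs{\zeta^{(0)}_i}\le N^C$ from the analogue of~\eqref{zeta}, and the finite speed of propagation of Lemma~\ref{fsl} (which is stated to hold on $[t_*,2t_*]$ as well), just as in the gap-case Lemma~\ref{uv}. One small correction: the bound $\abs{u_j(t_*)}\lesssim 1$ is not justified for far-away indices where one of the two interpolated configurations is a ghost/padding particle (there only a polynomial bound of order $N^{201}$ holds), but this does not matter since the finite-speed estimate gives $N^{-D}$ for arbitrary $D$, so one simply takes $D$ larger than your choice of $150$.
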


Collecting all the previous lemmas we conclude this section  with the proof of Proposition~\ref{cutmin2}.

\begin{proof}[Proof of Proposition~\ref{cutmin2}.]
We consider only the $i=1$ case.  By Lemma~\ref{appr} and Lemma~\ref{semin} we have that
 \[\begin{split}\abs{(x_1(t_1)-\widetilde{\mathfrak{m}}_{x,t_1})-(y_1(t_1)-\widetilde{\mathfrak{m}}_{y,t_1})}&\le \abs{\widetilde{x}_1(t_1)-\widehat{x}_1(t_1)}+\abs{\widehat{x}_1(t_1)-\widehat{y}_1(t_1)}+\abs{\widehat{y}_1(t_1)-\widetilde{y}_1(t_1)}\\
&\le \abs{\widehat{x}_1(t_1)-\widehat{y}_1(t_1)}+\frac{1}{N^{\frac{3}{4}+c}}\end{split}\] with very high probability. 
Since $u(t,\alpha)=\partial_\alpha\widehat{z}(t,\alpha)$,  using
$\widehat{x}_1(t_1)-\widehat{y}_1(t_1) =\int_0^1 u(t_1,\alpha)\diff \alpha$ and 
 Lemma~\ref{uvmin} it will be sufficient to estimate $\int_0^1\abs{v_1(t_1,\alpha)}\,\diff\alpha$.
 By rigidity from~\eqref{rzxmin}, we have
  \[ \abs{v_i(t_*,\alpha)}  =  \abs{u_i(t_*,\alpha)}  = \abs{\wt y_i(t_*) -\wt x_i(t_*)}
  \lesssim \frac{N^\xi}{N^\frac{3}{4}\abs{i}^\frac{1}{4}},
 \] 
 for any $1\le \abs{i}\le N^{4\omega_\ell+\delta_v}$ hence
 \[
  \lVert v(t_*,\alpha)\rVert_5\lesssim \frac{N^\xi}{N^\frac{3}{4}},
  \] 
  for any $\xi>0$ with very high probability.

 Finally,  using the heat kernel estimate in~\eqref{Ee2} for $\mathcal{U}^\mathcal{L}(0,t)$ for
  $t_*\le t\le 2t_*$,  we conclude, after a Markov inequality as in~\eqref{markov}-\eqref{markov1},
  \begin{equation}
\label{fefemin}
\int_0^1\abs{v_1(t_1,\alpha)}\,\diff\alpha\lesssim \frac{N^\xi}{N^\frac{3}{4}N^{\frac{4\omega_1}{15}}},
\end{equation} 
with very high probability.
\end{proof}

\appendix
\section{Proof of Theorem~\ref{thr:time cusp universality}}\label{sec proof space time}
We now briefly outline the changes required for the proof of Theorem~\ref{thr:time cusp universality} compared to the proof of Theorem~\ref{thr:cusp universality}. We first note that for $0\le\tau_1\le \dots\le\tau_k\lesssim N^{-1/2}$ in distribution $(H^{(\tau_1)},\dots,H^{(\tau_k)})$ agrees with 
\begin{equation}\label{eq H+tau U} \Bigl(H + \sqrt{\tau_1} U_1, H+\sqrt{\tau_1}U_1+\sqrt{\tau_2-\tau_1}U_2,\dots, H+\sqrt{\tau_1}U_1+\dots+\sqrt{\tau_k-\tau_{k-1}}U_k\Bigr),\end{equation}
where $U_1,\dots,U_k$ are independent GOE matrices. Next, we claim and prove later by Green function comparison that the time-dependent $k$-point correlation function of~\eqref{eq H+tau U} asymptotically agrees with the one of
\begin{equation}\label{eq wtH+tau U} \Bigl(\widetilde{H}_t + \sqrt{\tau_1} U_1, \widetilde{H}_t+\sqrt{\tau_1}U_1+\sqrt{\tau_2-\tau_1}U_2,\dots, \widetilde{H}_t+\sqrt{\tau_1}U_1+\dots+\sqrt{\tau_k-\tau_{k-1}}U_k\Bigr),\end{equation}
and thereby also with the one of
\begin{equation}\label{eq Ht+tau U} \Bigl(H_t +\sqrt{ct}U+ \sqrt{\tau_1} U_1, H_t+\sqrt{ct}U+\sqrt{\tau_1}U_1+\sqrt{\tau_2-\tau_1}U_2,\dots, H_t+\sqrt{ct}U+\sqrt{\tau_1}U_1+\dots+\sqrt{\tau_k-\tau_{k-1}}U_k\Bigr),\end{equation}
for any fixed $t\le N^{-1/4-\epsilon}$, where we recall that $\wt H_t$ and $H_t$ constructed as in Section~\ref{sec OU flow} (see~\eqref{OU decomp}). Finally, we notice that the joint eigenvalue distribution of the matrices in~\eqref{eq Ht+tau U} is precisely given by the joint distribution of 
\[\Bigl(\lambda_i(ct+\tau_1),\dots,\lambda_i(ct+\tau_k),\,\,i\in[N]\Bigr)\]
of eigenvalues $\lambda_i^s$ evolved according to the DBM 
\begin{equation} \label{DBM app}\diff \lambda_i(s) = \sqrt{\frac{2}{N}}\diff B_i + \sum_{j\ne i}\frac{1}{\lambda_i(s)-\lambda_j(s)}\diff s,\quad \lambda_i(0)= \lambda_i(H_t).
\end{equation}
The high probability control on the eigenvalues evolved according to~\eqref{DBM app} in Propositions~\ref{cut} and~\ref{cutmin} allows to simultaneously compare eigenvalues at different times with those of the Gaussian reference ensemble automatically. 

In order to establish Theorem~\ref{thr:time cusp universality} it thus only remains to argue that the $k$-point functions of~\eqref{eq H+tau U} and~\eqref{eq wtH+tau U} are asymptotically equal. For the sake of this argument we consider only the randomness in $H$ and condition on the randomness in $U_1,\dots,U_k$. Then the OU-flow 
\[\diff \widetilde{H}'_s = -\frac{1}{2}\bigl(\widetilde H'_s-A-\sqrt{\tau_1}U_1-\dots-\sqrt{\tau_l-\tau_{l-1}} U_l\bigr) \diff s+\Sigma^{1/2}[\diff\mathfrak B_s] \] 
with initial conditions
\[ \widetilde H'_0 = H + \sqrt{\tau_1} U_1+\dots +\sqrt{\tau_l-\tau_{l-1}} U_l\]
for fixed $U_1,\dots,U_l$ is given by
\[ \widetilde{H}'_s = \widetilde{H}_s+ \sqrt{\tau_1} U_1+\dots +\sqrt{\tau_l-\tau_{l-1}} U_l,\]
i.e.~we view $\sqrt{\tau_1} U_1+\dots +\sqrt{\tau_l-\tau_{l-1}} U_l$ as an additional expectation matrix. Thus we can appeal to the standard Green function comparison technique already used in Section~\ref{sec OU flow} to compare the $k$-point functions of~\eqref{eq H+tau U} and~\eqref{eq wtH+tau U}. Here we can follow the standard resolvent expansion argument from~\cite[Eq.~(116)]{1809.03971} and note that the proof therein verbatim also allows us to compare products of traces of resolvents with differing expectations. Finally we then take the $\E_{U_1}\dots\E_{U_k}$ expectation to conclude that not only the conditioned $k$-point functions of~\eqref{eq H+tau U} and~\eqref{eq wtH+tau U} asymptotically agree, but also the $k$-point functions themselves.

\section{Finite speed of propagation estimate}
\label{FSPS}
In this section we prove a finite speed of propagation estimate for the time evolution of the $\alpha$-derivative 
of the short range dynamics  defined in
\eqref{333}--\eqref{555}. It is an adjustment to the analogous proof of Lemma 4.1 in~\cite{1712.03881}.
For concreteness, we present the proof for the propagator $\mathcal{U}^\mathcal{L}$ where
$\mathcal{L}=\mathcal{B}+\mathcal{V}$ is defined in~\eqref{weq}--\eqref{V222rig1}.
 The point is that once the dynamics is localized, i.e.~the
range of the interaction term $\mathcal{B}$ is restricted to a local scale $\abs{i-j}\le \abs{j_+(i)-j_-(i)}$, with $\abs{j_+(i)-j_-(i)} \gtrsim N^{4\omega_\ell}\defqe L$, 
and the time is also restricted, $0\le t\le 2t_*\lesssim N^{-\frac{1}{2}+\om_1}$, then the propagation cannot go beyond a scale that is much bigger 
than the interaction scale.  This mechanism   is very general and will also be used
in a slightly different (simpler) setup of Lemma~\ref{FSP} and Proposition~\ref{finsp} where the interaction scale is much bigger $L\sim \sqrt{N}$.
We will give the necessary changes for the proof of  Lemma~\ref{FSP} and Proposition~\ref{finsp} at the end of this section.

\begin{lemma}
\label{fsl1}
Let $\wh z(t)=\wh z(t,\alpha)$ be the solution to the short range dynamics~\eqref{333}--\eqref{555}
with $i_*=N^{\frac{1}{2}+C_*\omega_1}$, exponents $\omega_1\ll \omega_\ell\ll \omega_A\ll 1$ and propagator
$\mathcal{L}=\mathcal{B}+\mathcal{V}$ from~\eqref{weq}--\eqref{V222rig1}. Let us assume that 
\begin{equation}
\label{rrrrr}
\sup_{0\le t\le t_*}\abs{\widehat{z}_i(t)-\overline{\gamma}_i(t)}\le \frac{N^{C\omega_1}}{N^\frac{3}{4}},\qquad 1\le \abs{i}\le i_*,
\end{equation} 
where $\ov\gamma_i(t)$ are the quantiles from~\eqref{barg}.
Then, there exists a constant $C'>0$ such that for any $0<\delta<C'\omega_\ell$, $\abs{a}\ge L N^{\delta}$ and $\abs{b}\le \frac{3}{4}LN^{\delta}$, 
for any fixed $0\le s\le t_*$, we have that \begin{equation}
\label{finitespeed}
\sup_{s\le t\le t_*} \mathcal{U}^\mathcal{L}_{ab}(s,t)+\mathcal{U}^\mathcal{L}_{ba}(s,t)\le N^{-D}
\end{equation} for any $D>0$, with very high probability.  The same result holds for the short range dynamics after the cusp
defined in~\eqref{eq1min} for $t_*\le s\le 2t_*$. 
\end{lemma}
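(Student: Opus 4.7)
The proof adapts the standard finite speed of propagation scheme for short-range DBM heat equations from~\cite{MR3372074, 1712.03881} to the cusp scaling, with the short range $L=\ell^4$ replacing the edge-case scale. Fix $0\le s\le t_*$ and consider, for each basis index $b$ with $\abs{b}\le \frac{3}{4}LN^{\delta}$, the solution $u(t)\defeq \mathcal{U}^{\mathcal{L}}(s,t)e_b$ for $t\in[s,t_*]$; the goal is to show $u_a(t)$ is superpolynomially small whenever $\abs{a}\ge LN^{\delta}$. The adjoint statement $\mathcal{U}^{\mathcal{L}}_{ba}$ follows from the symmetry of $\mathcal{B}$ after running the same argument on the transposed flow, with the multiplication operator $\mathcal{V}$ again contributing non-positively (since $\mathcal{V}_i\le 0$ for all $i$).

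The heart of the argument is a weighted $L^2$ energy estimate. I would introduce a smooth cutoff $\phi_i$ that equals $0$ on $\{\abs{i-b}\le \frac{1}{2}LN^{\delta}\}$, equals $1$ on $\{\abs{i-b}\ge LN^{\delta}\}$, and varies monotonically with $\abs{\phi_i-\phi_j}\lesssim \abs{i-j}/(LN^{\delta})$. Set $\psi_i\defeq \exp(\nu\phi_i)$ with $\nu$ to be chosen. Computing $\partial_t\sum_i\psi_i u_i^2=2\sum_i\psi_i u_i(\mathcal{L}u)_i$, the term from $\mathcal{V}$ is non-positive and can be discarded, and the $\mathcal{B}$-term is handled by the standard integration by parts together with Cauchy--Schwarz to obtain
\[
\frac{d}{dt}\sum_i\psi_iu_i^2 \le C\nu^2 \Bigl(\max_i \sum_{j:(i,j)\in\mathcal{A}}\abs{\mathcal{B}_{ij}}(\phi_i-\phi_j)^2\Bigr)\sum_i\psi_iu_i^2,
\]
reducing the proof to a purely deterministic check on the short-range neighbourhood.

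The key input is the assumed rigidity~\eqref{rrrrr}, which guarantees $\abs{\widehat{z}_i-\widehat{z}_j}\sim\abs{\ov\gamma_i-\ov\gamma_j}$ uniformly for $(i,j)\in\mathcal{A}$, so that $\abs{\mathcal{B}_{ij}}\lesssim N\abs{\ov\gamma_i-\ov\gamma_j}^{-2}$. Combining this with the explicit quantile spacings~\eqref{gamma difference gap} and the definition of $\mathcal{A}$ in~\eqref{shortrange}, one verifies the bound $\sum_{j:(i,j)\in\mathcal{A}}\abs{\mathcal{B}_{ij}}(i-j)^2\lesssim N^{1+C\omega_1}L^2$ uniformly in $i$, hence $\max_i\sum_{j\in\mathcal{A}(i)}\abs{\mathcal{B}_{ij}}(\phi_i-\phi_j)^2 \lesssim N^{C\omega_1}L^{-2}N^{-2\delta}\cdot N^{1+C\omega_1}L^2 = N^{1+C\omega_1-2\delta}$. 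Since $t_*\sim N^{-1/2+\omega_1}$, choosing $\nu=N^{\delta/4}$ and $\delta<C'\omega_\ell$ with $C'$ small enough makes $\nu^2 t_*\cdot N^{1+C\omega_1-2\delta}\ll 1$. Grönwall then yields $\sum_i\psi_iu_i^2(t)\le e^C$ on $[s,t_*]$, and since $\psi_b u_b^2(s)=1$ while $\psi_a\ge e^{\nu}$, we get $u_a^2(t)\le e^{C-\nu}\le N^{-D}$ for any fixed $D$. The post-cusp case $t_*\le s\le 2t_*$ is handled identically with~\eqref{gamma min},\eqref{gamma difference min} replacing their small-gap counterparts.

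The main obstacle is the non-uniform cusp scaling: both the range of $\mathcal{A}(i)$ (which grows like $\ell\abs{i}^{3/4}$) and the typical size of $\abs{\mathcal{B}_{ij}}$ (which scales with the inverse quantile gap) depend on $\abs{i}$. The deterministic sum must therefore be verified separately in the cusp regime $\abs{i}\le L$, where quantile gaps behave like $N^{-3/4}\abs{i}^{-1/4}$ and $\mathcal{A}(i)$ has range $\sim L$, and in the intermediate regime $L\le\abs{i}\le i_*/2$; the padding regime $\abs{i}>i_*/2$ is harmless because the interaction there uses $\widetilde{z}_i-\widetilde{z}_j$ at macroscopic distance and $\mathcal{V}_i=0$. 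In each regime the key bound goes through with the same exponent budget, because both the growth of the range and the growth of the kernel are canceled by the corresponding change in the quantile spacing.
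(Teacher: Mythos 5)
Your overall Grönwall scheme (exponential weight, energy estimate, smallness of $\nu^2 t_\ast$ times a kernel sum) is the right general template, but the specific choice of a deterministic, index-based weight $\phi_i$ creates a genuine gap at the one step that carries all the difficulty. Your key deterministic input is the claim that the rigidity~\eqref{rrrrr} gives $\abs{\widehat{z}_i-\widehat{z}_j}\sim\abs{\ov\gamma_i-\ov\gamma_j}$ \emph{uniformly} for $(i,j)\in\mathcal{A}$, hence an upper bound on $\abs{\mathcal{B}_{ij}}$ and on $\max_i\sum_{j\in\mathcal{A}(i)}\abs{\mathcal{B}_{ij}}(\phi_i-\phi_j)^2$. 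This is false for the pairs that matter most: near the cusp the quantile spacing is $\abs{\ov\gamma_i-\ov\gamma_{i+1}}\sim N^{-3/4}\abs{i}^{-1/4}$, which is \emph{smaller} than the rigidity precision $N^{-3/4+C\omega_1}$ in~\eqref{rrrrr}, so rigidity gives no lower bound whatsoever on nearest-neighbour gaps $\widehat{z}_i-\widehat{z}_{i+1}$; they can be arbitrarily small without violating the hypothesis, and then $\abs{\mathcal{B}_{i,i+1}}=\bigl(N(\widehat{z}_i-\widehat{z}_{i+1})^2\bigr)^{-1}$ has no deterministic bound. Making your scheme work would require a level-repulsion estimate, which is neither assumed nor proved here — indeed the paper stresses (Section~\ref{sec:padding}) that this coupling/finite-speed method is chosen precisely to avoid level repulsion. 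The comparability $\abs{\widehat z_i-\widehat z_j}\sim\abs{\ov\gamma_i-\ov\gamma_j}$ is only available once $\abs{i-j}$ is large enough that the quantile gap dominates the rigidity error, not on all of $\mathcal{A}$.

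The paper's proof circumvents exactly this obstacle by taking the weight to be a Lipschitz function of the \emph{particle positions}, $\phi_i=e^{\frac{1}{2}\nu\psi(\widehat z_i(t)-\ov\gamma_p(t))}$, rather than of the indices. Then in the analogue of your cross term one bounds, as in~\eqref{newker},
\begin{equation*}
\abs[1]{\mathcal{B}_{ij}}\,\abs[1]{\psi(\widehat z_i-\ov\gamma_p)-\psi(\widehat z_j-\ov\gamma_p)}^2
\le \frac{1}{N}\,\frac{\abs{\widehat z_i-\widehat z_j}^2}{(\widehat z_i-\widehat z_j)^2}=\frac{1}{N},
\end{equation*}
so the singular kernel is cancelled by the Lipschitz bound and no lower bound on gaps is ever needed. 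The price of this choice is that the weight is random and time dependent: one must apply Ito's formula, control the drift of $\widehat z_i-\ov\gamma_p$ (the terms $Q_i$ in~\eqref{g9}, estimated via the Stieltjes-transform bounds~\eqref{Re m gap},~\eqref{weakm} and rigidity), handle the martingale term by Burkholder--Davis--Gundy, and run a stopping-time argument before closing the Grönwall bound — none of which appear in your purely deterministic version. Unless you supply a level-repulsion input (not available), your argument cannot be repaired with an index-based weight; you should switch to a position-dependent weight as above.
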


\begin{proof}[Proof of Lemma~\ref{fsl1}.]
For concreteness we assume that $0\le s\le t\le t_*$, i.e.~we are in the small gap regime.
For $t_*\le s\le t\le 2t_*$ the proof is analogous using the definition~\eqref{hatquaminint} for the $\overline{\gamma}_i(t)$, the definition of the short range approximation in~\eqref{SDE1min}-\eqref{idmin} for the $\widehat{z}_i(t,\alpha)$ and replacing $\overline{\mathfrak{e}}_t^+$ by $\overline{\mathfrak{m}}_t$. 
With these adjustments the proof follows in the same way except for~\eqref{g10} below, 
where we have to use the estimates in~\eqref{Re m min} instead of~\eqref{Re m gap}.

First we consider the  $s=0$ case, then in Lemma~\ref{fsl} below we extend the proof for all $0\le s\le t$.
  Let $\psi(x)$ be an even $1$-Lipschitz real function, i.e.~$\psi(x)=\psi(-x)$,  $\norm{\psi'}_\infty\le 1$
 such that
 \begin{equation}
\label{def1}
\psi(x)=\abs{x} \qquad \text{for} \quad \abs{x}\le \frac{L^\frac{3}{4}N^{\frac{3}{4}\delta}}{N^\frac{3}{4}},
\qquad
\psi'(x)=0 \qquad \text{for}\quad \abs{x}\ge 2\frac{L^\frac{3}{4} N^{\frac{3}{4}\delta}}{N^\frac{3}{4}}.
\end{equation} 
and 
\begin{equation}
\label{def3}
\norm{\psi''}_\infty\lesssim \frac{N^\frac{3}{4}}{L^\frac{3}{4}N^\frac{3\delta}{4}}.
\end{equation}

We consider a solution of the equation \[\partial_t f=\mathcal{L}f, \qquad 0\le t\le t_*\] with some discrete Dirac delta  initial condition $f_i(0)=\delta_{ip_*}$ 
at $p_*$  for any $\abs{p_*}\ge N^{4\omega_\ell}N^{\delta}$. For concreteness, assume $p_*>0$ and 
set $p\defeq N^{4\omega_\ell}N^{\delta}$.
Define 
\begin{equation}
\label{nu}
  \phi_i = \phi_i(t,\alpha)\defeq e^{\frac{1}{2}\nu\psi(\widehat{z}_i(t,\alpha)-\overline{\gamma}_p(t))}, \qquad m_i=m_i(t,\alpha)\defeq f_i(t,\alpha) \phi_i(t,\alpha),
  \qquad \nu=\frac{N^\frac{3}{4}}{L^\frac{3}{4} N^{\delta'}}
\end{equation}
with 
 some $\delta'\ge \frac{\delta}{2}$  to be chosen later. Let $\widehat{z}_i=\widehat{z}_i(t,\alpha)$ and set
\begin{equation}
\label{def4}
F(t)\defeq \sum_if_i^2e^{\nu\psi(\widehat{z}_i-\overline{\gamma}_p(t))} =\sum_i m_i^2.
\end{equation} Since 
\[2\sum_i f_i(\mathcal{B}f)_i\phi_i^2=\sum_{(i,j)\in\mathcal{A}}\mathcal{B}_{ij}(m_i-m_j)^2-\sum_{(i,j)\in\mathcal{A}}\mathcal{B}_{ij}m_im_j\left(\frac{\phi_i}{\phi_j}+\frac{\phi_j}{\phi_i}-2\right),
\]
 using Ito's formula, we conclude that 
 \begin{align}
\label{derivative1}
\diff F&=\sum_{(i,j)\in \mathcal{A}}\mathcal{B}_{ij}(m_i-m_j)^2\diff t+2\sum_i\mathcal{V}_im_i^2\diff t\\
\label{derivative2}
&\quad -\sum_{(i,j)\in\mathcal{A}}\mathcal{B}_{ij}m_im_j\left(\frac{\phi_i}{\phi_j}+\frac{\phi_j}{\phi_i}-2\right)\diff t \\
\label{derivative3}
&\quad+  \sum_i\nu m_i^2 \psi'(\widehat{z}_i-\ov \gamma_{p})\diff (\widehat{z}_i-\overline{\gamma}_p) \\
\label{derivative4}
&\quad +\sum_i m_i^2\left(\frac{\nu^2}{N}\psi'(\widehat{z}_i-\overline{\gamma}_p)^2+\frac{\nu}{N}\psi''(\widehat{z}_i-\overline{\gamma}_p)\right)\diff t.
\end{align}

Let $\tau_1\le t_*$ be the first time such that $F\ge 5$ and let $\tau_2$ be stopping time so that 
the estimate~\eqref{rrrrr} holds with $t\le \tau_2$ instead of $t\le t_*$; the condition~\eqref{rrrrr}
then says that $\tau_2=t_*$ with very high probability.
Define $\tau\defeq \tau_1\wedge \tau_2\wedge t_*$, our goal is to show that $\tau=t_*$. 
In the following we assume $t\le \tau$.

Now we estimate the terms in~\eqref{derivative1}--\eqref{derivative4} one by one.
We start with~\eqref{derivative2}.
Note that the rigidity scale  $N^{-\frac{3}{4}+C\om_1}$ in~\eqref{rrrrr}
 is much smaller than $N^{-\frac{3}{4}(1-\delta) + 3\om_\ell  }$,  the range of the support of $\psi'$,
which, in turn, is comparable  with $\abs{\ov \gamma_i -\ov \gamma_p} \gtrsim (p/N)^{3/4}$ for any
 $i\ge 2 p =2L N^{\delta}$.
Therefore $\psi'(\widehat{z}_i-\overline{\gamma}_p)=0$ unless $\abs{i}\lesssim L N^{\delta}$. 
Moreover, if $\abs{i}\lesssim L N^{\delta}$ and $(i,j)\in\mathcal{A}$, then $\abs{j}\lesssim L N^{\delta}$. 
Hence, the nonzero terms in the sum in~\eqref{derivative2} have   both indices  $\abs{i},\abs{j}\lesssim N^{4\omega_\ell+\delta}$. 
By~\eqref{rrrrr} and  $C\omega_1\ll \omega_\ell$, for such terms we have 
\begin{equation}
\label{rige}
\abs{\widehat{z}_i-\widehat{z}_j}\lesssim \frac{\abs{i-j}}{N^\frac{3}{4}\min\{\abs{i},\abs{j}\}^\frac{1}{4}}
+ \frac{N^{C\omega_1}}{N^\frac{3}{4}}\lesssim \frac{L^\frac{3}{4}N^\frac{\delta}{2}}{N^\frac{3}{4}}.
\end{equation}
Note that $\nu \abs{\widehat{z}_i-\widehat{z}_j}\lesssim 1$. Therefore, by Taylor expanding
in the exponent, we have 
\[
\abs{\frac{\phi_i}{\phi_j}+\frac{\phi_j}{\phi_i}-2}=\left(e^{\frac{\nu}{2}(\psi(\widehat{z}_j-\overline{\gamma}_p)-\psi(\widehat{z}_i-\overline{\gamma}_p))}-e^{\frac{\nu}{2}(\psi(\widehat{z}_i-\overline{\gamma}_p)-\psi(\widehat{z}_j-\overline{\gamma}_p))}\right)^2\lesssim \nu^2 \abs{\psi(\widehat{z}_i-\overline{\gamma}_p)-\psi(\widehat{z}_j-\overline{\gamma}_p)}^2,
\]
and thus
 \begin{equation}
\label{newker}
\abs{\mathcal{B}_{ij}\left(\frac{\phi_i}{\phi_j}+\frac{\phi_j}{\phi_i}-2\right)}\lesssim \nu^2 \frac{\abs{\psi(\widehat{z}_i-\overline{\gamma}_p)-\psi(\widehat{z}_j-\overline{\gamma}_p)}^2}{N(\widehat{z}_i-\widehat{z}_j)^2}\lesssim \frac{\nu^2}{N},
\end{equation} 
where in the last inequality we used that $\psi$ is Lipschitz continuous. 
Hence  we conclude the estimate of~\eqref{derivative2} as
 \begin{equation}
\label{g1}
\abs{\sum_{(i,j)\in\mathcal{A}}\mathcal{B}_{ij}m_im_j\left(\frac{\phi_i}{\phi_j}+\frac{\phi_j}{\phi_i}-2\right)} \lesssim \frac{\nu^2}{N}\sum_im_i^2\sum_j^{\mathcal{A},(i)}\bm{1}_{\{\phi_j\ne \phi_i\}}\lesssim \frac{\nu^2L N^\frac{3}{4}\delta}{N}F(t),
\end{equation}
since the number of $j$'s in the summation is at most 
\begin{equation}\label{jj}
\abs{j_+(i)- j_-(i)}\le  \ell^4  +\ell  \abs{i}^{3/4} \le L N^{3\delta/4}.
\end{equation}

By~\eqref{def3} and since $\abs{\psi'(x)}\le 1$,~\eqref{derivative4} is bounded as follows \begin{equation}
\label{g2}
\abs{\sum_i m_i^2\left(\frac{\nu^2}{N}\psi'(\widehat{z}_i-\overline{\gamma}_p)^2+\frac{\nu}{N}\psi''(\widehat{z}_i-\overline{\gamma}_p)\right)}\lesssim \left(\frac{\nu^2}{N}+\frac{\nu}{N^\frac{1}{4}L^\frac{3}{4}N^{\frac{3}{4}\delta}}\right)F(t).
\end{equation} 

The next step is to get a bound for~\eqref{derivative3}. Since $\psi'(\widehat{z}_i-\overline{\gamma}_p)=0$ unless $\abs{i}\lesssim N^{4\omega_\ell+\delta}\ll N^{\omega_A}$, choosing $C>0$ such that $(4+C)\omega_\ell<\omega_A$ and
using~\eqref{444} we get 
\begin{equation}\label{drg}
\diff (\widehat{z}_i(t)-\overline{\gamma}_p(t))=\sqrt{\frac{2}{N}}\diff  B_i+\frac{1}{N}\sum_j^{\mathcal{A},(i)}\frac{1}{\widehat{z}_i(t)-\widehat{z}_j(t)}+Q_i(t)
\end{equation}
with
\begin{equation}
\label{QQQ}
\begin{split}
Q_i(t):&=\int_{\mathcal{I}_{y,i}(t)^c}\frac{\rho_{y,t}(E+\mathfrak{e}_{y,t}^+)}{\widehat{z}_i(t)-E}\diff E+\Re [m_{y,t}(\mathfrak{e}_{y,t}^+)]+\alpha \Big(\Re[m_{x,t}(\widehat{\gamma}_{x,p}(t)+\mathfrak{e}_{x,t}^+)-m_{x,t}(\mathfrak{e}_{x,t}^+)]\Big)\\
&\quad+(1-\alpha)\Big(\Re[m_{y,t}(\widehat{\gamma}_{y,p}(t)+\mathfrak{e}_{y,t}^+)-m_{y,t}(\mathfrak{e}_{y,t}^+)]\Big).
\end{split}
\end{equation}

We insert~\eqref{drg} into~\eqref{derivative3} and estimate all three terms separately in the regime $\abs{i}\lesssim LN^\delta$.
 For the stochastic differential,
by the definition of $\tau\le t_*$ and the Burkholder-Davis-Gundy inequality we have that 
\begin{equation}
\label{g3}
\sup_{0\le t\le \tau}\int_0^t\sqrt{\frac{2}{N}}\nu \sum_im_i^2\psi'(\widehat{z}_i-\overline{\gamma}_p)\, \diff B_i
\le N^{\epsilon'} \frac{\nu}{\sqrt{N}}  \sqrt{t_*} \sup_{0\le t\le \tau}F(t)
\lesssim \nu N^{\epsilon'} N^{-\frac{3}{4} +\frac{1}{2}\om_1}, 
\end{equation} 
for any $\epsilon'>0$, with very high probability. In~\eqref{g3} we used that 
$\tau\le t_*\sim N^{-\frac{1}{2}+\omega_1}$, and that, by the definition of $\tau$, 
$F(t)$ is bounded for all $0\le t\le \tau$.

The contribution of the second term in~\eqref{drg} to
\eqref{derivative3} is written, after symmetrisation,  as follows
 \begin{equation}
\label{g4}
\frac{\nu}{N}\sum_{(i,j)\in\mathcal{A}}\frac{\psi'(\widehat{z}_i-\overline{\gamma}_p)m_i^2}{\widehat{z}_j-\widehat{z}_i}=\frac{\nu}{2N}\sum_{(i,j)\in\mathcal{A}}\frac{\psi'(\widehat{z}_i-\overline{\gamma}_p)(m_i^2-m_j^2)}{\widehat{z}_j-\widehat{z}_i}+\frac{\nu}{2N}\sum_{(i,j)\in\mathcal{A}}m_i^2\frac{\psi'(\widehat{z}_i-\overline{\gamma}_p)-\psi'(\widehat{z}_j-\overline{\gamma}_p)}{\widehat{z}_j-\widehat{z}_i}.
\end{equation} 
Using~\eqref{def3} and~\eqref{jj}, the second sum in~\eqref{g4} is bounded by 
\begin{equation}
\label{g5}
\begin{split}
\abs{\frac{\nu}{2N}\sum_{(i,j)\in\mathcal{A}}m_i^2\frac{\psi'(\widehat{z}_i-\overline{\gamma}_p)-\psi'(\widehat{z}_j-\overline{\gamma}_p)}{\widehat{z}_j-\widehat{z}_i}}&\lesssim \frac{\nu}{N^\frac{1}{4}L^\frac{3}{4}N^{\frac{3}{4}\delta}}\sum_im_i^2\sum_j^{\mathcal{A},(i)}\bm{1}_{\{\psi'(\widehat{z}_i-\overline{\gamma}_p)\ne\psi'(\widehat{z}_j-\overline{\gamma}_p)\}} 
\lesssim \frac{\nu L^\frac{1}{4} }{N^\frac{1}{4}}F.
\end{split}\end{equation} 
Using $m_i^2-m_j^2 = (m_i-m_j)(m_i+m_j)$ and  Schwarz inequality, the first sum in~\eqref{g4} is bounded as follows 
\begin{equation}
\label{g6}
\begin{split}
\frac{\nu}{2N}\sum_{(i,j)\in\mathcal{A}}\frac{\psi'(\widehat{z}_i-\overline{\gamma}_p)(m_i^2-m_j^2)}{\widehat{z}_j-\widehat{z}_i} 
\le -\frac{1}{100}\sum_{(i,j)\in\mathcal{A}}  \mathcal{B}_{ij} (m_i-m_j)^2
+\frac{C\nu^2}{2N}\sum_{(i,j)\in\mathcal{A}}\psi'(\widehat{z}_i-\overline{\gamma}_p)^2(m_i^2+m_j^2).
\end{split}
\end{equation} 
The second sum in~\eqref{g6}, using~\eqref{jj},   is bounded by
\begin{equation}
\label{g7}
\frac{C\nu^2}{2N}\sum_{(i,j)\in\mathcal{A}}\psi'(\widehat{z}_i-\overline{\gamma}_p)(m_i^2+m_j^2)\le 
\frac{C\nu^2 L N^\frac{3\delta}{4}}{2N}F,
\end{equation} 
hence we conclude that 
\begin{equation}
\label{g8}
\frac{\nu}{N}\sum_{(i,j)\in\mathcal{A}}\frac{\psi'(\widehat{z}_i-\overline{\gamma}_p)m_i^2}{\widehat{z}_j-\widehat{z}_i}\le -\frac{1}{100}\sum_{(i,j)\in\mathcal{A}}\mathcal{B}_{ij}(m_i-m_j)^2+C\left(\frac{\nu L^\frac{1}{4} }{N^\frac{1}{4}}+\frac{\nu^2 L N^\frac{3\delta}{4}}{N}\right)F.
\end{equation} 
Note that the first term on the right-hand side of~\eqref{g8} can be incorporated in the first, dissipative term
  in~\eqref{derivative1}.

To conclude the estimate of~\eqref{derivative3} we write the third term in~\eqref{drg}
\begin{equation}
\label{g9}
\begin{split}
Q_i(t)&=\left(\int_{\mathcal{I}_{y,i}(t)^c}\frac{\rho_{y,t}(E+\mathfrak{e}_{y,t}^+)}{\widehat{z}_i(t)-E}\diff E+\Re [m_{y,t}(\overline{\gamma}_p(t)+\mathfrak{e}_{y,t}^+)]\right)\\
&\quad+\alpha\Big(\Re[m_{x,t}(\widehat{\gamma}_{x,p}(t)+\mathfrak{e}_{x,t}^+)-m_{x,t}(\mathfrak{e}_{x,t}^+)]
- \Re[m_{y,t}(\widehat{\gamma}_{x,p}(t)+\mathfrak{e}_{y,t}^+)-m_{y,t}(\mathfrak{e}_{y,t}^+)] \Big) \\
&\quad+\alpha \Big(\Re[m_{y,t}(\widehat{\gamma}_{x,p}(t)+\mathfrak{e}_{y,t}^+)]-\Re[m_{y,t}(\overline{\gamma}_p(t)
+\mathfrak{e}_{y,t}^+)]
\Big)
\\
&\quad+(1-\alpha)\Big(\Re[m_{y,t}(\widehat{\gamma}_{y,p}(t)+\mathfrak{e}_{y,t}^+)]
-\Re[m_{y,t}(\overline{\gamma}_p(t)+\mathfrak{e}_{y,t}^+)\Big)\defqe A_1+A_2+A_3+A_4.
\end{split}\end{equation}

Similarly to the estimates in~\eqref{Dest}, for $A_2$ we use~\eqref{Re m gap} while for $A_3, A_4$ we use~\eqref{weakm}, then 
we use the asymptotic behavior of $\wh \gamma_p, \ov\gamma_p$
by~\eqref{gamma gap} and $p=LN^\delta$ to conclude that  
\begin{equation}
\label{g10}
\abs{A_2}+\abs{A_3}+\abs{A_4}\lesssim
\frac{L^\frac{1}{4}N^\frac{\delta}{4}N^{C\omega_1}\log N}{N^\frac{1}{4}N^\frac{1}{6}}.
\end{equation} 

For the $A_1$ term we write it as
\begin{equation}\label{A1}
A_1= \int_{\mathcal{I}_{y,i}(t)^c}\frac{\overline{\gamma}_p(t)-\widehat{z}_i(t) }{(\widehat{z}_i(t)-E)(\overline{\gamma}_p(t)-E)}\rho_{y,t}(E+\mathfrak{e}_{y,t}^+)\diff E+
\int_{\mathcal{I}_{y,i}(t)}\frac{\rho_{y,t}(E+\mathfrak{e}_{y,t}^+) }{\overline{\gamma}_p(t)-E}\diff E.
\end{equation}
Since $i\le Cp$, we have
 $\rho_{y,t}(E+\mathfrak{e}_{y,t}^+)\le \rho_{y,t}(\overline{\gamma}_{Cp}(t)+\mathfrak{e}_{y,t}^+)\lesssim
 L^\frac{1}{4} N^{-\frac{1}{4}+\frac{\delta}{4}}$ for any $E\in \mathcal{I}_{y,i}(t)$, the second
 term in~\eqref{A1} is bounded by $L^\frac{1}{4} N^{-\frac{1}{4}+\frac{\delta}{4}}\log N$.
 In the first term in~\eqref{A1} we  use that 
 \[
  \abs{\widehat{z}_i(t)-E} \ge  \abs{\ov\gamma_i(t)-E} -  \abs{\widehat{z}_i(t)-\ov\gamma_i(t)} \gtrsim \ov\gamma_p(t)
  \]
   for $E\not\in \mathcal{I}_{y,i}(t)$, by rigidity~\eqref{rrrrr} and by the fact that in the $i\le Cp$ regime
   $\abs{\ov\gamma_i(t)-\ov\gamma_{i\pm j_\pm(i)}(t) } \gtrsim \ov\gamma_p(t) \gg N^{-\frac{3}{4}+C\om_1}$
   since $\om_1\ll \om_\ell$ and $=LN^\delta= N^{4\om_\ell +\om_1}$.

We thus conclude that the first term in~\eqref{A1} is bounded by
\[
 \abs{\widehat{z}_i(t)-\overline{\gamma}_p(t)}\frac{\Im [m_{y,t}(\mathfrak{e}_{y,t}^++\ii \overline{\gamma}_p(t))]}{\overline{\gamma}_p(t)}
 \lesssim \overline{\gamma}_p^\frac{1}{3} \lesssim 
L^\frac{1}{4} N^{-\frac{1}{4}+\frac{\delta}{4}},
\]
where we used again the rigidity~\eqref{rrrrr}.
In summary, we have
\begin{equation}
\label{g12}
\abs{A_1}\lesssim 
L^\frac{1}{4} N^{-\frac{1}{4}+\frac{\delta}{4}}\log N.
\end{equation}

In particular~\eqref{g9}-\eqref{g12} imply that \begin{equation}
\label{estQ}
Q\defeq \sup_{0\le t\le t_*}\sup_{\abs{i}\lesssim L N^{\delta}} \abs{Q_i(t)}\lesssim L^\frac{1}{4} N^{-\frac{1}{4}+\frac{\delta}{4}}\log N.
\end{equation}

Collecting all the previous estimates using the choice of $\nu$ from~\eqref{nu}
with $\delta'\ge \frac{\delta}{2}$ and that $F$ is bounded up to $t\le \tau$, we
integrate~\eqref{derivative1}--\eqref{derivative4} from $0$ up to time $0\le t\le t_*$ and 
 conclude that 
 \begin{equation}
\label{fF}
\begin{split}
\sup_{0\le t\le \tau} F(t) -F(0) &\lesssim \left(\frac{\nu^2L N^{\frac{3\delta}{4}+\omega_1}}{N^\frac{3}{2}}
+\frac{\nu L^\frac{1}{4} N^{\omega_1}}{N^\frac{3}{4}}+\frac{\nu  Q N^{\om_1}}{N^\frac{1}{2}}\right)\\
&\lesssim\frac{N^{\frac{3\delta}{4}+\omega_1}}{L^\frac{1}{2}N^{2\delta'}}+\frac{N^{\omega_1}}{L^\frac{1}{2}N^{\delta'}}+\frac{N^{\omega_1+\frac{\delta}{4}}}{L^\frac{1}{2}N^{\delta'}}\log N \le 1
\end{split}\end{equation} 
for large $N$ and
with very high probability, where we used the choice of $\nu$~\eqref{nu}
and that $\om_1\ll \om_\ell$ in the last line. Since $F(0)=1$, we get that
$\tau=t_*$ with very high probability, and so
 \begin{equation}
\label{fb}
\sup_{0\le t\le t_*}F(t)\le 5,
\end{equation} 
with very high probability.

Furthermore, since $p=L N^{\delta}$, if $i\le \frac{3}{4}L N^{\delta}$, choosing 
 $\delta'=\frac{3\delta}{4}-\epsilon_1$, with $\epsilon_1\le \frac{\delta}{4}$, then by Proposition~\ref{prop:option2} we have that 
 \[\nu\psi(\widehat{z}_i(t)-\overline{\gamma}_p) =
 \nu \abs{\widehat{z}_i(t)-\overline{\gamma}_p}\gtrsim \nu \frac{\abs{i-p}}{N^\frac{3}{4}\abs{p}^\frac{1}{4}}
 \gtrsim \frac{N^\frac{3\delta}{4}}{N^{\delta'}}=N^{\epsilon_1}
 \] with very high probability.
 
   Note that~\eqref{fb} implies
 \[
     f_i(t)  \le  5e^{-\frac{1}{2} \nu\psi(\widehat{z}_i(t)-\overline{\gamma}_p)}.
 \]
Therefore, if $i\le \frac{3L N^{\delta}}{4}$ and $p_*\ge p$, then for each fixed $0\le t\le t_*$ we have that
 \begin{equation}
\label{fb1}
\mathcal{U}^\mathcal{L}_{ip_*}(0,t)\le N^{-D},
\end{equation} 
for any $D>0$ with very high probability. Similar estimate holds
if $i$ and $p_*$ are negative or have opposite sign. This proves the estimate on the first term in
\eqref{finitespeed} for any fixed $s$.
The estimate for $\mathcal{U}^\mathcal{L}_{p_*i}(s,t)$ is analogous
with initial condition $ f = \delta_{i}$. 
This proves Lemma~\ref{fsl1}. 
\end{proof}

Next, we enhance this result to a bound  uniform in $0\le s\le t_*$. We first have:
\begin{lemma}
\label{gronwall}
Let $u$ be a solution of \begin{equation}
\label{egl}
\partial_t u=\mathcal{L}u,
\end{equation} with non-negative initial condition $u_i(0)\ge 0$. Then, for each $0\le t\le t_*$ we have \begin{equation} 
\label{grobounds}
\frac{1}{2}\sum_i u_i(0)\le \sum_iu_i(t)\le\sum_i u_i(0)
\end{equation} with very high probability.
\proof
Since $\mathcal{U}^\mathcal{L}$ is a contraction semigroup the upper bound in~\eqref{grobounds} is trivial.
Notice that $\partial_t\sum_i u_i=\sum_i\mathcal{V}_iu_i$. Thus
the lower bound will follow once we prove  
$-\mathcal{V}_i\lesssim N^\frac{1}{2} L^{-\frac{1}{2}}$ with very high probability since $t_*  N^\frac{1}{2} L^{-\frac{1}{2}}$ is much smaller than $1$
by $\om_1\ll \om_\ell$.

The estimate $-\mathcal{V}_i\lesssim N^\frac{1}{2} L^{-\frac{1}{2}}$ proceeds similarly to~\eqref{A1}.
Indeed, for $1\le\abs{i}<N^{\omega_A}$, we use $\rho_{y,t}(E+\mathfrak{e}_{y,t}^+)\lesssim \abs{E}^\frac{1}{3} $ 
and that $\abs{\widehat{z}_i(t)-E}\sim \abs{\overline{\gamma}_i(t)-E}$ by rigidity~\eqref{rrrrr} and by the fact that
\[\abs{j_+(i)-i}, \,\abs{j_-(i)-i}\gtrsim N^{4\omega_\ell}+N^{\omega_\ell}\abs{i}^\frac{3}{4}\]
is much bigger than the rigidity scale.
Therefore, we have
\[\begin{split}-\mathcal{V}_i=\int_{\mathcal{I}_{y,i}(t)^c}\frac{\rho_{r,t}(E+\mathfrak{e}_{r,t}^+)}{(\widehat{z}_i(t)-E)^2}\, \diff E
&\lesssim \int_{\mathcal{I}_{y,i}(t)^c}\frac{1}{\abs{E-\overline{\gamma}_i(t)}^\frac{5}{3}}\,\diff E+\int_{\mathcal{I}_{y,i}(t)^c}\frac{\abs{\overline{\gamma}_i}^\frac{1}{3}}{(E-\overline{\gamma}_i(t))^2}\,\diff E
\lesssim \frac{N^\frac{1}{2}}{N^{2\omega_\ell}}=\frac{N^\frac{1}{2}}{L^\frac{1}{2}}.\end{split}
\]
The estimate of $-\mathcal{V}_i$ for $N^{\omega_A}<\abs{i}\le \frac{i_*}{2}$ is similar.
This concludes the proof of Lemma~\ref{gronwall}.
\endproof
\end{lemma}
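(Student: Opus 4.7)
My plan is to derive a one-sided ODE for $S(t)\defeq\sum_i u_i(t)$ and conclude by Gronwall. The short-range dissipative part $\mathcal{B}$ annihilates the $\ell^1$ mass: since $\mathcal{B}_{ij}$ is symmetric in $(i,j)$ and $(\mathcal{B}u)_i=\sum_j \mathcal{B}_{ij}(u_i-u_j)$, summing over $i$ and swapping $i\leftrightarrow j$ shows $\sum_i(\mathcal{B}u)_i=0$. Hence $\partial_t S=\sum_i \mathcal{V}_i u_i$. The solution $u(t)$ stays non-negative because $-\mathcal{L}$ is a $Z$-matrix: its off-diagonal entries $-\mathcal{B}_{ij}=1/[N(\wh z_i-\wh z_j)^2]\ge 0$ are non-negative, and $\mathcal{V}$ is diagonal. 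Since $\mathcal{V}_i\le 0$ by the definition~\eqref{V222rig1} as minus an integral of a positive density against a positive kernel, we obtain $\partial_t S\le 0$, which gives the upper bound in~\eqref{grobounds} immediately.

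For the lower bound, set $\mathcal{V}_\infty\defeq\max_{1\le |i|\le N}|\mathcal{V}_i|$. Then $\partial_t S\ge -\mathcal{V}_\infty S$ and Gronwall yield $S(t)\ge S(0)e^{-t\mathcal{V}_\infty}\ge S(0)/2$ provided $t_*\mathcal{V}_\infty\le \log 2$. Since $t_*=N^{-1/2+\omega_1}$, it suffices to establish the pointwise bound
\[
\mathcal{V}_\infty\lesssim \frac{N^{1/2}}{L^{1/2}}=N^{1/2-2\omega_\ell},
\]
which beats $1/t_*\sim N^{1/2-\omega_1}$ thanks to the scale separation $\omega_1\ll \omega_\ell$ built into the choice of exponents.

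The remaining pointwise estimate on $|\mathcal{V}_i|$ is the only nontrivial step. For $1\le |i|<N^{\omega_A}$, by the definition~\eqref{shortrange} of $\mathcal{A}$ the endpoints of $\mathcal{I}_{y,i}$ are the quantiles $\wh\gamma_{y,j_\pm(i)}$ with $|j_\pm(i)-i|\gtrsim \ell^4=L$; by~\eqref{gamma difference gap} these endpoints lie at distance $\gtrsim L^{3/4}/N^{3/4}$ from $\wh\gamma_{y,i}$, which is much larger than the rigidity tolerance $N^{C\omega_1-3/4}$ in the hypothesis~\eqref{rrrrr}. Hence $|\wh z_i-E|\sim |\wh\gamma_{y,i}-E|$ uniformly in $E\in\mathcal{I}_{y,i}^c$. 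Combining this with the Hölder bound $\rho_{y,t}(E+\ed_{y,t}^+)\lesssim|E|^{1/3}$ from Lemma~\ref{lemma holderhsh} and splitting into a near-singularity piece and a bounded tail,
\[
|\mathcal{V}_i|\lesssim \int_{L^{3/4}/N^{3/4}}^{\delta_*}\frac{\diff s}{s^{5/3}}+\mathcal{O}(1)\lesssim \left(\frac{N^{3/4}}{L^{3/4}}\right)^{2/3}=\frac{N^{1/2}}{L^{1/2}},
\]
as required. The range $N^{\omega_A}<|i|\le i_*/2$ is handled identically with $\ov\rho_t,\mathcal{I}_{z,i}$ in place of $\rho_{y,t},\mathcal{I}_{y,i}$, using the Hölder continuity of $\ov\rho_t$ from Lemma~\ref{lemma holderhsh}, and $\mathcal{V}_i=0$ outside this range by definition.

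The hard part will be confirming that the comparison $|\wh z_i-E|\sim|\wh\gamma_{y,i}-E|$ is robust enough to survive taking the difference of the two interpolated quantile systems that underlie $\ov\gamma_i$ and $\wh\gamma_{y,i}$. This requires combining the input rigidity~\eqref{rrrrr} (which only controls $\wh z_i-\ov\gamma_i$) with the closeness $|\wh\gamma_{y,i}-\ov\gamma_i|\lesssim \ov\gamma_i\cdot N^{-1/6+C\omega_1}$ coming from~\eqref{rho rho gap}–\eqref{gamma gap}; the resulting total discrepancy remains well below $L^{3/4}/N^{3/4}$ so the comparison is justified. Once this is in hand, the rest of the argument is a mechanical integration.
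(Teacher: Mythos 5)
Your proposal is correct and follows essentially the same route as the paper: reduce the lower bound to the pointwise estimate $-\mathcal{V}_i\lesssim N^{1/2}L^{-1/2}$, which beats $1/t_*$ since $\omega_1\ll\omega_\ell$, and prove that estimate by replacing $\wh z_i$ with the quantile via rigidity and integrating the H\"older bound $\rho\lesssim\abs{E}^{1/3}$ over $\mathcal{I}_{y,i}(t)^c$. One small imprecision: the piece of the integrand where $\abs{E}^{1/3}\sim\abs{\ov\gamma_i}^{1/3}$ dominates $\abs{E-\ov\gamma_i}^{1/3}$ is not ``$\mathcal{O}(1)$'' — it contributes $\abs{\ov\gamma_i}^{1/3}\int_{\mathcal{I}_{y,i}^c}\abs{E-\ov\gamma_i}^{-2}\diff E\sim\abs{\ov\gamma_i}^{1/3}/\dist(\ov\gamma_i,\partial\mathcal{I}_{y,i})$, which the paper records as a separate integral; it is still $\lesssim N^{1/2}L^{-1/2}$, but only because the index width $\abs{j_\pm(i)-i}\gtrsim\ell^4+\ell\abs{i}^{3/4}$ grows with $\abs{i}$ at exactly the rate needed to offset $\abs{\ov\gamma_i}^{1/3}$.
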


Finally we prove the following version of  Lemma~\ref{fsl1} that is uniform in $s$:
\begin{lemma}
\label{fsl}
Under the same hypotheses of Lemma~\ref{fsl1}, for any $\delta'>0$, such that $\delta'<C'\omega_\ell$, 
with $C'>0$ the constant defined in Lemma~\ref{fsl1}, $\abs{a}\le \frac{L N^{\delta'}}{2}$ and $\abs{b}\ge L N^{\delta'}$ we have that 
\begin{equation}
\label{unifs}
\sup_{0\le s\le t\le t_*}\mathcal{U}_{ab}^\mathcal{L}(s,t)+\mathcal{U}_{ba}^\mathcal{L}(s,t)\le N^{-D}
\end{equation} with very high probability. The same result holds for $t_*\le s\le t\le 2t_*$ as well.
\proof
By the semigroup property for any $0\le s\le t\le t_*$ and any $j$ we have that
\begin{equation}
\label{spul}
\mathcal{U}_{aj}^\mathcal{L}(0,t)\ge\mathcal{U}_{ab}^\mathcal{L}(s,t)\mathcal{U}_{bj}^\mathcal{L}(0,s).
\end{equation} 
Furthermore, by Lemma~\ref{gronwall}  for the dual dynamics we have that 
\[\frac{1}{2}\sum_ju_j(0)\le\sum_ju_j(s)=\sum_i\sum_j \big(\mathcal{U}_{ji}^\mathcal{L}(0,s)\big)^Tu_i(0),
\]
and so, by choosing $u(0)=\delta_{b}$ 
we conclude that 
\[
\sum_j\mathcal{U}_{bj}^\mathcal{L}(0,s)\ge\frac{1}{2}, \qquad \forall\, 0\le s\le t_*.
\]
 From the last inequality and since $\sup_{s\le t_*}\mathcal{U}_{bj}^\mathcal{L}(0,s)\le N^{-100}$ 
 with very high probability for any 
 $\abs{j}\le \frac{3}{4} LN^{\delta'}$  by Lemma~\ref{fsl1}, 
 it follows that there exists an $j_*=j_*(s)$, maybe depending on $s$, 
  with $\abs{j_*(s)}\ge \frac{3}{4} L N^{\delta'}$, such that $\mathcal{U}_{bj_*(s)}^\mathcal{L}(0,s)\ge \frac{1}{4N}$. 
  Furthermore, by the finite speed propagation estimate in Lemma~\ref{fsl1} (this time with $\abs{a}\ge \frac{3}{4}LN^\delta$
  and $\abs{b}\le\frac{1}{2} LN^\delta$;  note that its proof only used that $\abs{a-b}\gtrsim LN^{\delta}$), we have that 
 \[
 \sup_{t\le t_*}  \mathcal{U}_{aj_*}^\mathcal{L}(0,t)\le N^{-D}, \qquad \forall \abs{j_*}\ge \frac{3}{4}L N^{\delta'}
 \]
  with very high probability. Hence we get from~\eqref{spul} with $j=j_*(s)$  that 
  $\sup_{s\le t}\mathcal{U}_{ab}^\mathcal{L}(s,t)\lesssim N^{-D+1}$
  with very high probability.
  The estimate for $\mathcal{U}_{ba}^\mathcal{L}(s,t)$ follows in a similar way.
 This concludes the proof of Lemma~\ref{fsl}.
 \endproof
\end{lemma}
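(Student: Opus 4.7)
The plan is to bootstrap the fixed-initial-time bound of Lemma~\ref{fsl1} to a bound uniform in $s$ by combining the Chapman--Kolmogorov composition with the mass-conservation estimate of Lemma~\ref{gronwall}. The key observation is that $\mathcal{L} = \mathcal{B}+\mathcal{V}$ generates a positivity-preserving semigroup: $\mathcal{B}$ has nonnegative off-diagonal hopping rates $\frac{1}{N(\wh z_i-\wh z_j)^2}$ on the short-range set $\mathcal{A}$, and $\mathcal{V}$ is diagonal, so $\mathcal{U}^\mathcal{L}_{ij}(s,t)\ge 0$ for all indices and times. For any intermediate index $j$,
\[
\mathcal{U}^\mathcal{L}_{aj}(0,t) \;=\; \sum_{k} \mathcal{U}^\mathcal{L}_{ak}(s,t)\,\mathcal{U}^\mathcal{L}_{kj}(0,s) \;\ge\; \mathcal{U}^\mathcal{L}_{ab}(s,t)\,\mathcal{U}^\mathcal{L}_{bj}(0,s),
\]
so the desired bound on $\mathcal{U}^\mathcal{L}_{ab}(s,t)$ reduces to exhibiting, for each $s$, an intermediate index $j_\ast = j_\ast(s)$ such that $\mathcal{U}^\mathcal{L}_{aj_\ast}(0,t)$ is much smaller than $\mathcal{U}^\mathcal{L}_{bj_\ast}(0,s)$.

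To produce such a $j_\ast$, I would apply Lemma~\ref{gronwall} to the evolution with initial datum $\delta_b$, obtaining $\sum_j \mathcal{U}^\mathcal{L}_{bj}(0,s) \ge \tfrac{1}{2}$ uniformly in $s\in[0,t_\ast]$. On the other hand, Lemma~\ref{fsl1} applied to the pair $(b,j)$ with initial time $s=0$ already yields a bound uniform in the evolved time, so $\mathcal{U}^\mathcal{L}_{bj}(0,s) \le N^{-100}$ with very high probability for every $\abs{j}\le \tfrac{3}{4}LN^{\delta'}$, since $\abs{b}\ge LN^{\delta'}$. The remaining $\ge \tfrac{1}{2}$ of the total mass must therefore sit on indices with $\abs{j}\ge \tfrac{3}{4}LN^{\delta'}$, and a pigeonhole argument over the at most $2N$ such indices yields some $j_\ast(s)$ with $\mathcal{U}^\mathcal{L}_{bj_\ast}(0,s) \ge \tfrac{1}{4N}$. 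Feeding this $j_\ast$ back into Lemma~\ref{fsl1}, now applied to the pair $(a,j_\ast)$ (which satisfies the required separation $\abs{a}\le \tfrac{1}{2}LN^{\delta'}$ and $\abs{j_\ast}\ge \tfrac{3}{4}LN^{\delta'}$), gives $\mathcal{U}^\mathcal{L}_{aj_\ast}(0,t)\le N^{-D-1}$ uniformly in $t$, and the semigroup inequality delivers $\mathcal{U}^\mathcal{L}_{ab}(s,t)\le 4N\cdot N^{-D-1}\le N^{-D}$. The bound on $\mathcal{U}^\mathcal{L}_{ba}$ follows by the symmetric argument via the dual semigroup, and the small-minimum regime $t_\ast\le s\le t\le 2t_\ast$ is handled identically after replacing $\ed_{r,t}^+$ by $\wt\mi_{r,t}$.

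The main obstacle, and essentially the only subtle point, is that $j_\ast$ depends on $s$, so the very-high-probability bound from Lemma~\ref{fsl1} must hold simultaneously over all candidate target indices. This is not a genuine difficulty: Lemma~\ref{fsl1} provides, for each fixed pair $(a,j)$, a bound with probability $\ge 1-N^{-D'}$ for any $D'$, so a union bound over the $\lesssim N$ relevant targets costs only a factor $N$, absorbed by enlarging $D'$. The continuity of $s\mapsto \mathcal{U}^\mathcal{L}(0,s)$ reduces the uncountable family of intermediate times to a polynomial grid at the same cost. Apart from this routine bookkeeping, the lemma is essentially a two-line corollary of Lemmas~\ref{fsl1} and~\ref{gronwall}, the only structural ingredient beyond these being the positivity of the kernel.
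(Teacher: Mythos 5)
Your proposal is correct and follows essentially the same route as the paper: the semigroup inequality $\mathcal{U}_{aj}^{\mathcal{L}}(0,t)\ge \mathcal{U}_{ab}^{\mathcal{L}}(s,t)\,\mathcal{U}_{bj}^{\mathcal{L}}(0,s)$, mass conservation from Lemma~\ref{gronwall} with initial datum $\delta_b$, a pigeonhole selection of $j_*(s)$ with $|j_*(s)|\ge \tfrac34 LN^{\delta'}$ carrying mass at least $\tfrac{1}{4N}$, and a second application of Lemma~\ref{fsl1} to the pair $(a,j_*)$, all uniform over the relevant indices by a union bound. Your extra remark about a grid in $s$ is harmless but unnecessary, since the uniformity over the index set $\{|j_*|\ge\tfrac34 LN^{\delta'}\}$ already covers the $s$-dependence of $j_*(s)$.
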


Finally, we prove   Lemma~\ref{FSP} and Proposition~\ref{finsp} which are versions of Lemma~\ref{fsl} but for the short range approximation
on scale $L= N^{1/2+C_1\om_1}$ needed in Section~\ref{sec:crudeshort}.

\begin{proof}[Proof of Lemma~\ref{FSP}]
Choosing $L=N^{\frac{1}{2}+C_1\omega_1}$, the proof of Lemma~\ref{fsl1} 
is exactly the same except for the estimate of $Q$ in~\eqref{estQ}, since, for any $\alpha \in [0,1]$, $Q_i(t)$  from~\eqref{uu1} is now defined as \begin{equation}
\label{newQi}
\begin{split}
Q_i(t)\defeq  \frac{\beta}{N}\sum_{j: \abs{j-i}> L}  \frac{1}{\ov\gamma_i^*-\ov\gamma_j^*} +
 \frac{1-\beta}{N}\sum_{j: \abs{j-i}> L}   \frac{1}{\wt z_i-\wt z_j} \diff t+\Phi_\alpha(t),
\end{split}
\end{equation}
 with $\Phi_\alpha(t)$ given in~\eqref{Phidef} instead of~\eqref{QQQ}.
  Then Lemma~\ref{gronwall} and Lemma~\ref{fsl} follow exactly in the same way.

By~\eqref{newQi} it easily follows that 
\begin{equation}
\label{12AB}
Q\defeq \sup_{0\le t\le t_*}\sup_{\abs{i}\le LN^{\delta'}}\abs{Q_i(t)}\lesssim \log N.
\end{equation} Hence, by an estimate  similar  to~\eqref{fF}, we conclude that 
\begin{equation}
\label{FinFin}
\begin{split}
\sup_{0\le t\le \tau} F(t)- F(0)&\lesssim \left(\frac{\nu^2L N^{\frac{3\delta}{4}+\omega_1}}{N^\frac{3}{2}}+\frac{\nu L^\frac{1}{4} 
N^{\omega_1}}{N^\frac{3}{4}}+\frac{\nu  QN^{\om_1}}{N^\frac{1}{2}}\right)\\
&\lesssim\frac{N^{\frac{3\delta}{4}+\omega_1}}{L^\frac{1}{2}N^{\delta'}}+\frac{N^{\omega_1}}{L^\frac{1}{2}N^{\delta'}}
+\frac{N^{\frac{3}{4}+\omega_1}}{L^\frac{3}{4}N^\frac{1}{2} N^{\delta'}}\log N\le 1,
\end{split}\end{equation} with very high probability. 
Note that in the last inequality we used that $L=N^{\frac{1}{2}+C_1\omega_1}$.
\end{proof}

\begin{proof}[Proof of Proposition~\ref{finsp}] This proof is almost identical to the previous one, except that $Q_i(t)$ is now defined from  
\eqref{veq} as 
\[
Q_i(t)\defeq  \beta\Bigg[ \frac{1}{N}\sum_{j: \abs{j-i}> L}   \frac{1}{\ov\gamma_i^*-\ov\gamma_j^*} +\Phi(t)\Bigg]
 + (1-\beta) \Bigg[ \frac{\diff}{\diff t} \ov \gamma_{i}^*(t) 
 -\frac{1}{N}\sum_{j: \abs{j-i}\le L}   \frac{1}{\ov\gamma_{i}^*-\ov\gamma_{j}^*}\Bigg],
\]
which satisfies the same bound~\eqref{12AB}. The rest of the proof is unchanged.
\end{proof}

\section{Short-long approximation}
\label{SLL}

In this section we estimate the difference of the solution of the DBM $\widetilde{z}(t,\alpha)$ and its short range
 approximation $\widehat{z}(t,\alpha)$, closely following the proof of Lemma 3.7  in~\cite{1712.03881} and adapting
 it to the more complicated cusp situation.
  In particular, in Section~\ref{SLLG} we estimate $\abs{\widetilde{z}(t,\alpha)-\widehat{z}(t,\alpha)}$ for $0\le t\le t_*$, i.e.~until the formation of an exact cusp;
 in Section~\ref{SLLM}, instead, we estimate $\abs{\widetilde{z}(t,\alpha)-\widehat{z}(t,\alpha)}$ for $t_*< t\le 2t_*$,
  i.e.~after the formation of a small minimum. The precision of this approximation depends on the rigidity bounds we put as a condition. 
We consider a two-scale rigidity assumption, a weaker rigidity valid for all indices and
a stronger rigidity valid for $1\le \abs{i}\lesssim i_*=N^{\frac{1}{2}+C_*\om_1}$; both  described by an exponent.

\subsection{Short-long approximation: Small gap and exact cusp.}
\label{SLLG}
In this subsection we estimate the difference of the solution of the DBM $\widetilde{z}(t,\alpha)$ defined in~\eqref{tildereq} and its short range approximation $\widehat{z}(t,\alpha)$ defined by~\eqref{333}-\eqref{666} for $0\le t\le t_*$. We formulate Lemma~\ref{seb111} (for $0\le t\le t_*$) 
 below a bit more generally than we need  in order to indicate the dependence of the approximation precision on these two exponents.
For our actual application in Lemma~\ref{shortlong2}  and Lemma~\ref{se} we use specific exponents.

\begin{lemma} 
\label{seb111}
Let $\omega_1\ll \omega_\ell\ll \omega_A\ll 1$. Let $0<  a_0 \le \frac{1}{4}+C\omega_1$, $C>0$ 
a universal constant and $0< a\le C\omega_1$. Let $i_*\defeq N^{\frac{1}{2}+C_*\om_1}$ with $C_*$ defined in Proposition~\ref{prop:option2}.
 We assume that \begin{equation}
\label{zerorig}
\abs{\widetilde{z}_i(t,\alpha)-\overline{\gamma}_i(t)}\le \frac{N^{a_0}}{N^\frac{3}{4}},\qquad 1\le \abs{i}\le N,\quad 0\le t\le t_*
\end{equation} and that \begin{equation}
\label{riglemma111}
\abs{\widetilde{z}_i(t,\alpha)-\overline{\gamma}_i(t)}\le \frac{N^a}{N^\frac{3}{4}},\qquad 1\le \abs{i}\le i_*,\quad 0\le t\le t_*.
\end{equation} 
 Then, for any $\alpha\in [0,1]$, we have that
\begin{equation}
\label{shortestimateb}
\begin{split}
&\sup_{1\le \abs{i}\le N}\sup_{0\le t\le t_*} \abs{\widehat{z}_i(t,\alpha)-\widetilde{z}_i(t,\alpha)}\\
&\qquad\le \frac{N^aN^{C\omega_1}}{N^\frac{3}{4}}\left(\frac{1}{N^{2\omega_\ell}}
+\frac{N^\frac{\omega_A}{2}\log N}{N^\frac{1}{6}N^a}
+\frac{N^\frac{\omega_A}{2}\log N}{N^\frac{1}{4}N^a}
+  \frac{1}{N^\frac{2a}{5} i_*^\frac{1}{5}}
+\frac{N^{a_0}}{N^ai_*^\frac{1}{2}} 
+\frac{1}{N^\frac{1}{18} N^a}
\right),
\end{split}\end{equation} with very high probability.

\begin{proof}[Proof of Lemma~\ref{shortlong2}] Use Lemma~\ref{seb111} with the choice 
$a_0=\frac{1}{4}+C\omega_1$ and $a=C\omega_1$, for some universal constant $C>0$. The conditions~\eqref{zerorig} and~\eqref{riglemma111} are guaranteed by~\eqref{crudinitial} and~\eqref{rgtilde}. 
\end{proof}

\proof[Proof of Lemma~\ref{seb111}]
Let $w_i\defeq \widehat{z}_i-\widetilde{z}_i$, hence $w$ is a solution of \begin{equation}
\label{vsolb}
\partial_tw=\mathcal{B}_1w+\mathcal{V}_1w+\zeta,
\end{equation} where the operator $\mathcal{B}_1$ is defined for any  $f\in\mathbb{C}^{2N}$ by 
\begin{equation}
\label{B1b}
(B_1f)_i=\frac{1}{N}\sum_j^{\mathcal{A},(i)}\frac{f_j-f_i}{(\widetilde{z}_i(t,\alpha)-\widetilde{z}_j(t,\alpha))(\widehat{z}_i(t,\alpha)-\widehat{z}_j(t,\alpha))}.
\end{equation} 
The diagonal operator $\mathcal{V}_1$ is defined by $(\mathcal{V}_1f)_i=\mathcal{V}_1(i)f_i$, where 
\begin{equation}
\label{V1b}
\mathcal{V}_1(i)\defeq-\int_{\mathcal{I}_{y,i}(t)^c}\frac{\rho_{y,t}(E+\mathfrak{e}_{y,t}^+)}{(\widetilde{z}_i(t,\alpha)-E)(\widehat{z}_i(t,\alpha)-E)}\diff E,
\qquad \mbox{for}\quad 0<\abs{i}\le N^{\omega_A},
\end{equation} and
 \begin{equation}
\label{V2b}
\mathcal{V}_1(i)\defeq-\int_{\mathcal{I}_{z,i}(t)^c\cap\mathcal{J}_z(t)}
\frac{\overline{\rho}_t(E+\overline{\mathfrak{e}}_t^+)}{(\widetilde{z}_i(t,\alpha)-E)(\widehat{z}_i(t,\alpha)-E)}\diff E,  \qquad
\mbox{for} \quad N^{\omega_A}< \abs{i}\le \frac{i_*}{2}.
\end{equation}
Finally, $\mathcal{V}_1(i)=0$ for $\abs{i}\ge\frac{i_*}{2}$.
The vector $\zeta$ in~\eqref{vsolb} collects various error terms.

We define the stopping time 
\begin{equation}
\label{bigtimeb}
T\defeq \max\Set{t\in [0,t_*]|\sup_{0\le s\le t}\abs{\widetilde{z}_i(s,\alpha)-\widehat{z}_i(s,\alpha)}\le \frac{1}{2}
\min\{\abs{\mathcal{I}_{z,i}(t)},\abs{\mathcal{I}_{y,i}(t)},\}\, \forall \alpha\in [0,1]},
\end{equation}
where we recall that $\abs{\mathcal{I}_{z,i}(t)}\sim \abs{\mathcal{I}_{y,i}(t)}\sim N^{-\frac{3}{4}+3\om_\ell}$.

For $0\le t\le T$ we have that $\mathcal{V}_1\le 0$. Therefore,  since $\sum_i(\mathcal{B}f)_i=0$, by the symmetry of $\mathcal{A}$, 
the semigroup of $\mathcal{B}_1+\mathcal{V}_1$, denoted by $\mathcal{U}^{\mathcal{B}_1+\mathcal{V}_1}$, 
is a contraction on every $\ell^p$ space. Hence, since $w(0)=0$ by~\eqref{666}, 
we have that 
\[
w(t)=\int_0^t\mathcal{U}^{\mathcal{B}_1+\mathcal{V}_1}(s,t)\zeta(s)\,\diff s
 \]
  and so 
\begin{equation}
\label{infinityvb}
\lVert w(t)\rVert_\infty\le t\sup_{0\le s\le t}\lVert \zeta(s)\rVert_\infty \le N^{-\frac{1}{2}+\om_1}\sup_{0\le s\le t}\lVert \zeta(s)\rVert_\infty.
\end{equation} 
Thus,  to prove~\eqref{shortestimateb} it is enough to estimate $\lVert \zeta(s)\rVert_\infty$, for all $0\le s\le t_*$.

The error term $\zeta$ is given by $\zeta_i=0$ for $\abs{i}> \frac{i_*}{2}$, then for $1\le \abs{i}\le N^{\omega_A}$, $\zeta_i$ is defined as \begin{equation}
\label{z1b}
\zeta_i=\int_{\mathcal{I}_{y,i}(t)^c}\frac{\rho_{y,t}(E+\mathfrak{e}_{y,t}^+)}{\widetilde{z}_i(t,\alpha)-E}\diff E-\frac{1}{N}\sum_j^{\mathcal{A}^c,(i)}\frac{1}{\widetilde{z}_i(t,\alpha)-\widetilde{z}_j(t,\alpha)}+\Phi_\alpha(t) -\Re [m_{y,t}(\mathfrak{e}_{y,t}^+)],
\end{equation} with $\Phi_\alpha(t)$ defined in~\eqref{Phidef}, and for $N^{\omega_A}< \abs{i}\le \frac{i_*}{2}$ as \begin{equation}
\label{z2b}
\zeta_i=\int_{\mathcal{I}_{z,i}(t)^c\cap\mathcal{J}_z(t)}\frac{\overline{\rho}_t(E+\overline{\mathfrak{e}}_t^+)}{\widetilde{z}_i(t,\alpha)-E}\diff E-\frac{1}{N}\sum_{1\le \abs{j}<\frac{3i_*}{4}}^{\mathcal{A}^c,(i)}\frac{1}{\widetilde{z}_i(t,\alpha)-\widetilde{z}_j(t,\alpha)}.
\end{equation} Note that in the sum in~\eqref{z2b} we do not have the summation over $\abs{j}\ge \frac{3i_*}{4}$ since if $1\le \abs{i}\le \frac{i_*}{2}$ and $\abs{j}\ge \frac{3i_*}{4}$ then $(i,j)\in\mathcal{A}^c$. 

 In the following we will often omit the $t$ and the $\alpha$ arguments from $\widetilde{z}_i$ and $\overline{\gamma}_i$ for notational simplicity.

First, we consider the error term~\eqref{z2b} for $N^{\omega_A}< \abs{i}\le \frac{i_*}{2}$. We start with the estimate \begin{equation}
\label{E133b}
\begin{split}
\abs{\zeta_i}&=\abs{\int_{\mathcal{I}_{z,i}^c(t)\cap\mathcal{J}_z(t)}\frac{\overline{\rho}_t(E+\overline{\mathfrak{e}}_t^+)}{\widetilde{z}_i-E}\diff E-\frac{1}{N}\sum_{1\le \abs{j}< \frac{3i_*}{4}}^{\mathcal{A}^c,(i)}\frac{1}{\widetilde{z}_i-\widetilde{z}_j}} \\
& \lesssim \abs{\sum_{1\le \abs{j}< \frac{3i_*}{4}}^{\mathcal{A}^c,(i)}\int_{\overline{\gamma}_j}^{\overline{\gamma}_{j+1}}\frac{\overline{\rho}_t(E+\overline{\mathfrak{e}}_t^+)(E-\overline{\gamma}_j)}{(\widetilde{z}_i-E)(\widetilde{z}_i-\overline{\gamma}_j)}\diff E}
 + \abs{\frac{1}{N}\sum_{1\le \abs{j}< \frac{3i_*}{4}}^{\mathcal{A}^c,(i)}\frac{\widetilde{z}_j-\overline{\gamma}_j}{(\widetilde{z}_i-\widetilde{z}_j)(\widetilde{z}_i-\overline{\gamma}_j)}} \\
&\quad+\abs{\int_{\overline{\gamma}_{j_+}}^{\overline{\gamma}_{j_++1}}\frac{\overline{\rho}_t(E+\overline{\mathfrak{e}}_t^+)}{\widetilde{z}_i-E}\diff E
}+\abs{\int_{\overline{\gamma}_{-\frac{3i_*}{4}}}^{\overline{\gamma}_{-\frac{3i_*}{4}+1}}\frac{\overline{\rho}_t(E+\overline{\mathfrak{e}}_t^+)}{\widetilde{z}_i-E}\diff E}+\abs{\int_0^{\overline{\gamma}_1}\frac{\overline{\rho}_t(E+\overline{\mathfrak{e}}_t^+)}{\widetilde{z}_i-E}\diff E}.
\end{split}
\end{equation} 
Since $\abs{j_+-i}\ge N^{4\omega_\ell}+N^{\omega_\ell}\abs{i}^\frac{3}{4}$ and $N^{\omega_A}$, i.e.~
\[\abs{\overline{\gamma}_{j_+}-\overline{\gamma}_i}\ge \frac{N^{\omega_\ell}\abs{i}^\frac{1}{2}}{N^\frac{3}{4}}
\]
 is bigger than the rigidity scale~\eqref{riglemma111}, all terms in the last line of~\eqref{E133b} are bounded by $N^{-\frac{1}{4}-3\omega_\ell}$.

Then, using the rigidity estimate in~\eqref{riglemma111} for the first and the second term of the rhs.~of~\eqref{E133b}, we conclude that 
\begin{equation}
\label{E122b}
\abs{\zeta_i}\lesssim \frac{N^a}{N^\frac{7}{4}}\sum_{1\le \abs{j}< \frac{3i_*}{4}}^{\mathcal{A}^c,(i)}\frac{1}{(\overline{\gamma}_i-\overline{\gamma}_j)^2}
+ N^{-\frac{1}{4}-3\omega_\ell}.
\end{equation} The sum on the rhs. of~\eqref{E122b} is over all the $j$, negative and positive, 
but the main contribution comes from $i$ and $j$ with the same sign, because if $i$ and $j$ have opposite sign then
 \[
 \frac{1}{(\overline{\gamma}_i-\overline{\gamma}_j)^2}\le\frac{1}{(\overline{\gamma}_{-i}-\overline{\gamma}_j)^2}.
 \]
  Hence, assuming that $i$ is positive (for negative $i$'s we proceed exactly in the same way), we conclude that 
  \begin{equation}
\label{E144b}
\abs{\zeta_i}\lesssim \frac{N^a}{N^\frac{7}{4}}\sum_{1\le j < \frac{3i_*}{4}}^{\mathcal{A}^c,(i)}\frac{1}{(\overline{\gamma}_i-\overline{\gamma}_j)^2}
+N^{-\frac{1}{4}-3\omega_\ell}.
\end{equation}

From now  we assume that both $i$ and $j$ are positive. 
In order to estimate~\eqref{E144b} we use the explicit expression of the quantiles from~\eqref{gamma gap}, i.e.~
\[\overline{\gamma}_j\sim\max\left\{ \Big(\frac{j}{N}\Big)^{2/3} \ov\Delta_t^\frac{1}{9}, \Big(\frac{j}{N}\Big)^{3/4}\right\},
\]
 where $\overline{\Delta}_t\lesssim t_*^{3/2}$ denotes the length of the small gap of $\overline{\rho}_t$, for all $\abs{j}\le i_*\sim N^\frac{1}{2}$. 
 A simple calculation from~\eqref{gamma gap} shows that
in the regime $i\ge N^{\om_A}$ and $j\in \mathcal{A}^c$ we may replace  $\abs{\ov \gamma_i -\ov \gamma_j}\sim \abs{\gamma_{y,i}(t)-\gamma_{y,j}(t)}
 \sim \abs{i^{3/4} -j^{3/4}}/N^{3/4}$, hence 
\begin{equation}
\label{E1b}
\abs{\zeta_i}\lesssim  \frac{N^a}{N^\frac{1}{4}}\sum_{1\le j< \frac{3i_*}{4}}^{\mathcal{A}^c,(i)}\frac{i^\frac{1}{2}+j^\frac{1}{2}}{(i-j)^2}
+N^{-\frac{1}{4}-3\omega_\ell}.
\end{equation}
In fact, the same replacement works if either $i\ge N^{4\om_\ell}$ or $j\ge N^{4\om_\ell}$ and at least one
of these two inequalities always hold as $(i,j)\in\mathcal{A}^c$.
Using  $i\le \frac{i_*}{2}$ and that by the restriction $(i,j)\in\mathcal{A}^c$ we have  $\abs{j- i}\ge \ell(\ell^3+i^\frac{3}{4})$,
elementary calculation gives
 \begin{equation}
\label{fe1b}
\abs{\zeta_i}\lesssim 
 \frac{N^a}{N^\frac{1}{4}N^{2\omega_\ell}}.
\end{equation} Since analogous computations hold for $i$ and $j$ both negative, we have 
 \begin{equation}
\label{fe2b}
\abs{\zeta_i}\lesssim \frac{N^a}{N^\frac{1}{4}N^{2\omega_\ell}}, \qquad \mbox{for any}\quad N^{\omega_A}< \abs{i}\le \frac{i_*}{2}.
\end{equation} with very high probability.

Next, we proceed with the bound for $\zeta_i$ for $\abs{i}\le N^{\omega_A}$. From~\eqref{z1b}  we have
 \begin{equation}
\label{E2b}
\begin{split}
\zeta_i&=
\left( \int_{\mathcal{I}_{z,i}(t)^c\cap\mathcal{J}_z(t)}\frac{\ov\rho_{t}(E+\ov{\mathfrak{e}}_{t}^+)}{\widetilde{z}_i-E}\diff E 
- \frac{1}{N}\sum_{\abs{j}<\frac{3i_*}{4}}^{\mathcal{A}^c,(i)}\frac{1}{\widetilde{z}_i-\widetilde{z}_j}\right) \\
&\quad +\left( \int_{\mathcal{J}_z(t)^c}\frac{\overline{\rho}_t(E+\overline{\mathfrak{e}}_t^+)}{\widetilde{z}_i-E}\diff E- \frac{1}{N}\sum_{\abs{j}\ge\frac{3i_*}{4}}^{\mathcal{A}^c,(i)}\frac{1}{\widetilde{z}_i-\widetilde{z}_j}\right) \\
&\quad +\Phi_\alpha(t)- \Re [\overline{m}_t(\widetilde{z}_i+\overline{\mathfrak{e}}_t^+)]+\Re[m_{y,t}(\widetilde{z}_i+\mathfrak{e}_{y,t}^+)] -\Re [m_{y,t}(\mathfrak{e}_{y,t}^+)]\\
&\quad +\left(\int_{\mathcal{I}_{z,i}(t)}\frac{\overline{\rho}_t(E+\overline{\mathfrak{e}}_t^+)}{\widetilde{z}_i-E}\diff E-\int_{\mathcal{I}_{y,i}(t)}\frac{\rho_{y,t}(E+\mathfrak{e}_{y,t}^+)}{\widetilde{z}_i-E}\diff E\right)\defqe   A_1+ A_2 + A_3 + A_4.
\end{split}\end{equation}

By the remark after~\eqref{E1b}, the estimate of $A_1$ proceeds as in~\eqref{E1b} and so we conclude that
 \begin{equation}\label{B11}
 \abs{A_1}\lesssim\frac{N^a}{N^\frac{1}{4} N^{2\omega_\ell}}.
 \end{equation}

To estimate $A_2$, we first notice  that the restriction $(i,j)\in\mathcal{A}^c$ in the summation is superfluous 
for $\abs{i}\le N^{\om_A}$ and $\abs{j}\ge \frac{3}{4}i_*$. Let $\eta_1 \in [N^{-\frac{3}{4}+\frac{3}{4}\om_A}, N^{-\delta}]$, for some small fixed $\delta>0$, be an auxiliary scale we will determine later in the proof, then we write
 $A_2$  as follows:
 \begin{equation}
\label{B121b}
\begin{split}
A_2=&\left( \int_{\mathcal{J}_z(t)^c}\frac{\overline{\rho}_t(E+\overline{\mathfrak{e}}_t^+)}{\widetilde{z}_i-E}\diff E -\int_{\mathcal{J}_z(t)^c}\frac{\overline{\rho}_t(E+\overline{\mathfrak{e}}_t^+)}{\widetilde{z}_i-E+\ii \eta_1}\diff E\right) \\
&\quad +\left(\frac{1}{N}\sum_{\abs{j}\ge\frac{3i_*}{4}}\frac{1}{\widetilde{z}_i-\widetilde{z}_j+\ii \eta_1}
-\frac{1}{N}\sum_{\abs{j}\ge\frac{3i_*}{4}}\frac{1}{\widetilde{z}_i-\widetilde{z}_j}\right) \\
&\quad + \left(\frac{1}{N}\sum_{\abs{j}<\frac{3i_*}{4}}\frac{1}{\widetilde{z}_i-\widetilde{z}_j+\ii \eta_1}-\int_{\mathcal{J}_z(t)}\frac{\overline{\rho}_t(E+\overline{\mathfrak{e}}_t^+)}{\widetilde{z}_i-E+\ii \eta_1}\diff E\right)\\
&\quad+ (\overline{m}_t(\widetilde{z}_i+\ii\eta_1)-m_{2N}(\widetilde{z}_i+\ii\eta_1,t,\alpha))\defqe A_{2,1}+A_{2,2}+A_{2,3}+A_{2,4},
\end{split}\end{equation}
 where we introduced
\[
m_{2N}(z,t,\alpha)\defeq \frac{1}{N}\sum_{\abs{j}\le N}\frac{1}{z_j(t,\alpha)-z}, \qquad z\in \mathbb{H}.
\] 

For $1\le \abs{i} \le N^{\omega_A}$ and $\abs{j}>\frac{3i_*}{4}$, the term $A_{2,2}$ is bounded by
the crude rigidity~\eqref{zerorig} as
 \begin{equation}
\label{B1A2b} 
\abs{A_{2,2}}\le \frac{1}{N}\sum_{\abs{j}>\frac{3i_*}{4}}\frac{\eta_1}{(\widetilde{z}_i-\widetilde{z}_j)^2}\lesssim \frac{N^\frac{1}{2}\eta_1}{i_*^\frac{1}{2}}.
\end{equation} 
Exactly the same estimate holds for $A_{2,1}$.

Next, using the rigidity estimates in~\eqref{zerorig} and~\eqref{riglemma111}  we conclude that \begin{equation}
\label{B1A4b}
\begin{split}
\abs{A_{2,4}}&\lesssim\frac{1}{N}\sum_{1\le \abs{j}\le i_*}\frac{\abs{\widetilde{z}_j-\overline{\gamma}_j}}{\abs{\widetilde{z}_i-\widetilde{z}_j+\ii \eta_1}^2}+\frac{1}{N}\sum_{i_*\le \abs{j}\le N}\frac{\abs{\widetilde{z}_j-\overline{\gamma}_j}}{\abs{\widetilde{z}_i-\widetilde{z}_j+\ii \eta_1}^2}\\
&\lesssim \frac{N^a}{N^\frac{3}{4}\eta_1}\Im m_{N}(\overline{\gamma}_i+\ii \eta_1)+\frac{N^{a_0}}{N^\frac{7}{4}}\sum_{i_*\le \abs{j}\le N}\frac{1}{(\overline{\gamma}_i-\overline{\gamma}_j)^2}\\
&\lesssim \frac{N^a}{N^\frac{3}{4}\eta_1}\left(\frac{N^\frac{3\omega_A}{4}}{N^\frac{3}{4}}+\eta_1\right)^\frac{1}{3}+\frac{N^{a_0}}{N^\frac{1}{4}i_*^\frac{1}{2}} \lesssim \frac{N^a}{N^\frac{3}{4}\eta_1^\frac{2}{3}} +\frac{N^{a_0}}{i_*^{\frac{1}{2}}N^\frac{1}{4}}.
\end{split}\end{equation} 
Here we used that the rigidity scale near $i$ for $1\le \abs{i}\le N^{\om_A}$ is much smaller than  $\eta_1\ge N^{-\frac{3}{4}+\frac{3}{4}\om_A}$. 
In particular, we know that  $\Im m_N(\overline{\gamma}_i+\ii \eta_1)$ can be bounded by the density $\ov\rho_t (\overline{\gamma}_i+ \eta_1)$
 which in turn is bounded by $(\overline{\gamma}_i+ \eta_1)^{1/3}$.
Similarly we conclude that \[
\abs{A_{2,3}}\le \frac{N^a}{N^\frac{3}{4}\eta_1^\frac{2}{3}}.
\]

Optimizing~\eqref{B1A2b} and~\eqref{B1A4b} for $\eta_1$, 
 we choose \(\eta_1 =(i_\ast^{1/2}N^{a-5/4})^{3/5}\) which falls into the required interval for $\eta_1$. Collecting all estimates for the parts of $A_2$ in~\eqref{B121b}, we
  therefore conclude that 
 \begin{equation}
\label{B1,2}
\abs{A_2}\le  \frac{N^\frac{3a}{5}}{i_*^\frac{1}{5}N^{\frac{1}{4}}}+\frac{N^{a_0}}{i_*^\frac{1}{2}N^\frac{1}{4}}.
\end{equation}

Next, we treat $A_3$ from~\eqref{E2b}. $\Phi_\alpha(t)=\Re[\overline{m}_t(\overline{\mathfrak{e}}_t^+)]+\mathcal{O}(N^{-1})$ by~\eqref{Phiest}, then by~\eqref{Re m gap} we conclude that \begin{equation}
\label{E4b}
\begin{split}
\abs{A_3}&=\abs{\Re [\overline{m}_t(\overline{\mathfrak{e}}_t^+)]- \Re [\overline{m}_t(\widetilde{z}_i+\overline{\mathfrak{e}}_t^+)]+\Re[m_{y,t}(\widetilde{z}_i+\mathfrak{e}_{y,t}^+)] -\Re [m_{y,t}(\mathfrak{e}_{y,t}^+)]} \\
&\lesssim\left(\frac{\abs{i}^\frac{1}{4}N^\frac{7\omega_1}{18}}{N^\frac{1}{4}N^\frac{1}{6}}+\frac{\abs{i}^\frac{1}{2}}{N^\frac{1}{2}}\right)\abs{\log \abs{\overline{\gamma}_i}}\lesssim \frac{N^\frac{\omega_A}{4}N^\frac{7\omega_1}{18}\log N}{N^\frac{1}{4} N^\frac{1}{6}}+\frac{N^\frac{\omega_A}{2}\log N}{N^\frac{1}{2}}.
\end{split}\end{equation}

We proceed writing $A_4$ as 
\begin{equation}
\label{B3b}
\begin{split}
A_4&=\left(\int_{\mathcal{I}_{z,i}(t)}\frac{\overline{\rho}_t(E+\overline{\mathfrak{e}}_t^+)}{\widetilde{z}_i-E}\diff E-\int_{\mathcal{I}_{z,i}(t)}\frac{\rho_{y,t}(E+\mathfrak{e}_{y,t}^+)}{\widetilde{z}_i-E}\diff E \right) \\
&\quad + \left(\int_{\mathcal{I}_{z,i}(t)}\frac{\rho_{y,t}(E+\mathfrak{e}_{y,t}^+)}{\widetilde{z}_i-E}\diff E-\int_{\mathcal{I}_{y,i}(t)}\frac{\rho_{y,t}(E+\mathfrak{e}_{y,t}^+)}{\widetilde{z}_i-E}\diff E\right)\defqe A_{4,1}+A_{4,2}.
\end{split}\end{equation} 

We start with the estimate for $A_{4,2}$.
By~\eqref{intmaxj}  and the comparison estimates between $\ov\gamma_{z,i}$ and $\wh\gamma_{y,i}$ by~\eqref{gamma difference gap}
we have that 
\begin{equation}
\label{bsdb}
\abs{\mathcal{I}_{z,i}(t)\Delta\mathcal{I}_{y,i}(t)}\lesssim
\abs{\ov \gamma_{z, i- j_-(i)} - \wh \gamma_{y, i-j_-(i)}} + \abs{\ov \gamma_{z, i+ j_+(i)} - \wh \gamma_{y, i+j_+(i)}}
\lesssim
 \frac{N^\frac{\omega_1}{2}(\ell^3+\abs{i}^\frac{3}{4})}{N^\frac{11}{12}}, 
\end{equation} 
where $\Delta$ is the symmetric difference. In the second inequality of~\eqref{bsdb} we used that 
$\abs{i\pm j_\pm(i)}\lesssim N^{\omega_A}$ and $\om_A\ll 1$.  For $E\in \mathcal{I}_{z,i}\Delta\mathcal{I}_{y,i}$
we have that 
\begin{equation}
\label{bosdb}
\abs{\frac{\rho_{y,t}(E+\mathfrak{e}_{y,t}^+)}{\widetilde{z}_i-E}}\lesssim \frac{N^\frac{1}{2}(\ell^2+\abs{i}^\frac{1}{2})}{\ell^3+\abs{i}^\frac{3}{4}}, 
\end{equation} and so, using $\abs{i}\le N^{\om_A}$, 
 \begin{equation}
\label{E5b}
\abs{A_{4,2}}\lesssim \frac{N^\frac{\omega_1}{2}N^\frac{\omega_A}{2}}{N^\frac{5}{12}}= \frac{N^\frac{\omega_1}{2}N^\frac{\omega_A}{2}}{N^\frac{1}{4}N^\frac{1}{6}}
\end{equation} with very high probability.

To estimate the integral in $A_{4,1}$ we have to deal with the logarithmic singularity due to the values of $E$ close to $\widetilde{z}_i(t)$.
For $\max\{\overline{\mathfrak{e}}_t^-,\mathfrak{e}_{y,t}^-\}< E\le 0$ we have that \begin{equation}
\label{AQZ}
\rho_{y,t}(E+\mathfrak{e}_{y,t}^+)=\overline{\rho}_t(E+\overline{\mathfrak{e}}_t^+)=0.
\end{equation}
For $\min\{\overline{\mathfrak{e}}_t^-,\mathfrak{e}_{y,t}^-\}\le E \le \max\{\overline{\mathfrak{e}}_t^-,\mathfrak{e}_{y,t}^-\}$, using the 
$\frac{1}{3}$-H\"older continuity of $\overline{\rho}_t$ and $\rho_{y,t}$ and~\eqref{eq Delta size} we have that
 \begin{equation}
\label{negativep1b}
\abs{\rho_{y,t}(E+\mathfrak{e}_{y,t}^+)-\overline{\rho}_t(E+\overline{\mathfrak{e}}_t^+)}\lesssim \Delta_{y,t}^\frac{1}{3}(t_*-t)^\frac{1}{9}\lesssim\frac{N^\frac{11\omega_1}{18}}{N^\frac{11}{36}},
\end{equation} 
for all $0\le t\le t_*$. In the last inequality we used that
 $\Delta_{y,t}\le \Delta_{y,0}\lesssim  N^{-\frac{3}{4}+\frac{3\omega_1}{2}}$  
 for all $t\le t_*$. 
Similarly, for $E\le \min\{\overline{\mathfrak{e}}_t^-,\mathfrak{e}_{y,t}^-\}$ we have that \begin{equation}
\label{negativepb}
\abs{\rho_{y,t}(E+\mathfrak{e}_{y,t}^+)-\overline{\rho}_t(E+\overline{\mathfrak{e}}_t^+)}\lesssim\abs{\rho_{y,t}(E'+\mathfrak{e}_{y,t}^-)-\overline{\rho}_t(E'+\overline{\mathfrak{e}}_t^-)}+\Delta_{y,t}^\frac{1}{3}(t_*-t)^\frac{1}{9},
\end{equation} with $E'\le 0$.

Using~\eqref{rho rho gap} for $ E\ge 0$ and combining~\eqref{rho rho gap} with~\eqref{AQZ}-\eqref{negativepb} for $E<0$, we have that \begin{equation}
\label{DD1b}
\begin{split}
\abs{A_{4,1}}&\lesssim  \left(\frac{(\ell+\abs{i}^\frac{1}{4})N^\frac{\omega_1}{3}}{N^\frac{1}{4}N^\frac{1}{6}}+\frac{(\ell^2+\abs{i}^\frac{1}{2})}{N^\frac{1}{2}}+\frac{N^\frac{11\omega_1}{18}}{N^\frac{11}{36}}\right)\int_{\mathcal{I}_{,i}(t)\cap \{\abs{E-\widetilde{z}_i}>N^{-60}\}}\frac{1}{\abs{\widetilde{z}_i-E}}\diff E \\
&\quad +\abs{\int_{\abs{E-\widetilde{z}_i}\le N^{-60} }\frac{\overline{\rho}_t(E+\overline{\mathfrak{e}}_t^+)-\rho_{y,t}(E+\mathfrak{e}_{y,t}^+)}{\widetilde{z}_i-E}\diff E}.
\end{split}\end{equation} 
The two singular integrals in the second line are estimated separately.
By the $\frac{1}{3}$-H\"older continuity $\rho_{y,t}$ we conclude that 
\[\begin{split}
\abs{\int_{\abs{E-\widetilde{z}_i}\le N^{-60}}\frac{\rho_{y,t}(E+\mathfrak{e}_{y,t}^+)}{\widetilde{z}_i-E}\diff E}&=\abs{\int_{\abs{E-\widetilde{z}_i}\le N^{-60}}\frac{\rho_{y,t}(E+\mathfrak{e}_{y,t}^+)-\rho_{y,t}(\widetilde{z}_i+\mathfrak{e}_{y,t}^+)}{\widetilde{z}_i-E}\diff E}\\
&\lesssim \int_{\abs{E-\widetilde{z}_i}\le N^{-60}}\frac{1}{\abs{\widetilde{z}_i-E}^\frac{2}{3}}\diff E\lesssim N^{-20}.
\end{split}\]
The same bound holds for the other singular integral in~\eqref{DD1b} by using the $\frac{1}{3}$-H\"older continuity of $\overline{\rho}_t$.
Hence, 
  for $1\le \abs{i}\le N^{\omega_A}$, by~\eqref{DD1b} we have that 
\begin{equation}
\label{DD12b}
\abs{A_{4,1}}\le \frac{N^\frac{\omega_A}{4}N^\frac{\omega_1}{3}\log N}{N^\frac{1}{4}N^\frac{1}{6}}+
\frac{ N^\frac{\omega_A}{2}\log N}{N^\frac{1}{2}}+\frac{N^\frac{11\omega_1}{18}\log N}{N^\frac{11}{36}},
\end{equation} 
 with very high probability.

Collecting all the estimates~\eqref{fe2b},~\eqref{B11},~\eqref{B1,2},~\eqref{E4b},~\eqref{E5b} and~\eqref{DD12b}, 
and recalling $\om_1\ll \om_\ell\ll \om_A\ll 1$, we see
that~\eqref{B11} is the largest term and thus $\abs{\zeta}\lesssim N^{-\frac{1}{4}-2\omega_\ell}N^{C\omega_1}$ as $a\le C\om_1$. 
Thus, using~\eqref{infinityvb}, 
 we conclude that the estimate in~\eqref{shortestimateb} is satisfied for all $0\le t\le T$. In particular, this means that 
\[\abs{\widehat{z}_i(t,\alpha)-\widetilde{z}_i(t,\alpha)}\le N^{-\frac{3}{4}+C\omega_1}, \qquad 0\le t\le T,\] 
for some small constant $C>0$. We conclude the proof of this lemma showing that $T\ge t_*$. 

Suppose by contradiction that $T<t_*$, then, since the solution of the DBM have continuous paths (see Theorem 12.2 of~\cite{MR3699468}),
 we have that 
\[ 
\abs{\widehat{z}_i(T+\tilde{t},\alpha)-\widetilde{z}_i(T+\tilde{t},\alpha)}\le 
\frac{N^a N^{c\omega_1}}{N^\frac{3}{4}N^{2\omega_\ell}},
\] for some tiny
 $\tilde{t}>0$ and for any $\alpha\in [0,1]$. This bound is much smaller than the threshold 
 $\abs{\mathcal{I}_{y,i}(t)},\abs{\mathcal{I}_{z,i}(t)}\sim N^{-\frac{3}{4}+3\omega\ell}$ in the definition of $T$.
 But this is a contradiction by the maximality in the definition of $T$, hence $T=t_*$, proving 
~\eqref{shortestimateb}  for all $0\le t\le t_*$. This completes the proof of Lemma~\ref{seb111}.
\endproof
\end{lemma}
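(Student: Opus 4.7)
The plan is to set $w_i \defeq \widehat{z}_i - \widetilde{z}_i$ and derive a discrete parabolic equation $\partial_t w = \mathcal{B}_1 w + \mathcal{V}_1 w + \zeta$, where $\mathcal{B}_1$ is the symmetric short-range interaction operator whose kernel involves both $\widetilde{z}$ and $\widehat{z}$ in the denominator, $\mathcal{V}_1$ is a diagonal operator coming from the mean-field terms in the definitions~\eqref{333}--\eqref{555}, and $\zeta$ gathers all remaining discrepancies between Riemann sums, integrals and the correct shifts $\Phi_\alpha(t)$. The initial data vanish by~\eqref{666}, so by Duhamel $w(t) = \int_0^t \mathcal{U}^{\mathcal{B}_1 + \mathcal{V}_1}(s,t)\zeta(s)\,\diff s$.

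To make this useful, I introduce a stopping time $T \le t_*$ defined by the condition that $\abs{\widehat{z}_i - \widetilde{z}_i} \le \tfrac{1}{2}\min\{|\mathcal{I}_{z,i}|,|\mathcal{I}_{y,i}|\}$ holds on $[0,T]$. On this time interval, the short-range cutoffs of $\widetilde{z}$ and $\widehat{z}$ agree in support, which guarantees $\mathcal{V}_1 \le 0$; hence $\mathcal{U}^{\mathcal{B}_1 + \mathcal{V}_1}$ is an $\ell^\infty$ contraction and $\|w(t)\|_\infty \le t_* \sup_{s \le t} \|\zeta(s)\|_\infty$. The proof then reduces to estimating $\|\zeta\|_\infty$ uniformly in $t \le T$, and closing the stopping time argument by continuity at the end.

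The bulk of the work is the estimate on $\zeta$, which naturally splits into three index regimes. For $|i| > i_*/2$ the error vanishes by construction. For $N^{\omega_A} < |i| \le i_*/2$, $\zeta_i$ is the discrepancy between the Riemann sum over $(i,j) \in \mathcal{A}^c$ and the integral against $\overline{\rho}_t$ over $\mathcal{I}_{z,i}(t)^c \cap \mathcal{J}_z(t)$, plus boundary integrals at the endpoints of $\mathcal{J}_z(t)$; applying rigidity~\eqref{riglemma111} together with the asymptotics $\overline{\gamma}_j \sim (|j|/N)^{3/4}$ from~\eqref{gamma gap} and summing $\sum_j |i-j|^{-2}(|i|^{1/2}+|j|^{1/2})$ restricted to $|i-j| \gtrsim \ell(\ell^3 + |i|^{3/4})$ yields the $N^a/(N^{1/4} N^{2\omega_\ell})$ bound. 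For $|i| \le N^{\omega_A}$, $\zeta_i$ decomposes as $A_1 + A_2 + A_3 + A_4$: $A_1$ is handled as above; $A_2$ is the hardest piece and I plan to insert a regularisation $\eta_1 \in [N^{-3/4+3\omega_A/4}, N^{-\delta}]$ and interpret the difference through $\overline{m}_t(\widetilde{z}_i + \ii \eta_1) - m_{2N}(\widetilde{z}_i + \ii \eta_1,t,\alpha)$, optimising $\eta_1 = (i_*^{1/2} N^{a-5/4})^{3/5}$; $A_3$ uses the Stieltjes transform comparison~\eqref{Re m gap} together with $\Phi_\alpha(t) = \Re\overline{m}_t(\overline{\ed}_t^+) + \mathcal{O}(N^{-1})$ from~\eqref{Phiest}; and $A_4$ requires splitting the near-singular region $|E-\widetilde{z}_i| \le N^{-60}$ and using $1/3$-Hölder continuity of $\rho_{y,t}$ and $\overline{\rho}_t$, combined with $|\mathcal{I}_{z,i} \Delta \mathcal{I}_{y,i}| \lesssim N^{\omega_1/2}(\ell^3+|i|^{3/4})/N^{11/12}$ from~\eqref{gamma difference gap}, and the density difference estimate~\eqref{rho rho gap} below the edges including the $\Delta_{y,t}^{1/3}(t_\ast-t)^{1/9} \lesssim N^{-11/36+11\omega_1/18}$ correction at the left edge.

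The main obstacle is the estimate of $A_2$ in~\eqref{B121b}, the far-field Riemann-sum-to-integral error over the full spectrum through the regulariser $\eta_1$: one must simultaneously control the contribution of indices $|j| \ge \tfrac{3}{4} i_*$ (where only the weak rigidity~\eqref{zerorig} with exponent $a_0 > a$ is available) and the cusp-regime singularity of $\overline{m}_t$, requiring the nontrivial interpolation of exponents that produces the $N^{3a/5}/(i_*^{1/5} N^{1/4})$ and $N^{a_0}/(i_*^{1/2} N^{1/4})$ terms in~\eqref{shortestimateb}. Once $\|\zeta\|_\infty$ is controlled in all three regimes, combining them in~\eqref{infinityvb} yields the claimed bound on $[0,T]$; finally, since the bound is strictly below the gap threshold $\min\{|\mathcal{I}_{z,i}|,|\mathcal{I}_{y,i}|\} \sim N^{-3/4+3\omega_\ell}$ by $\omega_1 \ll \omega_\ell$, continuity of paths forces $T = t_*$, closing the argument.
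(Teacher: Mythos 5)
Your proposal follows the paper's own proof essentially step for step: the same definition of $w=\widehat{z}-\widetilde{z}$ and parabolic equation with $\mathcal{B}_1,\mathcal{V}_1,\zeta$, the same stopping time ensuring $\mathcal{V}_1\le 0$ and $\ell^\infty$-contraction via Duhamel, the same three index regimes with the decomposition $A_1+A_2+A_3+A_4$, the identical regularisation and optimisation $\eta_1=(i_*^{1/2}N^{a-5/4})^{3/5}$ for $A_2$, and the same continuity argument closing $T=t_*$. The approach is correct and matches the paper's argument.
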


\begin{proof}[Proof of Lemma~\ref{se}.]
The proof of this lemma  is very similar to that of Lemma~\ref{seb111}, 
hence we will only  sketch the proof by indicating the differences. 
The main difference is that in this lemma we have optimal $i$-dependent rigidity for all $1\le \abs{i}\le i_*$. Hence, we can give a better estimate 
on the first two terms in~\eqref{E133b} as follows  (recall that $N^{\om_A}\le i\le \frac{i_*}{2}$)
\[
\abs{\zeta_i} \lesssim \frac{N^ \xi N^\frac{\omega_1}{6}}{N^\frac{3}{4}}\sum_{\abs{j}<\frac{3i_*}{4}}\frac{1}{(\overline{\gamma}_i-\overline{\gamma}_j)^2\abs{j}^\frac{1}{4}}\lesssim \frac{N^ \xi N^\frac{\omega_1}{6}}{N^\frac{3}{4}}\sum_{\abs{j}<\frac{3i_*}{4}}\frac{\abs{i}^\frac{1}{2}+\abs{j}^\frac{1}{2}}{(\abs{i}-\abs{j})^2\abs{j}^\frac{1}{4}}
\lesssim \frac{N^\xi N^\frac{\omega_1}{6}}{N^\frac{1}{4}N^{3\omega_\ell}}.
\] 
Compared with~\eqref{fe1b}, the additional $N^{\om_\ell}$ factor in the denominator comes from the $\abs{j}^{1/4}$ factor beforehand
that is due to the optimal dependence of the rigidity on the index. Consequently,  using the optimal rigidity in~\eqref{rzx},
 we improve the denominator in the
  first term on the rhs. of~\eqref{shortestimateb} from $N^{2\omega_\ell}$ to $N^{3\omega_\ell}$ with respect  Lemma~\ref{seb111}.

Furthermore, by~\eqref{rzx}, 
\[
\abs{A_{2,3}},\abs{A_{2,4}}\le \frac{N^\xi}{N\eta_1},\qquad\mbox{and}\qquad
\abs{A_{2,1}},\abs{A_{2,2}}\lesssim \frac{N^\frac{1}{2}\eta_1}{i_*^\frac{1}{2}}\lesssim N^{\frac{1}{4}-\frac{C_*\omega_1}{2}}\eta_1,
\] 
since $i_*= N^{\frac{1}{2}+C_*\omega_1}$, hence, choosing $\eta_1=N^{-\frac{5}{8}}$, 
we conclude that 
\[
\abs{A_1}+\abs{A_2}\lesssim \frac{N^\xi N^\frac{\omega_1}{6}}{N^\frac{1}{4}N^{3\omega_\ell}}+\frac{N^\xi}{N^\frac{3}{8}}.
\]
All other estimates follow exactly in the same way of the proof of Lemma~\ref{seb111}.
This concludes the proof of Lemma~\ref{se}.
\end{proof}

\subsection{Short-long approximation: Small minimum.}
\label{SLLM}
In this subsection we estimate the difference of the solution of the DBM $\widetilde{z}(t,\alpha)$ defined by~\eqref{interminz} and its short range approximation $\widehat{z}(t,\alpha)$ defined by~\eqref{SDE1min}-\eqref{idmin} for $t_*\le t\le 2t_*$.

\begin{lemma}
\label{semin111aaa}
Under the same assumption of Lemma~\ref{SLLG} and assuming that the rigidity
bounds~\eqref{zerorig} and~\eqref{riglemma111} hold for the $\wt z(t,\alpha)$ dynamics~\eqref{interminz} for all $t_*\le t\le 2t_*$,
 we conclude that \begin{equation}
\label{shortestimateminaaa}
\sup_{1\le \abs{i}\le N} \sup_{t_*\le t\le 2t_*}\abs{\widetilde{z}_i(t,\alpha)-\widehat{z}_i(t,\alpha)}\lesssim \frac{N^aN^{C\omega_1}}{N^\frac{3}{4}}\left(\frac{1}{N^{2\omega_\ell}}+ \frac{1}{N^\frac{2a}{5}i_*^\frac{1}{5}} 
+\frac{N^{a_0}}{N^ai_*^\frac{1}{2}}+\frac{1}{N^aN^\frac{1}{24}}\right),
\end{equation} with very high probability, for any $\alpha\in [0,1]$.
\proof
The proof of this lemma is similar to the proof of Lemma~\ref{SLLG}, but some estimates for the semicircular flow are slightly
different mainly because in this lemma the $\widetilde{z}_i(t,\alpha)$ 
are shifted by $\overline{\mathfrak{m}}_t$ instead of $\overline{\mathfrak{e}}_t^+$.
 Hence, we will skip some details in this proof, describing carefully only the estimates that are different respect to Lemma~\ref{SLLG}.

Let $w_i\defeq \widehat{z}_i-\widetilde{z}_i$, hence $w$ is a solution of \[ \partial_t=\mathcal{B}_1w+\mathcal{V}_1 w+\zeta,\] where $\mathcal{B}_1$ and $\mathcal{V}_1$ are defined as in~\eqref{B1b}-\eqref{V2b} substituting $\overline{\mathfrak{e}}_t^+$ with $\overline{\mathfrak{m}}_t$.

Without loss of generality we assume that $\mathcal{V}_1\le 0$ for all $t_*\le t\le T$ (see~\eqref{bigtimeb} in the proof of Lemma~\ref{SLLG}
but now we have $t_*\le t\le 2t_*$ in the definition of the stopping time).
 This implies that $\mathcal{U}^{\mathcal{B}_1+\mathcal{V}_1}$ is a contraction semigroup and 
 so in order to prove~\eqref{shortestimateminaaa} it is enough to estimate 
 \[\sup_{t_*\le t\le T}\lVert \zeta(s)\rVert_\infty.\] 
At the end, exactly as at the end of the proof of Lemma~\ref{seb111},
  by continuity of the paths, we can easily establish $T=2t_*$ for the stopping time.

The error term $\zeta$ is given by $\zeta_i=0$ for $\abs{i}> \frac{i_*}{2}$, then $\zeta_i$ for $1\le \abs{i}\le N^{\omega_A}$ is defined as \begin{equation}
\label{z1minaa}
\zeta_i=\int_{\mathcal{I}_{y,i}(t)^c}\frac{\rho_{y,t}(E+\widetilde{\mathfrak{m}}_{y,t})}{\widetilde{z}_i-E}\diff E-\frac{1}{N}\sum_j^{\mathcal{A}^c,(i)}\frac{1}{\widetilde{z}_i-\widetilde{z}_j}+\Psi_\alpha(t)+\frac{\diff}{\diff t}\widetilde{\mathfrak{m}}_{y,t},
\end{equation} with $\Psi_\alpha(t)$ defined in~\eqref{Psimin}, and for $N^{\omega_A}< \abs{i}\le \frac{i_*}{2}$ as \begin{equation}
\label{z2minaaa}
\zeta_i=\int_{\mathcal{I}_{z,i}(t)^c\cap\mathcal{J}_z(t)}\frac{\overline{\rho}_t(E+\overline{\mathfrak{m}}_t)}{\widetilde{z}_i-E}\diff E-\frac{1}{N}\sum_{\abs{j}<\frac{3i_*}{4}}^{\mathcal{A}^c,(i)}\frac{1}{\widetilde{z}_i-\widetilde{z}_j}.
\end{equation}

We start to estimate the error term for $N^{\omega_A}< \abs{i}\le \frac{i_*}{2}$. 
A similar computation as the one leading to~\eqref{fe2b} in Lemma~\ref{SLLG}, using~\eqref{riglemma111}, we conclude that 
\begin{equation}
\label{E133min}
\abs{\zeta_i}=\abs{\int_{\mathcal{I}_{i,z}^c(t)\cap\mathcal{J}_z(t)}\frac{\overline{\rho}_t(E+\overline{\mathfrak{m}}_t)}{\widetilde{z}_i-E}\diff E-\frac{1}{N}\sum_{\abs{j}< \frac{3i_*}{4}}^{\mathcal{A}^c,(i)}\frac{1}{\widetilde{z}_i-\widetilde{z}_j}}\lesssim \frac{N^a}{N^\frac{1}{4}N^{2\omega_\ell}},
\qquad N^{\omega_A}< \abs{i}\le \frac{i_*}{2}.
\end{equation}

Next, we proceed with the bound for $\zeta_i$ for $1\le \abs{i}\le N^{\omega_A}$. We rewrite $\zeta_i$ as 
\begin{equation}
\label{E2minaaa}
\begin{split}
\zeta_i&=\left(\int_{\mathcal{I}_{i,z}^c(t)}\frac{\overline{\rho}_t(E+\overline{\mathfrak{m}}_t)}{\widetilde{z}_i-E}\diff E-\frac{1}{N}\sum_j^{\mathcal{A}^c,(i)}\frac{1}{\widetilde{z}_i-\widetilde{z}_j}\right)\\
&\quad+\Re[m_{y,t}(\widetilde{z}_i+\widetilde{\mathfrak{m}}_{y,t})]+\frac{\diff}{\diff t}\widetilde{\mathfrak{m}}_{y,t} +\Psi_\alpha(t)-\Re[\overline{m}_t(\widetilde{z}_i+\overline{\mathfrak{m}}_t)] \\
&\quad +\left(\int_{\mathcal{I}_{z,i}(t)}\frac{\overline{\rho}_t(E+\overline{\mathfrak{m}}_t)}{\widetilde{z}_i-E}\diff E-\int_{\mathcal{I}_{y,i}(t)}\frac{\rho_{y,t}(E+\widetilde{\mathfrak{m}}_{y,t})}{\widetilde{z}_i-E}\diff E\right)\defqe (A_1+A_2)+A_3+A_4.
\end{split}\end{equation}
where $(A_1+A_2)$ indicates that for the actual estimates we split the first line in~\eqref{E2minaaa} into two terms 
as in~\eqref{E2b}.
By similar computations as in Lemma~\ref{SLLG}, see~\eqref{B11} and~\eqref{B1,2}, we conclude that 
\begin{equation}
\label{E3minaaa}
\abs{A_1}+\abs{A_2}\lesssim \frac{N^a}{N^\frac{1}{4}N^{2\omega_\ell}}+ \frac{N^\frac{3a}{5}}{N^\frac{1}{4}i_*^\frac{1}{5}} 
+\frac{N^{a_0}}{i_*^{\frac{1}{2}}N^\frac{1}{4}}.
\end{equation}

By~\eqref{eq wt mi mi},~\eqref{diff m m},~\eqref{Re m min} and the definition of $\Psi_\alpha(t)$ in~\eqref{Psimin} it follows that \begin{equation}
\label{E4minaaa}
\begin{split}
\abs{A_3}&\lesssim \abs{\Re[m_{y,t}(\widetilde{z}_i+\widetilde{\mathfrak{m}}_{y,t})-m_{y,t}(\widetilde{\mathfrak{m}}_{y,t})]-\Re[\overline{m}_t(\overline{\mathfrak{m}}_t)-\overline{m}_t(\widetilde{z}_i+\overline{\mathfrak{m}}_t)]}+\frac{N^{\omega_1}}{N}\\
&\lesssim \left(\frac{N^\frac{\omega_A}{4}N^\frac{\omega_1}{4}}{N^\frac{1}{4}N^\frac{1}{8}}+\frac{N^\frac{3\omega_1}{4}}{N^\frac{3}{8}}+\frac{N^\frac{\omega_A}{2}}{N^\frac{1}{2}}\right) \abs{\log \abs{\widehat{\gamma}_i(t)}}+\frac{N^\frac{7\omega_1}{12}}{N^\frac{7}{24}}\lesssim\frac{N^\frac{7\omega_1}{12}}{N^\frac{7}{24}}.
\end{split}\end{equation}

We proceed writing $A_4$ as \begin{equation}
\label{B3minaaa}
\begin{split}
A_4&=\left(\int_{\mathcal{I}_{z,i}(t)}\frac{\overline{\rho}_t(E+\overline{\mathfrak{m}}_t)}{\widetilde{z}_i-E}\diff E-\int_{\mathcal{I}_{z,i}(t)}\frac{\rho_{y,t}(E+\widetilde{\mathfrak{m}}_{y,t})}{\widetilde{z}_i-E}\diff E \right) \\
&\quad + \left(\int_{\mathcal{I}_{z,i}(t)}\frac{\rho_{y,t}(E+\widetilde{\mathfrak{m}}_{y,t})}{\widetilde{z}_i-E}\diff E-\int_{\mathcal{I}_{y,i}(t)}\frac{\rho_{y,t}(E+\widetilde{\mathfrak{m}}_{y,t})}{\widetilde{z}_i-E}\diff E\right)\defqe A_{4,1}+A_{4,2}.
\end{split}\end{equation} We start with the estimate for $A_{4,2}$.

By~\eqref{gamma difference min} we have that 
\begin{equation}
\label{bsd.1aaa}
\abs{\mathcal{I}_{z,i}(t)\Delta\mathcal{I}_{y,i}(t)}\lesssim \frac{N^\xi(\ell+\abs{i})}{N},
\end{equation} where $\Delta$ is the symmetric difference. Note that this bound is somewhat better than the analogous~\eqref{bsdb}
due to the better bound in~\eqref{gamma difference min} compared with~\eqref{gamma difference gap}.
For $E\in \mathcal{I}_{z,i}(t)\Delta\mathcal{I}_{y,i}(t)$  we have that 
 \begin{equation}
\label{bosd.1aaa}
\abs{\frac{\rho_{y,t}(E+\overline{\mathfrak{m}}_t)}{\widetilde{z}_i-E}}\lesssim \frac{N^\frac{1}{2}(\ell^2+\abs{i}^\frac{1}{2})}{\ell^3+\abs{i}^\frac{3}{4}}, 
\end{equation} and so \begin{equation}
\label{E5.1}
\abs{A_{4,2}}\lesssim \frac{N^\frac{3\omega_A}{4}}{N^\frac{1}{2}}
\end{equation} with very high probability.

To estimate the integral in $A_{4,1}$, we 
combine~\eqref{rho rho min} and~\eqref{eq wt mi mi} to  obtain that
 \begin{equation}
\label{dendiffholdaaa}
\abs{\overline{\rho}_t(\overline{\mathfrak{m}}_t+E)-\rho_{y,t}(\mathfrak{\widetilde{m}}_{y,t}+E) }\le\abs{\rho_{x,t}(\alpha\mathfrak{m}_{x,t}+(1-\alpha)\mathfrak{m}_{y,t}+E)-\rho_{y,t}(\mathfrak{m}_{y,t}+E)}+(t-t_*)^\frac{7}{12}.
\end{equation} 
Proceeding similarly to the estimate of $\abs{A_{4,1}}$ at the end of the proof of Lemma~\ref{SLLG}, we conclude that \begin{equation}
\label{DD1minaaa}
\begin{split}
\abs{A_{4,1}}&\lesssim\left(\frac{N^\xi (\ell^2+\abs{i}^\frac{1}{2})}{N^\frac{1}{2}}+\frac{N^\frac{7\omega_1}{12}}{N^\frac{7}{24}}\right)\int_{\mathcal{I}_{z,i}(t)\cap \{\abs{E-\widetilde{z}_i}>N^{-60}\}}\frac{1}{\abs{\widetilde{z}_i-E}}\diff E \\
&\quad +\abs{\int_{\abs{E-\widetilde{z}_i}\le N^{-60}}\frac{\overline{\rho}_t(E+\overline{\mathfrak{m}}_t)-\rho_{y,t}(E+\widetilde{\mathfrak{m}}_{y,t})}{\widetilde{z}_i-E}\diff E}.
\end{split}\end{equation} 
Furthermore, similarly to the estimate in the singular integral in~\eqref{DD1b}, but substituting $\overline{\mathfrak{e}}_t^+$ and $\mathfrak{e}_{y,t}^+$ by $\overline{\mathfrak{m}}_t$ and $\widetilde{\mathfrak{m}}_{y,t}$ respectively, we conclude that
that the last term in~\eqref{DD1minaaa} is bounded by $N^{-20}$.
Therefore, 
\begin{equation}
\label{DD13minaaa}
\abs{A_{41}}\lesssim \frac{N^\xi (\ell^2+\abs{i}^\frac{1}{2})}{N^\frac{1}{2}}+
\frac{N^\frac{7\omega_1}{12}}{N^\frac{7}{24}}\lesssim \frac{N^\frac{7\omega_1}{12}}{N^\frac{7}{24}},
\end{equation} for any $\abs{i}\le N^{\omega_A}$.
Collecting~\eqref{E3minaaa},~\eqref{E4minaaa},~\eqref{E5.1} and~\eqref{DD13minaaa} completes the proof of Lemma~\ref{semin111aaa}.
\endproof
\end{lemma}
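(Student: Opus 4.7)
The plan is to mirror the proof of Lemma~\ref{seb111} with systematic modifications reflecting the small-minimum regime. Every occurrence of the reference edge $\overline{\mathfrak{e}}_t^+$ is replaced by the approximate minimum $\overline{\mathfrak{m}}_t$, the reference edge $\mathfrak{e}_{y,t}^+$ by $\widetilde{\mathfrak{m}}_{y,t}$, and the drift correction $\Phi_\alpha(t)$ by $\Psi_\alpha(t)$ from~\eqref{Psimin}. Correspondingly, the density and Stieltjes-transform inputs from Section~\ref{sec scflow} are used in their small-minimum versions: \eqref{rho rho min},~\eqref{gamma difference min}, and~\eqref{Re m min} replace~\eqref{rho rho gap},~\eqref{gamma difference gap}, and~\eqref{Re m gap}, respectively.

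First I would set $w_i\defeq \wh z_i - \wt z_i$ and subtract~\eqref{interminz} from~\eqref{SDE1min}--\eqref{SDE3min} to obtain a linear parabolic equation $\partial_t w = \mathcal{B}_1 w + \mathcal{V}_1 w + \zeta$ on $t_*\le t\le 2t_*$, where $\mathcal{B}_1$ is the symmetric short-range interaction kernel analogous to~\eqref{B1b} and $\mathcal{V}_1$ is the non-positive multiplication operator analogous to~\eqref{V1b}--\eqref{V2b}, with the above shift replacements. Introduce the natural stopping time $T\in[t_*,2t_*]$ before which $\abs{w_i(t)}\le \tfrac{1}{2}\min\{\abs{\mathcal{I}_{z,i}(t)},\abs{\mathcal{I}_{y,i}(t)}\}$, so that on $[t_*,T]$ the propagator $\mathcal{U}^{\mathcal{B}_1+\mathcal{V}_1}$ is an $\ell^\infty$-contraction. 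Since $w(t_*)=0$ by~\eqref{idmin}, Duhamel's formula gives $\norm{w(t)}_\infty\le (t-t_*)\sup_{s\le t}\norm{\zeta(s)}_\infty$, so it suffices to bound $\norm{\zeta}_\infty$ by the right-hand side of~\eqref{shortestimateminaaa}; a standard contradiction argument based on path continuity of DBM then upgrades $T$ to $2t_*$.

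For the pointwise estimate on $\zeta$ I would proceed regime by regime as in Lemma~\ref{seb111}. In the intermediate range $N^{\omega_A}<\abs{i}\le i_*/2$, the analysis parallels~\eqref{E133b}--\eqref{fe2b}: a Riemann-sum discretization error plus a rigidity error gives a contribution $\lesssim N^{a-1/4-2\omega_\ell}$, the $N^{-2\omega_\ell}$ saving arising from the cutoff scale of $\mathcal{A}$. In the small-index range $\abs{i}\le N^{\omega_A}$, I would decompose $\zeta_i$ into four pieces $A_1+A_2+A_3+A_4$ exactly as in~\eqref{E2b}: the discretization inside $\mathcal{J}_z$ (same argument as above), the far-field contribution of $\mathcal{J}_z^c$ (handled by an auxiliary $\eta_1$-regularization to compare the empirical and self-consistent Stieltjes transforms at an imaginary shift $\eta_1$ optimized to $N^{-5/8}$), the shift-mismatch $A_3$, and the mismatch $A_4$ of the two near-$i$ integrals over $\mathcal{I}_{z,i}$ versus $\mathcal{I}_{y,i}$.

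The only genuinely new step, which I anticipate as the main obstacle, is controlling $A_3+A_4$. For $A_3$ I would use the identity $\Psi_\alpha(t)+\tfrac{d}{dt}\widetilde{\mathfrak{m}}_{y,t}=\Re[\overline{m}_t(\overline{\mathfrak{m}}_t)]-\Re[m_{y,t}(\widetilde{\mathfrak{m}}_{y,t})]+\mathcal{O}(N^{-1+\omega_1})$ inherited from~\eqref{Psimin},~\eqref{diff m m}, and~\eqref{eq wt mi mi}, reducing $A_3$ to the Stieltjes-transform comparison $\bigl(m_{y,t}(\wt z_i+\wt{\mathfrak{m}}_{y,t})-m_{y,t}(\wt{\mathfrak{m}}_{y,t})\bigr)-\bigl(\ov m_t(\wt z_i+\ov{\mathfrak{m}}_t)-\ov m_t(\ov{\mathfrak{m}}_t)\bigr)$, to which~\eqref{Re m min} applies and yields the bound $N^{7\omega_1/12-7/24}$; this is what produces the $N^{-1/24}$ factor in~\eqref{shortestimateminaaa}, sharper than the $N^{-1/18}$ in~\eqref{shortestimateb} due to the improved regularity past the cusp. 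For $A_4$ the boundary-window mismatch $\abs{\mathcal{I}_{z,i}\Delta\mathcal{I}_{y,i}}$ is estimated using the improved quantile comparison~\eqref{gamma difference min}, while the density comparison inside the common window combines~\eqref{rho rho min} with $\abs{\mathfrak{m}_{y,t}-\widetilde{\mathfrak{m}}_{y,t}}\lesssim (t-t_*)^{7/4}$ from~\eqref{eq wt mi mi}; the remaining logarithmic singularity of $(\wt z_i-E)^{-1}$ at $E=\wt z_i$ is disposed of by excising an $N^{-60}$-neighbourhood and invoking $1/3$-Hölder continuity of both densities. Summing the four contributions yields the four terms appearing in~\eqref{shortestimateminaaa}, and the bootstrap closes at once since each is much smaller than the threshold $N^{-3/4+3\omega_\ell}$ defining $T$.
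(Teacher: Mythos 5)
Your proposal follows essentially the same route as the paper's proof: the same Duhamel/contraction argument with the stopping time, the same split of $\zeta_i$ into the regimes $N^{\omega_A}<\abs{i}\le i_*/2$ and $\abs{i}\le N^{\omega_A}$, the same decomposition into $(A_1+A_2)+A_3+A_4$, and the same small-minimum inputs~\eqref{rho rho min},~\eqref{gamma difference min},~\eqref{Re m min},~\eqref{eq wt mi mi},~\eqref{diff m m},~\eqref{Psimin} producing the improved $N^{-7/24+7\omega_1/12}$ contribution behind the $N^{-1/24}$ term. This matches the paper's argument, so nothing further is needed.
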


\section{Sobolev-type inequality}
\label{STI}

The proof of the Sobolev-type inequality in the cusp case is essentially identical to that in the edge case
presented in Appendix B of~\cite{MR3253704}; only the exponents need adjustment
to the cusp scaling. We give some details for completeness.

\begin{proof}[Proof of Lemma~\ref{lm:Sobolev}]
We will prove only the first inequality in~\eqref{Sobolev}. The proof for the second one is exactly the same.
We start  by proving a continuous version of~\eqref{Sobolev} and then we will conclude the proof by linear interpolation.
We claim that for any small $\eta$ there exists a constant $c_\eta>0$ such that for any real function 
$f\in L^p(\mathbb{R}_+)$ we have that
\begin{equation}
\label{cSobolev}
\int_0^{+\infty}\int_0^{+\infty}\frac{(f(x)-f(y))^2}{\abs{x^\frac{3}{4}-y^\frac{3}{4}}^{2-\eta}}\diff x\diff y\ge c_\eta\left(\int_0^{+\infty}\abs{f(x)}^p\diff x\right)^\frac{2}{p}.
\end{equation}  

We recall the representation formula for fractional powers 
of the Laplacian: for any $0<\alpha<2$ and for any function  $f\in L^p(\mathbb{R})$ for
some $p\in [1,\infty)$ 
we have 
\begin{equation}
\label{laplace}
\langle f, \abs{p}^\alpha f\rangle=C(\alpha)\int_\mathbb{R}\int_\mathbb{R}\frac{(f(x)-f(y))^2}{\abs{x-y}^{1+\alpha}}\diff x\diff y,
\end{equation} with some explicit constant $C(\alpha)$, where $\abs{p}\defeq \sqrt{-\Delta}$. 

Since for $0<x<y$ we have that 
\[
y^\frac{3}{4}-x^\frac{3}{4}=\frac{4}{3}\int_x^ys^{-\frac{1}{4}}\, \diff s\le C(y-x)(xy)^{-\frac{1}{8}},
\] in order to prove~\eqref{cSobolev} it is enough to show that \begin{equation}
\label{ccSobolev}
\int_0^{+\infty}\int_0^{+\infty}\frac{(f(x)-f(y))^2}{\abs{x-y}^{2-\eta}} (xy)^q\diff x\diff y\ge c_\eta\left(\int_0^{+\infty}\abs{f(x)}^p\diff x\right)^\frac{2}{p},
\end{equation} where $q\defeq \frac{1}{4}-\frac{\eta}{8}$ and $p\defeq \frac{8}{2+3\eta}$. 
Let $\tilde{f}(x)$ be the symmetric extension of $f$ to the whole real line, i.e.
  $\tilde{f}(x)\defeq f(x)$ for $x>0$ and $\tilde{f}(x)\defeq f(-x)$ for $x<0$. Then, by a simple calculation we have
  \[
  \begin{split}
  4\int_0^{+\infty}\int_0^{+\infty}\frac{(f(x)-f(y))^2}{\abs{x-y}^{2-\eta}} (xy)^q\diff x\diff y 
  &\ge \int_\mathbb{R}\int_\mathbb{R}\frac{(\tilde{f}(x)-\tilde{f}(y))^2}{\abs{x-y}^{2-\eta}} \abs{xy}^q\diff x\diff y.
\end{split}
\] 
Introducing $\phi(x)\defeq \abs{x}^q$ and  dropping the tilde for $f$ 
the estimate in~\eqref{ccSobolev} would follow from
\begin{equation}
\label{ccccSobolev}
\int_\mathbb{R}\int_\mathbb{R}\frac{(f(x)-f(y))^2}{\abs{x-y}^{2-\eta}} \phi(x)\phi(y)\diff x\diff y\ge c_\eta'\left(\int_\mathbb{R}\abs{f(x)}^p\diff x\right)^\frac{2}{p}.
\end{equation}

By the same computation as in the proof of Proposition 10.5 in~\cite{MR3253704} we conclude that
 \[
 \begin{split}
\int_\mathbb{R}\int_\mathbb{R}\frac{(f(x)-f(y))^2}{\abs{x-y}^{2-\eta}}\phi(x)\phi(y)\diff x\diff y 
=\langle \phi f, \abs{p}^{1-\eta}\phi f\rangle+C_0(\eta)\int_\mathbb{R}\frac{\abs{\phi(x)f(x)}^2}{\abs{x}^{1-\eta}}\diff x
\end{split}
\]
with some $C_0(\eta)>0$, 
hence for the proof of~\eqref{ccccSobolev} it is enough to show that 
\[\langle \phi f, \abs{p}^{1-\eta}\phi f\rangle \ge c_\eta\left(\int_\mathbb{R} \abs{f}^p\right)^\frac{2}{p}.\] 
Let $g\defeq \abs{p}^{\frac{1}{2}(1-\eta)}\abs{x}^qf$, we need to prove that \[ \lVert g\rVert_2\ge c_\eta\lVert \abs{x}^{-q}\abs{p}^{-\frac{1}{2}(1-\eta)}g\rVert_p.\] By the $n$-dimensional Hardy-Littlewood-Sobolev inequality in~\cite{MR0098285} we have that \[\left\lVert\abs{x}^{-q}\int \abs{x-y}^{-a}g(y)\,\diff y\right\rVert_p\le C\lVert g\rVert_r,\] where $\frac{1}{r}+\frac{a+q}{n}=1+\frac{1}{p}$, $0\le q<\frac{n}{p}$ and $0<a<n$. In our case $a=\frac{1+\eta}{2}$, $r=2$, $n=1$ and all the conditions are satisfied if we take $0<\eta<1$. This completes the proof of~\eqref{cSobolev}.

Next,  in order to prove~\eqref{Sobolev}, we proceed by linear interpolation as in Proposition B.2 in~\cite{MR3372074}. Given $u:\mathbb{Z}\to\mathbb{R}$, let $\psi:\mathbb{R}\to \mathbb{R}$ be its linear interpolation, i.e.~$\psi(i)\defeq u_i$ for $i\in\mathbb{Z}$ and \begin{equation}
\label{linint}
\psi(x)\defeq u_i+(u_{i+1}-u_i)(x-i)=u_{i+1}-(u_{i+1}-u_i)(i+1-x),
\end{equation} for $x\in [i,i+1]$. It is easy to see that for each $p\in[2,+\infty]$ (i.e.~$\eta\le 2/3$),
there exists a constant $C_p$ such that 
\begin{equation}
\label{estnorm}
C_p^{-1}\lVert \psi\rVert_{L^p(\mathbb{R})}\le\lVert u\rVert_{L^p(\mathbb{Z})}\le C_p\lVert \psi\rVert_{L^p(\mathbb{R})}.
\end{equation} 
In order to prove~\eqref{Sobolev} we claim that
 \begin{equation}
\label{discont}
\int_0^{+\infty}\int_0^{+\infty}\frac{\abs{\psi(x)-\psi(y)}^2}{\abs{x^\frac{3}{4}-y^\frac{3}{4}}^{2-\eta}}\diff x\diff y\le c_\eta \sum_{i\ne j\in\mathbb{Z}_+}\frac{(u_i-u_j)^2}{\abs{i^\frac{3}{4}-j^\frac{3}{4}}^{2-\eta}},
\end{equation} for some constant $c_\eta>0$. Indeed, combining~\eqref{estnorm} and~\eqref{discont} with~\eqref{cSobolev} we conclude~\eqref{Sobolev}.  Finally, the proof of~\eqref{discont} is a simple exercise along the lines of the proof of 
Proposition B.2 in~\cite{MR3372074}.
\end{proof}

\section{Heat-kernel estimates}
\label{hke}

The proof of the heat kernel estimates  relies on the Nash method. In the edge scaling regime  a similar bound was proven in
~\cite{MR3253704} for a compact interval, extended to non-compact interval but with compactly supported initial
 data $w_0$ in~\cite{1712.03881}. Here we closely follow the latter proof, adjusted to the cusp regime, where interactions
 on both sides of the cusp play a role unlike in the edge regime.  
 
 \begin{proof}[Proof of Lemma~\ref{ee}.]
We start proving~\eqref{Ee1}, then~\eqref{Ee2} follows by~\eqref{Ee1} by duality.
Without loss of generality we assume $\norm{ w_0}_1=1$ and that \begin{equation}
\label{lbp}
\lVert w(\tilde{s})\rVert_p\ge N^{-100}
\end{equation} for each $s\le \tilde{s} \le t$, where  $w(\tilde{s})=\mathcal{U}^{\mathcal{L}}(s, \tilde{s})w_0$.
Otherwise, by $\ell^p$-contraction we had $\norm{ w(\tilde{s})}_p\le N^{-100}$ implying~\eqref{Ee1} directly.

In the following we use the convention $w\defeq w(\tilde{s})$ if there is no confusion.
By~\eqref{Sobolev}, we have that \[\lVert w\rVert_p^2 \lesssim \sum_{\substack{i,j\ge 1 \\ i\ne j}}\frac{(w_i-w_j)^2}{\abs{i^\frac{3}{4}-j^\frac{3}{4}}^{2-\eta}}+\sum_{\substack{i,j\le -1 \\ i\ne j}}\frac{(w_i-w_j)^2}{\abs{\abs{i}^\frac{3}{4}-\abs{j}^\frac{3}{4}}^{2-\eta}}.\] First we assume that both $i$ and $j$ are positive.
Let $\delta_4<\delta_2<\delta_3< \frac{\delta_1}{2}$.
We start with the following estimate \begin{equation}
\label{Z1}
\sum_{\substack{i,j \ge 1 \\ i\ne j}}\frac{(w_i-w_j)^2}{\abs{i^\frac{3}{4}-j^\frac{3}{4}}^{2-\eta}}\lesssim\sum_{\substack{(i,j)\in\mathcal{A} \\ i,j\ge 1}}\frac{(w_i-w_j)^2}{\abs{i^\frac{3}{4}-j^\frac{3}{4}}^{2-\eta}}+\sum_{i\ge 1}\sum_{j\ge 1}^{\mathcal{A}^c,(i)}\frac{w_i^2}{\abs{i^\frac{3}{4}-j^\frac{3}{4}}^{2-\eta}}.
\end{equation}
We proceed by writing 
\begin{equation}
\label{Z2}
\sum_{\substack{(i,j)\in\mathcal{A}\\ i,j\ge 1}}\frac{(w_i-w_j)^2}{\abs{i^\frac{3}{4}-j^\frac{3}{4}}^{2-\eta}}\lesssim \sum_{\substack{(i,j)\in\mathcal{A}: \,i,j\ge 1\\ i \,\text{or}\, j\le \ell^4N^{\delta_2}}}\frac{(w_i-w_j)^2}{\abs{i^\frac{3}{4}-j^\frac{3}{4}}^{2-\eta}}+\sum_{\substack{(i,j)\in\mathcal{A}\\ i,j\ge \ell^4 N^{\delta_2}}}\frac{(w_i-w_j)^2}{\abs{i^\frac{3}{4}-j^\frac{3}{4}}^{2-\eta}}.
\end{equation}

By Lemma~\ref{fsl} we have that \begin{equation}
\label{1}
\sum_{\substack{(i,j)\in\mathcal{A}\\ i,j\ge \ell^4N^{\delta_2}}}\frac{(w_i-w_j)^2}{\abs{i^\frac{3}{4}-j^\frac{3}{4}}^{2-\eta}}\lesssim N^{-200},
\end{equation} since $i\ge \ell^4N^{\delta_2}$ and $\abs{(w_0)_j}\le N^{-100}$ for $j\ge \ell^4N^{\delta_4}$ by our hypotheses. Indeed, for $i\ge \ell^4N^{\delta_2}$, we have that \begin{equation}
\label{decay}
w_i=\left(\mathcal{U}^\mathcal{L}(s,\tilde{s})w_0\right)_i=\sum_{j=-N}^N \mathcal{U}^\mathcal{L}_{ij}(w_0)_j=\sum_{j=-\ell^4N^{\delta_4}}^{\ell^4N^{\delta_4}}\mathcal{U}^\mathcal{L}_{ij}(w_0)_j+N^{-100}\lesssim N^{-100},
\end{equation} with very high probability. If $(i,j)\in\mathcal{A}$, $i,j\ge 1$ and $i$ or $j$ are smaller than $\ell^4N^{\delta_2}$ then both $i$ and $j$ are smaller than $\ell^4N^{\delta_3}$. Hence, for such $i$ and $j$, by~\eqref{ABC}, we have that \begin{equation}
\label{rig}
\abs{\widehat{z}_i(t,\alpha)-\widehat{z}_j(t,\alpha)}\lesssim\frac{N^\frac{\omega_1}{6}\abs{i^\frac{3}{4}-j^\frac{3}{4}}}{N^\frac{3}{4}},
\end{equation} for any fixed $\alpha\in [0,1]$ and for all $0\le t\le t_*$, where $\widehat{z}_i(t,\alpha)$ is defined by~\eqref{11}-\eqref{V222rig1}.

If $i$ and $j$ are both negative the estimates in~\eqref{Z1}-\eqref{rig} follow in the same way.

In the following of the proof $\mathcal{B}$, $\mathcal{B}_{ij}$ and $\mathcal{V}_i$ are defined in~\eqref{11}-\eqref{V222rig1}. By~\eqref{rig} it follows that
 \begin{equation}
\label{2}
\begin{split}
\sum_{\substack{(i,j)\in\mathcal{A}:\, i,j\ge 1\\ i\text{ or } j\le \ell^4N^{\delta_2}}}\frac{(w_i-w_j)^2}{\abs[1]{i^\frac{3}{4}-j^\frac{3}{4}}^{2-\eta}}+\sum_{\substack{(i,j)\in\mathcal{A}:\, i,j\le -1\\  i \,\text{or}\, j\ge -\ell^4N^{\delta_2}}}\frac{(w_i-w_j)^2}{\abs[1]{i^\frac{3}{4}-j^\frac{3}{4}}^{2-\eta}}&\lesssim -N^{-\frac{1}{2}}N^{\frac{\omega_1}{3}+C\eta}\sum_{(i,j)\in\mathcal{A}}\mathcal{B}_{ij}(w_i-w_j)^2\\
&= -2N^{-\frac{1}{2}}N^{\frac{\omega_1}{3}+C\eta}\langle w, \mathcal{B} w\rangle.
\end{split}\end{equation}

Furthermore, since $1\le \abs{i}\le \ell^4N^{\delta_3}$, we have that 
\begin{equation}
\label{ADFR}
\sum_j^{\mathcal{A}^c,(i)}\frac{1}{\abs[1]{\abs{i}^\frac{3}{4}-\abs{j}^\frac{3}{4}}^{2-\eta}}\lesssim \frac{N^{\frac{\omega_1}{3}+C\eta}}{N^\frac{3}{2}}\sum_j^{\mathcal{A}^c,(i)}\frac{1}{(\widehat{z}_i-\widehat{z}_j)^2}.
\end{equation}

By the rigidity~\eqref{RZH1},~\eqref{RZH} and~\eqref{ABC}, we can replace $\widehat{z}_j$ by $\overline{\gamma}_j$ in the sum on the rhs. of~\eqref{ADFR} and so approximate it by an integral, then using that $\overline{\rho}_t(E)\lesssim\rho_{y,t}(E)$ in the cusp regime, i.e.~$\abs{E}\le \delta_*$, with $\delta_*$ defined in Definition~\ref{def interpolating density}, we conclude that 
\begin{equation}
\label{concl}
\frac{1}{N}\sum_j^{\mathcal{A}^c,(i)}\frac{1}{(\widehat{z}_i(t)-\widehat{z}_j(t))^2}\lesssim
 \int_{I_{i,y}(t)^c}\frac{\rho_{y,t}(E+\mathfrak{e}_{y,t}^+)}{(\wh{z}_i(t)-E)^2}\diff E= -\mathcal{V}_i.
\end{equation}

Hence, by~\eqref{concl}, we conclude that \begin{equation}
\label{3}
\begin{split}
\sum_i\sum_j^{\mathcal{A}^c,(i)}\frac{w_i^2}{\abs[1]{\abs{i}^\frac{3}{4}-\abs{j}^\frac{3}{4}}^{2-\eta}}&\lesssim \sum_{1\le\abs{i}\le \ell^4N^{\delta_3}}\sum_j^{\mathcal{A}^c,(i)}\frac{w_i^2}{\abs[1]{\abs{i}^\frac{3}{4}-\abs{j}^\frac{3}{4}}^{2-\eta}}+N^{-200}\\
&\lesssim -N^{-\frac{1}{2}}N^{\frac{\omega_1}{3}+C\eta}\sum_{\abs{i}\le \ell^4N^{\delta_3}}w_i^2\mathcal{V}_i+N^{-200}\\
&\lesssim  -N^{-\frac{1}{2}}N^{\frac{\omega_1}{3}+C\eta}\langle w, \mathcal{V}w\rangle+N^{-200}.\end{split}
\end{equation} Note that in the first inequality of~\eqref{3} we used~\eqref{decay}.

Summarizing~\eqref{1},~\eqref{2} and~\eqref{3} and rewriting $N^{-200}$ into an $\ell^p$-norm using
~\eqref{lbp},  we obtain
  \[\lVert w\rVert_p^2\le -N^{-\frac{1}{2}}N^{\frac{\omega_1}{3}+C\eta}\langle w, \mathcal{L} w\rangle+\frac{1}{10}\lVert w\rVert_p^2.\] Hence, using H\"older inequality, we have that \begin{equation}
\label{4}
\begin{split}
\partial_t\lVert w \rVert_2^2&=\langle w, \mathcal{L} w \rangle\le  -c_\eta N^\frac{1}{2}N^{-\frac{\omega_1}{3}-C\eta}\lVert w\rVert_p^2 \\
&\le -c_\eta N^\frac{1}{2}N^{-\frac{\omega_1}{3}-C\eta}\lVert w\rVert_2^\frac{6-3\eta}{2}\lVert w\rVert_1^{-\frac{2-3\eta}{2}} \\
&\le -c_\eta N^\frac{1}{2}N^{-\frac{\omega_1}{3}-C\eta}\lVert w\rVert_2^\frac{6-3\eta}{2}\lVert w_0\rVert_1^{-\frac{2-3\eta}{2}}.
\end{split}\end{equation} In the last inequality of~\eqref{4} we used the $\ell^1$-contraction of $\mathcal{U}^\mathcal{L}$.
Integrating~\eqref{4} back in time, it easily follows that \begin{equation}
\label{5}
\lVert\mathcal{U}^\mathcal{L}(s,t)w_0\rVert_2\le \left(\frac{N^{C\eta+\frac{\omega_1}{3}}}{c_\eta N^\frac{1}{2}(t-s)}\right)^{1-3\eta}\lVert w_0\rVert_1,
\end{equation} proving~\eqref{Ee1}. The same bound also holds for the transpose operator $(\mathcal{U}^\mathcal{L})^T$.

In order to prove~\eqref{Ee2} we follow Lemma 3.11 of~\cite{1712.03881}.
Let $\chi(i)\defeq \bm{1}_{\{\abs{i} \le \ell^4N^{\delta_5}\}}$, with $\delta_4<\delta_5<\frac{\delta_1}{2}$, and $v\in \mathbb{R}^{2N}$. Then, we have that \[\langle \mathcal{U}^\mathcal{L}(0,t)w_0,v\rangle=\langle w_0,(\mathcal{U}^\mathcal{L})^T\chi v\rangle+\langle w_0,(\mathcal{U}^\mathcal{L})^T(1-\chi)v\rangle.\] 
By Lemma~\ref{fsl} we have that 
\begin{equation}
\label{f1}
\abs{\langle w_0, (\mathcal{U}^\mathcal{L})^T(1-\chi)v\rangle}\le N^{-100}\lVert w_0 \rVert_2\lVert v\rVert_1.
\end{equation} By~\eqref{Ee1} and Cauchy-Schwarz inequality we have that \begin{equation}
\label{f2}
\abs{\langle w_0, (\mathcal{U}^\mathcal{L})^T\chi v\rangle}\le \lVert w_0\rVert_2 \lVert (\mathcal{U}^\mathcal{L})^T\chi v\rVert_2\le \lVert w_0\rVert_2\left(\frac{N^{C\eta+\frac{\omega_1}{3}}}{c_\eta N^\frac{1}{2}t}\right)^{1-3\eta}\lVert v\rVert_1.
\end{equation} Hence, combining~\eqref{f1} and~\eqref{f2}, we conclude that \begin{equation}
\label{f3}
\lVert\mathcal{U}^\mathcal{L}(0,t)w_0\rVert_\infty\le\left(\frac{N^{C\eta+\frac{\omega_1}{3}}}{c_\eta N^\frac{1}{2}t}\right)^{1-3\eta}\lVert w_0\rVert_2,
\end{equation} and so, by~\eqref{5}, that 
\begin{equation}
\label{f4}
\begin{split}
\lVert\mathcal{U}^\mathcal{L}(0,t)w_0\rVert_\infty=\lVert\mathcal{U}^\mathcal{L}(t/2,t)\mathcal{U}^\mathcal{L}(0,t/2)w_0\rVert_\infty &\lesssim \left(\frac{N^{C\eta+\frac{\omega_1}{3}}}{c_\eta N^\frac{1}{2}t}\right)^{1-3\eta} \lVert \mathcal{U}^\mathcal{L}(0,t/2)w_0\rVert_2 \\
&\lesssim\left(\frac{N^{C\eta+\frac{\omega_1}{3}}}{c_\eta N^\frac{1}{2}t}\right)^{2(1-3\eta)}\lVert w_0\rVert_1,
\end{split}\end{equation}
where in the first inequality we used that $\mathcal{U}^\mathcal{L}(0,t/2)w_0$ satisfies the hypothesis of Lemma~\ref{ee}, since $\abs{(\mathcal{U}^\mathcal{L}(0,t/2)w_0)_i}\le N^{-100}$ for $\abs{i}\ge \ell^4 N^{2\delta_4}$ by the finite speed estimate of Lemma~\ref{fsl}. Combining~\eqref{f3} and~\eqref{f4} then~\eqref{Ee2} follows by interpolation.
\end{proof}

\printbibliography

\end{document}